\newfont{\Fr}{eufm10}
\newfont{\Sc}{eusm10}
\newfont{\Bb}{msbm10}
\newfont{\Am}{msam10}
\newfont{\am}{msam7}
\numberwithin{equation}{section}
\newtheorem{theorem}{Theorem}[section]
\newtheorem{proposition}[theorem]{Proposition}
\newtheorem{lemma}[theorem]{Lemma}
\newtheorem{corollary}[theorem]{Corollary}
\newtheorem{claim}{Claim}{\bf}{\it}
\newtheorem{ftheorem}{Theorem}{\bf}{\it}
{\bf}{\it}
\newtheorem{fcorollary}[ftheorem]{Corollary}{\bf}{\it}
\theoremstyle{definition}
\newtheorem{definition}[theorem]{Definition}
\newtheorem{fdefinition}[ftheorem]{Definition}{\bf}{\rm}
\newtheorem{fact}[theorem]{Fact}
\theoremstyle{remark}
\newtheorem{remark}[theorem]{Remark}
\newtheorem{fremark}[ftheorem]{Remark}{\bf}{\it}
\newtheorem{definition and corollary}[theorem]{Definition and Corollary}
\newtheorem{problem}[theorem]{Problem}
\newtheorem{fexample}[ftheorem]{Example}{\it}{\rm}
\newcommand{\Hom}{\mbox{\rm Hom}}
\newcommand{\MID}{\! \! \mid}
\newcommand{\h}{\mathfrac{\h}}
\title{A homological study of Green polynomials\footnote{The word ``green'' means `midori' in Japanese.}\, \footnote{French translation: Une \'etude homologique des polyn\^omes de Green}
}
\author{Syu \textsc{Kato} \footnote{Department of Mathematics, Kyoto University, Oiwake Kita-Shirakawa Sakyo Kyoto 606-8502, Japan. \tt{E-mail:syuchan@math.kyoto-u.ac.jp}} \footnote{Research supported in part by Max-Planck Institute f\"ur Mathematik in Bonn and JSPS Grant-in-Aid for Young Scientists (B) 23-740014.}}
\begin{document}
\maketitle

\begin{abstract}
We interpret the orthogonality relation of Kostka polynomials arising from complex reflection groups ([Shoji, Invent. Math. 74 (1983), J. Algebra 245 (2001)] and [Lusztig, Adv. Math. 61 (1986)]) in terms of homological algebra. This leads us to the notion of Kostka system, which can be seen as a categorical counterpart of Kostka polynomials. Then, we show that every generalized Springer correspondence ([Lusztig, Invent. Math. 75 (1984)]) in a good characteristic gives rise to a Kostka system. This enables us to see the top-term generation property of the (twisted) homology of generalized Springer fibers, and the transition formula of Kostka polynomials between two generalized Springer correspondences of type $\mathsf{BC}$. The latter provides an inductive algorithm to compute Kostka polynomials by upgrading [Ciubotaru-Kato-K, Invent. Math. 178 (2012)] \S 3 to its graded version. In the appendices, we present purely algebraic proofs that Kostka systems exist for type $\mathsf{A}$ and asymptotic type $\mathsf{BC}$ cases, and therefore one can skip geometric sections \S 3--5 to see the key ideas and basic examples/techniques.
\end{abstract}

\section*{Introduction}
Green polynomials attached to a reductive group is a family of polynomials indexed by two conjugacy classes of their (endoscopic) Weyl groups, depending on a variable $t$ roughly represents the cardinality of the base field. Introduced by Green \cite{Gr} for $\mathop{GL} ( n, \mathbb F_q )$ and Deligne-Lusztig \cite{DL} in general, they play a central role in the representation theory of finite groups of Lie types, affine Hecke algebras, $p$-adic groups, and so on. Equivalent to Green polynomials are Kostka polynomials attached to a reductive group, which are $t$-analogues of Kostka numbers in the case of $\mathop{GL} ( n )$. Hence, they appear almost everywhere in representation theory attached to root data.

Despite their natural appearance, not much is known about Kostka polynomials except for type $\mathsf A$. One major reason seems to be the fact that the set of Kostka polynomials admits integral parameters, which actually yield different collections of polynomials even if they arise from character sheaves of finite Chevalley groups (\cite{L-IC,Lu2,Lu3}). In such representation theoretic situation, Lusztig \cite{L-IC} introduced the notion of symbols, which govern the combinatorial data to determine Kostka polynomials by means of their {\it orthogonality relation} (\cite{Sh1,Lu2}). It is generalized by Malle \cite{Ma} and Shoji \cite{Sh2, Sh3} to include the case of complex reflection groups, in which the orthogonality relation is employed as their definition.

Kostka polynomials also appear in the context of elliptic representation theory (\cite{A}), that is the ``cuspidal quotient" of (usual) representation theory. In particular, the study of formal degrees of affine Hecke algebras/$p$-adic groups (\cite{Re,Op,OS,CKK,CT}) revealed the transition pattern of Kostka polynomials evaluated at $t = 1$. This supplies connections among representation theories of infinitely many $p$-adic groups (of different types).

The goal of the present paper is two-fold: One is to afford an algebraic framework of the study of Kostka polynomials of complex reflection groups. The other is to exhibit how the classical results on Kostka polynomials of Weyl groups and the above transition pattern unveil their finer versions in our framework. From these, we expect that our framework is suited to study global structures of families of (the sets of) Kostka polynomials, and to study their connections with elliptic/usual representation theory of reductive groups or ``spetses" (\cite{BMM2}).

For more detailed explanation, we need notations: Let $W$ be a complex reflection group, and let $\mathsf{Irr} \, W$ denote the set of isomorphism classes of irreducible $W$-modules. For each $\chi \in \mathsf{Irr} \, W$, we denote by $\chi^{\vee}$ its dual representation. Let $\mathfrak h$ be a reflection representation of $W$. Form a graded algebra $A _W := \mathbb C W \ltimes \mathbb C [\mathfrak h^*]$ with $\deg w = 0$ ($w \in W$) and $\deg x = 2$ ($x \in \mathfrak h$). Let $A_W \mathchar`-\mathsf{gmod}$ be the category of finitely generated $\mathbb Z$-graded $A_{W}$-modules. For $E, F \in A_W \mathchar`-\mathsf{gmod}$, we define
$$\left< E, F \right> _{\mathsf{gEP}} := \sum _{i \ge 0} (-1)^i \mathsf{gdim} \, \mathrm{ext} ^i _{A_W} (E,F) \in \mathbb Z (\!(t^{1/2})\!),$$
where $\mathrm{ext}$ means the graded extension, and $\mathsf{gdim}$ means the graded dimension (which sends a $\mathbb Z$-graded vector space $V = \oplus _{j \gg - \infty} V_j$ to $\sum _j t^{j/2} \dim V_j$). For each $\chi \in \mathsf{Irr} \, W$, we denote by $L _{\chi}$ the irreducible graded $A_{W}$-module sitting at degree $0$ that is isomorphic to $\chi$ as a $W$-module.

\begin{fdefinition}[$\doteq$ Definition \ref{kos}]\label{fkos}
Let $<$ be a total pre-order on $\mathsf{Irr} \, W$. Then, a Kostka system $\{ K_{\chi} ^{\pm} \} _{\chi} \subset A_W \mathchar`-\mathsf{gmod}$ is a collection such that
\begin{enumerate}
\item Each $K _{\chi} ^{\pm}$ is an indecomposable $A_W$-module with simple head $L_{\chi}$;
\item For each $\chi,\eta \in \mathsf{Irr} \, W$, we have equalities
\begin{align*}
[K_{\chi} ^+] & = [L_{\chi}] + \sum _{\eta > \chi} K _{\chi,\eta} ^+ [L _{\eta}] \hskip 2mm \text{ with } \hskip 2mm K _{\chi,\eta} ^+ \in t \mathbb N [t] \text{ and }\\
[K_{\chi ^{\vee}} ^-] & = [L_{\chi ^{\vee}}] + \sum _{\eta > \chi} K _{\chi,\eta} ^- [L _{\eta ^{\vee}}] \hskip 2mm \text{ with } \hskip 2mm K _{\chi,\eta} ^- \in t \mathbb N [t]
\end{align*}
in the Grothendieck group of $A_W \mathchar`-\mathsf{gmod}$;
\item  We have $\left< K _{\chi} ^+, ( K _{\eta} ^- ) ^* \right> _{\mathsf{gEP}} = 0$ for $\chi ^{\vee} \not\sim \eta$, where $( K _{\eta}^- ) ^*$ is the graded dual of $K _{\eta} ^-$.
\end{enumerate}
If $W$ is a real reflection group, then we have $K^+ _{\chi} = K ^- _{\chi}$ by (the genuine) definition, and we denote them by $K_{\chi}$.
\end{fdefinition}

This definition is slightly weaker than the one presented in the main body of the paper (for simplicity). For Weyl groups, the classical preorders on $\mathsf{Irr} \, W$ reflect the geometry of nilpotent cones and the Springer correspondences.

\begin{ftheorem}[$=$ Theorem \ref{reint}]\label{freint}
For a Kostka system $\{ K_{\chi} ^{\pm} \} _{\chi}$, its graded character multiplicities $K _{\chi,\eta} ^{\pm}$ satisfy the orthogonality relation of Kostka polynomials in the sense of $\cite{Sh1,Lu2,Sh2}$. In particular, a Kostka system is an enhancement of Kostka polynomials.
\end{ftheorem}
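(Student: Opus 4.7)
The plan is to reduce the three axioms of a Kostka system directly to Shoji's defining triangular/orthogonal equations for Kostka polynomials in \cite{Sh1, Lu2, Sh2}. Condition (2) already encodes upper-triangularity with unit diagonal for the transition matrices $(K^\pm_{\chi,\eta})$, so the only genuine work is to translate condition (3) into the Shoji-type matrix identity. For this one needs (a) an explicit formula for the Euler pairing on simples and (b) bi-additivity of this pairing on the Grothendieck group.

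For step (a), since $A_W = \mathbb{C}W \ltimes \mathbb{C}[\mathfrak{h}^*]$ is the skew group algebra of a polynomial ring, the simple module $L_\alpha$ admits the Koszul-type resolution
$$\cdots \longrightarrow A_W \otimes_{\mathbb{C}W} (\wedge^i \mathfrak{h} \otimes \alpha) \longrightarrow \cdots \longrightarrow A_W \otimes_{\mathbb{C}W} \alpha \longrightarrow L_\alpha \longrightarrow 0,$$
with $\wedge^i \mathfrak{h}$ placed in a single internal degree proportional to $i$ (dictated by $\deg x = 2$). Applying $\mathrm{Hom}^{\mathrm{gr}}_{A_W}(-,L_\beta)$ and using that $L_\beta$ sits in degree zero identifies $\mathrm{ext}^i_{A_W}(L_\alpha,L_\beta)$ with $\mathrm{Hom}_W(\wedge^i \mathfrak{h} \otimes \alpha,\beta)$ in that same internal degree. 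Taking the graded Euler characteristic, combining the identity $\sum_i (-s)^i \chi_{\wedge^i \mathfrak{h}^*}(w) = \det_{\mathfrak{h}^*}(1 - sw)$ with character orthogonality, one obtains, with $s$ equal to $t$ or $t^{-1}$ according to the sign convention,
$$\Omega_{\alpha,\beta}(t) \;:=\; \langle L_\alpha, L_\beta \rangle_{\mathsf{gEP}} \;=\; \frac{1}{|W|}\sum_{w \in W} \overline{\chi_\alpha(w)}\,\chi_\beta(w)\, \det{}_{\!\mathfrak{h}^*}(1 - sw),$$
which is, up to conventions, the universal orthogonality kernel used in the cited references to define Kostka polynomials.

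For step (b), since $\langle -,-\rangle_{\mathsf{gEP}}$ is bi-additive on short exact sequences, it descends to a pairing on $K_0(A_W\text{-}\mathsf{gmod})$. Graded duality sends $L_\beta \mapsto L_{\beta^\vee}$ and replaces $t$ by $t^{-1}$ on graded dimensions, so condition (2) yields
$$[K^+_\chi] = \sum_\alpha K^+_{\chi,\alpha}(t)\,[L_\alpha], \qquad [(K^-_\eta)^*] = \sum_\beta K^-_{\eta^\vee,\beta}(t^{-1})\,[L_\beta].$$
Substituting these into condition (3) gives
$$\sum_{\alpha,\beta} K^+_{\chi,\alpha}(t)\,\Omega_{\alpha,\beta}(t)\,K^-_{\eta^\vee,\beta}(t^{-1}) \;=\; 0 \qquad (\chi^\vee \not\sim \eta),$$
which, together with the triangularity from (2), is precisely the system of equations defining Kostka polynomials in \cite{Sh1, Lu2, Sh2}. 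The ``enhancement'' part of the conclusion is then immediate from axiom (1): the indecomposable modules $K^\pm_\chi$ with prescribed simple heads retain strictly more structure than their scalar character multiplicities.

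The Koszul/character computation of $\Omega$ is conceptually routine, so the main obstacle is convention-matching. Three independent duality involutions enter the picture — the contragredient $\chi \mapsto \chi^\vee$ on $\mathsf{Irr}\,W$, the graded dual $(-)^*$ on $A_W\text{-}\mathsf{gmod}$, and the substitution $t \mapsto t^{-1}$ on graded dimensions — and one must pin all of them down so that the derived matrix identity is literally the orthogonality relation of \cite{Sh1, Lu2, Sh2} rather than a rescaled cousin. A parallel check is that the total preorder $<$ used in our definition is compatible with (or refines) the preorder implicit in the Shoji/Lusztig setup, so that the equivalence relation $\chi^\vee \sim \eta$ on the left-hand side matches the block structure of Shoji's orthogonality matrix on the right.
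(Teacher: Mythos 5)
Your approach is sound and genuinely different from the paper's.  The paper's proof (Theorem \ref{reint}) never touches a Koszul resolution; it expands $[L_\kappa]$ in the basis $\{[P_\xi]\}$ using the invertible multiplicity matrix $P_{\chi,\eta} = [P_\chi : L_\eta]$ and then exploits the triviality of $\langle P_\xi, L_{\nu^\vee}\rangle_{\mathsf{gEP}} = \delta_{\xi,\nu^\vee}$ (projectives kill higher $\mathrm{ext}$), arriving after one substitution at $\overline{K^+ \cdot P^{-1}\cdot{}^{\mathtt t}(K^-)^\sigma} = (\text{block-diagonal})$, which is $(\ref{Smatrix})$ once $P = \Delta\cdot{}^{\mathtt t}\Omega$ (Corollary \ref{ajEP}) is inserted and both sides are transposed.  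Your route instead computes $\langle L_\alpha,L_\beta\rangle_{\mathsf{gEP}}$ directly from the Koszul resolution of $L_\alpha$, which is perfectly legitimate and conceptually transparent: you see the Molien-type factor $\det_{\mathfrak{h}^*}(1-sw)$ appear with no intermediary.  What you gain is an explicit closed form for the Euler pairing on simples; what the paper's route gains is that it bypasses this computation entirely and lands on the coinvariant-algebra matrix $\Omega$, which is \emph{literally} the kernel of the Shoji--Lusztig equation, without having to invert anything.

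Two precise points you should tighten.  First, your matrix $\langle L_\alpha,L_\beta\rangle_{\mathsf{gEP}}$ (with $\det_{\mathfrak{h}^*}(1-sw)$ in the \emph{numerator}) is not the $\Omega$ appearing on the right of $(\ref{Smatrix})$; that $\Omega_{\chi,\eta}=\mathsf{gdim}\,\mathrm{hom}_W(L_\chi\otimes L_{\eta^\vee}, C_{\mathsf{triv}})$ has $\det$ in the \emph{denominator} by Molien, and the two matrices are related by inversion and an overall $\Delta$-factor (as $P = \Delta\cdot{}^{\mathtt t}\Omega$ with $P$ essentially inverse to your matrix).  So your block-diagonal identity is the \emph{rearranged} form $(K^-)^\sigma\cdot\Omega^{-1}\cdot{}^{\mathtt t}K^+ = \Lambda^{-1}$ of $(\ref{Smatrix})$, not $(\ref{Smatrix})$ itself; you need one more inversion to get the form cited in \cite{Sh1,Lu2,Sh2}.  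Second, the Euler pairing $\langle-,-\rangle_{\mathsf{gEP}}$ is $t^{1/2}$-\emph{sesquilinear} in the first argument (grading shifts there contribute $t^{-d/2}$), so in your substituted identity both coefficients should be evaluated at $t^{-1}$: the term should read $K^+_{\chi,\alpha}(t^{-1})$, not $K^+_{\chi,\alpha}(t)$.  You flagged convention-matching as the main obstacle, and these are precisely the two places where it bites; once they are fixed, the argument closes.
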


There are a number of (conjectural) cases where Kostka polynomials of complex reflection groups satisfy the positivity of their coefficients (\cite{Ma,Sh2,Sh3}). Theorem \ref{freint} supplies a possible framework in which such Kostka polynomials might obtain mathematical reality.

This possibility is supported by the following results that most of the Kostka polynomials in representation theory of reductive groups give rise to Kostka systems by giving graded categorifications of many of their properties:

\begin{ftheorem}[$=$ part of Theorem \ref{gSp} and Corollary \ref{so}]\label{fex}
Every set of Kostka polynomials arising from character sheaves of a connected reductive group admits a realization as a Kostka system whenever the base field is of good characteristic. In addition, such Kostka systems are semi-orthogonal in the sense
\begin{equation}
\mathrm{ext} _{A_W} ^{\bullet} ( K _{\chi}, K _{\eta} ) = \{ 0 \} \hskip 5mm \text{ if } \hskip 5mm \chi < \eta.\label{forh}
\end{equation}
\end{ftheorem}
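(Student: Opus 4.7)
The plan is to construct the Kostka system geometrically from the generalized Springer correspondence (one such system per cuspidal block, and these collectively account for all Kostka polynomials arising from character sheaves), then to verify each clause of Definition \ref{fkos} together with the semi-orthogonality (\ref{forh}) by reducing to sheaf-theoretic statements on the nilpotent cone.

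First I would fix a cuspidal datum $(L, C_0, \mathcal E_0)$ on a Levi $L \subset G$ and let $W = N_G(L)/L$; the good-characteristic assumption guarantees that Lusztig's classification of cuspidal pairs and the cleanness of the involved local systems hold, and that the Springer sheaves are pure. Let $\mu : \widetilde{\mathcal N} \to \mathcal N$ be the corresponding generalized Springer map and $\mathcal L$ the induced local system on $\widetilde{\mathcal N}$. The generalized Springer correspondence attaches to each $\chi \in \mathsf{Irr} \, W$ a pair $(C_{\chi}, \mathcal E_{\chi})$ consisting of a nilpotent $G$-orbit and a $G$-equivariant irreducible local system on it. The candidate module $K_{\chi}^+$ is defined as an appropriate $\mathcal E_{\chi}$-isotypic piece of the (twisted) cohomology $H^{\bullet}(\mu^{-1}(x_{\chi}), \mathcal L)$ at a point $x_{\chi} \in C_{\chi}$, graded by cohomological degree and equipped with the $A_W$-module structure given by the Springer action of $W$ together with the $\mathbb{C}[\mathfrak h^*]$-action coming from equivariant cohomology (after identifying $\mathbb{C}[\mathfrak h^*]$ with $H_G^{\bullet}(\mathrm{pt})$). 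The companion $K_{\chi}^-$ is constructed analogously from Borel--Moore homology, so that it becomes Verdier-dual to $K_{\chi}^+$ up to a twist by the sign character.

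Next I would verify the three axioms. Axiom (1) follows from Lusztig's result that the top-degree component of the twisted cohomology of the generalized Springer fiber is exactly $\chi$ and generates the whole module, giving the simple head $L_{\chi}$. Axiom (2) is the content of the character formula for generalized Springer sheaves, which expresses the stalks of $\mathrm{IC}(C_{\chi}, \mathcal E_{\chi})$ along smaller orbits in terms of Kostka polynomials; the positivity and the vanishing of the degree-$0$ piece off the diagonal are direct consequences of the perverse normalization of the IC sheaves and purity. For axiom (3) and the stronger (\ref{forh}), I would identify $\mathrm{ext}^{\bullet}_{A_W}(K_{\chi}^+, (K_{\eta}^-)^*)$ with a graded Hom space of perverse sheaves on $\mathcal N$; combined with the decomposition theorem applied to $R\mu_* \mathcal L$, this reduces to a stalk/costalk computation that vanishes unless $(C_{\chi}, \mathcal E_{\chi})$ and $(C_{\eta}, \mathcal E_{\eta})^{\vee}$ coincide (or, for (\ref{forh}), unless $\chi \geq \eta$ in the chosen pre-order).

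The main obstacle will be this last Ext-computation. One has to match the commutative $\mathbb{C}[\mathfrak h^*]$-action with equivariant cohomology, track the grading shifts implicit in the $t^{1/2}$-conventions, and correctly handle the duality $\chi \leftrightarrow \chi^{\vee}$ (which becomes Verdier duality twisted by the sign character on the geometric side). A secondary delicate point is the choice of the total pre-order $<$ on $\mathsf{Irr} \, W$: it should refine the closure order on nilpotent orbits (together with compatibility data involving cuspidal support and the $a$-function) so that (\ref{forh}) becomes equivalent to the pure geometric assertion that there are no extensions between IC sheaves whose supports are strictly ordered.
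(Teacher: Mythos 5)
Your high-level plan (realize the modules as costalks of $\mu_*\dot{\mathcal L}$ equipped with the $A_W\cong\mathrm{Ext}^\bullet_G(\mu_*\dot{\mathcal L},\mu_*\dot{\mathcal L})$ action, invoke the decomposition theorem and purity) is the right starting point and overlaps with the paper's Theorem \ref{LusSTD} and its proof. But there are two genuine gaps in how you propose to verify the axioms, and one piece that is extraneous.

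First, your verification of the simple-head axiom is circular. You appeal to ``Lusztig's result that the top-degree component \dots generates the whole module.'' Top-term generation is not a known input here; it is precisely Corollary \ref{fgSp} of the paper, and the paper derives it only after proving that $K^{\mathbf c,gen}_{(x,\xi)}$ coincides with the $\mathcal P$-trace $P_{\chi,\mathcal P}$ (Theorem \ref{injectivity}). What is available from Lusztig (Theorem \ref{LusSTD} 9)) is only that the degree-$0$ piece is irreducible; that $P_\chi$ surjects onto the whole module is the nontrivial part, proved via a weight-filtration argument on the mixed complex $\mathcal E^!=\imath_!\imath^*\mathsf{IC}(\chi)$ (Claims \ref{j-ext} and \ref{comp-anal}), working over $\overline{\mathbb F}_p$ with Frobenius weights, using evenness and purity of the costalk homology to degenerate the Serre spectral sequence, and then killing the $W$-types coming from smaller orbits.

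Second, the Ext-vanishing step is not merely ``a stalk/costalk computation.'' Extensions in $A_W\mathchar`-\mathsf{gmod}$ are not the same as morphisms in $D^b_G(\mathcal N)$: the functor $\mathrm{Ext}^\bullet_G(\bullet,\ddot{\mathcal L})$ from complexes to $A_W$-modules is not a priori fully faithful, and the decomposition theorem applies to $\mu_*\dot{\mathcal L}$ but not to $\mathcal E^!$, which is genuinely mixed of non-semisimple structure. The paper (Corollary \ref{so}) passes through a formality-type argument: the weight filtration of $\mathcal E^!$ (again from Claims \ref{j-ext} and \ref{comp-anal}, together with \cite{K3} 2.5--2.8) produces a projective resolution of $\widetilde K_\chi$ whose terms are shifts of $\{P_{\chi'}\}_{\chi'\lesssim\chi}$, and the Ext-vanishing with $L_\eta$ for $\eta>\chi$ is read off from that resolution plus Lemma \ref{invert}. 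Without this construction of the resolution, your proposed reduction fails.

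Finally, your construction of a separate $K_\chi^-$ from Borel--Moore homology Verdier-dual to $K_\chi^+$ is unnecessary: the groups $W_{\mathbf c}$ here are real reflection groups, so every phyla is of Malle type and $K_\chi^+=K_\chi^-$ by definition (Remark \ref{rphyla} 2)). The paper builds a single module $K^{\mathbf c}_\chi=K^{\mathbf c,gen}_{(x,\xi)}$ via the costalk $\imath_x^!\mu_*\dot{\mathcal L}$ and checks that it is the $\mathcal P$-trace; there is no $\pm$ dichotomy to set up.
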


\begin{fremark}
Note that for a Weyl group of type $\mathsf{A}_n$, the set of Kostka polynomials is unique up to tensoring $\mathsf{sgn}$, while for a Weyl group of type $\mathsf{BC}_n$, we have at least $4(n-1)$ different sets of Kostka polynomials.
\end{fremark}

By a parameter-deformation argument (cf. \cite{L-CG2,Sh5,K1}) and the semi-continuity principle, (\ref{forh}) implies the corresponding $\mathrm{Ext}$-vanishing of the standard modules of a graded Hecke algebra in the sense of \cite{L-CG1} \S 8 (cf. \cite{L-CG2} \S 8 and \cite{CG} \S 8). This also supplies semi-orthogonal collections of many of the Bernstein blocks of $p$-adic groups (cf. \cite{L95,He}).

Since Kostka polynomials in Theorem \ref{fex} are coming from generalized Springer correspondences (\cite{L-IC}), we conclude:

\begin{fcorollary}[$=$ part of Theorem \ref{gSp}]\label{fgSp}
Every twisted total homology group of a generalized Springer fiber $(\cite{L-IC,Lu2})$ is generated by its top-term by hyperplane sections.
\end{fcorollary}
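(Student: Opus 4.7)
The strategy is to read off this corollary from the construction carried out in Theorem \ref{gSp}. In the generalized Springer setting, one realizes each Kostka-system object $K_{\chi}^{\pm}$ as (an appropriate regrading of) the twisted total homology of the generalized Springer fiber indexed by $\chi$. The $A_W$-module structure decomposes into two pieces: the $\mathbb C W$-part is the Springer--Lusztig action, while the $\mathbb C[\mathfrak h^*]$-part is induced by cup product with the hyperplane (Chern) classes pulled back from the ambient partial resolution. Under this identification $L_\chi$, which is the head of $K_\chi^{\pm}$, sits in the top homological degree (equivalently in degree $0$ after the regrading that puts $\mathbb C[\mathfrak h^*]$ in positive degrees).

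Granting that identification, the corollary is a formal consequence of condition~(1) of the Kostka system definition. The algebra $A_W$ is non-negatively graded with degree-zero part $\mathbb C W$ and $A_W / A_W^{>0} \cong \mathbb C W$, so the head of a finitely generated graded $A_W$-module $M$ coincides with $M / A_W^{>0} M$; by the graded Nakayama lemma, any lift of a spanning set of the head generates $M$. Since condition~(1) forces the head of $K_\chi^{\pm}$ to be the simple $W$-module $L_\chi$, and $L_\chi$ is already $W$-stable, we conclude $K_\chi^{\pm} = \mathbb C[\mathfrak h^*] \cdot L_\chi$. Transporting this back through the geometric identification gives exactly that the twisted total homology of each generalized Springer fiber is generated, under the action of hyperplane sections, by its top-term.

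The main obstacle is not in this final deduction but in the construction inside Theorem \ref{gSp}: one has to verify that the intrinsic $\mathbb C[\mathfrak h^*]$-action coming from hyperplane sections on the twisted homology matches the one making $K_\chi^{\pm}$ into an object of the Kostka system, and that the grading normalisation is chosen so that the simple head $L_\chi$ occupies exactly the top-term rather than some intermediate degree. Once these compatibilities are in place, Corollary \ref{fgSp} is immediate from condition~(1) via the graded Nakayama lemma.
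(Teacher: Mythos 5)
Your approach is essentially the paper's: Corollary \ref{fgSp} is read off from Theorem \ref{gSp} (more precisely from Theorem \ref{injectivity}), where the twisted total homology $K^{\mathbf c,gen}_{(x,\xi)}$ is shown to be the $\mathcal P$-trace quotient of $P_\chi = \mathbb C[\mathfrak h^*]\otimes L_\chi$, hence a cyclic module generated in degree $0$ over $\mathbb C[\mathfrak h^*]$, which is exactly top-term generation by hyperplane sections. One small caution: condition (1) of the simplified Definition \ref{fkos} (simple head $L_\chi$) alone does not force $L_\chi$ to sit in degree $0$ — a grading shift is a priori possible — so the graded Nakayama step really needs condition (2) there, or better the genuine Definition \ref{kos} in which $K_\chi^\pm$ is by construction a quotient of $P_\chi$; you gesture at this with the ``regrading'' remark, and the rest of the compatibilities you defer to Theorem \ref{gSp} are indeed where the actual work (the weight-filtration and $E_2$-degeneracy arguments of Theorem \ref{injectivity}) lives.
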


Corollary \ref{fgSp} does not hold for the usual cohomologies in general, and it has been regarded as a mysterious aspect of Springer fibers (cf. \cite{DP,Ta,Ca,GM2,KP}). Hence, our framework provides one reasonable answer to this mystery. Thanks to \cite{BMR,BeM}, Corollary \ref{fgSp} also imposes non-trivial constraints on the structure of modular representation theory of semi-simple Lie algebras and quantum groups.

Kostka systems of the same group are sometimes linked by mutation operations in derived categories. It can also be viewed as a graded analogue of \cite{CKK} \S 3, that is tightly connected with elliptic representation theory ({\it loc. cit.} \S 4). One particular instance is:

\begin{ftheorem}[$\doteq$ part of Theorem \ref{Kmain} + Corollary \ref{omid}]\label{fmain}
Let $\{ K _{\chi} ^{\sharp} \} _{\chi}$ and $\{ K _{\chi} ^{\flat} \} _{\chi}$ be two Kostka systems of type $\mathsf{BC}$ $($arising from character sheaves of connected reductive groups$)$ that have adjacent integral parameter values $($see Lemma $\ref{gSpdata}$ for detail$)$. Then, there exists another Kostka system $\{ K _{\chi} ^{mid} \} _{\chi}$ so that
\begin{itemize}
\item Each of $K _{\chi} ^{mid}$ is written as some extensions of $K _{\chi} ^{\sharp}$ by $K _{\eta} ^{\sharp}$ $(\eta>\chi)$;
\item Each of $K _{\chi} ^{mid}$ is written as some extensions of $K _{\chi} ^{\flat}$ by $K _{\eta} ^{\flat}$ $(\eta<\chi)$.
\end{itemize}
In addition, the Kostka system $\{ K _{\chi} ^{mid} \} _{\chi}$ is also semi-orthogonal.
\end{ftheorem}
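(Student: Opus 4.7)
The plan is to construct $\{K_\chi^{mid}\}_\chi$ by a single mutation step that interpolates between the two given Kostka systems, viewed as two semi-orthogonal collections in a common triangulated category. For type $\mathsf{BC}$ with adjacent integral parameters, the combinatorics of Lusztig symbols should tell us that the preorders attached to $\{K_\chi^\sharp\}$ and $\{K_\chi^\flat\}$ differ in a very controlled way—intuitively, one exchanges a small block of neighbouring entries of $\mathsf{Irr}\,W$—so that passing from one to the other is a single braid/mutation move rather than an arbitrary reshuffle.

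Working in $D^b(A_W\text{-}\mathsf{gmod})$, Theorem~\ref{fex} gives the semi-orthogonality~(\ref{forh}) for both systems. For each $\chi$ I would define $K_\chi^{mid}$ as a universal extension built from the failure of $K_\chi^\sharp$ to be orthogonal to the $K_\eta^\flat$: setting $V_\eta := \mathrm{ext}^1_{A_W}(K_\chi^\sharp, K_\eta^\sharp)$ (read off, via the semi-orthogonal change of basis, from the $K^\flat$-expansion of $K_\chi^\sharp$) one takes the short exact sequence
\[
0 \longrightarrow \bigoplus_{\eta>\chi} V_\eta^* \otimes K_\eta^\sharp \longrightarrow K_\chi^{mid} \longrightarrow K_\chi^\sharp \longrightarrow 0.
\]
Dually, comparing the two semi-orthogonal filtrations, the same object should sit in
\[
0 \longrightarrow K_\chi^\flat \longrightarrow K_\chi^{mid} \longrightarrow N_\chi \longrightarrow 0
\]
with $N_\chi$ a successive extension of $K_\eta^\flat$ for $\eta<\chi$. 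The equivalence of the two descriptions is exactly the standard statement that right-mutation through an exceptional pair can be rewritten as left-mutation from the other side, applied block-by-block along the adjacency move.

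Next I would verify the three axioms of Definition~\ref{fkos} together with the semi-orthogonality~(\ref{forh}) for $\{K_\chi^{mid}\}$. Indecomposability with simple head $L_\chi$ should follow because the construction only glues onto $K_\chi^\sharp$ objects whose heads $L_\eta$ satisfy $\eta>\chi$, and the head of $K_\chi^\sharp$ is already $L_\chi$. Axiom~(3) (orthogonality) reduces, via the defining triangles and bilinearity of $\langle -,-\rangle_{\mathsf{gEP}}$, to the orthogonality already known for $\{K_\chi^\sharp\}$ and $\{K_\chi^\flat\}$, combined with Theorem~\ref{freint} applied to both. Semi-orthogonality of $\{K_\chi^{mid}\}$ is then a general feature of mutations of exceptional collections.

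The main obstacle will be axiom~(2), namely verifying that the resulting character expansions have coefficients in $t\mathbb{N}[t]$ rather than merely $\mathbb{Z}[t,t^{-1}]$. Integrality and strict positivity in the grading amounts to showing that each $V_\eta$ is concentrated in strictly positive degrees—so that $V_\eta^*$ contributes a polynomial in $t\mathbb{N}[t]$—which ultimately encodes the fact that adjacent-parameter transitions lower (or raise, depending on orientation) the $\mathbf{a}$-function compatibly with the grading of $A_W$. I expect to establish this either by induction on the preorder using the $K^\flat$-filtration as a witness, or by invoking the explicit symbol-level transition formula for type $\mathsf{BC}$ that the paper promises (the graded upgrade of \cite{CKK} \S 3) to control the sign of each graded piece case-by-case. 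Once this positivity is in hand, the Kostka-system status of $\{K_\chi^{mid}\}$ and its semi-orthogonality both follow.
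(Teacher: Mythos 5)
Your proposal constructs $K_\chi^{mid}$ as an abstract mutation/universal extension built out of the two adjacent systems, and then tries to verify the Kostka-system axioms after the fact. The paper does essentially the opposite: it first produces the middle Kostka system \emph{geometrically}, as the collection of generalized Springer representations $K^{s+\epsilon}_{\bm\lambda}$ attached to a cuspidal datum on a \emph{Spin} group (Theorem~\ref{delimits} and Corollary~\ref{spin}), so that indecomposability, top-generation, orthogonality and semi-orthogonality all come for free from Theorem~\ref{gSp} and Corollary~\ref{so}. The extension/filtration statements of Theorem~\ref{fmain} are then derived downstream (Lemma~\ref{filt}), not used as the construction principle, and the grading shifts are pinned down combinatorially (Lemmas~\ref{trace-hom}, \ref{dist}). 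The one genuinely delicate point in the paper's argument is showing that a Kostka system adapted to the Spin group's singleton phyla is also adapted to the phyla $Z^{2,s+\epsilon}_n$; that goes through the $a$-function linearity in $\epsilon$, Claim~\ref{eq-str}, the intermediate phyla $\mathcal P_{s+\epsilon}$, Proposition~\ref{ext}, and Corollary~\ref{oc}.

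Several steps of your proposed route would not go through as stated. First, you treat the $K_\chi^\sharp$ as an exceptional collection to which mutation theory applies, but they are not exceptional objects: $A_W$ is infinite-dimensional and $\mathrm{ext}^\bullet_{A_W}(K^\sharp_\chi,K^\sharp_\chi)$ is large (already $\mathrm{hom}$ is a nontrivial graded algebra), so there is no cone-based ``universal extension'' that is guaranteed to land on an indecomposable module with simple head $L_\chi$; you would also need to know that the relevant $\mathrm{ext}^1$ groups $V_\eta$ are finite-dimensional and concentrated in a single degree to even form the displayed short exact sequence. Second, orthogonality (Definition~\ref{fkos}~\textbf{3)}) does not follow ``via bilinearity'' from the orthogonality of $\{K^\sharp\}$ and $\{K^\flat\}$ separately, because it compares $K^{mid}_\chi$ against the \emph{dual} of $K^{mid}_\eta$ and mixes contributions from both filtrations; in the paper this is exactly the point handled by realizing the middle system geometrically instead of by mutation. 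Third, and most seriously, you resolve your acknowledged ``main obstacle'' (positivity of the graded multiplicities) by ``invoking the explicit symbol-level transition formula for type $\mathsf{BC}$ that the paper promises (the graded upgrade of \cite{CKK} \S 3)'' — but that transition formula \emph{is} Theorem~\ref{Kmain}, i.e.\ the statement you are trying to prove, so this step is circular. In the paper the positivity and the precise $t^{d_{{\bm\lambda},{\bm\mu}}}$-shifts are not assumed; they are deduced from the ungraded character identity of Theorem~\ref{delimits} (itself a consequence of the tempered-module theory of Opdam--Slooten--Ciubotaru--Kato) plus the fact that $K^{s}_{\bm\mu}$ are $\mathcal P$-traces, via the filtration / dimension-count argument of Lemma~\ref{filt} and the explicit $\mathrm{hom}$-space lower bound of Lemma~\ref{trace-hom}.
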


Here the expression of Theorem \ref{fmain} is obscure, but we determine exactly which one appears with which grading shift in terms of the notion of strong similarity class (Definition \ref{symBC}) and distance (\S \ref{Wnot}). In addition, we have an explicit description of Kostka systems of type $\mathsf{BC}$ in the asymptotic region ($s \gg 0$ in Example \ref{B2}) in terms of those of type $\mathsf{A}$ (combine Proposition \ref{ak}, Lemma \ref{gind}, and Fact \ref{typeAref} 1)). Therefore, Theorem \ref{fmain} gives an algorithm to compute Kostka polynomials of type $\mathsf{BC}$ (that is independent of the orthogonality relations).

\begin{fexample}\label{B2}
Let $W$ be the Weyl group of type $\mathsf{B}_2$ and consider the total preorders coming from the Lusztig-Slooten symbols with positive parameter range (see \S 4 for detail, but here we warn that our symbols slightly differ from that in \cite{LS}). There are five irreducible representations of $W$
$$\mathsf{sgn}, \mathsf{Ssgn}, \mathsf{Lsgn}, \mathsf{ref}, \mathsf{triv},$$
and the modules $K _{\mathsf{sgn}}$ and $K _{\mathsf{triv}}$ are constant. The transition pattern of the graded characters of the other modules in Kostka systems is:
\begin{center}
\begin{tabular}{c|c|c|c}
$s$ & $\mathsf{gch} \, K _{\mathsf{Lsgn}}$ & $\mathsf{gch} \, K _{\mathsf{Ssgn}}$ & $\mathsf{gch} \, K _{\mathsf{ref}}$ \\ \hline
$s \in ( 0, 1 )$ & $[\mathsf{Lsgn}]$ & $[ \mathsf{Ssgn} ] + t [ \mathsf{ref} ] + t^2 [ \mathsf{triv} ]$ & $[ \mathsf{ref} ] + t [ \mathsf{triv} ] + t [\mathsf{Lsgn}]$ \\
$s = 1$ & $[\mathsf{Lsgn}]$ & $[ \mathsf{Ssgn} ] + t [ \mathsf{ref} ] + t^2 [ \mathsf{triv} ]$ & $[ \mathsf{ref} ] + t [ \mathsf{triv} ]$ \\
$s \in ( 1, 2 )$ & $[ \mathsf{Lsgn} ] + t [ \mathsf{ref} ] + t^2 [ \mathsf{triv} ]$ & $[ \mathsf{Ssgn} ] + t [ \mathsf{ref} ] + t^2 [ \mathsf{triv} ]$ & $[ \mathsf{ref} ] + t [ \mathsf{triv} ]$ \\
$s = 2$ & $[ \mathsf{Lsgn} ] + t [ \mathsf{ref} ] + t^2 [ \mathsf{triv} ]$ & $[ \mathsf{Ssgn} ]$ & $[ \mathsf{ref} ] + t [ \mathsf{triv} ]$ \\
$s > 2$ & $[ \mathsf{Lsgn} ] + t [ \mathsf{ref} ] + t^2 [ \mathsf{triv} ]$ & $[ \mathsf{Ssgn} ]$ & $[ \mathsf{ref} ] + t [ \mathsf{triv} ] + t [ \mathsf{Ssgn} ]$ \\\hline
\end{tabular}
\end{center}
\end{fexample}

The organization of this paper is as follows: The first section is for preliminaries. In \S 2, we define Kostka systems (for complex reflection groups) and present some of their general results. This section is entirely algebraic. In \S 3, we combine the results in \S 2 with Lusztig \cite{L-IC, L-CG2} and Beilinson-Bernstein-Deligne \cite{BBD} to prove that every generalized Springer correspondence gives rise to a Kostka system (Theorem \ref{gSp}). In \S 4, we recall how the description of generalized Springer fibers (of classical types) and symbol combinatorics are related (this part is just a reformulation of known results). In addition, we unify the results of Lusztig \cite{L-CG3} and Opdam-Solleveld \cite{OS} into Slooten's combinatorics (\cite{Sl2}) by utilizing our previous results (\cite{CK, CKK}) and some results from the previous sections. Finally, we present the transition pattern (Theorem \ref{Kmain}) between generalized Springer correspondences of type $\mathsf{BC}$ by utilizing the results from all the previous sections. In the appendices, we provide algebraic proofs that the dual of De Concini-Procesi-Tanisaki \cite{DP, Ta} yields a Kostka system for $W = \mathfrak S _n$, and there exists a Kostka system for $W = \mathfrak S_n \ltimes ( \mathbb Z / 2 \mathbb Z )^n$. Thanks to Garsia-Procesi \cite{GP}, this means that there is a completely algebraic path to study Kostka systems in some cases.

One natural problem arising from this paper is to abstract the arguments so that it include some important non-geometric cases like the Geck-Malle conjecture (\cite{GM}). The author hopes to get back to this problem later.

\begin{flushleft}
{\small
{\bf Acknowledgment:} The author is very grateful to Masaki Kashiwara, Toshiaki Shoji, and Seidai Yasuda for valuable discussions on some technically deep points. The author also thanks Dan Ciubotaru for the collaboration works which leads him to the present paper, and Noriyuki Abe, Pramod Achar, Yoshiyuki Kimura, George Lusztig, Toshio Oshima, and Arun Ram for helpful conversations and correspondences. We have utilized the output of \cite{Ap, GAP} during this research.}
\end{flushleft}

\section{Preliminaries}
\subsection{Overall notation}
Let $(W, S)$ be a complex reflection group with a set of simple reflections and let $\mathfrak h$ be its reflection representation (for $W = \mathfrak S_n$, we might add an additional copy of trivial representation). We form a graded algebra
$$A_W := \mathbb CW \ltimes \mathbb C [ \mathfrak h^* ]$$
by setting $\deg w \equiv 0$ for every $w \in W$ and $\deg \beta = 2$ for every $\beta \in \mathfrak h \subset \mathbb C [ \mathfrak h^* ]$. We set $J_W := \ker \left( \mathbb C [\mathfrak h^*]^W \to \mathbb C \right)$, where the map is the evaluation at $0 \in \mathfrak h^*$. For a subgroup $W' \subset W$, we define $A _{W, W'} := \mathbb C W' \ltimes \mathbb C [ \mathfrak h^* ] \subset A_W$. 

Let $\mathsf{Irr} \, W$ be the set of isomorphism classes of simple $W$-modules, and let $L _{\chi}$ and $e_{\chi}$ be a realization and a minimal idempotent of $W$ corresponding to $\chi \in \mathsf{Irr} \, W$, respectively.

In this paper, every grading should be understood as a $\mathbb Z$-grading. Let $\mathsf{vec}$ be the category of graded vector spaces. Let $A _W \mathchar`-\mathsf{gmod}$ be the category of finitely-generated graded $A_W$-modules. For each $M$ in $A _W \mathchar`-\mathsf{gmod}$ or $\mathsf{vec}$, we denote by $M_i$ its degree $i$ part. We set $M \! \left< d\right>$ to be the grading shift of $M$ of degree $d$ (i.e. $\left( M \! \left< d\right> \right) _i = M _{i-d}$ for each $i \in \mathbb Z$). For $E, F \in A_W \mathchar`-\mathsf{gmod}$ and $R = A_W, \mathbb C [\mathfrak h^*]$, or $W$, we define $\hom_{R} ( E, F )$ to be the direct sum of the space of graded $R$-module homomorphisms $\hom _{R} ( E, F )_j$ of degree $j$. We employ the same notation for extensions (i.e. $\mathrm{ext}_{R} ^i ( E, F ) = \oplus_{j\in \mathbb Z} \mathrm{ext}^{i}_{R} (E,F)_j$). For a graded subspace $J \subset A_W$, we set $\left< J \right>$ to be the (graded) ideal generated by $J$. 

In addition, for $M \in A _W \mathchar`-\mathsf{gmod}$, we define $( M^* ) _{-d} := \Hom _{\mathbb C} ( M _d, \mathbb C)$ and $M ^* := \bigoplus _{d} ( M ^* )_d$. This is a graded $A_W ^{op}$-module that is not necessarily finitely generated. We have an isomorphism $A_W \cong A_W ^{op}$ induced by sending $w \in W$ to $w ^{-1} \in W$ (and is identity on $\mathbb C [ \mathfrak h ^* ]$). Using this, we may also regard $M ^*$ as a (graded) $A_W$-module.

Let $S^d \mathfrak h$ be the $d$-th symmetric power of $\mathfrak h$, which is naturally a $W$-module. In case the reflection representation $\mathfrak h$ of $W$ admits a natural basis $\epsilon_1,\ldots,\epsilon_n$ (as in the case of $W = \mathfrak S_n \ltimes ( \mathbb Z / e \mathbb Z )^n$ for $e \ge 2$), we set $\wedge ^d _+ \mathfrak h \subset S^d \mathfrak h$ to be the span of all the monomials $\epsilon_1 ^{m_1} \epsilon_2^{m_2} \cdots \epsilon_n^{m_n}$ with $0 \le m _i \le 1$ for every $i$. Notice that $\wedge_+^d \mathfrak h \subset S^d \mathfrak h$ is a $W$-submodule.

For $Q (t^{1/2}) \in \mathbb Q (t^{1/2})$, we set $\overline{Q ( t^{1/2} )} := Q ( t^{-1/2} )$.

\subsection{Convention on partitions}\label{Wnot}
Let $\lambda = ( \lambda_1, \lambda_2, \ldots, \lambda _k, \ldots )$ be a non-negative integer sequence such that {\bf 1)} $\sum _i \lambda _i = n$, and {\bf 2)} $\lambda _1 \ge \lambda _2 \ge \cdots \ge 0$. We refer $\lambda$ as a partition of $n$, and $n = |\lambda|$ as the size of $\lambda$. For a partition $\lambda$, we define its transpose partition ${}^{\mathtt t} \lambda$ as $({}^{\mathtt t} \lambda) _i = \# \{ j \mid \lambda _j \ge i \}$. We define $\lambda _k ^{\le} := \sum _{i \le k} \lambda _i$ for each $k \in \mathbb Z _{> 0}$.

We define a partial order on the set of partitions as $\lambda \ge \mu$ if and only if we have $\lambda _k ^{\le} \ge \mu_k ^{\le}$ for every $k$ (for each pair of partitions $\lambda$ and $\mu$). We define the $a$-function of a partition $\lambda$ by $a ( \lambda ) := \sum _{i \ge 1} \left( \begin{matrix} {}^{\mathtt t} ( \lambda ) _i \\ 2 \end{matrix} \right)$. The partial order $<$ is weaker than the partial order given in accordance with the values of the $a$-function (in an opposite way).

For a partition $\lambda$ of $n$, we denote by $\mathfrak S _{\lambda}$ the natural subgroup
$$\mathfrak S _{\lambda _1} \times \mathfrak S _{\lambda _2} \times \cdots \subset \mathfrak S _n.$$
In addition, we have a unique irreducible $\mathfrak S _n$-module $L_{\lambda}$ (up to isomorphism) such that
$$\Hom _{\mathfrak S _{{}^{\mathtt t} \lambda}} (\mathsf{sgn}, L _{\lambda} ) \cong \mathbb C, \text{ and } \Hom _{\mathfrak S _{\lambda}} (\mathsf{triv}, L_{\lambda} ) \cong \mathbb C.$$

A pair of partitions ${\bm \lambda} = (\lambda ^{(0)},\lambda ^{(1)})$ is called a bi-partition, and it is called a bi-partition of $n$ if $n = | \lambda ^{(0)} | + | \lambda ^{(1)} |$ in addition. We denote by $\mathtt P ( n )$ the set of bi-partitions of $n$. The transpose ${} ^{\mathtt t} {\bm \lambda}$ of a bi-partition ${\bm \lambda} = (\lambda ^{(0)},\lambda ^{(1)})$ is defined as $({}^{\mathtt t} \lambda ^{(1)},{}^{\mathtt t} \lambda ^{(0)} )$. We define the $b$-function of a bi-partition ${\bm \lambda}$ as:
$$b ( {\bm \lambda} ) := | \lambda ^{(0)} | + 2 a ( \lambda ^{(0)} ) + 2 a ( \lambda^{(1)}),$$
where we employed the $a$-function of partitions in the RHS.

For a pair of two bi-partitions ${\bm \lambda} = ( \lambda ^{(0)}, \lambda ^{(1)}), {\bm \mu} = ( \mu ^{(0)}, \mu ^{(1)})$ of $n$, we define ${\bm \lambda} \doteq {\bm \mu}$ when there exists a unique pair $(i,j)$ so that $\lambda _i^{(0)} = \mu_i^{(0)} \pm 1$, $\lambda _j^{(1)} = \mu_j ^{(1)} \mp 1$, and $\lambda _k ^{(0)} = \mu _{k} ^{(0)}, \lambda _k ^{(1)} = \mu _{k} ^{(1)}$ otherwise.

For two bi-partitions ${\bm \lambda}$ and ${\bm \mu}$, we define their distance $d_{{\bm \lambda}, {\bm \mu}}$ as:
$$d _{{\bm \lambda}, {\bm \mu}} := \min \{d \mid {\bm \lambda} = {\bm \lambda} _0 \doteq \exists {\bm \lambda} _1 \doteq \cdots \doteq \exists {\bm \lambda} _{d-1} \doteq {\bm \lambda} _d = {\bm \mu} \}.$$

\section{Kostka systems}\label{genkos}
Keep the setting of the previous section.

\begin{lemma}\label{gch-def}
For each $M \in A _W \mathchar`-\mathsf{gmod}$, the following two series belong to $\mathbb Z (\!(t ^{1/2})\!) \mathsf{Irr} \, W$ and $\mathbb Z (\!(t ^{1/2})\!)$, respectively:
$$\mathsf{gch} \, M := \sum _{\chi \in \mathsf{Irr} \, W} \sum _{i \in \mathbb Z} t^{i/2} [L_{\chi}] \dim \Hom _W ( L_{\chi}, M_i ) \text{ and } \mathsf{gdim} \, M := \sum _{i \in \mathbb Z} t^{i/2} \dim M_i.$$
\end{lemma}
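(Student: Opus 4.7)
The plan is to reduce both claims to the statement that any finitely generated graded $A_W$-module has finite-dimensional graded components and is bounded below in degree, which follows from the fact that $A_W$ is itself a non-negatively graded algebra of finite type with finite-dimensional graded components.

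First I would record the basic structural facts about $A_W$. By construction $\deg w = 0$ for $w\in W$ and $\deg \beta = 2$ for $\beta\in\mathfrak h$, so $A_W = \bigoplus_{i\ge 0} (A_W)_i$ is concentrated in non-negative even degrees. Since $|W| < \infty$ and $\dim_{\mathbb C}\mathbb C[\mathfrak h^*]_{2k}$ is finite for each $k$, the isomorphism $A_W\cong \mathbb CW\otimes_{\mathbb C}\mathbb C[\mathfrak h^*]$ of graded vector spaces shows that each $(A_W)_i$ is finite-dimensional.

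Next, given $M\in A_W\text{-}\mathsf{gmod}$, I would choose a finite collection of homogeneous generators $m_1,\ldots,m_k$ of degrees $d_1,\ldots,d_k$ and set $d_0 := \min_j d_j$. The surjection
\[
\bigoplus_{j=1}^k A_W\!\left<d_j\right> \;\twoheadrightarrow\; M
\]
sending the $j$-th generator to $m_j$ exhibits $M$ as a graded quotient of a finitely generated free graded $A_W$-module. Taking the $i$-th graded piece gives a surjection $\bigoplus_j (A_W)_{i-d_j}\twoheadrightarrow M_i$, so $\dim_{\mathbb C} M_i < \infty$ for every $i$ and $M_i = 0$ whenever $i < d_0$.

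These two properties immediately imply that $\mathsf{gdim}\,M = \sum_i t^{i/2}\dim M_i$ is a formal Laurent series in $t^{1/2}$ with integer coefficients and support bounded below, hence lies in $\mathbb Z(\!(t^{1/2})\!)$. For $\mathsf{gch}\,M$, I would use that $\mathsf{Irr}\,W$ is a finite set (as $W$ is a finite complex reflection group), and for each $\chi\in\mathsf{Irr}\,W$ the inequality
\[
\dim_{\mathbb C}\Hom_W(L_\chi, M_i) \;\le\; \frac{\dim_{\mathbb C} M_i}{\dim_{\mathbb C} L_\chi}
\]
together with the same lower-bound property of the grading shows that $\sum_i t^{i/2}\dim\Hom_W(L_\chi,M_i) \in \mathbb Z(\!(t^{1/2})\!)$ for each $\chi$, so that $\mathsf{gch}\,M\in \mathbb Z(\!(t^{1/2})\!)\mathsf{Irr}\,W$. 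There is no genuine obstacle here; the only thing one has to be mildly careful about is tracking that the grading is indexed by $\mathbb Z$ (not $2\mathbb Z$) because of the grading shifts $\left<d\right>$, which is precisely why the coefficient ring is $\mathbb Z(\!(t^{1/2})\!)$ rather than $\mathbb Z(\!(t)\!)$.
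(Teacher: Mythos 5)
Your proof is correct and follows essentially the same route as the paper: both exhibit $M$ as a graded quotient of a finite direct sum $\bigoplus_j A_W\!\left<d_j\right>$, observe that each graded piece $(A_W\!\left<d\right>)_i$ is finite-dimensional and vanishes for $i<d$, and then read off the finiteness and boundedness of the graded pieces of $M$ by comparison. The additional remarks you make about $\mathsf{Irr}\,W$ being finite and about the factor of $\dim L_\chi$ are fine, but they are the routine last step the paper leaves implicit.
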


\begin{proof}
We have $\dim \, ( A_{W} \left< d \right> )_i = \# W \cdot \dim \, S^{i-d} \mathfrak h < \infty$ for each $i$ and $d$. In addition, we have $\dim \, ( A_{W} \left< d \right> )_i = 0$ if $i<d$. Thus, the assertions hold when $M = A_{W} \left< d \right>$. In general, $M$ is a graded quotient of $\bigoplus _{j \in J} A_W \left< d _j \right>$ (for a finite set $J$ and $d_j \in \mathbb Z$). Therefore, we conclude the assertions by the comparison of their graded pieces.
\end{proof}

Note that $L_{\chi}$ can be regarded as an irreducible $A_W$-module sitting at degree $0$, and we freely use this identification in the below. For each $\chi \in \mathsf{Irr} \, W$, we set $P_{\chi} := A_W e_{\chi}$ and $P _{\chi}^{(0)} := P _{\chi} / \left< J_W \right> P _{\chi}$.

\begin{lemma}\label{cl-indec}
The graded $A_W$-module $P_{\chi}$ is the indecomposable projective cover of $L_{\chi}$. In addition, all finitely generated indecomposable graded projective modules of $A_W$ are of this type up to grading shifts.
\end{lemma}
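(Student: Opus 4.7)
The plan is to realize $P_{\chi}$ as a graded direct summand of the regular module $A_W$, and then read off the two required properties from explicit descriptions of its head and of its graded endomorphism ring. First, I would fix an orthogonal decomposition $1 = \sum _{\eta \in \mathsf{Irr} \, W} \sum _{j = 1} ^{\dim \eta} e _{\eta, j}$ of $1 \in \mathbb C W$ into primitive idempotents, each $e _{\eta, j}$ being conjugate to $e _{\eta}$ inside $\mathbb C W$; such a decomposition exists because $\mathbb C W$ is split semisimple. This produces a graded decomposition $A _W = \bigoplus _{\eta, j} A _W e _{\eta, j}$ in $A _W \mathchar`-\mathsf{gmod}$ and identifies $P _{\chi}$ with a graded summand of $A _W$, so that $P _{\chi}$ is finitely generated and graded projective.

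Next I would compute the head of $P _{\chi}$. The graded two-sided ideal $\mathfrak m := \bigoplus _{i \geq 1} ( A _W ) _i = \left< \mathfrak h \right>$ satisfies $A _W / \mathfrak m \cong \mathbb C W$, and since $P _{\chi}$ is generated in degree zero one checks directly that $\mathfrak m P _{\chi} = \bigoplus _{i \geq 1} ( P _{\chi} ) _i$, so $P _{\chi} / \mathfrak m P _{\chi} = \mathbb C W e _{\chi} \cong L _{\chi}$ as $\mathbb C W$-modules by primitivity of $e _{\chi}$. Because $( P _{\chi} ) _0 \cong L _{\chi}$ is simple over $\mathbb C W$, any proper graded submodule of $P _{\chi}$ intersects $( P _{\chi} ) _0$ trivially and so is contained in $\mathfrak m P _{\chi}$; hence $L _{\chi}$ is the simple head of $P _{\chi}$. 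For the indecomposability I would use the standard identification $\End _{A _W} ( P _{\chi} ) \cong ( e _{\chi} A _W e _{\chi} ) ^{op}$ of graded algebras, obtained from $\phi \mapsto \phi ( e _{\chi} ) = e _{\chi} \phi ( e _{\chi} ) \in e _{\chi} A _W e _{\chi}$. Its degree zero part equals $e _{\chi} \mathbb C W e _{\chi} \cong \mathbb C$ by primitivity of $e _{\chi}$ in the split semisimple algebra $\mathbb C W$, while the sum of strictly positive degree parts forms a graded ideal whose quotient is $\mathbb C$. Thus $\End _{A _W} ( P _{\chi} )$ is graded local, forcing $P _{\chi}$ to be indecomposable in $A _W \mathchar`-\mathsf{gmod}$ and, together with the head computation, identifying $P _{\chi}$ with the graded projective cover of $L _{\chi}$.

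For the classification, let $M$ be a finitely generated indecomposable graded projective $A _W$-module. Since $M$ is finitely generated, $M / \mathfrak m M$ is a finite-dimensional semisimple $\mathbb C W$-module, hence decomposes as $\bigoplus _k L _{\chi _k} \! \left< d _k \right>$ for some finite data. Lifting a homogeneous generator of degree $d _k$ that hits the $k$-th summand yields maps $P _{\chi _k} \! \left< d _k \right> \to M$, and together they assemble into a map $\bigoplus _k P _{\chi _k} \! \left< d _k \right> \to M$ that is surjective modulo $\mathfrak m$, hence surjective by graded Nakayama. Projectivity of $M$ splits this surjection, and indecomposability of $M$ then forces $M \cong P _{\chi} \! \left< d \right>$ for one of the summands.

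I expect the main obstacle to be purely bookkeeping: verifying that the usual ungraded statements (orthogonal idempotent decompositions, Nakayama's lemma, and the local-endomorphism-ring criterion for indecomposability, together with the resulting graded Krull--Schmidt) all transport without modification to the $\mathbb Z$-graded setting of $A _W \mathchar`-\mathsf{gmod}$, where the zeroth graded piece is the split semisimple algebra $\mathbb C W$ rather than a field.
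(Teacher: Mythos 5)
Your proof is correct and follows essentially the same route as the paper: realize $P_\chi = A_W e_\chi$ as a graded direct summand of $A_W$, compute the head via the surjection $P_\chi \twoheadrightarrow P_\chi / \mathfrak h P_\chi \cong L_\chi$, and deduce the classification from the fact that the graded semisimple quotient of $A_W$ is $\mathbb C W$. The only cosmetic difference is that you certify indecomposability via the graded local endomorphism ring $\End_{A_W}(P_\chi) \cong (e_\chi A_W e_\chi)^{op}$, whereas the paper implicitly reads it off from $\mathfrak h P_\chi$ being the unique maximal graded submodule; both are standard and equivalent here.
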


\begin{proof}
As a direct summand of $A_W$, each $P_{\chi}$ is projective. In addition, we have a natural surjection $P_{\chi} \to L _{\chi}$ with its kernel $\mathfrak h P _{\chi}$. It follows that $P _{\chi}$ is indecomposable, and hence it is a projective cover of $L _{\chi}$. The graded semisimple quotient of $A_W$ is $A_{W,0} = \mathbb C W$. Hence we have an identification of $\mathsf{Irr} \, W$ with the set of isomorphism classes of simple graded $A_W$-modules up to grading shifts. Therefore, $\{ P_{\chi} \}_{\chi}$ exhausts the set of isomorphism classes of indecomposable graded projective modules up to grading shifts.
\end{proof}

\begin{corollary}
The set $\{\mathsf{gch} \, P_{\chi}\} _{\chi \in \mathsf{Irr} \, W}$ is a $\mathbb Z (\!(t^{1/2})\!)$-basis of $\mathbb Z (\!( t^{1/2} )\!) \mathsf{Irr} \, W$.
\end{corollary}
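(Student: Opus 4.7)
The plan is to compute the transition matrix between the evident basis $\{ [L_\chi] \}_{\chi}$ and the family $\{ \mathsf{gch} \, P_\chi \}_{\chi}$ and to show it is invertible over $\mathbb Z (\!(t^{1/2})\!)$. Since $\mathsf{Irr}\, W$ is finite, this is a square matrix and invertibility reduces to checking that the determinant is a unit.

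First, I would record the degree structure of $P_\chi$. The algebra $A_W$ is concentrated in non-negative even degrees, since $\mathbb C W$ lives in degree $0$ and $\mathbb C [\mathfrak h^*]$ is generated by $\mathfrak h$ in degree $2$. Hence $P_\chi = A_W e_\chi$ is also concentrated in non-negative even degrees, so $\mathsf{gch}\, P_\chi$ lies in $\mathbb Z [[t]] \cdot \mathsf{Irr}\, W$ (no half-integer or negative powers of $t$ can appear).

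Next, I would identify the degree-zero part. The decomposition $A_W \cong \mathbb C [\mathfrak h^*] \otimes \mathbb C W$ of graded vector spaces, with $\mathbb C W$ in degree $0$, yields $(P_\chi)_0 = \mathbb C W \cdot e_\chi$, which is isomorphic to $L_\chi$ as a $W$-module because $e_\chi$ is a minimal idempotent of the semisimple algebra $\mathbb C W$. Combined with the previous observation, this gives an expansion
$$\mathsf{gch}\, P_\chi \;=\; [L_\chi] \;+\; \sum_{\eta \in \mathsf{Irr}\, W} c_{\eta,\chi}(t) \, [L_\eta], \qquad c_{\eta,\chi}(t) \in t \mathbb Z [[t]].$$

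Finally, the transition matrix $M = (M_{\eta,\chi})$ obtained by reading off the coefficients above has the form $M = I + tN$ for some square matrix $N$ with entries in $\mathbb Z [[t]]$. Its determinant lies in $1 + t \mathbb Z [[t]]$, which is a unit in $\mathbb Z [[t]]$, hence \emph{a fortiori} in $\mathbb Z (\!(t^{1/2})\!)$. Therefore $M$ is invertible over $\mathbb Z (\!(t^{1/2})\!)$, and $\{ \mathsf{gch}\, P_\chi \}_\chi$ is the claimed basis. I do not foresee a substantive obstacle; the only point requiring mild care is to check that no negative or half-integer powers of $t$ appear in $\mathsf{gch}\, P_\chi$, which is secured by the non-negative even grading on $P_\chi$.
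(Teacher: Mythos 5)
Your proof is correct and rests on essentially the same observation as the paper's: the transition matrix from $\{[L_\eta]\}_\eta$ to $\{\mathsf{gch}\,P_\chi\}_\chi$ is the identity modulo $t$, because $P_\chi$ sits in non-negative even degree with degree-zero piece $L_\chi$. You conclude via the determinant lying in $1 + t\,\mathbb Z[\![t]\!]$, whereas the paper carries out the equivalent Neumann-series inversion as an explicit iterative expansion (invoking Lemma \ref{gch-def} for finiteness of each graded coefficient); these are two standard packagings of the same triangularity argument.
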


\begin{proof}
For each $\chi \in \mathsf{Irr} \, W$, we have $\mathsf{gch} \, P_{\chi} = [ L _{\chi}] \mod t^{1/2}$. Hence, the linear independence is clear. Every element of $\mathbb Z (\!( t^{1/2} )\!) \mathsf{Irr} \, W$ admits an iterative expansion by $\{ \mathsf{gch} \, P_{\chi} \} _{\chi}$ which removes the lowest (non-zero) graded piece repeatedly. This expansion has finite coefficients at each degree by Lemma \ref{gch-def} as required.
\end{proof}

\begin{proposition}
The category $A_W \mathchar`-\mathsf{gmod}$ has finite projective dimension.
\end{proposition}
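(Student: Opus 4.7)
The plan is to uniformly bound the projective dimension of every object of $A_W \mathchar`-\mathsf{gmod}$ by $n := \dim \mathfrak{h}$, reducing the claim to Hilbert's syzygy theorem for the polynomial ring $\mathbb{C}[\mathfrak{h}^*]$.

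The key structural input is that $A_W = \mathbb{C}W \ltimes \mathbb{C}[\mathfrak{h}^*]$ is free (hence flat) as a right $\mathbb{C}[\mathfrak{h}^*]$-module, with the group $W$ as a $\mathbb{C}[\mathfrak{h}^*]$-basis. Consequently, the graded induction functor
$$
\mathrm{Ind} := A_W \otimes_{\mathbb{C}[\mathfrak{h}^*]} (-) \colon \mathbb{C}[\mathfrak{h}^*] \mathchar`-\mathsf{gmod} \longrightarrow A_W \mathchar`-\mathsf{gmod}
$$
is exact, sends projectives to projectives (as $\mathrm{Ind}(\mathbb{C}[\mathfrak{h}^*]) = A_W$), and is left adjoint to the exact restriction functor $\mathrm{Res}$. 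Because $\mathrm{Ind}$ is exact and preserves projectives, this promotes to a (graded) adjunction on Ext-groups: $\mathrm{ext}^i_{A_W}(\mathrm{Ind}(N), N') \cong \mathrm{ext}^i_{\mathbb{C}[\mathfrak{h}^*]}(N, \mathrm{Res}(N'))$.

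For any $M \in A_W \mathchar`-\mathsf{gmod}$, the $A_W$-linear counit $\mu \colon \mathrm{Ind}(\mathrm{Res}(M)) \to M$, $a \otimes m \mapsto am$, is surjective. Exploiting that $|W|$ is invertible in $\mathbb{C}$, I claim the degree-zero map
$$
\sigma \colon M \longrightarrow \mathrm{Ind}(\mathrm{Res}(M)), \qquad \sigma(m) := \frac{1}{|W|} \sum_{w \in W} w \otimes w^{-1} m,
$$
is an $A_W$-linear section of $\mu$. Indeed, $W$-equivariance is a reindexing of the sum, while $\mathbb{C}[\mathfrak{h}^*]$-equivariance follows from the tensor relation $gw \otimes m = w \otimes w^{-1}(g) m$ (since $gw = w \cdot w^{-1}(g)$ in $A_W$). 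Hence $M$ is naturally a graded direct summand of $\mathrm{Ind}(\mathrm{Res}(M))$.

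Finally, by Hilbert's syzygy theorem, $\mathrm{Res}(M)$ admits a graded projective resolution of length at most $n$ in $\mathbb{C}[\mathfrak{h}^*] \mathchar`-\mathsf{gmod}$. Applying the exact functor $\mathrm{Ind}$ produces a projective resolution of $\mathrm{Ind}(\mathrm{Res}(M))$ of the same length in $A_W \mathchar`-\mathsf{gmod}$, so the adjunction above yields $\mathrm{ext}^{>n}_{A_W}(\mathrm{Ind}(\mathrm{Res}(M)), -) = 0$. Since Ext-vanishing passes to direct summands, we conclude $\mathrm{ext}^{>n}_{A_W}(M, -) = 0$, whence $\mathrm{pd}(M) \le n$ uniformly in $M$. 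There is no substantive obstacle in this approach---it is the standard skew-group-ring reduction to the commutative case---and the only point requiring attention is that all constructions respect the $\mathbb{Z}$-grading, which is automatic because $W$ acts on $\mathbb{C}[\mathfrak{h}^*]$ preserving the grading and every map in the averaging argument is of degree zero.
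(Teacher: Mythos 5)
Your argument is correct and is precisely the classical skew-group-ring argument (Maschke-type averaging to exhibit every module as a direct summand of an induced module, plus Hilbert's syzygy theorem) that underlies the reference McConnell--Robson--Small 7.5.6 which the paper simply cites in lieu of a proof. So the routes coincide in substance; you have merely written out what the paper delegates to a textbook.
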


\begin{proof}
See McConnell-Robson-Small \cite{MR} 7.5.6.
\end{proof}

Let $K ( A_W )$ be the Grothendieck group of $A_W \mathchar`-\mathsf{gmod}$. We define the graded Euler-Poincar\'e pairing $K (A_W) \times K (A_W) \to \mathbb Z (\!( t^{1/2} )\!)$ as
$$\left< E, F \right> _{\mathsf{gEP}} := \sum _{i \ge 0} (-1) ^i \mathsf{gdim} \, \mathrm{ext} ^i _{A_W} (E,F),$$
where $\mathrm{ext} ^i _{A_W} (E,F) \in \mathsf{vec}$ is the graded extension in $A_W \mathchar`-\mathsf{gmod}$. For each $M \in A_W \mathchar`-\mathsf{gmod}$ and $\chi \in \mathsf{Irr} \, W$, we set
$$[M : L _{\chi}] := \mathsf{gdim} \, \hom _{A_W} (P_{\chi}, M) = \mathsf{gdim} \, \hom _{W} (L_{\chi}, M)$$
and $(M : P_{\chi}) \in \mathbb Z (\!(t^{1/2})\!)$ to be
$$\mathsf{gch} \, M = \sum _{\chi \in \mathsf{Irr} \, W} (M : P_{\chi}) \, \mathsf{gch} \, P_{\chi}.$$

\begin{lemma}
For a finite-dimensional graded $A_W$-module $M$ and $\chi \in \mathsf{Irr} \, W$, we have
$$[M : L _{\chi}] = \overline{[M ^* : L _{\chi ^{\vee}}]}.$$
\end{lemma}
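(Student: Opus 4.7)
The plan is to unwind both sides into power series in $t^{1/2}$ and compare them coefficient by coefficient. By definition,
$$[M:L_\chi] = \sum_{i\in\mathbb Z} t^{i/2}\dim\Hom_W(L_\chi,M_i), \qquad [M^*:L_{\chi^\vee}]=\sum_{j\in\mathbb Z} t^{j/2}\dim\Hom_W(L_{\chi^\vee},(M^*)_j).$$
From \S 1.1, $(M^*)_j = (M_{-j})^*$ as a vector space, and the anti-isomorphism $A_W\cong A_W^{op}$ sending $w\mapsto w^{-1}$ is introduced precisely so that $(M^*)_j$, viewed as a $W$-module, coincides with the genuine contragredient $(M_{-j})^\vee$. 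Thus proving the claim reduces to establishing, for each $j$, the degreewise equality
$$\dim\Hom_W(L_{\chi^\vee},(M_{-j})^\vee) = \dim\Hom_W(L_\chi,M_{-j}),$$
after which the substitution $j=-i$ turns $[M^*:L_{\chi^\vee}]$ into $\overline{[M:L_\chi]}$.

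The remaining ingredient is standard finite-group representation theory over $\mathbb C$: the contragredient functor $V\mapsto V^\vee$ is an exact contravariant self-equivalence on finite-dimensional $W$-modules, and by the very definition of $\chi^\vee$ one has $L_\chi^\vee \cong L_{\chi^\vee}$. Dualizing a morphism $L_\chi\to M_{-j}$ produces a $W$-equivariant map $(M_{-j})^\vee\to L_{\chi^\vee}$, which yields a linear isomorphism $\Hom_W(L_\chi,M_{-j})\xrightarrow{\sim}\Hom_W((M_{-j})^\vee,L_{\chi^\vee})$. Since $L_{\chi^\vee}$ is simple and $W$ is finite, both $\dim\Hom_W(L_{\chi^\vee},(M_{-j})^\vee)$ and $\dim\Hom_W((M_{-j})^\vee,L_{\chi^\vee})$ compute the multiplicity of $L_{\chi^\vee}$ in $(M_{-j})^\vee$, and the desired equality of dimensions follows.

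There is no genuine obstacle in the argument; the only point that deserves care is the bookkeeping which verifies that the anti-involution $w\mapsto w^{-1}$ really identifies the $W$-module $(M^*)_j$ with the contragredient of $M_{-j}$ (and not with $M_{-j}$ twisted by some other automorphism of $W$). Once that identification is in place, finite-dimensionality of $M$ guarantees that $[M:L_\chi]$ is a genuine Laurent polynomial in $t^{1/2}$, so the bar operation $t^{1/2}\mapsto t^{-1/2}$ is well-defined and the matching of the two series is immediate.
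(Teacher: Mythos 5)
Your proposal is correct and follows essentially the same route as the paper: both arguments reduce the claim to the observation that the grading of $M^*$ is reversed (accounting for the bar involution $t^{1/2}\mapsto t^{-1/2}$) and that, under the anti-involution $w\mapsto w^{-1}$ used to make $M^*$ a left $A_W$-module, each graded piece $(M^*)_{-i}$ becomes the contragredient of $M_i$ as a $W$-module, so that $(L_\chi)^*\cong L_{\chi^\vee}$. The paper states the $W$-module identification more tersely, while you unfold it into the multiplicity computation, but the argument is the same.
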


\begin{proof}
By the finite-dimensionality, we have $M ^* \in A_W \mathchar`-\mathsf{gmod}$. The grading of $M ^*$ is opposite to $M$. Therefore, it suffices to prove $( L _{\chi} ) ^* \cong L _{\chi ^{\vee}}$. To this end, it is enough to chase the action of $W$. The naive dual $\mathrm{Hom} _{\mathbb C} ( L_{\chi}, \mathbb C )$ is isomorphic to $L _{\chi ^{\vee}}$ as a $W$-module. This $W$-action factors through $W \subset A_W \cong A_W ^{op}$. Therefore, we conclude the result.
\end{proof}

\begin{definition}[Phyla]
An ordered subdivision
\begin{equation}
\mathsf{Irr} \, W = \mathcal O_1 \sqcup \mathcal O_2 \sqcup \cdots \sqcup \mathcal O _m \label{defphy}
\end{equation}
is called a phyla $\mathcal P = \{ \mathcal O_i \} _{i=1} ^m$ of $W$, and each individual $\mathcal O_i$ is called a phylum. The total preorder $< _{\mathcal P}$ on $\mathsf{Irr} \, W$ defined as
$$\chi < _{\mathcal P} \eta \hskip 2mm \text{ (or } \hskip 1mm \chi \sim _{\mathcal P} \eta \text{)} \hskip 3mm \Leftrightarrow \hskip 3mm \chi \in \mathcal O_{i_1}, \eta \in \mathcal O _{i_2} \hskip 1mm \text{ with } \hskip 1mm i_1 < i_2 \hskip 2mm \text{ (or } \hskip 1mm i_1 = i_2 \text{)}$$
is called the order associated to the phyla $\mathcal P$. If a phyla $\mathcal P$ is fixed, we might drop the subscript $\mathcal P$ from the notation. We define the conjugate phyla $\overline{\mathcal P}$ of $\mathcal P$ by conjugating all irreducible $W$-representations in (\ref{defphy}). We call $\mathcal P$ being of Malle type if $\chi \in \mathcal O_i$ implies $\chi^{\vee} \in \mathcal O_i$, and call $\mathcal P$ a singleton phyla if every phylum is a singleton.
\end{definition}

\begin{remark}\label{rphyla}
{\bf 1)} If $\mathcal P$ is of Malle type, then we have $\overline{\mathcal P} = \mathcal P$. {\bf 2)} If $W$ is a real reflection group, then every phyla is of Malle type since $\chi \cong \chi ^{\vee}$. {\bf 3)} For background about phyla, we refer to Achar \cite{Ac}.
\end{remark}

Let $\Delta := \mathsf{gdim} \, \mathbb C [ \mathfrak h ^* ]^W$. We name $C_{\mathsf{triv}} := P ^{(0)} _{\mathsf{triv}}$.

\begin{lemma}\label{factor}
For each $\chi \in \mathsf{Irr} \, W$, we have $\mathsf{gch} \, P _{\chi} = \Delta \cdot \mathsf{gch} \, P _{\chi} ^{(0)}$. In addition, we have $\dim \, P_{\chi} ^{(0)} < \infty$.
\end{lemma}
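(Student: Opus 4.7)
The plan is to identify both $P_\chi$ and $P_\chi^{(0)}$ concretely as graded $W$-modules and then invoke the Chevalley--Shephard--Todd theorem to read off their characters. First, using the vector-space decomposition $A_W \cong \mathbb C[\mathfrak h^*] \otimes_{\mathbb C} \mathbb C W$ together with the commutation rule $w \cdot x = (w\cdot x)\cdot w$ defining the smash product, the multiplication map $p \otimes v \mapsto p \cdot v$ will give a graded $W$-module isomorphism
\[
P_\chi = A_W e_\chi \;\cong\; \mathbb C[\mathfrak h^*] \otimes_{\mathbb C} L_\chi,
\]
where $L_\chi \cong \mathbb C W \cdot e_\chi$ sits in degree zero and the right-hand side carries the diagonal $W$-action. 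Since $J_W \subset \mathbb C[\mathfrak h^*]^W$ is central in $A_W$, the two-sided ideal $\left< J_W\right>$ acts on $P_\chi$ through the first tensor factor only, so the quotient takes the form
\[
P_\chi^{(0)} \;\cong\; H_W \otimes_{\mathbb C} L_\chi, \qquad H_W := \mathbb C[\mathfrak h^*]/J_W\mathbb C[\mathfrak h^*],
\]
again as a graded $W$-module with diagonal action.

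Next, I will invoke Chevalley--Shephard--Todd: $\mathbb C[\mathfrak h^*]$ is a free graded $\mathbb C[\mathfrak h^*]^W$-module, and semisimplicity of $\mathbb C W$ on each (finite-dimensional) graded piece supplies a graded $W$-equivariant section of the surjection $\mathbb C[\mathfrak h^*] \twoheadrightarrow H_W$. Combining these yields a graded $(W \times \mathbb C[\mathfrak h^*]^W)$-module isomorphism
\[
\mathbb C[\mathfrak h^*] \;\cong\; \mathbb C[\mathfrak h^*]^W \otimes_{\mathbb C} H_W
\]
with $W$ acting trivially on the first factor. Tensoring on the right with $L_\chi$ and passing to graded $W$-characters, the first factor contributes only the scalar $\mathsf{gdim}\,\mathbb C[\mathfrak h^*]^W = \Delta$, and I obtain
\[
\mathsf{gch}\, P_\chi \;=\; \Delta \cdot \mathsf{gch}(H_W \otimes L_\chi) \;=\; \Delta \cdot \mathsf{gch}\, P_\chi^{(0)},
\]
which is the first assertion. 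The same theorem gives $\dim_{\mathbb C} H_W = |W|$, hence $\dim_{\mathbb C} P_\chi^{(0)} = |W| \cdot \dim_{\mathbb C} L_\chi < \infty$, proving the second.

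No substantial obstacle is anticipated; the only point requiring mild care is the existence of the graded $W$-equivariant section of $\mathbb C[\mathfrak h^*] \twoheadrightarrow H_W$, but this is automatic from the semisimplicity of $\mathbb C W$ acting on each finite-dimensional graded component.
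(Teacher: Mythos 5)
Your argument is essentially the same as the paper's: identify $P_\chi \cong \mathbb C[\mathfrak h^*]\otimes L_\chi$ and $P_\chi^{(0)} \cong C_{\mathsf{triv}}\otimes L_\chi$ (your $H_W$ is precisely the paper's $C_{\mathsf{triv}}=P_{\mathsf{triv}}^{(0)}$) as graded $W$-modules, then invoke the Chevalley--Shephard--Todd/Stanley factorization $\mathbb C[\mathfrak h^*]\cong \mathbb C[\mathfrak h^*]^W\otimes C_{\mathsf{triv}}$ to compare $\mathsf{gch}$. The only cosmetic difference is that the paper cites Stanley \cite{St} 3.1, 4.1, 4.10 for this factorization and the finiteness $\dim C_{\mathsf{triv}}=\#W$, whereas you appeal to CST directly; the content is identical.
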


\begin{proof}
Since $W$ is a complex reflection group, we have $\dim \, C _{\mathsf{triv}} = \# W < \infty$ by Stanley \cite{St} 4.10. In addition, {\it loc. cit.} 3.1 and 4.1 yields an isomorphism
$$\mathbb C [ \mathfrak h ^* ] \cong C _{\mathsf{triv}} \otimes \mathbb C [ \mathfrak h ^* ]^W$$
as a graded $W$-module. Taking $\mathsf{gch}$ of the both sides and taking account into the fact that $\mathbb C [ \mathfrak h ^* ]^W$ is a direct sum of (infinitely many copies of) ${\mathsf{triv}}$, we conclude
$$\mathsf{gch} \, P _{\mathsf{triv}} = \Delta \cdot \mathsf{gch} \, C _{\mathsf{triv}}.$$
Since $P _{\chi} \cong \mathbb C [\mathfrak h^*] \otimes L_{\chi}$ and $P _{\chi} ^{(0)} \cong C _{\mathsf{triv}} \otimes L_{\chi}$ as graded $W$-modules, we deduce
$$\mathsf{gch} \, P _{\chi} = \Delta \cdot \sum _{\eta \in \mathsf{Irr} \, W} [L_{\eta}] \, \mathsf{gdim} \, \mathrm{hom} _W ( L_{\eta}, C _{\mathsf{triv}} \otimes L _{\chi} ) = \Delta \cdot \mathsf{gch} \, P _{\chi} ^{(0)},$$
which is the first assertion. This also implies $\dim \, P_{\chi} ^{(0)} < \infty$ as required.
\end{proof}

We define the matrix $\Omega$ with its entries
$$\Omega _{\chi,\eta} := \mathsf{gdim} \, \mathrm{hom}_W ( L _{\chi} \otimes L _{\eta ^{\vee}}, C_{\mathsf{triv}} ) \hskip 2mm \text{ for each }\chi, \eta \in \mathsf{Irr} \, W.$$

\begin{corollary}\label{ajEP}
For each $\chi, \eta \in \mathsf{Irr} \, W$, we have $\left< P _{\chi}, P _{\eta} \right> _{\mathsf{gEP}} = \Delta \cdot \Omega_{\chi,\eta}$.
\end{corollary}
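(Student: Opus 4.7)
The plan is to exploit the fact that $P_{\chi}$ and $P_{\eta}$ are both projective (Lemma \ref{cl-indec}), so that $\mathrm{ext}^{i}_{A_W}(P_{\chi},P_{\eta}) = 0$ for every $i \geq 1$ and the Euler--Poincar\'e pairing collapses to
$$\langle P_{\chi}, P_{\eta} \rangle_{\mathsf{gEP}} = \mathsf{gdim}\, \hom_{A_W}(P_{\chi}, P_{\eta}).$$
The whole statement thus reduces to computing this single graded hom-space.

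For that computation, I would first identify $P_{\chi} = A_W e_{\chi} \cong A_W \otimes_{\mathbb{C} W} L_{\chi}$ as graded $A_W$-modules, with $L_{\chi}$ sitting in degree zero. The induction-restriction adjunction attached to the subalgebra $\mathbb{C}W \subset A_W$ then yields
$$\hom_{A_W}(P_{\chi}, P_{\eta}) \cong \hom_{W}(L_{\chi}, P_{\eta})$$
as graded vector spaces. Next, viewing $A_W$ as a free left $\mathbb{C}[\mathfrak h^{*}]$-module via the triangular decomposition $A_W \cong \mathbb{C}[\mathfrak h^{*}] \otimes \mathbb{C}W$, I would record the $W$-module identification $P_{\eta} \cong \mathbb{C}[\mathfrak h^{*}] \otimes L_{\eta}$ (diagonal $W$-action, $\mathbb{C}[\mathfrak h^{*}]$ with its natural grading, $L_{\eta}$ in degree zero).

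Finally, I would invoke exactly the coinvariant decomposition already used in the proof of Lemma \ref{factor}: the isomorphism $\mathbb{C}[\mathfrak h^{*}] \cong C_{\mathsf{triv}} \otimes \mathbb{C}[\mathfrak h^{*}]^{W}$ of graded $W$-modules, where $\mathbb{C}[\mathfrak h^{*}]^{W}$ is a trivial $W$-module of graded dimension $\Delta$. Combining this with the standard tensor-hom adjunction in $W\mathchar`-\mathsf{mod}$ and the identification $L_{\eta}^{*} \cong L_{\eta^{\vee}}$ gives
$$\hom_{W}(L_{\chi}, P_{\eta}) \cong \hom_{W}(L_{\chi} \otimes L_{\eta^{\vee}}, C_{\mathsf{triv}}) \otimes \mathbb{C}[\mathfrak h^{*}]^{W},$$
whose graded dimension is $\Omega_{\chi,\eta} \cdot \Delta$, as required.

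The argument is essentially a chain of adjunctions together with Stanley's coinvariant decomposition, so no serious obstacle is expected; the only delicate point is to verify that the conventions on duals (specifically $L_{\eta}^{*} = L_{\eta^{\vee}}$ coming from the natural $A_W^{op} \cong A_W$ identification) and on the diagonal $W$-action on $P_{\eta}$ are consistent with those fixed in \S 1, which amounts to unwinding the definition of the smash-product algebra $A_W$.
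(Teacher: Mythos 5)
Your proposal is correct and follows essentially the same route as the paper's own proof: collapse $\left< P_{\chi}, P_{\eta} \right>_{\mathsf{gEP}}$ to $\mathsf{gdim}\,\hom_{A_W}(P_{\chi}, P_{\eta})$ by projectivity, pass through the $\mathbb{C}W \subset A_W$ adjunction to $\hom_W(L_{\chi}, P_{\eta})$, identify $P_{\eta}$ with $L_{\eta} \otimes P_{\mathsf{triv}}$ as a graded $W$-module, and finish with Stanley's coinvariant factorization already recorded in Lemma \ref{factor}. The only cosmetic difference is that the paper writes $P_{\eta} \cong L_{\eta} \otimes P_{\mathsf{triv}}$ and pulls $\Delta$ out directly, while you route through $\mathbb{C}[\mathfrak h^*] \cong C_{\mathsf{triv}} \otimes \mathbb{C}[\mathfrak h^*]^W$ explicitly; these are the same computation.
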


\begin{proof}
We have
\begin{align*}
\left< P _{\chi}, P _{\eta} \right> _{\mathsf{gEP}} & = \mathsf{gdim} \, \mathrm{hom} _{A_W} ( P _{\chi}, P _{\eta} )\\
&= \mathsf{gdim} \, \mathrm{hom} _{W} ( L _{\chi}, P _{\eta} ) = \mathsf{gdim} \, \mathrm{hom} _{W} ( L _{\chi}, L _{\eta} \otimes P _{\mathsf{triv}} )\\
&=\Delta \cdot \mathsf{gdim} \, \mathrm{hom} _{W} ( L _{\chi} \otimes L _{\eta ^{\vee}}, C _{\mathsf{triv}} ).
\end{align*}
The last term coincides with $\Delta \cdot \Omega_{\chi,\eta}$ by definition.
\end{proof}

\begin{theorem}[Shoji \cite{Sh1,Sh2}, Lusztig \cite{Lu2}]\label{Sh}
Let $(W, \mathcal P)$ be a pair of a complex reflection group and its phyla. Assume that $K ^{\pm} = ( K ^{\pm} _{\chi,\eta} )_{\chi,\eta \in \mathsf{Irr}\, W}$ are unknown $\mathbb Q (\!(t)\!)$-valued matrices such that
\begin{eqnarray}
K _{\chi,\eta} ^{+} = \begin{cases} 1 & (\chi = \eta)\\ 0 & (\chi \gtrsim \eta \neq \chi) \end{cases}, \hskip 2mm \text{ and } \hskip 2mm \hskip 2mm K _{\chi,\eta} ^{-} = \begin{cases} 1 & (\chi = \eta)\\ 0 & (\chi^{\vee} \gtrsim \eta ^{\vee} \neq \chi^{\vee}) \end{cases}.\label{Kmatrix}
\end{eqnarray}
Let $\Lambda = ( \Lambda _{\chi,\eta} )_{\chi,\eta \in \mathsf{Irr}\, W}$ be also a$(n$ unknown$)$ $\mathbb Q (\!(t)\!)$-valued matrix such that
$$\Lambda _{\chi,\eta} \neq 0 \hskip 2mm \text{ only if } \hskip 2mm \chi \sim \eta.$$
Let $K^{\sigma}$ be the permutation of $K$ by means of $( \chi, \eta ) \mapsto ( \chi^{\vee}, \eta ^{\vee} )$. Then, the matrix equation
\begin{eqnarray}
{} ^{\mathtt{t}} K ^+ \cdot \Lambda \cdot ( K ^- ) ^{\sigma} = \Omega \label{Smatrix}
\end{eqnarray}
has a unique solution.
\end{theorem}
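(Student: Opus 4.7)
The plan is to interpret the equation ${}^{\mathtt{t}}K^+ \cdot \Lambda \cdot (K^-)^\sigma = \Omega$ as a block-LDU decomposition of $\Omega$ with respect to the phyla $\mathcal{P} = \{\mathcal{O}_i\}_{i=1}^m$. Whenever $\chi \neq \eta$ lie in a common phylum, the hypothesis (\ref{Kmatrix}) forces $K^+_{\chi,\eta} = 0 = K^-_{\chi,\eta}$, while the diagonal entries are $1$. Consequently, both ${}^{\mathtt{t}}K^+$ and $(K^-)^\sigma$ restrict to the identity on each diagonal block $\mathcal{O}_i \times \mathcal{O}_i$; between distinct phyla, ${}^{\mathtt{t}}K^+$ is supported on pairs $(\alpha,\beta)$ with $\alpha >_{\mathcal{P}} \beta$, whereas $(K^-)^\sigma$ is supported on pairs with $\alpha <_{\mathcal{P}} \beta$. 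Since $\Lambda$ is supported on pairs with $\alpha \sim_{\mathcal{P}} \beta$, the equation is exactly a block-LDU factorization of $\Omega$.

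The key arithmetic input is the congruence $\Omega \equiv I \pmod{t\,\mathbb{Q}[[t]]}$. Indeed $(C_{\mathsf{triv}})_0 \cong \mathsf{triv}$, so the $t^0$-coefficient of $\Omega_{\chi,\eta}$ equals $\dim \Hom_W(L_\chi \otimes L_{\eta^\vee}, \mathsf{triv}) = \dim \Hom_W(L_\chi, L_\eta) = \delta_{\chi,\eta}$, and higher-order contributions lie in $t\mathbb{Z}[t]$ because $\mathfrak{h}$ has degree $2$ and hence $C_{\mathsf{triv}}$ is supported in even degrees. In particular, every principal submatrix of $\Omega$ is of the form $I + tM$ with $M$ taking values in $\mathbb{Q}[[t]]$, hence is invertible over $\mathbb{Q}(\!(t)\!)$.

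Order the phyla so that $\mathcal{O}_1 <_{\mathcal{P}} \cdots <_{\mathcal{P}} \mathcal{O}_m$ and induct on $m$. On the lowest block the product collapses to $\Lambda|_{\mathcal{O}_1 \times \mathcal{O}_1} = \Omega|_{\mathcal{O}_1 \times \mathcal{O}_1}$, since the triangular factors reduce to identities there and no lower phyla contribute cross terms, which determines $\Lambda$ on $\mathcal{O}_1$ uniquely. For $i > 1$ one reads off
\begin{equation*}
\Omega|_{\mathcal{O}_1 \times \mathcal{O}_i} = \Lambda|_{\mathcal{O}_1 \times \mathcal{O}_1} \cdot (K^-)^\sigma|_{\mathcal{O}_1 \times \mathcal{O}_i}, \qquad \Omega|_{\mathcal{O}_i \times \mathcal{O}_1} = {}^{\mathtt{t}}K^+|_{\mathcal{O}_i \times \mathcal{O}_1} \cdot \Lambda|_{\mathcal{O}_1 \times \mathcal{O}_1},
\end{equation*}
and by the invertibility of $\Lambda|_{\mathcal{O}_1 \times \mathcal{O}_1}$ these uniquely recover the first row/column blocks of the two triangular factors. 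Forming the Schur complement
\begin{equation*}
\Omega' := \Omega|_{\mathcal{O}_{>1} \times \mathcal{O}_{>1}} - {}^{\mathtt{t}}K^+|_{\mathcal{O}_{>1} \times \mathcal{O}_1} \cdot \Lambda|_{\mathcal{O}_1 \times \mathcal{O}_1} \cdot (K^-)^\sigma|_{\mathcal{O}_1 \times \mathcal{O}_{>1}},
\end{equation*}
the subtracted term is a product of two $O(t)$ off-diagonal blocks with the invertible factor $I + O(t)$, so $\Omega' \equiv I \pmod{t\,\mathbb{Q}[[t]]}$ still holds. The inductive hypothesis applied to $\Omega'$ with the phyla $\mathcal{O}_2, \ldots, \mathcal{O}_m$ then produces the remaining blocks, yielding existence and uniqueness simultaneously.

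The main technical obstacle is purely combinatorial bookkeeping: verifying precisely which matrix entries vanish on diagonal phyla blocks (using both "$\neq$" clauses in (\ref{Kmatrix})), and checking that the Schur complement inherits the $I + O(t)$ shape so the induction can proceed. Neither point is deep, but both must be tracked carefully to secure uniqueness as well as existence.
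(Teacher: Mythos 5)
Your proof is correct and self-contained: you reinterpret the equation ${}^{\mathtt t}K^+ \cdot \Lambda \cdot (K^-)^\sigma = \Omega$ as a block-LDU factorization and run the standard Schur-complement induction, using the observation that $\Omega \equiv I \pmod{t}$ (which follows from $(C_{\mathsf{triv}})_0 \cong \mathsf{triv}$ and the even grading of $\mathfrak h$) to guarantee that all the leading principal block minors are invertible over $\mathbb Q(\!(t)\!)$. The paper, by contrast, does not prove the theorem from scratch: it reduces it to the classical Lusztig-Shoji algorithm of \cite{Sh1,Lu2,Sh2} (restricting the discussion to the Malle-type case for simplicity) by tracking the change of convention -- transposing $K$, passing between $\omega(t)$ and its $\mathsf{sgn}$-twist via $t^{N^*}\overline{\omega(t)} = \mathsf{gch}(\mathsf{sgn}\otimes C_{\mathsf{triv}})$, and absorbing the normalization $K_{\chi,\chi}=1$ into diagonal twists of $K^\sigma$, $K$, and $\Lambda$. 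Substantively the underlying mechanism is the same (the Lusztig-Shoji algorithm is precisely a block-LDU peeling argument), but you expose the argument explicitly, whereas the paper defers existence and uniqueness to the cited references; your version has the advantage of not needing the Malle-type simplification, and makes the role of the $\Omega \equiv I$ congruence transparent. One minor point worth spelling out for full rigor is why the factorization of the Schur complement $\Omega'$ with respect to $\mathcal{O}_2,\ldots,\mathcal{O}_m$ lifts bijectively to a factorization of $\Omega$: write the full $L$, $D$, $U$ in $2\times 2$ block form with the first block equal to $\mathcal O_1$, note that $L_{21}=\Omega_{21}\Omega_{11}^{-1}$, $D_1=\Omega_{11}$, $U_{12}=\Omega_{11}^{-1}\Omega_{12}$ are forced, and that the $(2,2)$-blocks of $L$, $U$ are again unitriangular with respect to the remaining phyla; you state this implicitly but it is the hinge of the induction.
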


\begin{proof}
We explain how to deduce this from the usual version of the Lusztig-Shoji algorithm (\cite{Sh1, Lu2, Sh2, Ac, AcG}) in the case that $\mathcal P$ is of Malle type (for the sake of simplicity, and in fact otherwise the explanation in the middle does not make sense). We denote $K ^{\pm} _{\chi,\eta}$ by $K _{\chi,\eta}$. Our $K$ is the transpose of the usual convention since our matrix $K$ is designed to represent ``the homology of Springer fibers (cf. \cite{Sp,Sp2,L-IC})" (while usually the matrix $K$ represents the dimensions of the stalks of character sheaves; cf. \cite{BM}). Set $\omega (t) := \mathsf{gch} \, C _{\mathsf{triv}} \in \mathbb Z [t] \mathsf{Irr} \, W$. We have $t^{N^*} \overline{\omega (t)} = \mathsf{gch} \, \left( \mathsf{sgn} \otimes C _{\mathsf{triv}} \right)$, where $N^*$ is the total number of complex reflections of $W$. It implies that our $K _{\chi,\eta}$ are the (unmodified) Kostka polynomials up to normalizations. (Note that \cite{Sh2} \S 5 implies that $K _{\chi,\eta}$ are rational functions for any choice of $\mathcal P$.) Finally, setting $K _{\chi,\chi} = 1$ is achieved by twisting the diagonal matrices (with blockwise same eigenvalues) to $K^{\sigma}, K$, and $\Lambda$, and is a harmless normalization.
\end{proof}

\begin{definition}
For a phyla $\mathcal P$ and $\chi \in \mathsf{Irr} \, W$, we define the $\mathcal P$-trace $P _{\chi, \mathcal P}$ of $P_{\chi}$ (with respect to $\mathcal P$) as
$$P_{\chi, \mathcal P} := P _{\chi} / ( \sum _{\eta \lesssim \chi, f \in \mathrm{hom} _{A_W} ( P_{\eta}, P_{\chi}) _{> 0}} \mathrm{Im} f \, ).$$
\end{definition}

\begin{remark}\label{survive}
{\bf 1)} By the condition $\deg f > 0$, we conclude that $(P_{\chi, \mathcal P}) _0 = L _{\chi}$. {\bf 2)} Since the surjection $P _{\chi} \to P_{\chi, \mathcal P}$ factors through $P^{(0)} _{\chi}$, we deduce that $P_{\chi, \mathcal P}$ is always finite-dimensional. In particular, we have $P_{\chi, \mathcal P} ^* \in A_W \mathchar`-\mathsf{gmod}$.
\end{remark}

\begin{definition}[Kostka systems]\label{kos}
Let $(W, \mathcal P)$ be a pair of a complex reflection group and its phyla. A collection of modules $\mathsf K := \{K_{\chi} ^{\pm}\} _{\chi \in \mathsf{Irr} \, W} \subset A_W \mathchar`-\mathsf{gmod}$ is called a Kostka system (adapted to $\mathcal P$) if it satisfies the following two conditions:
\begin{itemize}
\item[${\bf 1)}$] Each $K_{\chi} ^+$ is a $\mathcal P$-trace of $P _{\chi}$ and each $K_{\chi} ^-$ is a $\overline{\mathcal P}$-trace of $P _{\chi}$;
\item[${\bf 2)}$] We have $\left< K_{\chi} ^+, ( K_{\eta} ^- )^* \right> _{\mathsf{gEP}} \neq 0$ only if $\chi \sim \eta ^{\vee}$.
\end{itemize}
In case $\mathcal P$ is of Malle type, we have $K _{\chi} ^+ = K _{\chi}^-$ for each $\chi \in \mathsf{Irr} \, W$, and we denote them by $K_{\chi}$.
\end{definition}

\begin{problem}\label{Ec}
Does a Kostka system adapted to a (nice) phyla $\mathcal P$ satisfy the orthogonality condition
\begin{itemize}
\item[${\bf 3)}$] $\mathrm{ext} ^{\bullet}_{A_W} ( K_{\chi} ^{\pm}, K_{\eta} ^{\pm} ) \equiv 0$ if $\chi < \eta$ ?
\end{itemize}
Conversely, does a collection of objects in $D^{b} (A_W\mathchar`-\mathsf{gmod})$ with {\bf 3)} and the conditions of Lemma \ref{invert} give rise to a Kostka system whenever their graded characters are positive?
\end{problem}

For more background of Problem \ref{Ec}, see Corollary \ref{so} and Proposition \ref{orthex} in the below.

\begin{lemma}\label{invert}
Let $\{ K ^{+} _{\chi} \} _{\chi}$ and $\{ K ^{-} _{\chi} \} _{\chi}$ be complete collections of $\mathcal P$-traces and $\overline{\mathcal P}$-traces, respectively.
\begin{enumerate}
\item We have $[K_{\chi} ^{\pm} : L_{\eta}] \equiv \delta_{\chi,\eta} \mod t$;
\item We have $[K_{\chi} ^+ : L_{\eta}] \neq 0$ or $[K_{\chi^{\vee}} ^- : L_{\eta ^{\vee}} ] \neq 0$ only if $\chi \lesssim _{\mathcal P} \eta$;
\item We have $[K_{\chi} ^+ : L _{\eta}] \equiv 0 \equiv [K_{\chi^{\vee}} ^- : L_{\eta^{\vee}}]$ if $\chi \sim \eta$ but $\chi \neq \eta$.
\end{enumerate}
\end{lemma}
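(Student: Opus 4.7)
The plan is to reduce all three claims to a single assertion: for $\eta \lesssim_{\mathcal P} \chi$ with $\eta \ne \chi$, one has $[K_\chi^+ : L_\eta] = 0$. Granted this, part (1) follows immediately from Remark~\ref{survive}, which pins $(K_\chi^+)_0 = L_\chi$ and forces the constant term of $[K_\chi^+ : L_\eta]$ to equal $\dim \Hom_W(L_\eta, L_\chi) = \delta_{\chi,\eta}$; part (3) is the special case $\eta \sim_{\mathcal P} \chi$; and part (2) is the contrapositive (the $\eta = \chi$ case being trivial). The corresponding assertions for $K_{\chi^{\vee}}^- = P_{\chi^{\vee},\overline{\mathcal P}}$ will follow by the identical argument with $\mathcal P$ replaced by $\overline{\mathcal P}$, using the built-in equivalence $\alpha \lesssim_{\overline{\mathcal P}} \beta \Leftrightarrow \alpha^{\vee} \lesssim_{\mathcal P} \beta^{\vee}$ to rephrase the conclusion in terms of $\chi$ and $\eta$ rather than $\chi^{\vee}$ and $\eta^{\vee}$.

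To establish the key implication I would argue by contradiction. Suppose $L_\eta$ occurs inside $(K_\chi^+)_d$ for some $d \ge 0$ with $\eta \lesssim_{\mathcal P} \chi$ and $\eta \ne \chi$. The surjection $P_\chi \twoheadrightarrow K_\chi^+$ lifts this occurrence to a nonzero $W$-equivariant embedding $L_\eta \hookrightarrow (P_\chi)_d$; projectivity of $P_\eta$ together with the standard identification $\hom_{A_W}(P_\eta, M) = \hom_W(L_\eta, M)$ then extends it to a morphism $f \in \hom_{A_W}(P_\eta, P_\chi)_d$ whose image in $(P_\chi)_d$ contains the chosen copy of $L_\eta$. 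If $d > 0$, the hypothesis $\eta \lesssim_{\mathcal P} \chi$ places $f$ in the defining sum of $P_{\chi,\mathcal P}$, so $\mathrm{Im}\,f$ is killed in $K_\chi^+$, contradicting the survival of the chosen $L_\eta$. If $d = 0$, then $L_\eta \hookrightarrow (P_\chi)_0 = L_\chi$ forces $\eta = \chi$, contrary to assumption.

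There is no genuine obstacle; the proof is essentially an unpacking of the definition of $\mathcal P$-trace combined with projectivity of the $P_\eta$. The one subtlety worth flagging is that the sum defining $P_{\chi,\mathcal P}$ ranges over \emph{all} positive-degree morphisms $f \in \hom_{A_W}(P_\eta, P_\chi)$ with $\eta \lesssim_{\mathcal P} \chi$, which is precisely what is needed to simultaneously kill every copy of every such $L_\eta$ at every positive degree, rather than merely the particular one lifted from the assumed occurrence.
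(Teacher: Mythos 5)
Your argument correctly unpacks the paper's terse "Immediate from the definition of a $\mathcal P$-trace": the reduction of all three parts to the single vanishing claim, the lift via $\hom_{A_W}(P_\eta, P_\chi)_d \cong \hom_W(L_\eta, (P_\chi)_d)$, and the conjugation dictionary $\alpha \lesssim_{\overline{\mathcal P}} \beta \Leftrightarrow \alpha^{\vee} \lesssim_{\mathcal P} \beta^{\vee}$ are all exactly what the definition supplies, and match the paper's intent. The one small point you leave implicit is the one the paper explicitly flags: since $A_W$ (hence $P_\chi$ and its quotient $K_\chi^{\pm}$) is concentrated in even degrees, $[K_\chi^{\pm}:L_\eta]\in\mathbb Q[\![t]\!]$, so pinning the degree-$0$ piece really does give congruence $\bmod\, t$ rather than only $\bmod\, t^{1/2}$.
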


\begin{proof}
Immediate from the definition of a $\mathcal P$-trace. Notice that we take modulo $t$ in the first assertion instead of $t^{1/2}$ since $[K_{\chi} ^{\pm} : L_{\eta}] \in \mathbb Q [\![ t ]\!]$.
\end{proof}

\begin{proposition}[Problem \ref{Ec} and Kostka systems]\label{orthex}
Let $(W, \mathcal P)$ be a complex reflection group and its phyla. If we have a collection of graded $A_W$-modules $\mathsf K = \{ K _{\chi} ^{\pm} \} _{\chi \in \mathsf{Irr} \, W}$ satisfying the condition of Definition \ref{kos} {\bf 1)} and
\begin{itemize}
\item[${\bf 3)} ^+$] $\mathrm{ext} ^{\bullet} _{A_W} ( K _{\chi} ^+, K_{\eta} ^+ ) = \{ 0 \}$ for every $\chi < _{\mathcal P} \eta$;
\item[${\bf 3)} ^-$] $\mathrm{ext} ^{\bullet} _{A_W} ( K _{\chi} ^-, K_{\eta} ^- ) = \{ 0 \}$ for every $\chi ^{\vee} < _{\mathcal P} \eta ^{\vee}$,
\end{itemize}
then we have
$$\mathrm{ext} _{A_W} ^{\bullet} ( K _{\chi} ^+, ( K_{\eta} ^-)^* ) = \{ 0 \} = \mathrm{ext} _{A_W} ^{\bullet} ( K _{\eta} ^-, ( K_{\chi} ^+ )^* )  \hskip 3mm \text{ unless } \hskip 3mm \chi \sim \eta ^{\vee}.$$
In particular, $\mathsf{K}$ gives rise to a Kostka system.
\end{proposition}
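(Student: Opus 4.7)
The plan is to first reduce the two Ext-vanishings to a single one by establishing the duality
\[
\mathrm{ext}^i_{A_W}(M, N^*) \;\cong\; \mathrm{ext}^i_{A_W}(N, M^*)
\]
for finite-dimensional graded $A_W$-modules $M, N$; this follows from the anti-isomorphism $A_W \cong A_W^{op}$ ($w \mapsto w^{-1}$) together with the symmetry of $\mathrm{Tor}$, since both sides equal $\mathrm{Hom}_{\mathbb C}(\mathrm{Tor}^{A_W}_i(N,M), \mathbb C)$ after viewing one module as a right module through the anti-isomorphism. Since the phyla order is total, the hypothesis $\chi \not\sim_{\mathcal P} \eta^\vee$ splits into Case A ($\chi <_{\mathcal P} \eta^\vee$) and Case B ($\chi >_{\mathcal P} \eta^\vee$).

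The key structural step is to place $(K_\eta^-)^*$ inside the smallest full triangulated subcategory $\mathcal T(\eta)$ of $D^b(A_W\mathchar`-\mathsf{gmod})$ containing $\{K_\xi^+\langle d\rangle : \xi \geq_{\mathcal P} \eta^\vee,\, d \in \mathbb Z\}$. I would prove this by downward induction on $\chi$ in the phyla order, showing $L_\chi \in \mathcal T(\eta)$ for every $\chi \geq_{\mathcal P} \eta^\vee$. In the base case (top phylum), Lemma~\ref{invert}(1) and (3) force $K_\chi^+ = L_\chi$. For the inductive step, the crucial observation is that $K_\chi^+$ contains no copy of $L_\chi$ in positive degree: by definition of the $\mathcal P$-trace, $\hom_{A_W}(P_\chi, K_\chi^+)_{>0} = 0$, i.e., $\hom_W(L_\chi, K_\chi^+)_{>0} = 0$. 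Combined with Lemma~\ref{invert}(3), this forces every composition factor of $\mathrm{rad}(K_\chi^+)$ to sit at some $L_\xi$ with $\xi >_{\mathcal P} \chi$ strictly. By induction these simples are in $\mathcal T(\eta)$, and since $\mathrm{rad}(K_\chi^+)$ is finite-dimensional by Remark~\ref{survive}(2), it is an iterated extension of them, so $\mathrm{rad}(K_\chi^+) \in \mathcal T(\eta)$ and hence $L_\chi \in \mathcal T(\eta)$ via $0 \to \mathrm{rad}(K_\chi^+) \to K_\chi^+ \to L_\chi \to 0$. Finally, the identity $[(K_\eta^-)^* : L_\zeta] = \overline{[K_\eta^- : L_{\zeta^\vee}]}$ combined with Lemma~\ref{invert}(2) forces every composition factor of $(K_\eta^-)^*$ to lie at some $L_\zeta$ with $\zeta \geq_{\mathcal P} \eta^\vee$, so $(K_\eta^-)^*$ is itself an iterated extension of such simples and lies in $\mathcal T(\eta)$.

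Case A then follows immediately: for $\chi <_{\mathcal P} \eta^\vee$, every generator $K_\xi^+\langle d\rangle$ of $\mathcal T(\eta)$ satisfies $\chi <_{\mathcal P} \xi$, so condition ${\bf 3)}^+$ yields $\mathrm{ext}^\bullet_{A_W}(K_\chi^+, K_\xi^+\langle d\rangle) = \{0\}$; this vanishing propagates along distinguished triangles to $\mathrm{ext}^\bullet_{A_W}(K_\chi^+, X) = \{0\}$ for every $X \in \mathcal T(\eta)$, in particular for $X = (K_\eta^-)^*$. For Case B, Step~1 gives $\mathrm{ext}^\bullet(K_\chi^+, (K_\eta^-)^*) \cong \mathrm{ext}^\bullet(K_\eta^-, (K_\chi^+)^*)$, and the symmetric construction (with $\overline{\mathcal P}$ and $\{K_\beta^-\}$ in place of $\mathcal P$ and $\{K_\xi^+\}$) places $(K_\chi^+)^*$ in the subcategory generated by $\{K_\beta^-\langle d\rangle : \beta \geq_{\overline{\mathcal P}} \chi^\vee\}$; since $\chi >_{\mathcal P} \eta^\vee$ translates to $\eta <_{\overline{\mathcal P}} \chi^\vee$, condition ${\bf 3)}^-$ delivers the vanishing. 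The final clause ($\mathsf K$ is a Kostka system) then follows because the Ext-vanishing forces the graded Euler--Poincar\'e pairing of Definition~\ref{kos} ${\bf 2)}$ to vanish outside $\chi \sim \eta^\vee$.

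The main obstacle is the generation step: verifying that $K_\chi^+$ has no positive-degree $L_\chi$-component, so that the downward induction actually closes. This relies precisely on the fact that the $\mathcal P$-trace quotients $P_\chi$ by endomorphisms of $P_\chi$ of positive degree (the case $\eta = \chi$ in the definition), which together with Lemma~\ref{invert}(3) ensures that $\mathrm{rad}(K_\chi^+)$ carries no composition factors from the phylum of $\chi$ itself.
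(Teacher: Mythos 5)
Your proposal is correct and follows essentially the same route as the paper: the generation/filtration argument you phrase in triangulated-category language is what the paper calls a ``repeated use of long exact sequences,'' the $\mathrm{Ext}$--$\mathrm{Tor}$ duality you invoke is equivalent to the paper's functorial isomorphism $\mathrm{hom}_{A_W}(M,N) \cong \mathrm{hom}_{A_W}(N^*,M^*)$ extended via universal $\delta$-functors, and the case split $\chi \lessgtr_{\mathcal P} \eta^\vee$ together with the mirror argument under ${\bf 3)}^-$ matches the paper's ``swapping the roles of $K^+$ and $K^-$.'' Your explicit observation that $\hom_W(L_\chi, K_\chi^+)_{>0} = 0$ makes transparent what the paper buries in Lemma~\ref{invert}, but the proof is otherwise the same.
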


\begin{proof}
By Lemma \ref{invert} and the condition ${\bf 3)}^+$, a repeated use of long exact sequences implies
$$\mathrm{ext} ^{\bullet} _{A_W} ( K _{\chi} ^+, L _{\eta} ) = \{ 0 \} \text{ for every } \chi < _{\mathcal P} \eta.$$
Again by Lemma \ref{invert} and a repeated use of long exact sequences, we deduce
$$\mathrm{ext} ^{\bullet} _{A_W} ( K _{\chi} ^+, ( K _{\eta} ^- ) ^* ) = \{ 0 \} \text{ for every } \chi < _{\mathcal P} \eta ^{\vee}.$$
We have a functorial isomorphism (defined through $A_W \cong A_W ^{op}$)
$$\mathrm{hom} _{A_W} ( M, N ) \cong \mathrm{hom} _{A_W} ( N^*, M^* )$$
for every finite-dimensional graded $A_W$-modules $N,M$. Since $*$ is an exact functor and $\mathrm{ext} _{A_W} ^{\bullet}$ is a universal $\delta$-functor, this implies
$$\mathrm{ext} ^{\bullet} _{A_W} ( K _{\eta} ^-, ( K _{\chi} ^+ ) ^* ) \cong \mathrm{ext} ^{\bullet} _{A_W} ( K _{\chi} ^+, ( K _{\eta} ^- ) ^* ) = \{ 0 \} \text{ for every } \chi < _{\mathcal P} \eta ^{\vee}.$$
By swapping the roles of $K^+$ and $K^-$ by utilizing the condition ${\bf 3)^-}$, we conclude the first assertion. By taking the graded Euler-Poincar\'e characteristic, we deduce the second assertion.
\end{proof}

\begin{theorem}\label{reint}
Assume that we have a Kostka system $\mathsf K$ adapted to $\mathcal P$. Then, the collection $\{ K _{\chi} ^{\pm} \} _{\chi \in \mathsf{Irr} \, W}$ gives rise to the solution of $(\ref{Smatrix})$ as:
$$K _{\chi,\eta} ^{\pm} =  [ K ^{\pm} _{\chi} : L _{\eta} ] \hskip 3mm \text{ for every } \hskip 3mm \chi,\eta\in \mathsf{Irr}\, W.$$
\end{theorem}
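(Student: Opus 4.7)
The plan is to invoke the uniqueness assertion of Theorem \ref{Sh}. Setting $\widetilde K^{\pm}_{\chi,\eta} := [K^{\pm}_{\chi}:L_{\eta}]$, I aim to produce a matrix $\widetilde\Lambda$ supported on similarity classes such that the triple $(\widetilde K^+, \widetilde\Lambda, \widetilde K^-)$ solves the matrix equation (\ref{Smatrix}); Theorem \ref{Sh} then forces $\widetilde K^{\pm}$ to coincide with the Shoji--Lusztig Kostka polynomials, which is exactly what is claimed.

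First, I verify that $\widetilde K^{\pm}$ satisfies the triangularity (\ref{Kmatrix}). The off-diagonal vanishing is immediate from Lemma \ref{invert}(2) and (3). The exact equality $\widetilde K^+_{\chi,\chi} = 1$ (not merely modulo $t$) requires the observation that any positive-degree copy of $L_\chi$ as a composition factor of $P_\chi$ lifts, by projectivity of $P_\chi$, to a graded endomorphism of $P_\chi$ of strictly positive degree; such endomorphisms are precisely what gets quotiented out in forming the $\mathcal P$-trace $K^+_\chi$. The same argument applied to $\overline{\mathcal P}$ handles $\widetilde K^-$.

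Next, I define $\widetilde\Lambda_{\mu,\nu} := \langle K^+_\mu, (K^-_{\nu^\vee})^*\rangle_{\mathsf{gEP}}$. Condition ${\bf 2)}$ of Definition \ref{kos}, applied with $(\chi,\eta) = (\mu, \nu^\vee)$, forces $\widetilde\Lambda_{\mu,\nu} = 0$ unless $\mu \sim_{\mathcal P} \nu$, delivering the support property required by (\ref{Smatrix}).

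The main obstacle lies in the matrix identity ${}^t \widetilde K^+ \widetilde\Lambda (\widetilde K^-)^\sigma = \Omega$. By Corollary \ref{ajEP} and Lemma \ref{factor}, $\Omega_{\chi,\eta} = [P^{(0)}_\eta : L_\chi]$, which isolates a finite-dimensional computation. In the Grothendieck group of finite-dimensional graded $A_W$-modules, both $\{[K^+_\mu]\}$ and $\{[(K^-_{\nu^\vee})^*]\}$ form unitriangular bases, with $L_\mu$ and $L_\nu$ respectively placed in degree $0$; the change-of-basis matrices are readable off $\widetilde K^+$ and, after entrywise conjugation, $(\widetilde K^-)^\sigma$. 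Expressing $[P^{(0)}_\chi]$ and the graded dual $[(P^{(0)}_\eta)^*]$ in these two bases and applying sesquilinearity of $\langle\cdot,\cdot\rangle_{\mathsf{gEP}}$, one recasts $\Omega_{\chi,\eta}$ as the product ${}^t \widetilde K^+ \cdot \widetilde\Lambda \cdot (\widetilde K^-)^\sigma$. The delicate point is that the conjugations produced by sesquilinearity of the Euler pairing on one side must precisely cancel against the $*$-duality defining $\widetilde\Lambda$ and the $\sigma$-permutation on the other; once this bookkeeping is carried out, the identity is established, and Theorem \ref{Sh} concludes $\widetilde K^{\pm} = K^{\pm}$.
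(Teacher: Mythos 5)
Your overall strategy is the paper's: verify the triangularity (\ref{Kmatrix}), exhibit a block-diagonal matrix in the middle slot so that (\ref{Smatrix}) holds, and invoke the uniqueness of Theorem \ref{Sh}. The verification that $\widetilde K^{\pm}_{\chi,\chi} = 1$ exactly (via positive-degree endomorphisms of $P_{\chi}$ being quotiented out) is correct, as is the observation $\Omega_{\chi,\eta} = [P^{(0)}_{\eta} : L_{\chi}]$ and the use of Definition \ref{kos} {\bf 2)} for the support condition.

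The gap is in the identification of $\widetilde\Lambda$. You set $\widetilde\Lambda_{\mu,\nu} := \left<K^{+}_{\mu}, (K^{-}_{\nu^{\vee}})^{*}\right>_{\mathsf{gEP}}$ and then aim to prove ${}^{\mathtt t}\widetilde K^{+}\cdot\widetilde\Lambda\cdot(\widetilde K^{-})^{\sigma} = \Omega$, but that identity is false: the Euler--Poincar\'e pairing matrix sits in the role of $\Lambda^{-1}$, not $\Lambda$. Indeed, the paper's own computation shows $\left<K^{+}_{\eta}, (K^{-}_{\chi^{\vee}})^{*}\right>_{\mathsf{gEP}} = \overline{\left(K^{+}P^{-1}{}^{\mathtt t}(K^{-})^{\sigma}\right)_{\eta,\chi}}$ with $P = \Delta\cdot{}^{\mathtt t}\Omega$, which unwinds to $\widetilde\Lambda = \overline{\Delta}^{-1}\cdot\overline{{}^{\mathtt t}\Lambda^{-1}}$; the $\Lambda$ in (\ref{Smatrix}) is a conjugated, transposed, $\Delta$-rescaled \emph{inverse} of your $\widetilde\Lambda$. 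This is also exactly what your change-of-basis route would reveal if carried out: writing $[P^{(0)}_{\chi}]$ in the $\{[K^{+}_{\mu}]\}$-basis produces coefficients built from $(\widetilde K^{+})^{-1}$ (since $[L_{\nu}] = \sum_{\mu}((\widetilde K^{+})^{-1})_{\nu,\mu}[K^{+}_{\mu}]$), so the matrices that emerge next to the Euler pairing are $(\widetilde K^{\pm})^{-1}$, not $\widetilde K^{\pm}$; rearranging to the shape of (\ref{Smatrix}) then forces an inversion of $\widetilde\Lambda$. (There is also a secondary conjugation-and-$\vee$ mismatch: $\left<P^{(0)}_{\chi}, (P^{(0)}_{\eta})^{*}\right>_{\mathsf{gEP}}$ involves $\overline{\Omega_{\chi,\eta^{\vee}}}$ rather than $\Omega_{\chi,\eta}$, which must also be tracked.) The remedy is local: either define $\widetilde\Lambda$ as the appropriate twisted inverse of the pairing matrix, or --- as the paper effectively does --- define $\Lambda := ({}^{\mathtt t}\widetilde K^{+})^{-1}\Omega((\widetilde K^{-})^{\sigma})^{-1}$ so that (\ref{Smatrix}) holds by fiat, and then use the computation of the Euler pairing to deduce that $\Lambda$ has block-diagonal support. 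Since inversion, transposition, conjugation, and scalar multiplication all preserve block-diagonality, the proof then closes. But as written, the asserted product formula with your $\widetilde\Lambda$ is not correct, so the proposal has a genuine (though fixable) error.
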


\begin{proof}
We define a matrix $P$ with its entries $P_{\chi,\eta} := [ P _{\chi} : L _{\eta} ] \in \mathbb Z [\![t]\!]$. We have $P _{\chi,\eta} \equiv \delta _{\chi,\eta} \mod t$. Therefore, the matrix $P$ is invertible. In addition, we can also regard $P_{\chi,\eta} \in \mathbb Q ( t )$ by Lemma \ref{factor}. By Lemma \ref{invert} and Remark \ref{survive} 2), the same is true for $K^{\pm}$. Hence, we can calculate as:
\begin{align*}
\left< K_{\eta} ^+, ( K_{\chi ^{\vee}} ^- )^* \right> _{\mathsf{gEP}} & = \sum _{\kappa, \nu} \overline{K ^+ _{\eta, \kappa} K_{\chi ^{\vee}, \nu} ^-} \left< L _{\kappa}, L_{\nu^{\vee}} \right> _{\mathsf{gEP}}\\
& = \sum _{\kappa, \nu, \xi} \overline{K _{\eta,\kappa} ^+ K _{\chi ^{\vee}, \nu} ^- (P^{-1}) _{\kappa, \xi}} \left< P _{\xi}, L_{\nu ^{\vee}} \right> _{\mathsf{gEP}}\\
& = \sum _{\kappa, \nu} \overline{K _{\eta, \kappa} ^+ K_{\chi ^{\vee}, \nu} ^- (P^{-1}) _{\kappa, \nu ^{\vee}}}\\
& = ( \overline{K ^+ \cdot P^{-1} \cdot {} ^{\mathtt t} ( K ^- ) ^\sigma} ) _{\eta, \chi}.
\end{align*}
We have $P _{\chi,\eta} = \left< P_{\eta}, P_{\chi} \right> _{\mathsf{gEP}} = \Delta \cdot \Omega_{\eta,\chi}$. Therefore, Definition \ref{kos} {\bf 2)} yields
$${}^{\mathtt t} ( K ^+ \cdot P^{-1} \cdot {} ^{\mathtt t} ( K ^- ) ^{\sigma} ) = \Delta ^{-1} ( ( K ^- ) ^{\sigma} \cdot \Omega ^{-1} \cdot {} ^{\mathtt t} K ^+ ) = \Delta ^{-1} \Lambda ^{-1} \text{ in (\ref{Smatrix}),}$$
as required.
\end{proof}

\begin{corollary}[of the proof of Theorem \ref{reint}]
If we have a collection of $A_W$-modules $\{ K _{\chi} ^{\pm} \} _{\chi \in \mathsf{Irr} \, W}$ so that its graded characters satisfy the equation $(\ref{Smatrix})$ with respect to a phyla, then Definition \ref{kos} {\bf 2)} is satisfied for that phyla. \hfill $\Box$
\end{corollary}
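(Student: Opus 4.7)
The plan is to recognize that the chain of equalities appearing inside the proof of Theorem \ref{reint} is a formal identity among the matrices $K^\pm$, $P$, and $\Omega$: it uses only the definitions of the graded character and of $\langle\cdot,\cdot\rangle_{\mathsf{gEP}}$, the invertibility of the character matrix $P_{\chi,\eta} = [P_\chi : L_\eta]$, and Corollary \ref{ajEP}. In particular that computation never invokes Definition \ref{kos} \textbf{2)}. Rerunning it verbatim will therefore yield, for any collection $\{K_\chi^\pm\}$ whose graded characters are well-defined, the unconditional matrix identity
\[
\left<K_{\eta}^+,(K_{\chi^\vee}^-)^*\right>_{\mathsf{gEP}} \;=\; \bigl(\,\overline{K^+ \cdot P^{-1} \cdot {}^{\mathtt t}(K^-)^{\sigma}}\,\bigr)_{\eta,\chi}.
\]
Concretely, I would expand $K_\eta^+$ and $(K_{\chi^\vee}^-)^*$ in the basis of simples, use biadditivity of $\langle\cdot,\cdot\rangle_{\mathsf{gEP}}$, and convert each $L_\kappa$ to a $\mathbb Q(t)$-combination of the $P_\xi$ by inverting $P$; since $\langle P_\xi,L_{\nu^\vee}\rangle_{\mathsf{gEP}} = \delta_{\xi,\nu^\vee}$, the sum collapses to the displayed matrix entry.

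Next I would plug in the hypothesis. Corollary \ref{ajEP} gives $P = \Delta \cdot {}^{\mathtt t}\Omega$, while by assumption $\Omega = {}^{\mathtt t}K^+ \cdot \Lambda \cdot (K^-)^{\sigma}$; transposing and inverting yields
\[
P^{-1} \;=\; \Delta^{-1}\,(K^+)^{-1}\,({}^{\mathtt t}\Lambda)^{-1}\,\bigl({}^{\mathtt t}(K^-)^{\sigma}\bigr)^{-1},
\]
so the outer factors $K^+$ and ${}^{\mathtt t}(K^-)^{\sigma}$ cancel and the right-hand side above becomes $\overline{\Delta^{-1}\,({}^{\mathtt t}\Lambda)^{-1}}$ at position $(\eta,\chi)$. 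Here invertibility of $K^\pm$ is immediate from the triangularity in (\ref{Kmatrix}), and invertibility of $\Lambda$ is built into the assumption that $(\ref{Smatrix})$ has a genuine solution (with $\Omega$ and $P$ invertible).

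Since $\Lambda$ is supported on the equivalence relation $\sim_{\mathcal P}$, so is ${}^{\mathtt t}\Lambda$, and so is $({}^{\mathtt t}\Lambda)^{-1}$ (any matrix block-diagonal for the partition of $\mathsf{Irr}\,W$ into $\sim$-classes has a block-diagonal inverse for the same partition). Hence the $(\eta,\chi)$-entry vanishes unless $\eta \sim \chi$; relabelling $\chi \leftrightsquigarrow \eta^\vee$ this is exactly the non-vanishing condition of Definition \ref{kos} \textbf{2)}. The only real hurdle is bookkeeping: tracking which matrices are transposed, where the $\sigma$-permutation (the swap on row/column indices induced by $\chi \mapsto \chi^\vee$) sits, and how the bar-involution $Q(t^{1/2}) \mapsto Q(t^{-1/2})$ interacts with matrix inversion, so that the cancellation is actually valid. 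No new conceptual ingredient beyond what was assembled for Theorem \ref{reint} is needed, which is why the result is advertised as a corollary of the \emph{proof} of Theorem \ref{reint} rather than of its statement.
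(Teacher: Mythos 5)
Your argument is correct and is exactly what the paper intends: you isolate the unconditional part of the calculation in the proof of Theorem~\ref{reint} (which indeed never invokes Definition~\ref{kos} \textbf{2)}), substitute $P = \Delta \cdot {}^{\mathtt t}\Omega$ from Corollary~\ref{ajEP} and ${}^{\mathtt t}\Omega = {}^{\mathtt t}(K^-)^{\sigma} \cdot {}^{\mathtt t}\Lambda \cdot K^+$ from (\ref{Smatrix}), and read off that the resulting matrix $\Delta^{-1}({}^{\mathtt t}\Lambda)^{-1}$ is block-diagonal for $\sim_{\mathcal P}$. Since the paper marks this result with $\Box$ and no text, your rerun of the Theorem~\ref{reint} computation in reverse is precisely the intended proof.
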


\begin{lemma}[Abe]
For a Kostka system $\mathsf K$ adapted to $\mathcal P$, we have
$$( K _{\chi} ^+ : P_{\eta} ) = 0 \text{ and } ( K _{\chi ^{\vee}} ^- : P_{\eta ^{\vee}} ) = 0 \text{ if } \chi < _{\mathcal P} \eta.$$
\end{lemma}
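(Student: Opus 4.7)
The strategy is to exploit the duality between the $P$- and $L$-bases of the Grothendieck group and then use Definition \ref{kos} 2) as an orthogonality principle. The key preliminary observation is that for every $M \in A_W\mathchar`-\mathsf{gmod}$,
$$(M : P_\eta) = \langle M, L_\eta \rangle_{\mathsf{gEP}}.$$
Indeed, since $\langle P_\kappa, L_\eta \rangle_{\mathsf{gEP}} = \mathsf{gdim}\,\mathrm{hom}_{A_W}(P_\kappa, L_\eta) = \delta_{\kappa,\eta}$, the matrix $(\langle L_\mu, L_\eta \rangle_{\mathsf{gEP}})_{\mu,\eta}$ is the inverse of the decomposition matrix $P_{\kappa,\mu} = [P_\kappa : L_\mu]$; unfolding the defining equation $\mathsf{gch}\, M = \sum_\eta (M:P_\eta)\, \mathsf{gch}\, P_\eta$ through this pairing yields the displayed formula. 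Thus the lemma reduces to showing $\langle K_\chi^+, L_\nu \rangle_{\mathsf{gEP}} = 0$ whenever $\chi <_\mathcal P \nu$.

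Next, I will expand $[L_\nu]$ in the alternative basis $\{[(K_\eta^-)^*]\}_\eta$ of the (completed) Grothendieck group. Since $[(K_\eta^-)^*] = \sum_\mu \overline{K^-_{\eta, \mu^\vee}}\, [L_\mu]$, calling this transition matrix $D$, Lemma \ref{invert} 2) applied to $K^-$ says that after re-indexing the rows by $\eta \mapsto \eta^\vee$ the matrix $D$ becomes upper triangular in the $\mathcal P$-order, while Lemma \ref{invert} 3) forces each phylum block to restrict to the identity and Lemma \ref{invert} 1) guarantees nonzero diagonal entries. Hence $D$ is invertible, and the same triangular structure transfers to the inverse to give the support constraint
$$(D^{-1})_{\nu, \eta} \neq 0 \quad \Longrightarrow \quad \eta^\vee \gtrsim_\mathcal P \nu.$$

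Substituting the resulting expansion $[L_\nu] = \sum_\eta (D^{-1})_{\nu, \eta}\, [(K_\eta^-)^*]$ into the pairing gives
$$\langle K_\chi^+, L_\nu \rangle_{\mathsf{gEP}} = \sum_\eta (D^{-1})_{\nu, \eta}\, \langle K_\chi^+, (K_\eta^-)^* \rangle_{\mathsf{gEP}}.$$
By Definition \ref{kos} 2), each pairing on the right vanishes unless $\chi \sim_\mathcal P \eta^\vee$; combined with $\eta^\vee \gtrsim_\mathcal P \nu$ from the support of $D^{-1}$, every nonvanishing summand forces $\chi \sim \eta^\vee \gtrsim_\mathcal P \nu$ and hence $\chi \gtrsim_\mathcal P \nu$. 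This contradicts $\chi <_\mathcal P \nu$, so the sum vanishes and $(K_\chi^+ : P_\nu) = 0$. The statement for $K^-_{\chi^\vee}$ follows from the mirror argument, using that the pairing in Definition \ref{kos} 2) is symmetric in $K^+$ and $K^-$ via the adjunction $\mathrm{hom}_{A_W}(M, N^*) \cong \mathrm{hom}_{A_W}(N, M^*)$ invoked in Proposition \ref{orthex}. The only delicate point is the bookkeeping with the involution $\chi \mapsto \chi^\vee$ when $\mathcal P$ is not of Malle type, but this is handled cleanly by reading off Lemma \ref{invert} exactly as stated, so no additional work is needed.
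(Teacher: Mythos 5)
Your argument is correct and coincides with the paper's in essence: you expand $[L_\nu]$ in the basis $\{[(K_\eta^-)^*]\}_\eta$ via an invertible block-triangular transition matrix supplied by Lemma~\ref{invert} and then apply the block-diagonality from Definition~\ref{kos}~2), whereas the paper expands $K_\chi^+$ in the projective basis and pairs against $(K_{\eta^\vee}^-)^*$, arriving at the same matrix identity read from the other side. The only cosmetic slip is that, since $\langle\,\cdot\,,\,\cdot\,\rangle_{\mathsf{gEP}}$ is semi-linear in its first argument (a grading shift $M\langle d\rangle$ multiplies it by $t^{-d/2}$), the preliminary identity should read $(M:P_\eta)=\overline{\langle M,L_\eta\rangle_{\mathsf{gEP}}}$, but the bar is irrelevant for a vanishing statement and does not affect the conclusion.
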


\begin{proof}
By the linearity of the graded Euler-Poincar\'e pairing, we have
\begin{align*}
\left< K _{\chi} ^+, ( K _{\eta ^{\vee}} ^- ) ^* \right> _{\mathsf{gEP}} & = \sum _{\kappa} \overline{( K _{\chi} ^+ : P_{\kappa}  )} \left< P _{\kappa}, ( K _{\eta ^{\vee}} ^- ) ^* \right> _{\mathsf{gEP}}\\
& = \sum _{\kappa} \overline{( K _{\chi} ^+: P_{\kappa}  )} [ ( K _{\eta ^{\vee}} ^- ) ^* : L _{\kappa} ] \neq 0 \hskip 5mm \text{ only if } \chi \sim \eta.
\end{align*}
Here the matrix $( [ ( K _{\eta ^{\vee}} ^- ) ^* : L _{\kappa} ] )$ is invertible and blockwise upper-triangular (with respect to $\mathcal P$) by Lemma \ref{invert} 1), and the matrix $( \left< K _{\chi} ^+, ( K _{\eta ^{\vee}} ^- ) ^* \right> _{\mathsf{gEP}} )$ is block-diagonal by Definition \ref{kos} {\bf 2)}. Therefore, we conclude the result for $K _{\chi} ^+$. The case of $K _{\chi} ^-$ is similar.
\end{proof}

\begin{proposition}\label{ext}
Let $( W, \mathcal P )$ be a complex reflection group and its phyla. Let $\{ K _{\chi} ^+ \} _{\chi}$ be a complete collection of $\mathcal P$-traces. Then we have
$$\mathrm{ext}^{i} _{A_W} ( K _{\chi} ^+, L _{\eta} ) \cong \mathrm{ext}^{i} _{A_W} ( K_{\eta ^{\vee}} ^-, L _{\chi ^{\vee}} ) \hskip 5mm i = 0,1$$
for every $\chi \sim _{\mathcal P} \eta$, where $K_{\eta ^{\vee}} ^-$ is the $\overline{\mathcal P}$-trace of $P _{\eta ^{\vee}}$.
\end{proposition}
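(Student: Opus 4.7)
My plan is to treat the two cases $i = 0$ and $i = 1$ separately, reducing each to a head-level computation of graded $W$-modules. For $i = 0$, the assertion is immediate from Remark \ref{survive} {\bf 1)}: both $K_\chi^+$ and $K_{\eta^\vee}^-$ have simple heads $L_\chi$ and $L_{\eta^\vee}$ respectively, concentrated in degree zero, so $\mathrm{hom}_{A_W}(K_\chi^+, L_\eta)$ and $\mathrm{hom}_{A_W}(K_{\eta^\vee}^-, L_{\chi^\vee})$ are both $\delta_{\chi,\eta}\mathbb{C}$ in degree zero, matching.

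For $i = 1$, I apply $\mathrm{hom}_{A_W}(-, L_\eta)$ to the defining short exact sequence $0 \to N_\chi \to P_\chi \to K_\chi^+ \to 0$ of the $\mathcal P$-trace, where $N_\chi$ is the $A_W$-submodule of $P_\chi$ generated by the positive-degree $L_\kappa$-isotypes for $\kappa \lesssim_{\mathcal P} \chi$. Projectivity of $P_\chi$ yields $\mathrm{ext}^1_{A_W}(K_\chi^+, L_\eta) \cong \mathrm{hom}_{A_W}(N_\chi, L_\eta)/\mathrm{Im}(\mathrm{hom}(P_\chi, L_\eta))$, and an analogous identity on the right-hand side via $0 \to N_{\eta^\vee}^- \to P_{\eta^\vee} \to K_{\eta^\vee}^- \to 0$. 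The case $\chi = \eta$ reduces to a direct parallel check, so the remaining content is to establish, for $\chi \sim \eta$ with $\chi \ne \eta$, the isomorphism $\mathrm{hom}_{A_W}(N_\chi, L_\eta) \cong \mathrm{hom}_{A_W}(N_{\eta^\vee}^-, L_{\chi^\vee})$; note that by Lemma \ref{invert} {\bf 3)} the entire positive-degree $L_\eta$-isotype of $P_\chi$ already sits in $N_\chi$ (and symmetrically on the other side).

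The crux is a canonical graded $W$-equivariant identification of the $L_\eta$-isotype of $N_\chi/\mathfrak{h}N_\chi$ with the $L_{\chi^\vee}$-isotype of $N_{\eta^\vee}^-/\mathfrak{h}N_{\eta^\vee}^-$. The generator-level identification comes from Corollary \ref{ajEP} together with the symmetry $\Omega_{\eta,\chi} = \mathsf{gdim}\,\mathrm{hom}_W(L_\eta \otimes L_{\chi^\vee}, C_{\mathsf{triv}}) = \Omega_{\chi^\vee, \eta^\vee}$, which produces a canonical isomorphism $\mathrm{hom}_W(L_\eta, P_\chi) \cong \mathrm{hom}_W(L_{\chi^\vee}, P_{\eta^\vee})$ of graded vector spaces.

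The main obstacle is to lift this to the heads, i.e., to show that the $\mathfrak h$-actions on $N_\chi$ and $N_{\eta^\vee}^-$ induce matching relations under the symmetry. I expect this to follow from the $W$-equivariance of multiplication by $\mathfrak h \subset A_W$ combined with the self-duality $\dim \mathrm{hom}_W(L_\lambda, \mathfrak h \otimes L_\kappa) = \dim \mathrm{hom}_W(L_{\kappa^\vee}, \mathfrak h \otimes L_{\lambda^\vee})$, which matches the Koszul-type corrections to the generators of $N_\chi$ and $N_{\eta^\vee}^-$ under the duality $\chi \leftrightarrow \eta^\vee$, thereby promoting the generator-level isomorphism to the required head-level one.
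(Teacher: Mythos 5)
Your reduction is sound and is essentially the same one the paper uses: for $i=0$ the equality is forced by the definition of a $\mathcal P$-trace (any positive-degree map $K_\chi^+ \to L_\eta\langle d\rangle$ with $\eta\sim_{\mathcal P}\chi$ would lift to an element of $\Xi_\chi^d$ whose image has been killed, so both sides reduce to $\delta_{\chi,\eta}\mathbb C$ in degree $0$), and for $i=1$ one passes to the short exact sequence $0\to N_\chi\to P_\chi\to K_\chi^+\to 0$ and uses projectivity of $P_\chi$ to identify $\mathrm{ext}^1_{A_W}(K_\chi^+,L_\eta)$ with the $L_\eta$-isotypic part of the head $N_\chi/\mathfrak h N_\chi$, the problem being to show this matches the $L_{\chi^\vee}$-isotypic part of $N_{\eta^\vee}^-/\mathfrak h N_{\eta^\vee}^-$. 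This is exactly the reformulation the paper works with, where in its notation the $L_\eta$-isotype of $N_\chi/\mathfrak h N_\chi$ in degree $d$ is $(L\cap\Gamma_\chi^d)^\perp$ inside the full $L_\eta$-isotype $L\subset P_{\chi,d}$.

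\textbf{The gap.} The crucial step, which you yourself flag as the obstacle, is not proved. The observations you offer in its place are genuinely too weak to close it. The identity $\Omega_{\eta,\chi}=\Omega_{\chi^\vee,\eta^\vee}$ is only an equality of graded dimensions, arising from commutativity of the tensor product; it gives you $\mathsf{gdim}\,\hom_W(L_\eta,P_\chi)=\mathsf{gdim}\,\hom_W(L_{\chi^\vee},P_{\eta^\vee})$ but produces no canonical map compatible with the $\mathbb C[\mathfrak h^*]$-actions on the two sides. Likewise the one-step self-duality $\dim\hom_W(L_\lambda,\mathfrak h\otimes L_\kappa)=\dim\hom_W(L_{\kappa^\vee},\mathfrak h\otimes L_{\lambda^\vee})$ is again only numerical, and in any case controls a single tensoring by $\mathfrak h$; what is needed is a mechanism guaranteeing that a fixed $L_\eta$-isotypic subspace of $P_{\chi,d}$ lies inside $\Gamma_\chi^d$ precisely when the corresponding $L_{\chi^\vee}$-isotypic subspace of $P_{\eta^\vee,d}$ lies inside $\Gamma_{\eta^\vee}^d$, which involves all the lower-degree generators of $N_\chi$ simultaneously. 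The paper's actual proof of this for each fixed $d>0$ is a delicate element-by-element argument through the nondegenerate pairing $P_\chi\times P_\chi^*\to\mathbb C$ (identifying $P_\chi^*\cong\mathbb C[\mathfrak h]\otimes L_{\chi^\vee}$ and letting $\mathfrak h$ act by derivations on the dual side), repeated decomposition of elements $u\in L\cap\Gamma_\chi^d$ as $\sum h_i u_i$, and a contrapositive to obtain an inclusion of isotypic components; this yields a surjection $\mathrm{ext}^1_{A_W}(K_\chi^+,L_\eta)_{-d}\twoheadrightarrow\mathrm{ext}^1_{A_W}(K_{\eta^\vee}^-,L_{\chi^\vee})_{-d}$, which is then upgraded to an isomorphism by symmetry. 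None of this is supplied by — nor obviously deducible from — the dimension equalities you invoke, so as written the proof has a genuine hole at precisely the point where all the work lies.
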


\begin{proof}
Since $\chi \sim _{\mathcal P} \eta$ if and only if $\chi ^{\vee} \sim _{\overline{\mathcal P}} \eta ^{\vee}$, the assertion for $i=0$ is an immediate consequence of the definition of $\mathcal P$-traces.

We prove the case $i = 1$. The first two terms of the minimal projective resolution of $K _{\chi} ^+$ goes as:
$$\bigoplus _{\chi' \in \mathsf{Irr} \, W, d > 0} P _{\chi'} \left< d \right> ^{\bigoplus m_{\chi',d}} \longrightarrow P _{\chi} \longrightarrow K _{\chi} ^+ \to 0.$$
Since $K _{\chi} ^+$ is a $\mathcal P$-trace, we need $\chi' \lesssim \chi$ in order that $m_{\chi',d} \neq 0$. 

Fix an arbitrary $d > 0$. We set $\Gamma _{\chi} ^d := \sum _{f\in \Xi_{\chi} ^d}\mathrm{Im} f \subset P _{\chi}$ and $\Gamma _{\eta ^{\vee}} ^d := \sum _{f\in \Xi_{\eta ^{\vee}} ^d}\mathrm{Im} f \subset P _{\eta ^{\vee}}$, where
$$\Xi_{\chi} ^d = \bigoplus _{\chi' \lesssim \chi, 0 < d' < d} \mathrm{hom}_{A_W} ( P_{\chi'}, P _{\chi} )_{d'},\hskip 1mm \text{ and } \hskip 1mm \Xi_{\eta ^{\vee}} ^d = \bigoplus _{\eta' \lesssim \eta, 0 < d' < d} \mathrm{hom}_{A_W} ( P_{( \eta' )^{\vee}}, P _{\eta ^{\vee}} )_{d'}$$
(here the orderings are taken with respect to the phyla $\mathcal P$). If $m_{\eta,d} \neq 0$, then there exists a $W$-submodule $L _\eta \subset P _{\chi, d}$ that is not contained in $\Gamma _{\chi} ^d$. We identify the dual space $P^* _{\chi}$ with $\mathbb C [ \mathfrak h ] \otimes L _{\chi^{\vee}}$. We have a natural non-degenerate pairing
$$( \bullet, \bullet ) : P _{\chi} \otimes P _{\chi} ^* \longrightarrow \mathbb C$$
induced by a $W$-invariant map $L _{\chi} \otimes L _{\chi ^{\vee}} \to \mathbb C$ and the natural pairing
$$S ^{\bullet} \mathfrak h \times S ^{\bullet} \mathfrak h ^* \ni ( P, f ) \mapsto ( P f ) (0) \in \mathbb C,$$
where we regard $S ^{\bullet} \mathfrak h \cong \mathbb C [ \mathfrak h ^* ]$ as differentials arising from the natural pairing $\mathfrak h^* \times \mathfrak h \to \mathbb C$. In particular, the above pairing equip $P _{\chi} ^*$ a graded $A _W$-module structure, where $\mathfrak h$ acts on $\mathbb C [ \mathfrak h ]$ by derivations.

Let $L$ be the $L_{\eta}$-isotypic part of $P _{\chi,d}$, and let $L^*$ be the $L _{\eta ^{\vee}}$-isotypic part of $P _{\chi,-d}^*$. The natural pairing $( \bullet, \bullet ) : P _{\chi} \times P^* _{\chi} \to \mathbb C$ induces a non-degenerate pairing $( \bullet, \bullet) : L \times L^* \to \mathbb C$. Further, if we write $L \cong L ^{+} \boxtimes L _{\eta}$ and $L ^* \cong L^- \boxtimes L _{\eta ^{\vee}}$ to single out the multiplicity space, then we obtain a non-degenerate pairing $L^+ \times L^- \to \mathbb C$ induced by $L_{\eta} \otimes L_{\eta ^{\vee}} \to \mathbb C$, which we denote by $(\bullet,\bullet)_0$.

For each element $u \in L \cap \Gamma _{\chi} ^d$, we have a non-trivial decomposition
$$u = h_1 u_1 + \cdots + h _m u _m \hskip 5mm \text{ (finite sum)},$$
where $h _i \in \mathbb C [ \mathfrak h^* ]$ is a homogeneous element of degree $( d - d_i )$ and $u _i \in  f_i ( L _{\chi_i} )$ with $f _i \in \hom _{A_W} ( P_{\chi_i}, P _{\chi} )_{d_i} \subset \Xi _{\chi}^d$ for each $1 \le i \le m$. There exists $u' \in L^*$ with $( u, u' ) \neq 0$. It follows that
$$0 \neq \sum _{i = 1} ^m ( h_i u_i, u' ) = \sum _{i = 1} ^m ( u_i, h_i u' ),$$
and hence $(u_{i_0}, h_{i_0} u') \neq 0$ for some $i_0$. Set $d_0 := d_{i_0}$ and $\chi _0 := \chi _{i_0}$. It follows that $\mathbb C W h _{i_0} u'$ contains a $W$-isotypic component $L_{\chi _0 ^{\vee}}$. In particular, we have $u' _0 \in P ^*_{\chi, -d_0}$ so that $( u _{i_0}, u'_{0} ) \neq 0$ and $\mathbb C W u'_0 \cong L _{\chi _0 ^{\vee}}$ by the $W$-invariance of $( \bullet, \bullet )$. We have a decomposition
$$u _{i_0} = h'_1 v_1 + \cdots + h' _{m'} v _{m'} \hskip 5mm \text{ (finite sum)},$$
where $v_i \in L _{\chi} = P _{\chi, 0}$ and $h'_i \in \mathbb C [ \mathfrak h ^* ]$ are degree $d _0$ elements for all $1 \le i \le m'$. By a similar argument as above, there exists $1 \le i_1 \le m'$ so that $( v_{i_1}, h' _{i_1} u'_0 ) \neq 0$.

Let $\sigma _{u'} : P _{\eta ^{\vee}} \left< - d \right> \to P ^* _{\chi}$ be a map determined by $u'$ (i.e. $u' \in \mathrm{Im} \sigma _{u'}$). Let $g_{u'_0} : P _{\chi _0} \left< - d_0 \right> \to P _{\eta ^{\vee}} \left< - d \right>$ be a map obtained by lifting $u'_0$ to $P _{\eta ^{\vee}} \left< - d \right>$ (and require $u'_0 \in \mathrm{Im} g _{u'_0}$). Then the above argument says that for every $u \in L \cap \Gamma _{\chi} ^d$ and every $u' \in L ^*$ with $(u,u') \neq 0$, there exists
$$g_{u'_0} ( h' _{i_1} \otimes u'_{0} ) \in \Gamma _{\eta^{\vee}} ^d \left< - d \right> \subset P _{\eta ^{\vee}} \left< - d \right>$$
so that $\sigma _{u'} ( g_{u'_0} ( h' _{i_1} \otimes u'_{0} ) ) \neq 0$. Notice that the space $L' \boxtimes L_{\chi ^{\vee}}$ of $L_{\chi ^{\vee}}$-isotypic part of $P_{\eta^{\vee}, d}$ is isomorphic to $L^+ \boxtimes L _{\chi ^{\vee}}$ since
\begin{align*}
L^+& \cong \mathrm{hom} _{W} ( L _{\eta}, P _{\chi} )_d \cong \mathrm{hom} _{A_W} ( P _{\eta}, P _{\chi} )_d \cong \mathrm{hom} _{A_W} ( P _{\chi}^*, P _{\eta}^* )_{d}\\
& \cong \mathrm{hom} _{W} ( S^d \mathfrak h^* \otimes L _{\chi ^{\vee}}, L _{\eta ^{\vee}} ) \cong \mathrm{hom} _{W} ( L_{\chi ^{\vee}}, S^d \mathfrak h \otimes L _{\eta ^{\vee}} ) \cong L'.
\end{align*}

Here we have an isomorphism
\begin{align*}
L^- \cong \mathrm{hom} _{W} ( L _{\eta ^{\vee}} \left< - d \right>, P _{\chi} ^* )_0 \cong \mathrm{hom} _{A_W} ( P _{\eta ^{\vee}} \left< - d \right>, P _{\chi} ^* )_0.
\end{align*}

From these, we deduce that for each $u \in L \cap \Gamma _{\chi} ^d$ and $u' \in L^-$ so that $(u,u' \boxtimes L _{\eta} ^{\vee} ) \not\equiv 0$, we have some $u_1 \boxtimes v \in ( L^+ \boxtimes L_{\chi ^{\vee}} \cap \Gamma _{\eta ^{\vee}} ^d )$ so that $(u',u_1)_0 \neq 0$. By taking contraposition, if $u' \in L^-$ satisfies $(u',u_1)_0 = 0$ for every $u_1 \boxtimes v \in ( L^+ \boxtimes L_{\chi ^{\vee}} \cap \Gamma _{\eta^{\vee}} ^d )$, then we have $(u,u' \boxtimes L _{\eta ^{\vee}} ) \equiv 0$ for every $u \in L \cap \Gamma _{\chi} ^d$. 

Therefore, we conclude
$$\hom _W ( ( L \cap \Gamma _{\chi} ^d ), L_{\eta} ) \subset \hom_W ( ( L' \boxtimes L_{\chi ^{\vee}} \cap \Gamma _{\eta ^{\vee}} ^d ), L _{\chi ^{\vee}} ),$$
which is equivalent to a surjective map
$$\mathrm{ext} ^1 _{A_W} ( K _{\chi} ^+, L _{\eta} )_{-d} \longrightarrow \!\!\!\!\! \to \mathrm{ext} ^1 _{A_W} ( K _{\eta^{\vee}} ^-, L _{\chi ^{\vee}} )_{-d}.$$
By the symmetry of the condition, we deduce that this map is actually an isomorphism for each $d > 0$ as desired.
\end{proof}

\begin{corollary}\label{op}
Keep the setting of Proposition \ref{ext}. Let $\mathcal P'$ be another phyla whose total preorder $< _{\mathcal P'}$ is refined by $< _{\mathcal P}$. If we have
$$[ K _{\chi} ^+: L _{\eta} ] = 0 = [ K _{\chi^{\vee}} ^- : L _{\eta ^{\vee}} ] \hskip 5mm \text{ for every } \chi \sim _{\mathcal P'} \eta \text{ but } \chi \not\sim _{\mathcal P} \eta,$$
then $\{ K _{\chi} ^+ \} _{\chi}$ is a complete collection of $\mathcal P'$-traces. In addition, we have
$$\mathrm{ext} ^1 _{A_W} ( K_{\chi} ^+, L _{\eta} ) = \{0\} = \mathrm{ext} ^1 _{A_W} ( K_{\chi^{\vee}} ^-, L _{\eta ^{\vee}} ) \hskip 5mm \text{ for every } \chi \sim _{\mathcal P'} \eta \text{ but } \chi \not\sim _{\mathcal P} \eta.$$
Conversely, let $\mathcal P''$ be a phyla whose total preorder $< _{\mathcal P''}$ refines $< _{\mathcal P}$ and
$$\mathrm{ext} ^1 _{A_W} ( K_{\chi} ^+, L _{\eta} ) = \{0\} = \mathrm{ext} ^1 _{A_W} ( K_{\chi^{\vee}} ^-, L _{\eta ^{\vee}} ) \hskip 5mm \text{ for every } \chi \sim _{\mathcal P} \eta \text{ but } \chi \not\sim _{\mathcal P''} \eta.$$
Then $\{ K _{\chi}^+ \} _{\chi}$ is a complete collection of $\mathcal P''$-traces.
\end{corollary}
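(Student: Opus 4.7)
The overall plan is to read off each claimed equality $K_\chi^+ = P_{\chi, \mathcal Q}$ (with $\mathcal Q \in \{\mathcal P', \mathcal P''\}$) by comparing the submodules that define the two quotients, encoded by the sets $A_{\mathcal R} := \{\eta \in \mathsf{Irr}\, W : \eta \lesssim_{\mathcal R} \chi\}$. An elementary case analysis on the orderings yields $A_{\mathcal P} \subseteq A_{\mathcal P'}$ in the forward direction and $A_{\mathcal P''} \subseteq A_{\mathcal P}$ in the converse, and in both cases the ``extra'' set $A_{\mathcal Q}\setminus A_{\mathcal R}$ coincides (after excluding $\eta = \chi$) with exactly the set of indices on which the hypothesis is imposed.

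In the forward direction, $A_{\mathcal P} \subseteq A_{\mathcal P'}$ produces a surjection $K_\chi^+ \twoheadrightarrow P_{\chi, \mathcal P'}$ whose kernel is generated, as an $A_W$-submodule of $K_\chi^+$, by images of positive-degree maps $P_\eta \to K_\chi^+$ with $\eta \sim_{\mathcal P'} \chi$ and $\eta >_{\mathcal P} \chi$. For such $\eta$ the hypothesis $[K_\chi^+ : L_\eta] = 0$ forces $\hom_{A_W}(P_\eta, K_\chi^+) = 0$, so this kernel vanishes and $K_\chi^+ = P_{\chi, \mathcal P'}$; the mirror argument on the $K^-$-side identifies $\{K_{\chi^\vee}^-\}$ as a complete collection of $\overline{\mathcal P'}$-traces. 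Now Proposition \ref{ext} applied to $\mathcal P'$ yields, for every $\chi \sim_{\mathcal P'} \eta$, the isomorphism $\mathrm{ext}^1_{A_W}(K_\chi^+, L_\eta) \cong \mathrm{ext}^1_{A_W}(K_{\eta^\vee}^-, L_{\chi^\vee})$. For a pair with $\chi \not\sim_{\mathcal P} \eta$, either $\chi <_{\mathcal P} \eta$, in which case the second term of the minimal projective resolution of the $\mathcal P$-trace $K_\chi^+$ is supported on indices $\chi' \lesssim_{\mathcal P} \chi$, so $P_\eta$ is absent and the left side vanishes, or $\chi >_{\mathcal P} \eta$, in which case the mirror argument on $K_{\eta^\vee}^-$ (an $\overline{\mathcal P}$-trace of $P_{\eta^\vee}$) gives the right side zero. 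The isomorphism then transports the vanishing to both sides.

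For the converse, $A_{\mathcal P''} \subseteq A_{\mathcal P}$ yields a surjection $P_{\chi, \mathcal P''} \twoheadrightarrow K_\chi^+$ with some kernel $N$. The parallel kernel analysis shows that $N$ is generated inside $P_{\chi, \mathcal P''}$ by images of positive-degree maps $P_\eta \to P_{\chi, \mathcal P''}$ for the indices $\eta$ with $\chi \sim_{\mathcal P} \eta$ and $\chi \not\sim_{\mathcal P''} \eta$; hence the head of $N$ is a $W$-subquotient of $\bigoplus L_\eta\left<d\right>$ taken over precisely these $\eta$ and $d > 0$. Applying $\hom_{A_W}(-, L_\eta)$ to $0 \to N \to P_{\chi, \mathcal P''} \to K_\chi^+ \to 0$, and using that $\hom_{A_W}(K_\chi^+, L_\eta) = 0 = \hom_{A_W}(P_{\chi, \mathcal P''}, L_\eta)$ for $\eta \neq \chi$ (both modules have head $L_\chi$ concentrated in degree zero), the long exact sequence produces an injection $\hom_{A_W}(N, L_\eta) \hookrightarrow \mathrm{ext}^1_{A_W}(K_\chi^+, L_\eta)$. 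The hypothesis forces the right side to vanish on exactly the set of $\eta$ that can appear in the head of $N$, so $\hom_{A_W}(N, L_\eta) = 0$ throughout that set, whence the head of $N$, and hence $N$ itself, is zero; therefore $K_\chi^+ = P_{\chi, \mathcal P''}$. The central obstacle is the careful bookkeeping of the two orderings so that the ``extra'' set of generators one must kill lines up precisely with the constraint provided by the hypothesis.
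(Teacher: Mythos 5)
Your proof is correct and follows essentially the same approach as the paper. For the first assertion you use the same kernel analysis on the surjection $K_\chi^+ \twoheadrightarrow P_{\chi,\mathcal P'}$, noting the extra generators come from $\eta$ with $\eta \sim_{\mathcal P'}\chi$, $\eta >_{\mathcal P}\chi$, and that the multiplicity hypothesis kills $\hom_{A_W}(P_\eta, K_\chi^+)$. For the $\mathrm{ext}^1$-vanishing your case analysis is a mild rearrangement of the paper's: you establish one side of the Proposition \ref{ext} isomorphism in each case and transport, whereas the paper directly establishes both vanishings for the $\chi <_{\mathcal P}\eta$ case and then uses symmetry. One small imprecision: after your case analysis you obtain $\mathrm{ext}^1(K_\chi^+,L_\eta)=0$ and $\mathrm{ext}^1(K_{\eta^\vee}^-,L_{\chi^\vee})=0$, which are the two sides of the Proposition \ref{ext} isomorphism for the pair $(\chi,\eta)$; but the corollary asks for $\mathrm{ext}^1(K_{\chi^\vee}^-,L_{\eta^\vee})=0$. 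To get that, you must also run the argument on the pair $(\eta,\chi)$ — the quantified condition is symmetric, so this is immediate, but worth stating. For the converse the paper only says ``straightforward''; your long-exact-sequence argument on $0 \to N \to P_{\chi,\mathcal P''} \to K_\chi^+ \to 0$, showing $\hom_{A_W}(N,L_\eta)$ embeds into the hypothesized vanishing $\mathrm{ext}^1(K_\chi^+,L_\eta)$ for every $\eta$ that could appear in the head of $N$, is a clean way to make that precise.
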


\begin{proof}
Observe that the assumption implies
\begin{equation}
[ K_{\chi} ^+ : L_{\eta}] \equiv \delta _{\chi,\eta} \equiv [ K_{\chi ^{\vee}} ^- : L_{\eta ^{\vee}}]  \hskip 5mm \text{ if } \chi \sim _{\mathcal P'} \eta. \label{mult}
\end{equation}
Let $\{ K' _{\chi} \} _{\chi}$ be the (complete) collection of $\mathcal P'$-traces. Each $K' _{\chi}$ is a quotient of $K_{\chi} ^+$ by the images of positive degree map $P_{\chi'} \to K_{\chi} ^+$ for some $\chi \sim _{\mathcal P'} \chi'$, which cannot exist by (\ref{mult}). It follows that $\{ K _{\chi} ^+ \} _{\chi} = \{ K' _{\chi} \} _{\chi}$. The same is true for the collection of $\overline{\mathcal P'}$-traces and $\{ K _{\chi} ^- \} _{\chi}$.

In case $\chi \sim _{\mathcal P'} \eta$ but $\chi \not\sim _{\mathcal P} \eta$, we have either $\chi <_{\mathcal P} \eta$ or $\eta <_{\mathcal P} \chi$. We need to consider only the first case by symmetry. Then, since $K _{\chi} ^+$ is a $\mathcal P$-trace, non-trivial extension of $K _{\chi} ^+$ by $L _{\eta}$ is prohibited. In other words, we have $\mathrm{ext} _{A_W} ^1 ( K_{\chi} ^+, L _{\eta} ) = \{0\}$. Similarly, we have $\mathrm{ext} _{A_W} ^1 ( K_{\chi ^{\vee}} ^-, L _{\eta ^{\vee}} ) = \{0\}$. By Proposition \ref{ext}, we also have $\mathrm{ext} _{A_W} ^1 ( K_{\eta} ^+, L _{\chi} ) = \{0\}$ and $\mathrm{ext} _{A_W} ^1 ( K_{\eta ^{\vee}} ^-, L _{\chi ^{\vee}} ) = \{0\}$. Therefore, we conclude the first assertion. The second assertion is straight-forward.
\end{proof}

\begin{corollary}\label{oc}
Keep the setting of Corollary \ref{op}. If $\{ K _{\chi} ^{\pm} \} _{\chi}$ is a Kostka system adapted to $\mathcal P$, then it is a Kostka system adapted to $\mathcal P'$. In addition, if $\{ K _{\chi} ^{\pm} \} _{\chi}$ is a Kostka system adapted to $\mathcal P$ and
$$\left< K_{\chi} ^+, ( K _{\eta} ^- )^* \right> _{\mathsf{gEP}} = 0 \hskip 5mm \text{ for every } \chi \sim _{\mathcal P} \eta ^{\vee} \text{ but } \chi \not\sim _{\mathcal P''} \eta ^{\vee},$$
then it is a Kostka system adapted to $\mathcal P''$. \hfill $\Box$
\end{corollary}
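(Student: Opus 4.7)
The plan is to derive both claims as immediate consequences of Corollary~\ref{op} together with the functorial behavior of the pairing under phyla refinement. In each case I will verify the two conditions of Definition~\ref{kos} relative to the new phyla, splitting each verification into a trace-theoretic part (handled by the appropriate part of Corollary~\ref{op}) and a pairing-vanishing part (handled by a direct comparison of preorders).

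For the first claim, Corollary~\ref{op}'s first part applied to $(\mathcal{P},\mathcal{P}')$ yields directly that $\{K_\chi^+\}_\chi$ is a complete collection of $\mathcal{P}'$-traces. To obtain the symmetric statement that $\{K_\chi^-\}_\chi$ is a complete collection of $\overline{\mathcal{P}'}$-traces, I will apply the same corollary to the pair $(\overline{\mathcal{P}}, \overline{\mathcal{P}'})$ with the roles of $+$ and $-$ exchanged; the multiplicity-vanishing hypothesis for that application, after renaming $\chi \leftrightarrow \chi^\vee$, coincides with the hypothesis already present in the setting, so no extra assumption is required. Condition~{\bf 2)} for $\mathcal{P}'$ is then automatic, since $<_\mathcal{P}$ refines $<_{\mathcal{P}'}$ gives $\chi \sim_\mathcal{P} \eta^\vee \Rightarrow \chi \sim_{\mathcal{P}'} \eta^\vee$, and hence the pairing-vanishing supplied by the $\mathcal{P}$-Kostka system already covers the weaker range $\chi \not\sim_{\mathcal{P}'} \eta^\vee$.

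For the second claim, I will proceed analogously, using Corollary~\ref{op}'s second part once directly and once through the dualization $(\mathcal{P},\mathcal{P}'') \mapsto (\overline{\mathcal{P}}, \overline{\mathcal{P}''})$, to obtain condition~{\bf 1)} for $\mathcal{P}''$. For condition~{\bf 2)}, I will partition the range $\chi \not\sim_{\mathcal{P}''} \eta^\vee$ into the disjoint subranges $\chi \not\sim_\mathcal{P} \eta^\vee$ (which is contained in $\chi \not\sim_{\mathcal{P}''} \eta^\vee$ because $<_{\mathcal{P}''}$ refines $<_\mathcal{P}$, and is covered by the $\mathcal{P}$-Kostka system) and ``$\chi \sim_\mathcal{P} \eta^\vee$ but $\chi \not\sim_{\mathcal{P}''} \eta^\vee$'' (covered by the additional pairing-vanishing hypothesis explicitly assumed in the statement); together these give the required vanishing.

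The only step that requires any care is the dualization of Corollary~\ref{op} used to treat the $K^-$-side, but since the hypotheses and conclusions of that corollary are manifestly invariant under the simultaneous substitution $(\chi, \eta, \mathcal{P}, +, -) \mapsto (\chi^\vee, \eta^\vee, \overline{\mathcal{P}}, -, +)$ (and likewise for $\mathcal{P}'$ or $\mathcal{P}''$), this is purely formal and does not present a serious obstacle.
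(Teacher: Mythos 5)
Your proposal is correct and carries out precisely the routine verification that the paper leaves implicit by ending Corollary~\ref{oc} with ``$\Box$'': condition~{\bf 1)} of Definition~\ref{kos} is pulled from Corollary~\ref{op}, and condition~{\bf 2)} is settled by comparing the equivalence relations $\sim_{\mathcal P}$, $\sim_{\mathcal P'}$, $\sim_{\mathcal P''}$. One small economy you could make: the dualization step $(\mathcal P,\mathcal P') \mapsto (\overline{\mathcal P},\overline{\mathcal P'})$ for the $K^-$ side is already handled inside Corollary~\ref{op} itself — its proof explicitly remarks ``The same is true for the collection of $\overline{\mathcal P'}$-traces and $\{K_\chi^-\}_\chi$'' — so you may invoke that directly rather than rerunning the argument under conjugation, but your dualization is formally valid and changes nothing substantive.
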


The following proposition is applied to graded Hecke algebras \cite{Lu3} in a later section.

\begin{proposition}\label{fl}
Let $\mathcal A$ be a $\mathbb C [z]$-algebra with the following properties:
\begin{enumerate}
\item We have an algebra embedding $\mathbb C W \subset \mathcal A$, and $\mathcal A$ is a flat $\mathbb C [z]$-module;
\item Specialization to $z=0$ yields an isomorphism $\mathbb C _0 \otimes _{\mathbb C [z]} \mathcal A \cong A_W$, which identifies subalgebras $\mathbb CW$ in the both sides;
\item There exists a $\mathbb C^{\times}$-action $\mathsf{r} _{\bullet}$ on $\mathcal A$ with $\mathsf{r} _a z = a z$ $(a \in \mathbb C^{\times})$ which induces:
\begin{itemize}
\item an isomorphism $\mathsf{r} _{z_1/z_0}^* : \mathbb C _{z_0} \otimes _{\mathbb C[z]} \mathcal A \stackrel{\cong}{\longrightarrow} \mathbb C _{z_1} \otimes _{\mathbb C[z]} \mathcal A$ for $z_0 \neq 0 \neq z_1$;
\item a dilation action on $A_W = \mathbb C_0 \otimes _{\mathbb C[z]} \mathcal A$ with respect to the grading.
\end{itemize}
\end{enumerate}
Let $M$ be a finite-dimensional irreducible $\mathcal A$-module for which $z$ acts by a nonzero scalar and $L_{\chi}$ appears in $M$ with multiplicity one $($as a $W$-module$)$. Then, there exists an indecomposable graded $A_W$-module $M_0$ $($canonical up to grading shifts and isomorphisms$)$ so that $M \MID _W \cong M_0 \MID _W$ and $P_{\chi}$ surjects onto $M_0$.\\
In addition, if we have a $\mathbb C^{\times}$-equivariant $\mathcal A$-module $\mathcal M$ which is flat over $\mathbb C[z]$ and $M \cong \mathbb C_1 \otimes_{\mathbb C [z]} \mathcal M$, then we have a submodule $\mathcal M ^{\flat} \subset \mathcal M$ so that $\mathbb C[z^{\pm 1}] \otimes _{\mathbb C[z]} \mathcal M ^{\flat} \cong \mathbb C[z^{\pm 1}] \otimes _{\mathbb C[z]} \mathcal M$ and $M_0 \cong \mathbb C_0 \otimes _{\mathbb C[z]} \mathcal M^{\flat}$.
\end{proposition}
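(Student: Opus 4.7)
The plan is to mimic the classical Rees construction that relates a filtered algebra to its associated graded. By condition 3), the $\mathbb{C}^{\times}$-action $\mathsf{r}_{\bullet}$ equips $\mathcal{A}$ with a $\mathbb{Z}_{\geq 0}$-grading $\mathcal{A}=\bigoplus_{d\geq 0}\mathcal{A}_{d}$ with $\mathbb{C}W\subset\mathcal{A}_{0}$ and $z$ homogeneous of positive degree, and condition 2) identifies $\mathcal{A}/z\mathcal{A}$ with $A_{W}$ as a graded algebra. The strategy is to build $M_{0}$ as the associated graded of $M$ along a canonical filtration coming from a cyclic vector of type $\chi$.

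After rescaling via $\mathsf{r}_{\bullet}$, I may assume that $z$ acts on $M$ by $1$, so $M$ is a module over $\mathcal{A}/(z-1)\mathcal{A}$. Choose a nonzero $v\in M$ spanning the unique $L_{\chi}$-isotypic line; multiplicity-one together with irreducibility of $M$ forces $\mathcal{A}\cdot v=M$, and $v$ is canonical up to scalar. Define an exhaustive $W$-stable filtration
$$F^{d}M\ :=\ \sum_{e\leq d}\mathcal{A}_{e}\cdot v$$
and set $M_{0}:=\mathrm{gr}^{F}M$, a finite-dimensional graded $A_{W}$-module. Because $\mathbb{C}W\subset\mathcal{A}_{0}$, the filtration is $W$-stable, so $M_{0}\cong M$ as $W$-modules. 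The image of $v$ in degree $0$ is cyclic of type $\chi$, giving a surjection $P_{\chi}\twoheadrightarrow M_{0}$, so $M_{0}$ has simple head $L_{\chi}$ and is therefore indecomposable. The whole construction depends only on $v$ up to scalar, hence $M_{0}$ is canonical up to isomorphism and grading shift.

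For the family statement, when no $\mathcal{M}$ is prescribed I take the Rees module $\mathcal{M}^{\mathrm{Rees}}:=\bigoplus_{d}F^{d}M$, with $\mathcal{A}_{e}$ sending the degree-$d$ part into the degree-$(d+e)$ part through the $\mathcal{A}/(z-1)\mathcal{A}$-action on $M$; it is graded, $\mathbb{C}[z]$-flat, $\mathbb{C}^{\times}$-equivariant, with special fibre $M_{0}$ and generic fibre $M$. If $\mathcal{M}$ is given, lift $v$ to a homogeneous vector $\tilde{v}\in\mathcal{M}$ of minimal weight (existence guaranteed by the $\mathbb{C}^{\times}$-weight decomposition combined with the multiplicity-one assumption on $L_{\chi}$), and set $\mathcal{M}^{\flat}:=\mathcal{A}\cdot\tilde{v}\subset\mathcal{M}$. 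Flatness of $\mathcal{M}^{\flat}$ over $\mathbb{C}[z]$ follows from torsion-freeness inherited from $\mathcal{M}$; the equality $\mathbb{C}[z^{\pm 1}]\otimes_{\mathbb{C}[z]}\mathcal{M}^{\flat}=\mathbb{C}[z^{\pm 1}]\otimes_{\mathbb{C}[z]}\mathcal{M}$ is a fibrewise statement, since after inverting $z$ every fibre is isomorphic to $M$, hence irreducible, while $\tilde{v}$ transports to a nonzero element in every fibre under $\mathsf{r}_{\bullet}$.

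The main obstacle is the remaining identification $\mathbb{C}_{0}\otimes_{\mathbb{C}[z]}\mathcal{M}^{\flat}\cong M_{0}$, up to a shift by $\deg\tilde{v}$. This amounts to showing that the homogeneous filtration on $\mathcal{A}\cdot\tilde{v}$ matches, via specialization at $z=1$, the filtration $F^{\bullet}$ used to define $M_{0}$: if $a\in\mathcal{A}_{e}$ satisfies $a\tilde{v}\in z\mathcal{M}$, then the image $\bar{a}\in\mathcal{A}/(z-1)\mathcal{A}$ must send $v$ into $F^{e-1}M$. This compatibility is where $\mathbb{C}^{\times}$-equivariance and $\mathbb{C}[z]$-flatness of $\mathcal{M}$ are essential: together they force the weight decomposition of the cyclic submodule $\mathcal{A}\tilde{v}$ to be the Rees filtration on $M$ generated by $v$. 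Once this check is in place, the surjection $P_{\chi}\twoheadrightarrow\mathcal{M}^{\flat}/z\mathcal{M}^{\flat}$ has the same kernel as $P_{\chi}\twoheadrightarrow M_{0}$, finishing the proof and incidentally re-proving canonicity of $M_{0}$.
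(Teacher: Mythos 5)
Your construction is essentially the Rees-module / deformation-to-the-associated-graded argument that the paper also uses, but you package it differently, and the repackaging creates a small amount of extra work that you flag but do not carry out.

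The paper takes $\widetilde P_{\chi}:=\mathcal A e_{\chi}$, lets $\mathcal K'$ be the saturated kernel of the (essentially unique) map $\widetilde P_{\chi}\to\mathcal M^{\circ}$, and then \emph{defines} both $M_0$ and $\mathcal M^{\flat}$ to be built from $\widetilde P_{\chi}/\mathcal K'$: $M_0:=\mathbb C_0\otimes_{\mathbb C[z]}(\widetilde P_{\chi}/\mathcal K')$, and $\mathcal M^{\flat}$ is the image of $\widetilde P_{\chi}$ inside $\mathcal M$, which one checks is again $\widetilde P_{\chi}/\mathcal K'$ because the image is torsion-free and its kernel agrees with $\mathcal K'$ after inverting $z$. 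With this set-up the identity $M_0\cong\mathbb C_0\otimes\mathcal M^{\flat}$ is tautological. You instead define $M_0=\mathrm{gr}^F M$ intrinsically as the associated graded of the filtration $F^dM:=\sum_{e\le d}\mathcal A_e\cdot v$; this is a cleaner and more conceptual way to see what $M_0$ is, and it is indeed the same module (it is the special fibre of the Rees module $\bigoplus_dF^dM\,z^d\subset M[z^{\pm1}]$, which coincides with the image of $\widetilde P_{\chi}$ in $\mathcal M^{\circ}$). However, you are then on the hook for a verification the paper never has to make: that the special fibre of $\mathcal M^{\flat}:=\mathcal A\tilde v\subset\mathcal M$ equals the intrinsically defined $M_0$. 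You explicitly identify this as the ``main obstacle'' and say the hypotheses ``force'' the match, but you do not prove it. The missing check is to invoke the canonical $\mathcal A[z^{-1}]$-equivariant identification $\mathcal M[z^{-1}]\cong M\otimes_{\mathbb C}\mathbb C[z^{\pm1}]$ (coming from $\mathbb C^{\times}$-equivariance and flatness), under which $(\mathcal A\tilde v)_d$ is sent exactly to $F^dM\cdot z^d$ because the image of $\mathcal A_d$ in $\mathcal A/(z-1)\mathcal A$ equals $\sum_{e\le d}\overline{\mathcal A_e}$. Once you write that out, your argument closes; as it stands, the proof has a real gap at precisely the step you point to. Two smaller remarks: (a) your step ``the image of $v$ in degree $0$ is cyclic of type $\chi$'' silently uses $\mathcal A_0=\mathbb C W$, i.e.\ that the $\mathbb C^{\times}$-action on $\mathcal A$ is non-negatively graded; this is true in the intended application but is not among the stated hypotheses, whereas the paper's use of $\mathcal A e_{\chi}$ sidesteps it; and (b) ``$L_{\chi}$-isotypic \emph{line}'' is wrong unless $\dim L_{\chi}=1$, but since $\mathcal A_0\supseteq\mathbb C W$ and $\mathbb C W v$ is the full isotypic component by irreducibility, your filtration is nevertheless independent of the choice of $v$.
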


\begin{proof}
Suppose that $z$ act by $z_0$ on $M$. By utilizing the $\mathbb C^{\times}$-action, $M$ can be transferred to an $\mathcal A$-module $\mathcal M ^{\circ}$ that is flat over $\mathbb C [z^{\pm 1}]$ and $\mathbb C _{z_1} \otimes _{\mathbb C[z^{\pm 1}]} \mathcal M ^{\circ} \cong \mathsf{r}_{z_1 / z_0}^{*} M$ for each $z_1 \in \mathbb C^{\times}$. Let $\widetilde{P} _{\chi} := \mathcal A e _{\chi}$ be a direct summand of $\mathcal A$. This is a non-zero projective $\mathcal A$-module. By the multiplicity-free assumption and irreducibility, we have a unique (up to scalar multiplications and $z^{\pm 1}$-twists) map $\widetilde{P} _{\chi} \to \mathcal M ^{\circ}$ which becomes surjection after localizing to $\mathbb C [z^{\pm 1}]$. Let $\mathcal K$ be the kernel of this map, which is an $\mathcal A$-submodule of $\widetilde{P} _{\chi}$ by definition. Here $\mathcal K$ must be a torsion-free $\mathbb C [ z ]$-module since $\widetilde{P} _{\chi}$ is so. Here $\mathbb C[z]$ is PID, so $\mathcal K$ is flat as a $\mathbb C [z]$-module. Therefore, we have inclusions of $\mathcal A$-modules
$$\mathcal K \subset \mathcal K' := \mathbb C[z^{\pm 1}] \otimes _{\mathbb C [z]} \mathcal K \cap \widetilde{P} _{\chi} \subset \mathbb C [z^{\pm 1}] \otimes _{\mathbb C [z]} \widetilde{P} _{\chi}.$$
By the maximality of this module and again by fact that $\mathbb C [z]$ is PID, we conclude that $\widetilde{P} _{\chi} / \mathcal K'$ is flat as a $\mathbb C [ z ]$-module. In addition, $\mathcal M ^{\circ}$ and $\widetilde{P} _{\chi} / \mathcal K'$ are naturally isomorphic if we invert $z$. By the rigidity of (finite-dimensional) $W$-modules, we conclude that $M_0 := \mathbb C _0 \otimes _{\mathbb C [z]} ( \widetilde{P} _{\chi} / \mathcal K' )$ has the same $W$-module structure as that of $M$. In addition, it admits a surjection from $P_{\chi} \cong \mathbb C_0 \otimes _{\mathbb C[z]} \widetilde{P} _{\chi}$. Now we utilize the $\mathbb C^{\times}$-action to deduce $M_0$ is graded.

For the latter assertion, we set $\mathcal M^{\flat} := ( \widetilde{P} _{\chi} / \mathcal K' )$. We rearrange the above map by twisting some power of $z$ if necessary to obtain a homomorphism $\widetilde{P} _{\chi} \longrightarrow \mathcal M$, whose image contains $\mathbb C[z] W e _{\chi} \cong \mathbb C[z] L_{\chi}$. By the above construction, it gives rise to a submodule $\mathcal M^{\flat} \subset \mathcal M$ as desired.
\end{proof}

\section{Kostka systems arising from reductive groups}
We use the setting of the previous section. In this section, we prove the existence of a Kostka system corresponding to a generalized Springer correspondence by utilizing Lusztig's construction of generalized Springer correspondence/graded Hecke algebra.

In this section (and only in this section), we work over a field of positive characteristic in order to apply the machinery of \cite{BBD}. We fix two distinct primes $p$ and $\ell$, set $\mathbb F$ to be a finite extension of $\mathbb F_p$, and set $\Bbbk$ to be the algebraic closure of $\mathbb F$. We define $\mathsf{Fr}$ to be the geometric Frobenius morphism such that $X ( \Bbbk ) ^{\mathsf{Fr}} = X ( \mathbb F )$ for a variety $X$ over $\mathbb F$. For sheaves, we usually work in the derived category, and hence we understand that all functors are derived unless stated otherwise. We utilize some identification $\overline{\mathbb Q} _{\ell} \cong \mathbb C$ to pass the results to the other cases.

A generalized Springer correspondence is determined by the following data (\cite{L-IC}): a split connected reductive group $G$ over $\mathbb F$, its split Levi subgroup $L$, a cuspidal $\overline{\mathbb Q}_{\ell}$-local system $\mathcal L$ on a nilpotent orbit $\mathcal O_c$ of $L$, and its Frobenius linearization $\phi : \mathsf{Fr} ^* \mathcal L \stackrel{\cong}{\longrightarrow} \mathcal L$ (which is a descent data from $\Bbbk$ to $\mathbb F$) defined over $\mathcal O _c \otimes _{\mathbb F} \Bbbk$. We call $\mathbf c := ( G, L, \mathcal O_c, \mathcal L, \phi )$ a {\it cuspidal datum}.

We assume that the characteristic of $\mathbb F$ is good for $G$. For an algebraic group, we denote its Lie algebra by its small gothic letter. Let $\mathcal N _{G} \subset \mathfrak g$ denote the nilpotent cone of $G$. Let $P \subset G$ be a parabolic subgroup of $G$, with a choice of its Levi decomposition $P = LU$. The nilpotent cone $\mathcal N_L = \mathcal N_G \cap \mathfrak l$ of $L$ contains the $L$-orbit $\mathcal O_c$. Form a collapsing map
$$\mu : G \times ^{P} ( \overline{\mathcal O_c} \oplus \mathfrak u ) \longrightarrow \mathcal N _G.$$
We denote the domain of $\mu$ by $\widetilde{\mathcal N}$, and the image of $\mu$ by $\mathcal N$. Note that $\mu$ is proper and $\mathcal N$ is closed in $\mathcal N _G$. Let $j : \mathcal O_c \to \overline{\mathcal O_c}$ be the natural inclusion map and let $\mathsf{pr} : ( \overline{\mathcal O_c} \oplus \mathfrak u ) \to \overline{\mathcal O_c}$ be the projection map. They are $L$- and $P$-equivariant, respectively. By the cleanness property of cuspidal local systems (Ostrik \cite{Os}), we have $j _! \mathcal L \cong j _* \mathcal L$, and hence $\mathsf{pr} ^* j _! \mathcal L$ defines a (shifted) $P$-equivariant perverse sheaf on $( \overline{\mathcal O_c} \oplus \mathfrak u )$. By taking the $G$-translation, we obtain a (shifted) $G$-equivariant perverse sheaf $\dot{\mathcal L}$ on $G \times ^{P} ( \overline{\mathcal O_c} \oplus \mathfrak u )$. Let $W = W _{\mathbf c}:= \{ g \in N_G(L) \mid g^* \mathcal L \cong \mathcal L \} / L$ be the Weyl group attached to $\mathbf c$. Let $H^{\circ}$ be the identity component of an algebraic group $H$. For $x \in \mathcal N ( \mathbb F )$, we set $A_x := Z_G(x)/Z_G(x)^{\circ}$.

The following Theorem \ref{LusSTD} is (logically) buried in Lusztig \cite{L-IC, Lu2, L-CG1, L-CG2} (which lies on the results of many mathematicians, including those of Borho-MacPherson \cite{BM}, Ginzburg \cite{Gi,CG}, Shoji \cite{Sh1}, Beynon-Spaltenstein \cite{BS}, and Evens-Mirkovi\'c \cite{EM}). Some part of its Lie algebra version is presented in Letellier \cite{Let} \S 5 (which serves a good point to begin with) and Mirkovi\'c \cite{Mi}. Hence, all the assertions in Theorem \ref{LusSTD} are known to experts, and the author is claiming {\it no} originality for Theorem \ref{LusSTD} itself. Nevertheless, we provide explanations on how to deduce the present form for the sake of completeness.

\begin{theorem}[Lusztig's generalized Springer correspondence]\label{LusSTD} We have the following results over $\Bbbk$:
\begin{enumerate}
\item The sheaf $\mu _* \dot{\mathcal L} [\dim \widetilde{\mathcal N}]$ is perverse, and is a direct sum of simple perverse sheaves $($with respect to the self-dual perversity$)$;
\item We have $A_{W} \cong \mathrm{Ext} ^{\bullet} _G ( \mu _* \dot{\mathcal L},  \mu _* \dot{\mathcal L} )$ as graded algebras, where the extension is taken in the $G$-equivariant derived category $D_G^b (\mathcal N)$;
\item {\bf (generalized Springer correspondence)} For each $\chi \in \mathsf{Irr} \, W$, there exists a simple $(G$-equivariant$)$ perverse sheaf $\mathsf{IC} (\chi)$ on $\mathcal N$ so that:
\begin{equation}
\mu _* \dot{\mathcal L} [\dim \widetilde{\mathcal N}] \cong \bigoplus _{\chi \in \mathsf{Irr} \, W} L_{\chi} \boxtimes \mathsf{IC} ( \chi ).\label{IC-simple}
\end{equation}
In addition, we have $\mathsf{IC} ( \chi ) \cong \mathsf{IC} ( \chi' )$ if and only if $L _{\chi} \cong L _{\chi'}$ as $W$-modules;
\item For each $i \in \mathbb Z$, the Frobenius action $($arising from $\phi)$ of $\mathrm{Ext} ^{i} _G ( \mu _* \dot{\mathcal L},  \mu _* \dot{\mathcal L} )$ is pure of weight $i$. More precisely, $\phi$ induces a vector space automorphism with the absolute values of all of its eigenvalues equal to $q ^{i/2}$;
\item For each $x \in \mathcal N ( \mathbb F )$, we set $\mathfrak B_x := \mu ^{-1} (x)$ and $\imath _x : \{ x \} \hookrightarrow \mathcal N$. Then, the graded vector space
\begin{equation}
H_{\bullet} ( \mathfrak B_x, \dot{\mathcal L} ) := \mathbb H ^{\bullet} ( \imath_x ^! \mu _* \dot{\mathcal L} [2 \dim \mathcal N - 2 \dim \mathfrak B_x ] ) \label{homology}
\end{equation}
admits a structure of a graded $A_W$-module which commutes with the $A_x$-action;
\item Let $x \in \mathcal N (\mathbb F)$. For each $\xi \in \mathsf{Irr} \, A_x$, we define
$$K^{{\mathbf c}, gen} _{(x, \xi)} = H_{\bullet} ( \mathfrak B_x, \dot{\mathcal L} ) _{\xi} := \Hom _{A_x} (\xi, H_{\bullet} ( \mathfrak B_x, \dot{\mathcal L} ) )$$
and call it the generalized Springer representation. The graded module $K^{{\mathbf c}, gen} _{(x, \xi)}$ is concentrated in non-negative even degrees;
\item Fix $x \in \mathcal N (\mathbb F)$ and let $\xi \in \mathsf{Irr} \, A_x$. For every $\chi' \in \mathsf{Irr} \, W$, we have
$$[K _{(x,\xi)} ^{\mathbf c, gen} : L _{\chi'}] = t ^{\dim \mathfrak B _x - \frac{1}{2} \dim \mathcal N} \mathsf{gdim} \, \mathrm{Hom} _{A_x} ( \xi, \mathbb H ^{\bullet} ( i _{x} ^! \mathsf{IC} (\chi') ) );$$
\item Each $x \in \mathcal N ( \mathbb F )$ and $\xi \in \mathsf{Irr} \, A_x$ gives rise to a $G$-equivariant simple perverse sheaf $\mathsf{IC} ( x, \xi )$ via the minimal extension of the local system on $G.x$ corresponding to $\xi$. If this $\mathsf{IC} ( x, \xi )$ is not of the form $\mathsf{IC} ( \chi )$ for some $\chi \in \mathsf{Irr} \, W$, then $K^{{\mathbf c}, gen} _{(x, \xi)} = \{ 0 \}$;
\item If $K^{{\mathbf c}, gen} _{(x, \xi)} \neq \{ 0 \}$, then $( K^{{\mathbf c}, gen} _{(x, \xi)} ) _0$ is irreducible as a $W$-module. In addition, the Frobenius action on $K^{{\mathbf c}, gen} _{(x, \xi)}$ is pure;
\item The graded $W$-module $K^{{\mathbf c}, gen} _{(x, \xi)}$ is isomorphic to the one defined by using varieties over $\mathbb C$.
\end{enumerate}
\end{theorem}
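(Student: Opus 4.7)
The plan is to assemble classical inputs from \cite{L-IC, L-CG1, L-CG2, BBD, BM, Gi, Os} rather than attempt anything from scratch, since the statement essentially packages Lusztig's theory. First I would verify the basic setup: the space $\widetilde{\mathcal N} = G\times^P(\overline{\mathcal O_c}\oplus\mathfrak u)$ is smooth (as $\overline{\mathcal O_c}$ may be replaced by a smooth resolution at the level of local systems once cleanness is invoked), and Ostrik's cleanness \cite{Os} gives $j_!\mathcal L\cong j_*\mathcal L$, so that $\mathsf{pr}^*j_!\mathcal L$ is a shifted $P$-equivariant simple perverse sheaf, which by $G$-translation defines $\dot{\mathcal L}$.

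For parts (1), (3), (4): since $\mu$ is proper (as $G/P$ is projective) and $\dot{\mathcal L}$ is pure of weight $\dim\widetilde{\mathcal N}$ with respect to $\phi$, the Decomposition Theorem of \cite{BBD} together with Deligne's purity theorem for proper pushforwards immediately yields that $\mu_*\dot{\mathcal L}[\dim\widetilde{\mathcal N}]$ is semisimple and pure of weight $\dim\widetilde{\mathcal N}$, establishing (4). A direct support argument (or the semismallness of $\mu$ after restricting to a stratum of $\mathcal N$) confines the complex to a single perverse degree, giving (1). Granting (2), part (3) then follows by decomposing $\mu_*\dot{\mathcal L}[\dim\widetilde{\mathcal N}]$ into $W$-isotypic pieces: each $L_\chi$ labels a unique $G$-equivariant simple perverse summand $\mathsf{IC}(\chi)$ by Schur's lemma applied to the endomorphism algebra identified in (2).

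The hard part will be (2). Following Ginzburg--Lusztig \cite{Gi, CG, L-CG1, L-CG2}, I would realize $\mathrm{Ext}^\bullet_G(\mu_*\dot{\mathcal L}, \mu_*\dot{\mathcal L})$ as the $G$-equivariant Borel--Moore homology of the Steinberg-type fiber product $Z := \widetilde{\mathcal N}\times_{\mathcal N}\widetilde{\mathcal N}$ twisted by $\dot{\mathcal L}\boxtimes\dot{\mathcal L}^\vee$, with convolution providing the product. Stratifying $Z$ by the $G$-orbits on $G/P\times G/P$, which are indexed by $W=W_{\mathbf c}$ via the cuspidal datum, produces a filtration whose associated graded matches the graded dimension of $\mathbb C W\otimes\mathbb C[\mathfrak h^*]$. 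The key technical step is to split this filtration to recover the smash product structure of $A_W$; this relies on purity (part (4)) together with parity/odd-vanishing on each stratum, and the cuspidality of $\mathcal L$ (to rule out the extra terms that would appear for a non-cuspidal Levi datum).

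Parts (5)--(10) should then follow as formal corollaries. For (5), the convolution action of $H^G_\bullet(Z, \dot{\mathcal L}\boxtimes\dot{\mathcal L}^\vee)$ on $\mathbb H^\bullet(\imath_x^!\mu_*\dot{\mathcal L})$ furnishes the $A_W$-action, which commutes with $A_x$ by construction. For (6), combine purity of the stalks (from (4)) with a parity vanishing argument for generalized Springer fibers. For (7), apply $\imath_x^!$ to the decomposition (\ref{IC-simple}) and extract the $\xi$-isotypic component; the shift $t^{\dim\mathfrak B_x - \frac12\dim\mathcal N}$ is precisely what is absorbed into the normalization (\ref{homology}). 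Part (8) is the classification: simple $G$-equivariant perverse sheaves on $\mathcal N$ bijecting with pairs $(x,\xi)$ means that those pairs not appearing as $\mathsf{IC}(\chi)$ must give $K^{\mathbf c,gen}_{(x,\xi)}=0$ by (\ref{IC-simple}) and proper base change. For (9), the ``top-term'' irreducibility is Lusztig's original parameterization statement in \cite{L-IC}, and pureness of the Frobenius action is inherited from (4). Finally, (10) follows from Deligne's comparison theorem between $\ell$-adic and analytic (equivariant) cohomology, using the rigidity of $W$-modules to transport the action across the comparison isomorphism.
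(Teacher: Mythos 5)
Your roadmap is broadly the same as the paper's (assemble \cite{L-IC}, \cite{Lu2}, \cite{L-CG1}, \cite{L-CG2}, \cite{BBD}, \cite{CG} in the usual way), but three of your intermediate claims are wrong as stated and would not close the argument.

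First, $\widetilde{\mathcal N} = G\times^P(\overline{\mathcal O_c}\oplus\mathfrak u)$ is \emph{not} smooth in general, since $\overline{\mathcal O_c}$ is usually singular, and Ostrik's cleanness does not change the space. What cleanness gives is that $j_!\mathcal L\cong j_{!*}\mathcal L\cong j_*\mathcal L$, so that $\dot{\mathcal L}$, after the appropriate shift, is a \emph{simple perverse sheaf} on $\widetilde{\mathcal N}$ (equivalently, the $!$-extension from the smooth open $G\times^P(\mathcal O_c\oplus\mathfrak u)$ is already the intermediate extension). That simplicity, together with $\dot{\mathcal L}$ being of geometric origin, is what lets \cite{BBD}~5.4.6/6.2.4 apply to the proper pushforward. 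The smoothness claim is a red herring.

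Second, in (6), non-negativity of degrees is not a purity or parity statement, and your combination of ``purity of stalks from (4)'' with ``a parity vanishing argument'' only addresses the evenness half. The vanishing of $\mathcal H^k\bigl(\imath_x^!\mu_*\dot{\mathcal L}[2\dim\mathcal N - 2\dim\mathfrak B_x]\bigr)$ for $k<0$ is a consequence of the \emph{cosupport condition} in the definition of a perverse sheaf applied to the perversity established in (1), together with the semismallness of $\mu$ (which controls how $\dim\mathfrak B_x$ enters the shift). The evenness, separately, is \cite{Lu2}~24.8a plus the even-dimensionality of nilpotent orbits. You cannot skip the cosupport step.

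Third, in (9), you claim the Frobenius-purity of $K^{\mathbf c,gen}_{(x,\xi)}$ is ``inherited from (4).'' It is not: the functor $\imath_x^!$ does \emph{not} preserve purity (proper pushforward preserves purity; shriek restriction at a closed point does not). The pointwise purity of $\imath_x^!\mu_*\dot{\mathcal L}$ is a genuine theorem of Lusztig (\cite{Lu2}~24.6, in the character-sheaf framework) and has to be cited as such; the paper does exactly this. Until you invoke that result, you do not have the purity used crucially downstream (e.g., in the degeneration of the Serre spectral sequence inside the proof of Theorem~\ref{injectivity}).

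Everything else in your proposal matches the paper up to minor rearrangement: (1) is \cite{L-IC}~6.5c; (2) is the equivariant-Ext/Steinberg convolution identification from \cite{L-CG1}, \cite{L-CG2}~8.11 and \cite{CG}~\S8 (the paper additionally spells out that the Borel approximation of $EG$ over $\mathbb F$ yields the mixed equivariant category needed to even state (4)); (3), (7), (8) are formal consequences of (\ref{IC-simple}); (10) is \cite{Lu2}~24.8b together with \cite{BBD}~(6.1.10.1) and 6.2.2--6.2.7. Fix the three points above and the proposal is a correct sketch along the same lines as the paper.
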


\begin{remark}\label{BLex}
{\bf 1)} For the sake of simplicity, our homologies substantially differ from the usual convention (e.g. their degrees are cohomological). In particular, the $i$-th homology of a smooth irreducible variety $\mathfrak X$ (in this paper) is $H ^{i - 2 \dim \mathfrak X} ( \mathfrak X, \mathbb D _{\mathfrak X} )$, where $\mathbb D_{\mathfrak X}$ is the dualizing sheaf of $\mathfrak X$. {\bf 2)} There are other Springer correspondences (see e.g. Xue \cite{Xu}). It might be interesting to see whether they give rise to Kostka systems, and how they are related with those in this paper.
\end{remark}

\begin{proof}[Sketch of the proof of Theorem \ref{LusSTD}]
Here we use the good characteristic assumption in several ways: One is to utilize the Springer isomorphism between the unipotent variety and the nilpotent cone of $G$. Another is to assume the set of nilpotent orbits, its dimensions, its stabilizers at points, and its closure relations are in common between over $\mathbb F$ and over $\mathbb C$ (\cite{CM}). The other is that \cite{L-IC,Lu2} sometimes requires the good characteristic assumption.

{\bf 1)} follows from \cite{L-IC} 6.5c. Since $\mathbf H$ in \cite{L-CG2} 8.11 is free over $H ^{\bullet} _{\mathbb G_m} (\mathrm{pt})$, the forgetful map must be surjective by the Serre spectral sequence. We have $A_W \cong \mathbf H / ( \mathbf r )$ in the notation of \cite{L-CG2} \S 8. Therefore, {\bf 2)} follows from the positive characteristic analogue of \cite{L-CG2} 8.11. For its proof (\cite{L-CG2}, or the combination of \cite{L-CG1} and \cite{CG} \S 8.6) to work in our setting (and to justify the proof of {\bf 4)}), it suffices to have a model of $EG$ defined over $\mathbb F$ which yields the mixed version $D ^b _{G,m} ( \mathcal N )$ of $D ^b _{G} ( \mathcal N )$.

In \cite{L-CG1,L-CG2}, the space $EG$ is replaced by a smooth irreducible variety $\Gamma$ (depending on $j$) with a free $G$-action and $H^m ( \Gamma ) = \{ 0 \}$ for $0 < m \le j$ (to compute the $j$-th $G$-equivariant cohomology). The weight structure of $H^j ( BG ) = H ^j ( G \backslash \Gamma )$ is independent of the choice of such $\Gamma$.

Hence, the Borel approximation model of $EG$ (cf. \cite{L-CG1} 1.1) yield the (well-defined) notion of weights in $G$-equivariant cohomologies. This implies the existence of $D ^b _{G,m} ( \mathcal N )$. Thus, {\bf 2)} follows by \cite{L-CG2}, or by \cite{L-CG1} and \cite{CG}. See Shoji \cite{Sh5} \S 2 for more detailed justification (which covers \cite{L-CG1}).

The sheaf $\dot{\mathcal L}$ is of geometric origin (\cite{BBD} 6.2.4) by the classification of cuspidal pairs in \cite{L-IC}. Since $\mu$ is proper, it follows that $\mu _* \dot{\mathcal L}$ is a direct sum of simple perverse sheaves (\cite{BBD} 5.4.6). The presentation of $A_{W,0}$ implies $\overline{\mathbb Q}_{\ell} W \cong \mathrm{Hom} _G ( \mu _* \dot{\mathcal L}, \mu _* \dot{\mathcal L} )$. Therefore, the rest of the assertions in {\bf 3)} follows.

The vector space $\overline{\mathbb Q}_{\ell} W = A_{W,0}$ is pure of weight $0$ (since $W$ arise as automorphisms of $\mu _* \dot{\mathcal L}$ in $D ^b_G ( \mathcal N )$, and is defined over $\mathbb F$), and $H^2 _L ( \mathcal O_c )$ is pure of weight $2$ (actually $\phi$ induces $q \mathrm{id}$, since our groups $G, L$, and $Z ( L ) ^{\circ}$ are $\mathbb F$-split by assumption). Since $A_W$ is generated by $\overline{\mathbb Q}_{\ell} W$ and $H^{2} _L ( \mathcal O_c )$ (by \cite{L-CG1} 4.1, 5.1 and \cite{L-CG2} 8.11), we deduce {\bf 4)}.

With {\bf 1)} and {\bf 2)} in hands, {\bf 5)} follows by \cite{L-CG1} 8.1, 8.2 (base change is applicable by the cleanness property of $\mathcal L$). The non-negativity assertion of {\bf 6)} follows by the vanishing costalk condition in the definition of perverse sheaves applied to {\bf 1)} and the fact that $\mu$ is semi-small by \cite{L-IC} 1.2. The evenness assertion of {\bf 6)} follows by \cite{Lu2} 24.8a and the fact that every nilpotent orbit has even dimension (\cite{CM}).

The $W$-module structure of $K^{{\mathbf c}, gen} _{(x, \xi)}$ arises from $A_{W,0}$ (cf. \cite{L-CG1} 8.1). Therefore, (\ref{IC-simple}) implies that the $L_{\chi'}$-isotypic part of $H_{\bullet} ( \mathfrak B_x, \dot{\mathcal L} )$ given by $\mathbb H^{\bullet} ( i_x^! ( L_{\chi'} \boxtimes \mathsf{IC} ( \chi' ) ) [ \dim \mathcal N - 2 \dim \mathfrak B_x ])$. This yields {\bf 7)}.

In view of {\bf 1)} and {\bf 5)}, \cite{Lu2} 24.8c implies {\bf 8)}. The first part of {\bf 9)} follows by (\ref{IC-simple}) and the vanishing costalk condition of simple perverse sheaves. The latter half of {\bf 9)} follows by \cite{Lu2} 24.6.

We explain {\bf 10)}. By \cite{Lu2} 24.8b, we deduce that the dimensions of the stalks of $G$-equivariant perverse sheaves are in common between all good characteristics. We utilize \cite{BBD} (6.1.10.1) to conclude that they are also in common with that over $\mathbb C$. In addition, $\mu_* \dot{\mathcal L}$ is of geometric origin. In particular, simple perverse sheaves appearing in $\mu_* \dot{\mathcal L}$ are in common between over $\mathbb F$ (provided if the characteristic is large enough) and over $\mathbb C$ (\cite{BBD} 6.2.2--6.2.7). These are enough to deduce the assertion from the definition (\ref{homology}).
\end{proof}

We denote the degree zero part of $K^{{\mathbf c}, gen} _{(x, \xi)}$ (if non-zero) by $L_{(x,\xi)}$. If $L_{(x,\xi)} \cong L_{\chi}$ as a $W$-module, then we call $(x,\xi)$ the Springer correspondent of $\chi$ with respect to $\mathbf c$. This is equivalent to $\mathsf{IC} (x,\xi) \cong \mathsf{IC} ( \chi )$. For each $\chi \in \mathsf{Irr} \, W$ with its Springer correspondent $(x, \xi)$, we set $\mathcal O _{\chi} := G. x \subset \mathcal N$.

\begin{theorem}\label{injectivity}
Fix a phyla $\mathcal P$ that is a refinement of the closure ordering of the generalized Springer correspondence attached to $\mathbf c$. Then, $K^{{\mathbf c}, gen} _{(x, \xi)}$ is the $\mathcal P$-trace of $L _{(x,\xi)}$.
\end{theorem}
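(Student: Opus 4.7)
The plan is to identify $K^{\mathbf c, gen}_{(x,\xi)}$ with the $\mathcal P$-trace $P_{\chi, \mathcal P}$, where $\chi \in \mathsf{Irr}\, W$ is the Springer correspondent of $(x, \xi)$, so that $L_{(x,\xi)} = (K^{\mathbf c, gen}_{(x,\xi)})_0 \cong L_\chi$ by Theorem \ref{LusSTD}(9). First I build a natural homomorphism $\Phi : P_\chi \to K^{\mathbf c, gen}_{(x,\xi)}$ by lifting the inclusion $L_\chi \hookrightarrow (K^{\mathbf c, gen}_{(x,\xi)})_0$ via projectivity of $P_\chi = A_W e_\chi$. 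Then I verify that $\Phi$ factors through $P_{\chi, \mathcal P}$, and finally argue that the induced map $\bar\Phi$ is an isomorphism.

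The factorization rests on a composition-factor analysis via Theorem \ref{LusSTD}(7), which gives
$$[K^{\mathbf c, gen}_{(x,\xi)} : L_{\chi'}] = t^{\dim \mathfrak B_x - \dim \mathcal N / 2}\, \mathsf{gdim}\, \mathrm{Hom}_{A_x}\bigl(\xi,\, \mathbb H^\bullet(i_x^!\, \mathsf{IC}(\chi'))\bigr).$$
Non-vanishing requires $\mathcal O_\chi \subseteq \overline{\mathcal O_{\chi'}} = \mathrm{supp}(\mathsf{IC}(\chi'))$, i.e., $\chi \leq \chi'$ in the closure order, and hence $\chi \leq_{\mathcal P} \chi'$ by the refinement hypothesis. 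If $\mathcal O_{\chi'} = \mathcal O_\chi$ but $\chi' \neq \chi$, then $\chi'$ is paired with a distinct $A_x$-character $\xi' \neq \xi$ and the costalk $i_x^!\, \mathsf{IC}(\chi')$ restricts to $\xi'$ in a single degree, so $\mathrm{Hom}_{A_x}(\xi, \xi') = 0$ forces the multiplicity to vanish. For $\chi' = \chi$, semi-smallness of $\mu$ yields $\dim \mathfrak B_x = (\dim \mathcal N - \dim \mathcal O_\chi)/2$, and the resulting degree shifts cancel to give $[K^{\mathbf c, gen}_{(x,\xi)} : L_\chi] = 1$, concentrated in degree zero. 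Combining these cases, $\hom_W(L_\eta, (K^{\mathbf c, gen}_{(x,\xi)})_d) = 0$ whenever $\eta \lesssim_{\mathcal P} \chi$ and $(\eta, d) \neq (\chi, 0)$. In particular every positive-degree homomorphism $P_\eta \to K^{\mathbf c, gen}_{(x,\xi)}$ with $\eta \lesssim_{\mathcal P} \chi$ is zero, so $\Phi$ annihilates the defining submodule of $P_{\chi, \mathcal P}$ and descends to $\bar\Phi : P_{\chi, \mathcal P} \to K^{\mathbf c, gen}_{(x,\xi)}$.

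The main obstacle is showing $\bar\Phi$ is an isomorphism. For surjectivity, equivalently top-term generation of $K^{\mathbf c, gen}_{(x,\xi)}$ as an $A_W$-module, I will invoke Proposition \ref{fl} applied to Lusztig's graded affine Hecke algebra $\mathbb H_{\mathbf c}$ from \cite{L-CG1, L-CG2}, which flatly deforms $A_W$ over $\mathbb C[r]$ and acts on a $\mathbb C[r]$-deformation $\mathcal M$ of $K^{\mathbf c, gen}_{(x,\xi)}$; at a generic nonzero value of $r$ the fiber is an irreducible $\mathbb H_{\mathbf c}$-module in which $L_\chi$ appears with $W$-multiplicity one (Theorem \ref{LusSTD}(9)), so Proposition \ref{fl} produces a graded $A_W$-module surjected by $P_\chi$, and a careful matching of the submodule $\mathcal M^\flat$ with the specialization at $r = 0$ identifies the resulting quotient with $K^{\mathbf c, gen}_{(x,\xi)}$. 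For injectivity I will compare graded characters: Shoji-Lusztig theory together with Theorem \ref{Sh} identify $\mathsf{gch}\, K^{\mathbf c, gen}_{(x,\xi)}$ with the Kostka polynomial prescribed by (\ref{Smatrix}), whereas the composition-factor constraints on $P_{\chi, \mathcal P}$ combined with the converse direction of Theorem \ref{reint} (the corollary following its proof, together with the orthogonality coming from Proposition \ref{ext}) forces $\mathsf{gch}\, P_{\chi, \mathcal P}$ to equal the same Kostka polynomial; the surjection $\bar\Phi$ between modules of equal graded dimension is then an isomorphism. The genuine difficulty lies in the top-term generation step, since this property fails for untwisted Springer cohomology and depends here on the cuspidality of $\mathcal L$ entering through the construction of $\mathbb H_{\mathbf c}$.
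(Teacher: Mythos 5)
Your strategy of identifying $K^{\mathbf c, gen}_{(x,\xi)}$ with the $\mathcal P$-trace $P_{\chi,\mathcal P}$ is correct, and the composition-factor analysis via Theorem \ref{LusSTD}(7) (so that any lift $\Phi : P_\chi \to K^{\mathbf c,gen}_{(x,\xi)}$ kills the defining submodule of $P_{\chi,\mathcal P}$) matches the final step of the paper's proof. But the two remaining ingredients -- top-term generation and the injectivity of $\bar\Phi$ -- have genuine gaps.

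The central problem is the top-term generation step. You want to deduce it from Proposition \ref{fl} applied to a $\mathbb C[z]$-deformation $\mathcal M$ of $K^{\mathbf c,gen}_{(x,\xi)}$. But read the statement of Proposition \ref{fl} carefully: its output is a module $M_0 = \mathbb C_0 \otimes_{\mathbb C[z]} \mathcal M^\flat$ for a possibly \emph{proper} submodule $\mathcal M^\flat \subsetneq \mathcal M$ (the one generated by $\widetilde P_\chi$). What is guaranteed is only $M_0|_W \cong M|_W$, not $M_0 \cong \mathbb C_0 \otimes \mathcal M$. Indeed, when $\mathcal M/\mathcal M^\flat \neq 0$ it is $z$-torsion and $\mathrm{Tor}_1^{\mathbb C[z]}(\mathbb C_0, \mathcal M/\mathcal M^\flat) \neq 0$, so the natural map $\mathbb C_0 \otimes \mathcal M^\flat \to \mathbb C_0 \otimes \mathcal M = K^{\mathbf c,gen}_{(x,\xi)}$ is neither injective nor an isomorphism, and $M_0$ and $K^{\mathbf c,gen}_{(x,\xi)}$ are, a priori, two \emph{different} graded $A_W$-modules with the same $W$-character. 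Proving $\mathcal M^\flat = \mathcal M$ at $z=0$ is \emph{exactly} the top-term generation property one is trying to establish, so this step is circular. What the paper does instead is an entirely geometric, weight-theoretic argument: it uses Claims \ref{j-ext} and \ref{comp-anal} on the perverse cohomology ${}^p H^i(\mathcal E^!)$ and its weights, the purity of $K^{\mathbf c,gen}_{(x,\xi)}$ coming from \cite{Lu2} 24.6 via Theorem \ref{LusSTD}(9), and the $E_2$-degeneration of the Serre spectral sequence (from the evenness of Theorem \ref{LusSTD}(6)). None of this is replaced by the Hecke-algebra deformation. (As the author's own remark after Corollary \ref{fgSp} indicates, top-term generation fails for untwisted Springer fibers, so the input from cuspidality and purity is essential, and it enters via weights, not via Proposition \ref{fl}.)

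The injectivity argument also does not close. You want to force $\mathsf{gch}\, P_{\chi,\mathcal P}$ to equal $\mathsf{gch}\, K^{\mathbf c,gen}_{(x,\xi)}$ by invoking Theorem \ref{Sh}, the corollary after Theorem \ref{reint}, and Proposition \ref{ext}. But Theorem \ref{reint} and its corollary presuppose that the collection already satisfies (\ref{Smatrix}) or Definition \ref{kos}(2); neither is known for $\{P_{\chi,\mathcal P}\}$ until the present theorem (and Theorem \ref{gSp}) is proved, and Proposition \ref{ext} only transposes $\mathrm{ext}^{0,1}$ spaces between $\mathcal P$- and $\overline{\mathcal P}$-traces -- it does not produce the orthogonality needed to apply Theorem \ref{Sh}. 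In the paper, injectivity is obtained instead by establishing a surjection $K^{\mathbf c,gen}_{(x,\xi)} \twoheadrightarrow P_{\chi,\mathcal P}$ by the same weight-filtration machinery (the chain $P_\chi \twoheadrightarrow \mathrm{Ext}^\bullet_G(\mathcal E^!, \ddot{\mathcal L}) \twoheadrightarrow K^{\mathbf c,gen}_{(x,\xi)} \twoheadrightarrow P_{\chi,\mathcal P}$), after which the pair of surjections between two finite-dimensional modules forces an isomorphism without any appeal to Kostka-polynomial orthogonality.
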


\begin{proof}
Fix $\chi \in \mathsf{Irr}\, W$ so that $(x,\xi)$ is the Springer correspondent of $\chi$. We denote $\mathcal O _{\chi}$ by $\mathcal O$ for the sake of simplicity. Let $\imath : \mathcal O \hookrightarrow \mathcal N$ be the inclusion. We set $d := \dim \widetilde{\mathcal N} = \dim \mathcal N$. We set $\ddot{\mathcal L} := \mu _* \dot{\mathcal L} [d] ( \frac{d}{2})$. Here $(\frac{d}{2})$ is the Tate twist which makes $\ddot{\mathcal L}$ perverse and pure of weight $0$ (cf. \cite{BBD} 5.1.8, 5.4.5, and 5.4.9. Note that here we interpret that the Tate twist has an effect on the data $\phi$ which we omitted from the notation).

By Theorem \ref{LusSTD} 2) and (\ref{IC-simple}), we have
$$P_{\chi} = A_W e _{\chi} \cong \mathrm{Ext} ^{\bullet} _G  ( \mathsf{IC} ( \chi ), \ddot{\mathcal L} ).$$
We set $\mathcal E := \imath ^* \mathsf{IC} ( \chi )$ and write $\mathcal E^! := \imath _! \mathcal E$. Let $\imath _y : \{ y \} \hookrightarrow \mathcal N$ be the inclusion of $y \in \mathcal N ( \mathbb F )$. The $A_W$-module $\mathrm{Ext} ^{\bullet} _G  ( \mathcal E^!, \ddot{\mathcal L} )$ is rewritten as:
\begin{align}\nonumber
& \mathrm{Ext} ^{\bullet} _G  ( \mathcal E^!, \ddot{\mathcal L} ) \cong \mathrm{Ext} ^{\bullet} _G  ( \mathcal E , \imath ^! \ddot{\mathcal L} ) \cong \mathrm{Ext} ^{\bullet} _{Z_G(x)}  ( \xi, \imath _x ^! \ddot{\mathcal L} ) \hskip 4mm \text{(adjunction and \cite{BL} 2.6.2)} \\\nonumber
& \cong \mathrm{Ext} ^{\bullet}_{A_x} ( \xi, \mathrm{Ext} ^{\bullet} _{Z_G(x) ^{\circ}} ( \overline{\mathbb Q} _{\ell}, \imath _x ^! \ddot{\mathcal L} ) ) \cong H_{\bullet} ^{Z_G(x) ^{\circ}} ( \mathfrak B_x, \dot{\mathcal L} ) _{\xi}\\
& = \bigoplus _{\zeta \in \mathsf{Irr} A_x} \Hom _{A_x} ( \xi, H ^{\bullet} _{Z_G(x) ^{\circ}} ( \{ x \} ) \otimes H_{\bullet} ( \mathfrak B_x, \dot{\mathcal L} ) _{\zeta} ),\label{gSpfiber}
\end{align}
where we utilized the fact that $\mathrm{Ext} ^{\bullet}_{A_x} ( \overline{\mathbb Q} _{\ell}, \bullet ) = \mathrm{Ext} ^{\bullet}_{D^b_{A_x} ( \mathrm{Spec} \, \Bbbk )} ( \overline{\mathbb Q} _{\ell}, \bullet )$ is the functor taking the $A_x$-fixed part of (a complex of) vector spaces. We set
$$\Lambda := \{ \eta \in \mathsf{Irr} \, W \MID \mathcal O _{\eta} \subset \overline{\mathcal O} \backslash \mathcal O \}.$$

We denote by ${}^p H ^{\bullet}$ and $\tau _{\bullet}$ the perverse cohomology functor and the truncation functor of $D ^b _G ( \mathcal N )$ with respect to its (self-dual) perverse $t$-structure. Then, the right $t$-exactness of $\imath _!$ implies
$${}^p H ^i ( \mathcal E ^! ) \neq \{ 0 \} \hskip 2mm \text{ only if } i \le 0.$$
Thanks to Theorem \ref{LusSTD} 2), we deduce an isomorphism
$$\mathrm{Ext} ^{odd}_G ( \mathsf{IC} ( \chi' ), \mathsf{IC} ( \chi'' ) ) = \{ 0 \} \text{ for every } \chi',\chi'' \in \mathsf{Irr} \, W.$$

In order to apply the formalism of weights, we sometimes descend from $\Bbbk$ to $\mathbb F$ by means of a Frobenius linearization. In particular, we understand that if a sheaf $\mathcal F$ is defined over $\Bbbk$, then $\mathcal F _{0}$ is the corresponding sheaf defined over $\mathbb F$ by utilizing the Frobenius linearization (coming from $\phi$ in $\mathbf c$). Thanks to the edge exact sequence
\begin{align}
0 \to \mathrm{Hom} _G ( \mathsf{IC} ( \chi' ), \mathsf{IC} ( \chi'' ) )_{\mathsf{Fr}} \to \mathrm{Ext} ^1 _G ( \mathsf{IC} ( \chi' )_0, \mathsf{IC} ( \chi'' )_0 ) \to \mathrm{Ext} ^1 _G ( \mathsf{IC} ( \chi' ), \mathsf{IC} ( \chi'' ) ) ^{\mathsf{Fr}} \to 0,\label{Fext}
\end{align}
we conclude that each ${}^p H ^i ( \mathcal E ^! ) _0$ is a direct sum of simple $G$-equivariant perverse sheaves (up to extensions between Tate twists of isomorphic modules) provided if all the constituents are of the form $\mathsf{IC} ( \chi' ) _0$ for some $\chi' \in \mathsf{Irr} \, W$.

We have a surjection
$${}^p H^0 ( \mathcal E ^! ) _0 \longrightarrow \!\!\!\!\! \rightarrow \mathsf{IC} ( \chi ) _0$$
in the category of perverse sheaves, which is a unique simple quotient.

\begin{claim}\label{j-ext}
We have ${}^p H ^0 ( \mathcal E ^! ) _0 = \mathsf{IC} ( \chi ) _0.$
\end{claim}

\begin{claim}\label{comp-anal}
For each $i < 0$, a direct summand of ${}^p H ^i ( \mathcal E ^! )_0$ is of the form $V_{\eta} \boxtimes \mathsf{IC} ( \eta )_0$ for some $\eta \in \Lambda$ and some continuous $\mathrm{Gal} ( \Bbbk / \mathbb F)$-module $V _{\eta}$. In addition, it is mixed of weight $< i$.
\end{claim}

\begin{proof}[Proof of Claims \ref{j-ext} and \ref{comp-anal}]
We prove the assertions by induction. For each $k \ge 0$, we denote by $j _k : \mathbb O _{k} \hookrightarrow \mathcal N$ the embedding of the union of all $G$-orbits of dimension $\ge \dim \mathcal O - k$. We set $\mathbb O' _k := \mathbb O_k \backslash \mathbb O_{k-1}$. We define $\jmath _k : \mathbb O _{k-1} \hookrightarrow \mathbb O_{k}$. It is clear that $j _k$ and $\jmath _k$ are open embeddings for each $k \ge 0$. We prove the assertions by induction on $k$.

We suppose that the assertions are true when restricted to $\mathbb O_{k-1}$. Notice that $\mathcal O \subset \mathbb O_0$ is a closed subset and hence the assertion holds when restricted to $\mathbb O_0$. We need to show that the assertions hold when restricted to $\mathbb O_k$.

By induction hypothesis, we have
$${}^p H ^i ( j _{k-1} ^! \mathcal E^! )_0 = \begin{cases} \{0\} & (i>0)\\ j _{k-1} ^! \mathsf{IC} ( \chi ) _0 & (i=0)\end{cases}$$
and each direct summand of ${}^p H ^i ( j _{k-1} ^! \mathcal E^! )_0$ ($i < 0$) is of the form $V_{\eta} \boxtimes j _{k-1} ^! \mathsf{IC} ( \eta )_0 = V_{\eta} \boxtimes j _{k-1} ^* \mathsf{IC} ( \chi )_0$ for some $\eta \in \Lambda$ with its weight $< i$.

We consider the distinguished triangle
$$\to ( \mathcal K_i )_0 \to ( \jmath _k )_{!} {}^p H ^{i} ( j _{k-1} ^! \mathcal E^! )_0 [-i] \to ( \jmath _k )_{!*} {}^p H ^{i} ( j _{k-1} ^! \mathcal E^! )_0 [-i] \stackrel{+1}{\longrightarrow},$$
where $( \jmath _k )_{!*}$ denote the minimal extension. The stalk of $( \jmath _k )_{!} {}^p H ^{i} ( j _{k-1} ^! \mathcal E^! )_0$ is zero along $\mathbb O_{k}'$ (by definition). For each $y \in \mathbb O_k' ( \mathbb F )$, we deduce that
\begin{equation}
\imath _y ^* H ^m ( ( \jmath _k )_{!*} {}^p H ^{i} ( j _{k-1} ^! \mathcal E^! )_0 [-i] ) \cong \imath _y ^* H ^{m+1} ( ( \mathcal K_i )_0 ) \hskip 5mm \text{ for each } m.\label{dist-succ}
\end{equation}
This implies that the pointwise weight of $( \mathcal K_i )_0$ is exactly one less than that of $( \jmath _k )_{!*} {}^p H ^{i} ( j _{k-1} ^! \mathcal E^! )_0 [-i]$ along $\mathbb O_{k}' ( \mathbb F )$. Therefore, all simple perverse sheaves supported on $\mathbb O'_k$ appearing in ${}^p H^m ( ( \jmath _k )_{!} {}^p H ^{i} ( j _{k-1} ^! \mathcal E^! ) ) _0$ must have weight $< ( m + i - 1 )$ ($m + i < 0$) or weight $< 0$ ($i = 0 = m$). Utilizing \cite{BBD} 5.4.1 (and the argument just after that), we deduce that ${}^p H^i ( j_k ^! \mathcal E^! ) _0$ has weight $< i$ for each $i < 0$. Now each ${}^p H ^{m} ( \mathcal K_i ) _0$ acquires only the sheaves of the form $j_k ^! \mathsf{IC} (\eta) _0$ for $\eta \in \Lambda$ (up to Tate twists) by the comparison of the stalks by using Theorem \ref{LusSTD} 8) and the induction hypothesis. This implies ${}^p H^0 ( j _{k} ^! \mathcal E^! ) _0 \cong {}^p H^0 ( ( \jmath _k )_{!} {}^p H ^{0} ( j _{k-1} ^! \mathcal E^! )) _0 \cong j _{k} ^! \mathsf{IC} ( \chi ) _0$ and every Jordan-H\"older constituent of ${}^p H^i ( j_k ^! \mathcal E^! )$ ($i < 0$) is of the form $j_k ^! \mathsf{IC} ( \eta )$ for some $\eta \in \Lambda$. Therefore, the induction proceeds and we conclude the results.
\end{proof}

We return to the proof of Theorem \ref{injectivity}.
Each direct summand $\mathsf{IC} ( \eta ) \subset {}^p H ^i ( \mathcal E ^! )$ yields an isomorphism
$$\mathrm{Ext} ^{-i+m}_G ( \mathsf{IC} ( \eta ) [-i], \ddot{\mathcal L} ) \cong \begin{cases} P _{\eta, m} & (m \text{ is even}) \\ \{ 0 \} & (m \text{ is odd}) \end{cases}.$$
By taking $\mathrm{Hom}_G ( \bullet, \ddot{\mathcal L} )$, we obtain a (part of an) exact sequence
\begin{align}\nonumber
0 \to & \mathrm{Ext} ^{-i+2m}_G ( \tau _{> i} \mathcal E ^!, \ddot{\mathcal L} )\to \mathrm{Ext} ^{-i+2m}_G ( \tau _{\ge i} \mathcal E ^!, \ddot{\mathcal L} ) \to \mathrm{Ext} ^{-i+2m}_G ( {}^p H ^i ( \mathcal E ^! ) [-i], \ddot{\mathcal L} )\\
& \to \mathrm{Ext} ^{1-i+2m}_G ( \tau _{> i} \mathcal E ^!, \ddot{\mathcal L} ) \to \mathrm{Ext} ^{1-i+2m}_G ( \tau _{\ge i} \mathcal E ^!, \ddot{\mathcal L} ) \to 0\label{five-term}
\end{align}
for each $m \in \mathbb Z$. This exact sequence admits a weight filtration with respect to the Frobenius action (by utilizing $\phi$ and its induced linearizations).

For a mixed $G$-equivariant sheaf $\mathcal F_0$ (which is equivalent to $\mathcal F \in D^b _G ( \mathcal N )$ with a Frobenius linearization $\phi _{\mathcal F} : \mathsf{Fr} ^* \mathcal F \cong \mathcal F$), we denote $\mathtt{Gr} ^{\mathsf W} _k \mathrm{Ext} ^{m}_G ( \mathcal F, \ddot{\mathcal L} ) $ the weight $k$ part of $\mathrm{Ext} ^{m}_G ( \mathcal F, \ddot{\mathcal L} )$ for each $m, k \in \mathbb Z$ (after constructing its associated graded). Then, Claim \ref{comp-anal} implies that
$$\mathtt{Gr} ^{\mathsf W} _{- i + m + k} \mathrm{Ext} ^{- i + m}_G ( {}^p H ^{i} ( \mathcal E ^! ) [-i], \ddot{\mathcal L} ) = \{ 0 \} \hskip 2mm \text{ for all }i < 0, k \le 0\text{, and all }m \in \mathbb Z.$$ Applying this to (\ref{five-term}), we conclude that the sequence
\begin{align*}
& \mathtt{Gr} ^{\mathsf W} _{1-i+2m} \mathrm{Ext} ^{-i+2m}_G ( \tau _{\ge i} \mathcal E ^!, \ddot{\mathcal L} ) \to \mathtt{Gr} ^{\mathsf W} _{1-i+2m} \mathrm{Ext} ^{-i+2m}_G ( {}^p H ^i ( \mathcal E ^! )[-i], \ddot{\mathcal L} )\\
& \to \mathtt{Gr} ^{\mathsf W} _{1-i+2m} \mathrm{Ext} ^{1-i+2m}_G ( \tau _{> i} \mathcal E ^!, \ddot{\mathcal L} ) \to \mathtt{Gr} ^{\mathsf W} _{1-i+2m} \mathrm{Ext} ^{1-i+2m}_G ( \tau _{\ge i} \mathcal E ^!, \ddot{\mathcal L} ) \to 0
\end{align*}
must be exact and
$$\mathtt{Gr} ^{\mathsf W} _{-i+2m} \mathrm{Ext} ^{-i+2m}_G ( \tau _{> i} \mathcal E ^!, \ddot{\mathcal L} ) \cong \mathtt{Gr} ^{\mathsf W} _{-i+2m} \mathrm{Ext} ^{-i+2m}_G ( \tau _{\ge i} \mathcal E ^!, \ddot{\mathcal L} ) \hskip 2mm \text{ for all }m \in \mathbb Z.$$

In particular, if we write $\mathtt{Gr} ^{\mathsf W} _{i-1} {}^p H ^{i} ( \mathcal E ^{!} )_0$ by $\bigoplus _{\eta \in \Lambda} V_{\eta,-1} ^i \boxtimes \mathsf{IC} ( \eta )$, then the above short exact sequence turns into a short exact sequence
\begin{equation}
\bigoplus _{\eta \in \Lambda} V _{\eta, -1} ^i \boxtimes P _{\eta} \to \bigoplus _{m \ge 0} \mathtt{Gr} ^{\mathsf W} _{m} \mathrm{Ext} ^{m}_G ( \tau _{> i} \mathcal E ^!, \ddot{\mathcal L} ) \to \bigoplus _{m \ge 0} \mathtt{Gr} ^{\mathsf W} _{m} \mathrm{Ext} ^{m}_G ( \tau _{\ge i} \mathcal E ^!, \ddot{\mathcal L} ) \to 0\label{proj-kill}
\end{equation}
for each $i < 0$ and $m \in \mathbb Z$.

Thanks to the $A_W$-module structure of $\bigoplus _{m \ge 0} \mathtt{Gr} ^{\mathsf W} _{m} \mathrm{Ext} ^{m}_G ( \bullet, \ddot{\mathcal L} )$ arising from the Yoneda composition, we deduce the surjectivities of
$$P _{\chi} \longrightarrow \!\!\!\!\! \rightarrow \bigoplus _{m \ge 0} \mathtt{Gr} ^{\mathsf W} _{m} \mathrm{Ext} ^{m} _G ( \tau _{> i} \mathcal E ^!, \ddot{\mathcal L} ) \longrightarrow \!\!\!\!\! \rightarrow P_{\chi, \mathcal P}$$
for every $i \le -1$ by using (\ref{proj-kill}) repeatedly. Here the middle term is $P_{\chi}$ in the $i = -1$ case, while it is the pure-part of $H _{\bullet} ^{Z_G (x) ^{\circ}} ( \mathfrak B_x, \dot{\mathcal L}) _{\xi}$ in the $i \ll 0$ case. Since $H _{odd} ( \mathfrak B_x, \dot{\mathcal L}) _{\xi} = \{ 0 \}$ by Theorem \ref{LusSTD} 6), the Serre spectral sequence
$$E_2 (\chi) : = H ^{\bullet} _{Z_G (x) ^{\circ}} ( \mathrm{pt} ) \otimes H _{\bullet} ( \mathfrak B_x, \dot{\mathcal L}) \Rightarrow H _{\bullet} ^{Z_G (x) ^{\circ}} ( \mathfrak B_x, \dot{\mathcal L})$$
is $E_2$-degenerate. By Theorem \ref{LusSTD} 9), we conclude that $H _{\bullet} ^{Z_G (x) ^{\circ}} ( \mathfrak B_x, \dot{\mathcal L}) _{\xi}$ is pure. This implies that $H _{\bullet} ^{Z_G (x) ^{\circ}} ( \mathfrak B_x, \dot{\mathcal L})_{\xi}$ is a quotient of $P _{\chi}$. The $H ^{\bullet} _{Z_G (x) ^{\circ}} ( \mathrm{pt} )$-action commutes with the $A_W$-action (as the $H _{Z_G(x)}^{\bullet} ( \mathrm{pt} )$-module structure is obtained as a scalar extension of the $H _{G} ^{\bullet} ( \mathrm{pt} )$-module structure of $A_W$; cf. \cite{L-CG1} 8.13, 8.14). By the degeneracy of $E_2 (\chi)$, the forgetful map
$$\phi : H _{\bullet} ^{Z_G (x) ^{\circ}} ( \mathfrak B_x, \dot{\mathcal L}) _{\xi} \longrightarrow H _{\bullet} ( \mathfrak B_x, \dot{\mathcal L}) _{\xi} \cong K _{(x,\xi)} ^{\mathbf c, gen}$$
must be surjective. Thus, $\ker \, \phi$ is isomorphic to
\begin{equation*}
\Hom _{A_x} ( \xi, H ^{>0} _{Z_G (x) ^{\circ}} ( \{ x \} ) \otimes H _{\bullet} ( \mathfrak B_x, \dot{\mathcal L} ) _{\xi} \oplus \bigoplus _{\zeta \neq \xi} H ^{\bullet} _{Z_G (x) ^{\circ}} ( \{ x \} ) \otimes H _{\bullet} ( \mathfrak B_x, \dot{\mathcal L} ) _{\zeta} ).\label{annfor}
\end{equation*}
The surjectivity of $\phi$ implies that $H _{\bullet} ( \mathfrak B_x, \dot{\mathcal L}) _{\xi}$ is generated by its degree $0$-part. So are the same for every $\eta \in \mathsf{Irr} \, W$. Therefore, a generator set of $\ker \, \phi$ is contained in $H ^{\bullet} _{Z_G (x) ^{\circ}} ( \{ x \} ) \otimes H _{0} ( \mathfrak B_x, \dot{\mathcal L} )$. By Theorem \ref{LusSTD} 8) and 9), all the $W$-isotypic constituents of the latter space is of type $L_{\eta}$ with $\mathcal O _{\eta} = \mathcal O_{\chi}$. As a consequence, we have a sequence of surjective maps of graded $A_W$-modules
$$P _{\chi} \longrightarrow \!\!\!\!\! \rightarrow \mathrm{Ext} ^{\bullet}_G ( \mathcal E^!, \ddot{\mathcal L} ) \longrightarrow \!\!\!\!\! \rightarrow K _{(x,\xi)} ^{\mathbf c, gen} \longrightarrow \!\!\!\!\! \rightarrow P _{\chi, \mathcal P}.$$
In particular, $K _{(x,\xi)} ^{\mathbf c, gen}$ is a quotient of $P_{\chi}$. By Theorem \ref{LusSTD} 7), we deduce that $[ K _{(x,\xi)} ^{\mathbf c, gen} : L _{\chi'} ] \neq 0$ only if $\mathcal O _{\chi} \subset \overline{\mathcal O _{\chi'}} \setminus \mathcal O _{\chi'}$ or $\chi = \chi'$. Hence, $K _{(x,\xi)} ^{\mathbf c, gen}$ must be a quotient of $P _{\chi, \mathcal P}$. This implies $K _{(x,\xi)} ^{\mathbf c, gen} \cong P _{\chi, \mathcal P}$ as desired.
\end{proof}

\begin{definition}
Let $\mathbf c$ be a cuspidal datum. A phyla $\mathcal P$ is called an admissible phyla of $\mathbf c$ if each phylum is an equi-orbit class of the Springer correspondents with respect to $\mathbf c$ and a phylum has a smaller index if the dimension of an orbit is smaller.
\end{definition}

\begin{theorem}\label{gSp}
Let $\mathbf c$ be a cuspidal datum. For each ${\chi} \in \mathsf{Irr} \, W$ with its Springer correspondent $(x,\xi)$ $($with respect to $\mathbf{c})$, we define $K ^{\mathbf c} _{\chi} := K^{\mathbf c, gen} _{(x,\xi)}$.\\
Then, the collection $\{ K ^{\mathbf c} _{\chi} \} _{\chi \in \mathsf{Irr} \, W}$ gives rise to a Kostka system adapted to every admissible phyla $\mathcal P$ of ${\mathbf c}$.
\end{theorem}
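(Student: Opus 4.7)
The plan is to verify the two conditions of Definition \ref{kos} for the family $\{K^{\mathbf{c}}_\chi\}_{\chi \in \mathsf{Irr}\, W}$ adapted to an admissible phyla $\mathcal{P}$. A preliminary simplification I will use: the group $W = W_{\mathbf{c}}$ arising from the cuspidal datum is a Weyl group (in particular a Coxeter group), so its irreducible characters are integer-valued and $\chi^\vee \cong \chi$ for every $\chi \in \mathsf{Irr}\, W$. Hence $\overline{\mathcal{P}} = \mathcal{P}$ is of Malle type (cf. Remark \ref{rphyla}), and I may legitimately set $K^+_\chi = K^-_\chi = K^{\mathbf{c}}_\chi$ throughout, so that only the ``$+$''-part of Definition \ref{kos} needs to be checked.

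First I would dispatch Definition \ref{kos} (1) by a direct invocation of Theorem \ref{injectivity}. The definition of an admissible phyla is tailored so that $\mathcal{P}$ refines the closure ordering on nilpotent orbits coming from the Springer correspondence attached to $\mathbf{c}$: phyla are equi-orbit classes indexed in nondecreasing order of orbit dimension, which is certainly a refinement of the closure order. Hence Theorem \ref{injectivity} applies to every $\chi$ and identifies $K^{\mathbf{c}}_\chi = K^{\mathbf{c},gen}_{(x,\xi)}$ with the $\mathcal{P}$-trace $P_{\chi,\mathcal{P}}$ of $P_\chi$.

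Next I would verify Definition \ref{kos} (2), namely $\langle K^{\mathbf{c}}_\chi, (K^{\mathbf{c}}_\eta)^*\rangle_{\mathsf{gEP}} = 0$ for $\chi \not\sim \eta$. The route is to form the matrix $\widetilde{K} := ([K^{\mathbf{c}}_\chi : L_\eta])_{\chi,\eta}$ and argue that it satisfies the Lusztig--Shoji equation (\ref{Smatrix}) of Theorem \ref{Sh}. The triangular normalization (\ref{Kmatrix}) of $\widetilde{K}$ is already provided by step one together with Lemma \ref{invert}. The identity (\ref{Smatrix}) itself is the classical orthogonality relation of Kostka polynomials arising from generalized Springer correspondence (\cite{Sh1,Lu2,Sh2}): the matrix $\Omega$ is recognized, via Corollary \ref{ajEP}, as $\Delta^{-1}$ times the Euler pairing of the projectives $P_\chi$, while $\Lambda$ is the block-diagonal matrix recording the Green-function inner products within each phylum. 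Once (\ref{Smatrix}) is known for $\widetilde{K}$, the Corollary following Theorem \ref{reint} converts it directly into Definition \ref{kos} (2).

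The main obstacle is packaged entirely inside Theorem \ref{injectivity}, whose proof is the technical heart of this section: it combines the purity and weight-filtration of $\mu_*\dot{\mathcal{L}}$ from Theorem \ref{LusSTD} with a careful analysis of the perverse truncation of $\imath^*\mathsf{IC}(\chi)$ and of the associated edge maps (\ref{Fext})--(\ref{proj-kill}). Beyond Theorem \ref{injectivity}, what remains is essentially bookkeeping: matching the geometric realization (\ref{gSpfiber}) of the multiplicities with the algebraic normalization appearing in Theorem \ref{Sh} (the Tate twist $(\tfrac{d}{2})$, the transpose conventions warned about in the proof of Theorem \ref{Sh}, and the $\Delta$-factor of Corollary \ref{ajEP}) so as to apply the Corollary to Theorem \ref{reint} cleanly. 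I do not expect any further geometric input beyond what has been developed in this section to be required.
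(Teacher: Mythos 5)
Your proposal is correct and follows essentially the same route as the paper: Theorem \ref{injectivity} supplies condition \textbf{1)} of Definition \ref{kos}, the Lusztig--Shoji orthogonality (the paper pins this down to \cite{Lu2} 24.8b) gives that the multiplicity matrix satisfies (\ref{Smatrix}), and the Corollary to Theorem \ref{reint} converts this into condition \textbf{2)}. The observation that $W_{\mathbf c}$ is a real reflection group, hence $\mathcal P$ is of Malle type and $K^+_\chi = K^-_\chi$, is a valid and clean simplification that the paper leaves implicit.
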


\begin{proof}
By \cite{Lu2} 24.8b, the matrix $([ K ^{\mathbf c} _{\chi} : L _{\eta} ])$ satisfies (\ref{Smatrix}) for every refinement of the closure ordering. Hence, Theorem \ref{injectivity} implies that $\{ K ^{\mathbf c} _{\chi} \} _{\chi}$ is a Kostka system adapted to every admissible phyla $\mathcal P$ of ${\mathbf c}$ as required.
\end{proof}

\begin{corollary}\label{filt-eo}
Keep the setting of Theorem \ref{gSp}. For each $\chi \in \mathsf{Irr} \, W$, we define
$$\widetilde{K} _{\chi} := P_{\chi} / ( \sum _{\chi' < \chi, f \in \mathrm{hom} _{A_W} ( P_{\chi'}, P _{\chi} )} \mathrm{Im} f ),$$
where the ordering of $\mathsf{Irr} \, W$ is determined by an admissible phyla of $\mathbf c$. Then, $\widetilde{K} _{\chi}$ admits a separable decreasing $A_W$-module filtration whose successive quotients are of the form $\{ K ^{\mathbf c} _{\chi'} \} _{\chi' \sim \chi}$ up to grading shifts.
\end{corollary}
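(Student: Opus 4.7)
The plan is to realize $\widetilde{K}_\chi$ geometrically as the $\xi$-isotypic equivariant homology $H_\bullet^{Z_G(x)^\circ}(\mathfrak{B}_x,\dot{\mathcal{L}})_\xi$ of the generalized Springer fiber and then decompose it into a direct sum of grading-shifted $K^{\mathbf{c}}_{\chi'}$'s ($\chi'\sim\chi$) via the Serre spectral sequence. Let $(x,\xi)$ denote the Springer correspondent of $\chi$, set $\mathcal{O}:=\mathcal{O}_\chi$, $\imath:\mathcal{O}\hookrightarrow\mathcal{N}$, and $\mathcal{E}^{!}:=\imath_!\imath^*\mathsf{IC}(\chi)$, following the notation of the proof of Theorem~\ref{injectivity}. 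The first and main step is to establish
\[
\widetilde{K}_\chi \;\cong\; \Ext^\bullet_G(\mathcal{E}^{!},\ddot{\mathcal{L}}) \;\cong\; H_\bullet^{Z_G(x)^\circ}(\mathfrak{B}_x,\dot{\mathcal{L}})_\xi
\]
as graded $A_W$-modules, the second isomorphism being (\ref{gSpfiber}). For the first, the canonical perverse truncation $\mathcal{E}^{!}\to{}^{p}H^{0}(\mathcal{E}^{!})=\mathsf{IC}(\chi)$ (valid by Claim~\ref{j-ext}) induces a surjection $P_\chi\twoheadrightarrow\Ext^\bullet_G(\mathcal{E}^{!},\ddot{\mathcal{L}})$, and the iterated exact sequence (\ref{proj-kill}) together with Claim~\ref{comp-anal} and the purity of $H_\bullet^{Z_G(x)^\circ}(\mathfrak{B}_x,\dot{\mathcal{L}})_\xi$ (established in the final paragraph of the proof of Theorem~\ref{injectivity}) identifies its kernel with the $A_W$-submodule of $P_\chi$ generated by the images of $\hom_{A_W}(P_\eta,P_\chi)$ for $\eta\in\Lambda$. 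This matches the defining submodule of $\widetilde{K}_\chi$ provided $\hom_{A_W}(P_{\chi'},P_\chi)=0$ whenever $\chi'<\chi$ and $\chi'\notin\Lambda$, which follows from $\Ext^\bullet_G(\mathsf{IC}(\chi),\mathsf{IC}(\chi'))=0$ (by the support constraint on $\mathsf{IC}(\chi)$ and $\mathsf{IC}(\chi')$) via Theorem~\ref{LusSTD}~2.

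Next, by Theorem~\ref{LusSTD}~6 and~9 the Serre spectral sequence computing $H_\bullet^{Z_G(x)^\circ}(\mathfrak{B}_x,\dot{\mathcal{L}})$ from $H^\bullet_{Z_G(x)^\circ}(\{x\})$ and $H_\bullet(\mathfrak{B}_x,\dot{\mathcal{L}})$ degenerates at $E_2$, furnishing an $A_x$-equivariant isomorphism
\[
H_\bullet^{Z_G(x)^\circ}(\mathfrak{B}_x,\dot{\mathcal{L}}) \;\cong\; H^\bullet_{Z_G(x)^\circ}(\{x\})\otimes H_\bullet(\mathfrak{B}_x,\dot{\mathcal{L}})
\]
of graded modules on which $A_W$ acts only through the second factor. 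Decomposing $H_\bullet(\mathfrak{B}_x,\dot{\mathcal{L}})=\bigoplus_\zeta\zeta\otimes K^{\mathbf{c},gen}_{(x,\zeta)}$ as an $A_x$-module, applying $\Hom_{A_x}(\xi,-)$, and invoking Theorem~\ref{LusSTD}~8 to see that only the Springer pairs $(x,\zeta_{\chi'})$ with $\chi'\sim\chi$ contribute the non-zero $K^{\mathbf{c}}_{\chi'}=K^{\mathbf{c},gen}_{(x,\zeta_{\chi'})}$, one obtains
\[
\widetilde{K}_\chi \;\cong\; \bigoplus_{\chi'\sim\chi}\bigl(H^\bullet_{Z_G(x)^\circ}(\{x\})\bigr)^{\xi\otimes\zeta_{\chi'}^{*}}\otimes K^{\mathbf{c}}_{\chi'}
\]
as graded $A_W$-modules, each multiplicity space being a finite-dimensional non-negatively evenly graded vector space (one-dimensional in degree $0$ exactly when $\chi'=\chi$).

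This direct-sum presentation, after choosing a homogeneous basis of each multiplicity space and ordering the resulting summands, immediately yields the desired separable decreasing $A_W$-module filtration of $\widetilde{K}_\chi$ whose successive quotients are grading shifts of the $K^{\mathbf{c}}_{\chi'}$ for $\chi'\sim\chi$. I expect the main obstacle to be Step~1, the clean identification $\widetilde{K}_\chi\cong\Ext^\bullet_G(\mathcal{E}^{!},\ddot{\mathcal{L}})$: while the weight-filtration bookkeeping from the proof of Theorem~\ref{injectivity} identifies the geometric kernel in terms of $\eta\in\Lambda$, matching it to the algebraic defining submodule of $\widetilde{K}_\chi$ requires the support vanishing $\hom_{A_W}(P_{\chi'},P_\chi)=0$ for $\chi'<\chi$ outside $\Lambda$, which must be verified from the closure relations between the orbits carrying non-zero summands of $\mu_*\dot{\mathcal{L}}$.
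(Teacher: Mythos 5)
Your step 1 breaks down at the claimed vanishing $\hom_{A_W}(P_{\chi'},P_\chi)=0$ for $\chi'<\chi$ with $\chi'\notin\Lambda$. This is never true: as a graded $W$-module $P_\chi\cong\mathbb{C}[\mathfrak{h}^*]\otimes L_\chi$, so $\hom_{A_W}(P_{\chi'},P_\chi)\cong\hom_W(L_{\chi'},\mathbb{C}[\mathfrak{h}^*]\otimes L_\chi)$, which is non-zero for every $\chi'$ since $\mathbb{C}[\mathfrak{h}^*]$ contains every irreducible as $W$-module (Lemma \ref{factor}). The Ext-vanishing you invoke as a justification is the same space (via Theorem \ref{LusSTD} 2), 3)), so that argument is circular and also false: there is no ``support constraint'' in the $G$-equivariant derived category that kills $\mathrm{Ext}^\bullet_G(\mathsf{IC}(\chi'),\mathsf{IC}(\chi))$ when the orbit closures differ. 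Consequently, identifying the kernel of $P_\chi\twoheadrightarrow\mathrm{Ext}^\bullet_G(\mathcal{E}^!,\ddot{\mathcal{L}})$ (generated by images of $P_\eta$, $\eta\in\Lambda$) with the defining submodule of $\widetilde{K}_\chi$ (generated by images of $P_{\chi'}$ for all $\chi'<\chi$) is not a matter of a Hom-vanishing. The paper instead argues by a squeeze: the geometric quotient $H^{Z_G(x)^\circ}_\bullet(\mathfrak{B}_x,\dot{\mathcal{L}})_\xi$ surjects onto $\widetilde{K}_\chi$ via the iterated use of (\ref{proj-kill}), while the reverse surjection follows because that module has no $W$-constituent $L_{\chi'}$ with $\chi'<\chi$ (by Theorem \ref{LusSTD} 7) and the commutativity of the $R_x$-action with $W$), so the defining submodule of $\widetilde{K}_\chi$ is contained in the kernel; this gives the isomorphism without ever needing a Hom-vanishing between projectives.

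There is a second, smaller overclaim in your step 2: the $E_2$-degeneracy of the Serre spectral sequence gives you an $A_W$-stable filtration whose associated graded is $\bigoplus_k R_x^{2k}\otimes H_\bullet(\mathfrak{B}_x,\dot{\mathcal L})_\zeta$ (as $A_W$-modules), but not an $A_W$-module splitting of the total space into that direct sum. The degeneracy only ensures freeness as an $R_x$-module and the collapse of the filtration spectral sequence; the canonical content is a filtration with specified subquotients, and the $\mathbb{C}[\mathfrak{h}^*]$-action (degree two) can genuinely mix filtration layers. This is precisely why the corollary is phrased as a filtration, not a direct sum. The paper's proof exhibits the filtration directly as $F^{2k}:=\bigoplus_\zeta\Hom_{A_x}(\xi,H^{\ge 2k}_{Z_G(x)^\circ}(\mathrm{pt})\otimes H_\bullet(\mathfrak{B}_x,\dot{\mathcal L})_\zeta)$, which is $A_W$-stable because the two actions commute, and then reads off the associated graded. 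Your final conclusion survives if you replace ``direct-sum presentation'' with ``$A_W$-stable spectral sequence filtration'', but the intermediate direct-sum decomposition you wrote down as an $A_W$-module isomorphism is not established.
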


\begin{proof}
We employ the setting in the proof of Theorem \ref{injectivity}. The $A_W$-module $H _{\bullet} ^{Z_G(x)^{\circ}} ( \mathcal B_x ) _{\xi}$ is a quotient of $P_{\chi}$. It surjects onto $\widetilde{K} _{\chi}$ by a repeated use of (\ref{proj-kill}). Since the $H ^{\bullet} _{Z_G(x)^{\circ}} ( \mathrm{pt} )$-action commutes with the $W$-action, $H _{\bullet} ^{Z_G(x)^{\circ}} ( \mathcal B_x ) _{\xi}$ does not contain a $W$-type $L_{\chi'}$ with $\chi' < \chi$ by Theorem \ref{LusSTD} 7). Therefore, we have $H _{\bullet} ^{Z_G(x)^{\circ}} ( \mathcal B_x ) _{\xi} \cong \widetilde{K} _{\chi}$. For each $k \in \mathbb Z$, the subspace
$$\bigoplus _{\zeta \in \mathsf{Irr} A_x} \Hom _{A_x} ( \xi, H ^{\ge 2k} _{Z_G(x)^{\circ}} ( \mathrm{pt} ) \otimes H _{\bullet} ( \mathfrak B_x ) _{\zeta} ) \subset \widetilde{K} _{\chi}$$
is an $A_W$-submodule. Its associated graded is a direct sum of $A_W$-modules of the form $\{ H _{\bullet} ( \mathfrak B_x ) _{\zeta} \} _{\zeta}$ (up to grading shifts), and hence we conclude the result.
\end{proof}

\begin{corollary}\label{filt-gr}
Keep the setting of Corollary \ref{filt-eo}. Define $R_x := H ^{\bullet} _{Z_G(x)^{\circ}} ( \mathrm{pt} )$ to be the graded algebra equipped with an $A_x$-action. We have
$$\mathsf{gch} \, \widetilde{K} _{\chi} = \sum _{(x,\zeta) \sim (x,\xi)} \left( \mathsf{gdim} \, \Hom _{A_x} ( \xi \otimes \zeta ^{\vee}, R_x ) \right) \cdot \mathsf{gch} \, K ^{\mathbf c, gen} _{(x,\zeta)}.$$
In particular, we have $\widetilde{K} _{\chi} = K _{\chi} ^{\mathbf c}$ if $Z_G(x)^{\circ}$ is unipotent.
\end{corollary}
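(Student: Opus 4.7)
The plan is to start from the identification $\widetilde{K}_{\chi} \cong H_{\bullet}^{Z_G(x)^{\circ}}(\mathfrak B_x, \dot{\mathcal L})_{\xi}$ already established in the proof of Corollary \ref{filt-eo}, and to compute its graded character by reducing the equivariant homology to the non-equivariant one via the Serre spectral sequence, and then decomposing the latter according to $A_x$-types.

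First I would recall that the Serre spectral sequence
$$E_2(\chi): R_x \otimes H_{\bullet}(\mathfrak B_x, \dot{\mathcal L}) \Rightarrow H_{\bullet}^{Z_G(x)^{\circ}}(\mathfrak B_x, \dot{\mathcal L})$$
already degenerated at $E_2$ in the proof of Theorem \ref{injectivity}, because of the vanishing of the odd homology (Theorem \ref{LusSTD} 6) combined with the purity of $H_{\bullet}^{Z_G(x)^{\circ}}(\mathfrak B_x, \dot{\mathcal L})_{\xi}$. Taking the $\xi$-isotypic part with respect to the commuting $A_x$-action, this yields a graded $A_W$-module isomorphism
$$\widetilde{K}_{\chi} \;\cong\; \Hom_{A_x}\!\left(\xi,\; R_x \otimes H_{\bullet}(\mathfrak B_x, \dot{\mathcal L})\right).$$

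Next I would decompose the non-equivariant homology according to its $A_x$-action: by the definition of $K^{\mathbf c,gen}_{(x,\zeta)}$ we have
$$H_{\bullet}(\mathfrak B_x, \dot{\mathcal L}) \;\cong\; \bigoplus_{\zeta \in \mathsf{Irr}\, A_x} \zeta \boxtimes K^{\mathbf c,gen}_{(x,\zeta)}$$
as $A_x \otimes A_W$-modules, where the $A_W$-action lives on the multiplicity space. Substituting this decomposition and using the standard adjunction $\Hom_{A_x}(\xi, R_x \otimes \zeta) = \Hom_{A_x}(\xi \otimes \zeta^{\vee}, R_x)$ together with the fact that $R_x$ and $K^{\mathbf c,gen}_{(x,\zeta)}$ carry commuting graded structures, one obtains the desired formula
$$\mathsf{gch}\, \widetilde{K}_{\chi} \;=\; \sum_{\zeta} \bigl(\mathsf{gdim}\, \Hom_{A_x}(\xi \otimes \zeta^{\vee}, R_x)\bigr)\cdot \mathsf{gch}\, K^{\mathbf c,gen}_{(x,\zeta)}.$$
The sum is effectively over those $(x,\zeta) \sim (x,\xi)$ because (i) $x$ is fixed, so admissibility of $\mathcal P$ puts all these $\zeta$'s into a single phylum, and (ii) by Theorem \ref{LusSTD} 8) the terms with $(x,\zeta)$ not a Springer correspondent contribute zero.

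For the last sentence, if $Z_G(x)^{\circ}$ is unipotent then it is an affine space up to $\mathbb{G}_a$-fibrations, hence $R_x = H^{\bullet}_{Z_G(x)^{\circ}}(\mathrm{pt}) = \mathbb{C}$ sits in degree zero with trivial $A_x$-action, and Schur's lemma collapses $\Hom_{A_x}(\xi \otimes \zeta^{\vee}, R_x)$ to $\delta_{\xi,\zeta} \mathbb{C}$, giving $\widetilde{K}_{\chi} = K^{\mathbf c}_{\chi}$. The whole argument is essentially bookkeeping once the $E_2$-degeneracy is in hand; the only point that requires a bit of care is making sure the $A_x$-action commutes with the $A_W$-action at each stage, which follows from the fact that both actions come from $G$-equivariance via \cite{L-CG1} 8.13 and 8.14 as already used in the proof of Theorem \ref{injectivity}.
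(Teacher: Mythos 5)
Your proof is correct and follows the same route the paper intends: the one‑line proof ``Compare the presentation of $\widetilde{K}_{\chi}$ in (\ref{gSpfiber}) and Corollary \ref{filt-eo}'' is exactly the argument you have unpacked — identify $\widetilde{K}_{\chi} \cong H_{\bullet}^{Z_G(x)^{\circ}}(\mathfrak B_x, \dot{\mathcal L})_{\xi}$, use the $E_2$-degenerate Serre spectral sequence to write this as $\bigoplus_{\zeta}\Hom_{A_x}(\xi, R_x \otimes \zeta)\otimes K^{\mathbf c,gen}_{(x,\zeta)}$, apply the adjunction, and note that the nonzero $\zeta$'s are confined to a single phylum by admissibility and Theorem \ref{LusSTD} 8). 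The unipotent case via $R_x=\mathbb{C}$ and Schur's lemma is also exactly what the paper is implicitly invoking.
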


\begin{proof}
Compare the presentation of $\widetilde{K} _{\chi}$ in (\ref{gSpfiber}) and Corollary \ref{filt-eo}.
\end{proof}

\begin{corollary}\label{Lp}
We use the setting of Theorem \ref{gSp} and borrow the notation $\widetilde{K} _{\chi}$ and $R_x$ from Corollaries \ref{filt-eo} and \ref{filt-gr}. We define
$$\Xi _x := \{ \zeta \in \mathsf{Irr} A_x \!\mid ( x, \zeta ) \text{ is a Springer correspondent with respect to } \mathbf c \}.$$
We identify $\Xi_x$ with a subset of $\mathsf{Irr} \, W$. Form a graded algebra
\begin{align*}
& A_W^{\uparrow} := A_W / ( \sum _{\chi' < \chi} A_W e_{\chi'} A_W )
\text{ and set}\\
& R_x^{\mathbf c} := \bigoplus _{\xi,\zeta \in \Xi_x} \Hom _{A_x} ( \xi \otimes \zeta ^{\vee}, R_x), \hskip 3mm \mathtt K := \bigoplus _{\chi \in \Xi_x} \widetilde{K}_{\chi}.
\end{align*}
Then, we have an essentially surjective functor
$$A_W ^{\uparrow} \mathchar`-\mathsf{gmod} \ni M \mapsto \mathrm{hom}_{A_W} ( \mathtt K, M ) \in R_x ^{\mathbf c} \mathchar`-\mathsf{gmod}$$
which annihilates precisely the module which does not contain $L_{\chi}$ with $\chi \in \Xi_x$.
\end{corollary}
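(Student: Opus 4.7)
My plan is to realise $\mathtt K$ as the idempotent-truncated projective $A_W^{\uparrow} e$, where $e := \sum_{\chi \in \Xi_x} e_\chi$, and then invoke the standard idempotent Morita truncation formalism. First observe that if $\chi \in \Xi_x$ then $\chi \sim \chi_0$ for any fixed $\chi_0 \in \Xi_x$, so the condition $\chi' < \chi$ in the definition of $\widetilde{K}_\chi$ coincides with the condition $\chi' < \chi_0$ defining the ideal used to form $A_W^{\uparrow}$. Since $\mathrm{hom}_{A_W}(P_{\chi'}, P_\chi) = e_{\chi'} A_W e_\chi$ and the image of right-multiplication by $e_{\chi'} A_W e_\chi$ is $A_W e_{\chi'} A_W e_\chi \subset P_\chi$, summing over $\chi' < \chi$ gives exactly $\bigl(\sum_{\chi' < \chi_0} A_W e_{\chi'} A_W\bigr) \cdot e_\chi$. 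Hence $\widetilde{K}_\chi = A_W^{\uparrow} e_\chi$ and $\mathtt K = A_W^{\uparrow} e$, so that $\mathrm{hom}_{A_W}(\mathtt K, M) = eM$ naturally for every $M \in A_W^{\uparrow}\mathchar`-\mathsf{gmod}$.

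Next I identify the endomorphism algebra. From $\mathtt K = A_W^{\uparrow} e$ one has $\End_{A_W^{\uparrow}}(\mathtt K)^{op} = eA_W^{\uparrow} e = \bigoplus_{\chi,\chi' \in \Xi_x} e_\chi \widetilde{K}_{\chi'}$ as a graded algebra. On the level of graded vector spaces, the filtration of $\widetilde{K}_{\chi'}$ provided by Corollary \ref{filt-eo} together with Corollary \ref{filt-gr} expresses $[\widetilde{K}_{\chi'} : L_\chi]$ as $\sum_\zeta \mathsf{gdim}\, \mathrm{hom}_{A_x}(\xi' \otimes \zeta^\vee, R_x) \cdot [K^{\mathbf c, gen}_{(x,\zeta)} : L_\chi]$, and Theorem \ref{LusSTD} 7) applied at a point of the open orbit (together with the fact that the IC-extension of $\xi$ restricts to its defining local system on $\mathcal O_\chi$) reduces the inner multiplicity to $\delta_{\zeta, \xi}$; this matches $\mathsf{gdim}\, (R_x^{\mathbf c})_{\xi', \xi} = \mathsf{gdim}\, \mathrm{hom}_{A_x}(\xi' \otimes \xi^\vee, R_x)$. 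To upgrade this to an algebra isomorphism $R_x^{\mathbf c} \cong eA_W^{\uparrow} e$, I use the $R_x$-action on $H^{Z_G(x)^\circ}_\bullet(\mathfrak B_x, \dot{\mathcal L})$ appearing in the proof of Corollary \ref{filt-eo}: this action commutes with the $A_W$-action (as explained in the proof of Theorem \ref{injectivity}) and is $A_x$-equivariant, and so canonically produces graded $A_W^{\uparrow}$-endomorphisms of $\mathtt K$ packaged, after taking $A_x$-isotypic components, by the off-diagonal pieces $\mathrm{hom}_{A_x}(\xi' \otimes \xi^\vee, R_x)$. The matching of graded dimensions then forces the resulting algebra map to be an isomorphism.

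With these identifications in hand the remaining assertions follow from the idempotent-truncation formalism. The functor $M \mapsto eM$ is exact (since $e^2 = e$), admits the left adjoint $N \mapsto A_W^{\uparrow} e \otimes_{eA_W^{\uparrow} e} N$, and the counit $e\bigl(A_W^{\uparrow} e \otimes_{eA_W^{\uparrow} e} N\bigr) = eA_W^{\uparrow} e \otimes_{eA_W^{\uparrow} e} N \to N$ is the identity, which yields essential surjectivity. The kernel consists of those $M$ with $eM = 0$, equivalently $e_\chi M = 0$ for every $\chi \in \Xi_x$; since $e_\chi M$ is the $L_\chi$-isotypic $W$-component of $M$ and every $A_W^{\uparrow}$-composition series refines a $W$-composition series in each graded piece, this is equivalent to $L_\chi$ not appearing as an $A_W^{\uparrow}$-composition factor of $M$ for any $\chi \in \Xi_x$.

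The main obstacle is the algebra-level identification in the second paragraph: matching graded dimensions is routine bookkeeping from Corollaries \ref{filt-eo} and \ref{filt-gr} together with Theorem \ref{LusSTD} 7), but promoting this to a natural graded algebra surjection $R_x^{\mathbf c} \to eA_W^{\uparrow} e$ requires a careful unwinding of the Ext-algebra presentation of $A_W$ from Theorem \ref{LusSTD} 2), together with the compatibility between the $H^\bullet_{Z_G(x)^\circ}(\mathrm{pt})$-action arising from the Serre spectral sequence and the $A_W$-action on $\mathtt K$.
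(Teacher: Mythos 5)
Your proof is correct and follows essentially the same route as the paper: the identification $\widetilde K_\chi = A_W^{\uparrow} e_\chi$ is precisely what underlies the paper's terse observation that ``each $\widetilde K_\chi$ is a projective object in $A_W^{\uparrow}\mathchar`-\mathsf{gmod}$,'' and your Morita-truncation paragraph spells out what the paper condenses into ``$\mathrm{hom}_{A_W}(\mathtt K,\mathtt K)\cong R_x^{\mathbf c}$, which is enough to see the assertion.'' The extra care you devote to upgrading the graded-dimension match (from Corollaries \ref{filt-eo} and \ref{filt-gr}) to an algebra isomorphism via the $A_W$-linear, $A_x$-equivariant $R_x$-action is a reasonable and honest filling-in of a step the paper leaves implicit.
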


\begin{proof}
By construction, each $\widetilde{K} _{\chi}$ is a projective object in $A_W ^{\uparrow} \mathchar`-\mathsf{gmod}$. We have $\mathrm{hom} _{A_W} ( \mathtt{K}, L _{\chi'} ) = 0$ for every $\chi' > \Xi_x$. Thanks to Corollaries \ref{filt-eo} and \ref{filt-gr}, we deduce
$$\mathrm{hom}_{A_W} ( \mathtt{K}, \mathtt{K} ) \cong R_x ^{\mathbf c},$$
which is enough to see the assertion.
\end{proof}

\begin{corollary}\label{so}
Keep the setting of Theorem \ref{gSp}. We have:
\begin{enumerate}
\item $\mathrm{ext} ^{\bullet} _{A_W} ( \widetilde{K} _{\chi}, K ^{\mathbf c} _{\chi'} ) \neq \{ 0 \} \text{ only if } \chi > \chi' \text{ or } \chi = \chi'$;
\item $\mathrm{ext} ^{\bullet} _{A_W} ( K ^{\mathbf c} _{\chi}, K ^{\mathbf c} _{\chi'} ) \neq \{ 0 \} \text{ only if } \chi \gtrsim \chi'.$
\end{enumerate}
\end{corollary}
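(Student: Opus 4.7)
The plan is to exhibit controlled projective resolutions of $\widetilde{K}_\chi$ and $K^{\mathbf{c}}_\chi$ using the geometric ingredients in the proof of Theorem \ref{injectivity}, and then to read the vanishing off from the composition-factor description of $K^{\mathbf{c}}_{\chi'}$ provided by Lemma \ref{invert}.

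For part (1), I would first produce a projective resolution $\cdots\to P^2_\chi\to P^1_\chi\to P_\chi\to \widetilde K_\chi\to 0$ in $A_W\mathchar`-\mathsf{gmod}$ whose higher terms $P^i_\chi$ ($i\ge 1$) are direct sums of $P_\eta\langle d\rangle$ with $\eta\in\Lambda=\{\eta:\mathcal{O}_\eta\subsetneq\overline{\mathcal{O}_\chi}\}$, i.e.\ with $\eta<_{\mathcal P}\chi$. This comes from iterating (\ref{proj-kill}): $\widetilde K_\chi\cong\mathrm{Ext}^\bullet_G(\mathcal{E}^!_\chi,\ddot{\mathcal L})$ with $\mathcal{E}^!_\chi=\imath_{\chi,!}\imath_\chi^*\mathsf{IC}(\chi)$, each perverse cohomology ${}^pH^i(\mathcal{E}^!_\chi)_0$ with $i<0$ is, up to Tate twists, a direct sum of $\mathsf{IC}(\eta)$'s with $\eta\in\Lambda$ by Claim \ref{comp-anal}, and the purity of Theorem \ref{LusSTD}(4) together with the $E_2$-degeneracy of the relevant Serre spectral sequence promotes the perverse filtration into honest exact sequences in $A_W\mathchar`-\mathsf{gmod}$. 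Applying $\mathrm{hom}_{A_W}(-,K^{\mathbf{c}}_{\chi'})$ gives a complex whose $i$-th term is $\bigoplus_\eta\mathrm{hom}_W(L_\eta,K^{\mathbf{c}}_{\chi'})$ with $\eta=\chi$ at $i=0$ and $\eta<_{\mathcal P}\chi$ at $i\ge 1$; by Lemma \ref{invert} the composition factors of $K^{\mathbf{c}}_{\chi'}$ are $L_{\eta'}$ with $\eta'>_{\mathcal P}\chi'$ or $\eta'=\chi'$. Matching these, non-vanishing at $i=0$ forces $\chi>_{\mathcal P}\chi'$ or $\chi=\chi'$, while non-vanishing at $i\ge 1$ requires $\eta$ satisfying both $\eta<_{\mathcal P}\chi$ and ($\eta>_{\mathcal P}\chi'$ or $\eta=\chi'$), either of which entails $\chi>_{\mathcal P}\chi'$. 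If $\chi<_{\mathcal P}\chi'$, or if $\chi\sim_{\mathcal P}\chi'$ with $\chi\ne\chi'$, no valid $\eta$ exists and $\mathrm{ext}^\bullet_{A_W}(\widetilde K_\chi,K^{\mathbf{c}}_{\chi'})=0$.

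For part (2) I splice. Corollary \ref{filt-eo} provides the short exact sequence $0\to N_\chi\to\widetilde K_\chi\to K^{\mathbf{c}}_\chi\to 0$ with $N_\chi$ filtered by shifts of $K^{\mathbf{c}}_{\chi''}$ for $\chi''\sim_{\mathcal P}\chi$. Running the resolution construction of the previous paragraph in parallel for all $\chi''$ in the phylum of $\chi$ (the phylum is finite and every module involved is finite-dimensional, so the splicing terminates) yields a projective resolution of $K^{\mathbf{c}}_\chi$ whose terms are direct sums of $P_\eta\langle d\rangle$ with $\eta\lesssim_{\mathcal P}\chi$. When $\chi<_{\mathcal P}\chi'$, every such $\eta$ satisfies $\eta<_{\mathcal P}\chi'$, so by Lemma \ref{invert} $L_\eta$ is not a composition factor of $K^{\mathbf{c}}_{\chi'}$, every term of the hom-complex is zero, and $\mathrm{ext}^\bullet_{A_W}(K^{\mathbf{c}}_\chi,K^{\mathbf{c}}_{\chi'})=0$.

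The main technical obstacle is promoting the perverse-truncation data for $\mathcal{E}^!_\chi$ into a bona fide projective resolution of $\widetilde K_\chi$ in $A_W\mathchar`-\mathsf{gmod}$ with the support control used above: the perverse filtration a priori only gives a spectral sequence converging to $\widetilde K_\chi$, so one must run the dévissage of (\ref{proj-kill}) stratum by stratum inside $\overline{\mathcal{O}_\chi}$ and invoke the inductive weight and purity analysis from Claims \ref{j-ext} and \ref{comp-anal} to ensure each successive differential factors through $P_\eta$'s with $\eta\in\Lambda$.
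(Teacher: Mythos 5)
Your part (1) is essentially the paper's argument: build a projective resolution of $\widetilde K_\chi$ whose higher terms are $P_\eta\langle d\rangle$ with $\eta\in\Lambda$ (using Claims \ref{j-ext}/\ref{comp-anal}, the purity statements of Theorem \ref{LusSTD}, and the weight device), deduce $\mathrm{ext}^\bullet_{A_W}(\widetilde K_\chi,L_\eta)\ne 0$ only for $\eta\lesssim\chi$ (with $\eta=\chi$ only at degree $0$), and finish with the multiplicity control of Lemma \ref{invert}. You have correctly isolated the technical obstruction (promoting the perverse/weight filtration data into an honest resolution with support control); the paper dispatches precisely this via the weight-truncation formalism of [K3] 2.5, 2.7, 2.8, so you have identified both the approach and its hard step.

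For part (2) you diverge from the paper, which deduces (2) from (1) via Corollaries \ref{filt-eo} and \ref{Lp}; you instead use only \ref{filt-eo} plus a ``splicing'' argument to assert a projective resolution of $K^{\mathbf c}_\chi$ with terms $P_\eta$, $\eta\lesssim\chi$. The conclusion is correct and this route is legitimate, but the justification ``the phylum is finite and every module is finite-dimensional, so the splicing terminates'' does not actually break the circularity: resolving $N_\chi$ requires resolutions of the $K^{\mathbf c}_{\chi''}$, which is what you are trying to construct, and the resolutions are infinite. The right way to close the circle is an induction on cohomological degree $i$, proving the support statement $\mathrm{ext}^i_{A_W}(K^{\mathbf c}_{\chi''},L_\eta)=\{0\}$ for all $\chi''\sim\chi$ and all $\eta>\chi$ simultaneously: the long exact sequence of $0\to N_{\chi''}\to\widetilde K_{\chi''}\to K^{\mathbf c}_{\chi''}\to 0$ together with the already-established vanishing $\mathrm{ext}^\bullet(\widetilde K_{\chi''},L_\eta)=\{0\}$ reduces the degree-$i$ statement for $K^{\mathbf c}_{\chi''}$ to the degree-$(i-1)$ statements for the $K^{\mathbf c}_{\chi'''}$ ($\chi'''\sim\chi$) filtering $N_{\chi''}$, with the base case $i=0$ immediate from simple heads. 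Once this induction is spelled out, your argument is complete and gives a more hands-on alternative to the paper's appeal to \ref{Lp}.
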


\begin{remark}
Corollary \ref{so} resembles the structure of the Ginzburg conjecture for affine Hecke algebras (\cite{G, B, TX, X}).
\end{remark}

\begin{proof}[Proof of Corollary \ref{so}]
Thanks to Corollaries \ref{filt-eo} and \ref{Lp}, {\bf 2)} follows from {\bf 1)}. 

We prove {\bf 1)}. Thanks to \cite{K3} 2.5, Claims \ref{j-ext} and \ref{comp-anal} imply that for each $a \in \mathbb Z$, we have a distinguished triangle:
$$\rightarrow \mathrm{gr} _a \, \mathcal E ^! \rightarrow F_{\ge a} \mathcal E ^! \rightarrow F _{> a} \mathcal E ^! \stackrel{+1}{\longrightarrow}$$
so that $F _{\ge a} \mathcal E ^! \cong \mathcal E ^!$ for $a \ll 0$, $\mathrm{gr} _a \, \mathcal E ^!$ is a mixed sheaf of pure weight $a$, $F_{\ge a} \mathcal E^!$ has weight $\ge a$, and $F _{> a} \mathcal E ^!$ has weight $> a$. In addition, each direct summand of $\mathrm{gr} _a \, \mathcal E ^!$ is isomorphic to a degree shift of $\{ \mathsf{IC} ( \chi' ) \} _{\chi' \in \Lambda}$ if $a < 0$, isomorphic to $\mathsf{IC} ( \chi )$ if $a = 0$, and $\{ 0 \}$ if $a > 0$.

For each $a \in \mathbb Z$, we set
$$Q _{a} ( \chi ) := \mathrm{Ext} ^{\bullet} _G ( \mathrm{gr} _a \, \mathcal E ^! , \ddot{\mathcal L} ).$$
This is a graded projective $A$-module. Each direct summand of $Q _{a} ( \chi )$ is a grading shifts of $P_{\chi'}$ $(\chi' \in \Lambda)$ for $a < 0$, and we have $Q_0 ( \chi ) \cong P _{\chi}$. In addition, the same argument as in \cite{K3} 2.7 and 2.8 (which are in turn applicable by Theorem \ref{LusSTD} 4) and 9), respectively) yields a projective resolution:
$$\rightarrow Q_{-2} ( \chi ) \stackrel{d_{-2}}{\longrightarrow} Q_{-1} ( \chi ) \stackrel{d_{-1}}{\longrightarrow}  Q_{0} ( \chi ) \stackrel{d_{0}}{\longrightarrow} \widetilde{K} _{\chi} \rightarrow 0.$$
This implies
$$\mathrm{ext} _{A} ^{\bullet} ( \widetilde{K} _{\chi}, L_{\eta} ) \neq \{ 0 \} \hskip 3mm \text{ only if } \hskip 3mm \eta \in \Lambda \hskip 3mm \text{ or } \hskip 3mm \eta = \chi.$$
Combined with Lemma \ref{invert}, we deduce {\bf 1)} as desired.
\end{proof}

\section{Lusztig-Slooten symbols of type $\mathsf{BC}$}\label{LSsymb}
We use the setting of \S \ref{genkos}. In this section, we consider the case $W = \mathfrak S _n \ltimes ( \mathbb Z / 2 \mathbb Z )^n$. Most of the assertions here are essentially not new. Nevertheless, we put explanations/proofs to each statement since we need to reinterpret them in order to make them fit into our framework.

Let $\Gamma := ( \mathbb Z / 2 \mathbb Z ) ^n \subset W$ denote the normal subgroup of $W$ so that $W = \mathfrak S _n \ltimes \Gamma$. Let $S_{\Gamma}$ be the set of reflections (of $W$) in $\Gamma$. We fix $\mathsf{Lsgn}$ (resp. $\mathsf{Ssgn}$) to be the one-dimensional representation of $W$ so that $\mathfrak S _n$ acts trivially and each element of $S_{\Gamma}$ acts by $-1$ (resp. $\mathfrak S _n$ acts by $\mathsf{sgn}$ and $\Gamma$ acts trivially).

For a bi-partition ${\bm \lambda} = ( \lambda ^{(0)}, \lambda ^{(1)} )$ of $n$, we define
$$W _{\bm \lambda} := \prod _{i \ge 1} \left(  W _{\lambda ^{(0)} _i} \times W _{\lambda ^{(1)} _i} \right) \subset W,$$
where $W _{k}$ is the Weyl group of type $\mathsf{BC} _k$. Let $\mathsf{mi}_{\bm \lambda}$ be the one-dimensional representation of $W _{\bm \lambda}$ on which $W _{\lambda ^{(0)} _i}$ acts by $\mathsf{Ssgn}$ and $W _{\lambda ^{(1)} _i}$ acts by $\mathsf{sgn}$. We also define $W^{\bm \lambda} := W_{|\lambda ^{(0)}|} \times W_{| \lambda ^{(1)}|} \subset W$.

\begin{fact}\label{typeCref}
There exists a bijection between $\mathsf{Irr} \, W$ and $\mathtt P ( n )$ so that:
\begin{enumerate}
\item For each partition $\lambda$, let $L _{\lambda}$ denote the $W$-representation obtained as the pullback by $W \to \!\!\!\!\! \to \mathfrak S _n$. For each ${\bm \lambda} = ( \lambda ^{(0)}, \lambda ^{(1)} ) \in \mathtt P ( n )$, we have
$$L _{\bm \lambda} \cong \mathsf{Ind} ^W _{W ^{\bm \lambda}} \left( ( L _{\lambda^{(0)}} \otimes \mathsf{Lsgn} ) \boxtimes L _{\lambda ^{(1)}} \right).$$
Exactly $|\lambda^{(0)}|$ elements of $S_{\Gamma}$ act by $-1$ on each $S_{\Gamma}$-eigenspace of $L_{\bm \lambda}$;
\item For each ${\bm \lambda} = ( \lambda ^{(0)}, \lambda ^{(1)} ) \in \mathtt P ( n )$, we have
$$\Hom _{W _{{}^{\mathtt t} {\bm \lambda}}} ( \mathsf{mi} _{{} ^{\mathtt t} {\bm \lambda}}, L _{\bm \lambda} ) \cong \mathbb C;$$
\item For each ${\bm \lambda} \in \mathtt P ( n )$, we have
$$\dim \mathrm{hom} _{A_W} ( P _{\bm \lambda}, P _{\mathsf{triv}}^*\left< 2 b ( {\bm \lambda} ) \right>) _{i} = \begin{cases}1 & (i=0)\\ 0 & (i > 0)\end{cases};$$
\item Let $K ^{ex} _{\bm \lambda}$ be the image of a non-zero map in {\bf 3)}. Then, we have
$$\dim \mathrm{hom} _{W} ( L _{\bm \mu}, K ^{ex} _{\bm \lambda} ) \neq 0 \text{ only if } b ( {\bm \lambda} ) \ge b ( {\bm \mu} ).$$
In addition, we have
$$\mathsf{gdim} \, \mathrm{hom} _{W} ( \mathsf{triv}, K ^{ex} _{\bm \lambda} ) = t ^{b ( {\bm \lambda} )} \text{ and } \mathsf{gdim} \, \mathrm{hom} _{W} (  L _{\bm \lambda}, K ^{ex} _{\bm \lambda} ) = 1;$$
\item For each ${\bm \lambda} = ( \lambda ^{(0)}, \lambda ^{(1)} ) \in \mathtt P ( n )$, we have
$$L _{{}^{\mathtt t} {\bm \lambda}} \cong L _{\bm \lambda} \otimes \mathsf{sgn} \text{ and }L _{( \lambda ^{(0)}, \lambda ^{(1)} )} \cong L _{( \lambda ^{(1)}, \lambda ^{(0)} )} \otimes \mathsf{Lsgn};$$
\item For each ${\bm \lambda} \in \mathtt P ( n )$, we have
$$\mathfrak h \otimes L _{\bm \lambda} \cong \bigoplus_{{\bm \lambda} \doteq {\bm \mu}}  L _{\bm \mu}.$$
\end{enumerate}
\end{fact}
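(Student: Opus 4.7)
The assertions split into three groups: (1), (5), (2) are Clifford--Mackey and Frobenius-reciprocity calculations for the wreath product $\mathfrak S_n\wr(\mathbb Z/2\mathbb Z)$; (6) is a Pieri-type rule for the reflection representation; and (3), (4) encode the fake-degree ($b$-function) computation for type $\mathsf{BC}$. Since $\Gamma=(\mathbb Z/2\mathbb Z)^n$ is normal abelian, the $\mathfrak S_n$-orbits on $\widehat\Gamma$ are labelled by $0\le i\le n$ (the number of $-1$ factors), with stabiliser $\mathfrak S_i\times\mathfrak S_{n-i}$, giving the Clifford parametrisation of $\mathsf{Irr}\,W$ by bipartitions $\bm\lambda$ with $i=|\lambda^{(0)}|$; the formula in (1) is obtained by installing the $\Gamma$-character $\mathsf{Lsgn}|_{W_{|\lambda^{(0)}|}}$ on the first factor. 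Tensoring with the linear characters $\mathsf{sgn}$ and $\mathsf{Lsgn}$ and applying $L_\mu\otimes\mathsf{sgn}_{\mathfrak S_k}=L_{{}^{\mathtt t}\mu}$ yields (5). For (2), Frobenius reciprocity identifies $\hom_{W_{{}^{\mathtt t}\bm\lambda}}(\mathsf{mi}_{{}^{\mathtt t}\bm\lambda},L_{\bm\lambda})$ with a product of classical sign-hom spaces, each of dimension one by the standard fact $\hom_{\mathfrak S_{{}^{\mathtt t}\mu}}(\mathsf{sgn},L_\mu)=\mathbb C$, once the $\mathsf{Lsgn}$-twist on the first component is unwound.

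For (6), I would apply the projection formula
\[
\mathfrak h\otimes L_{\bm\lambda}=\mathsf{Ind}^W_{W^{\bm\lambda}}\bigl(\mathsf{Res}^W_{W^{\bm\lambda}}\mathfrak h\otimes(L_{\lambda^{(0)}}\otimes\mathsf{Lsgn})\boxtimes L_{\lambda^{(1)}}\bigr)
\]
and decompose $\mathsf{Res}^W_{W^{\bm\lambda}}\mathfrak h=\mathfrak h_{|\lambda^{(0)}|}\boxtimes\mathbb C\oplus\mathbb C\boxtimes\mathfrak h_{|\lambda^{(1)}|}$. A direct Clifford computation (with no circular induction needed) shows $\mathfrak h_k\otimes L_{(\emptyset,\mu)}=\bigoplus_\square L_{((1),\mu-\square)}$, summed over removable corners of $\mu$, together with the analogous formula for $L_{(\mu,\emptyset)}$ obtained from (5). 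Pushing each summand back up via the two-partition Littlewood--Richardson rule (which factors as the product of two type-$\mathsf A$ LR rules) produces precisely the bipartitions $\bm\mu$ with $\bm\lambda\doteq\bm\mu$, each with multiplicity one, as desired.

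Assertions (3) and (4) together amount to the statement that $L_{\bm\lambda}$ first appears in $\mathbb C[\mathfrak h^*]$ in degree $2b(\bm\lambda)$ with multiplicity one. Stanley's factorisation $\mathbb C[\mathfrak h^*]\cong C_{\mathsf{triv}}\otimes\mathbb C[\mathfrak h^*]^W$ from Lemma \ref{factor} reduces this to the corresponding claim inside the coinvariant ring $C_{\mathsf{triv}}$, which is the classical fake-degree formula in type $\mathsf{BC}$. Given (3), I would then define $K^{ex}_{\bm\lambda}$ as the image of the essentially unique map in $\hom_{A_W}(P_{\bm\lambda},P_{\mathsf{triv}}^*\langle 2b(\bm\lambda)\rangle)_0$. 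Because $P_{\mathsf{triv}}^*$ has simple socle $\mathsf{triv}$ concentrated at its top degree, the shifted module has socle $\mathsf{triv}$ sitting in degree $2b(\bm\lambda)$, and $K^{ex}_{\bm\lambda}$ is forced to contain it with multiplicity one. The bound $b(\bm\lambda)\ge b(\bm\mu)$ in (4) follows by combining the inclusion $K^{ex}_{\bm\lambda}\subset P_{\mathsf{triv}}^*\langle 2b(\bm\lambda)\rangle$ with (3) applied to $\bm\mu$: any embedding $L_{\bm\mu}\hookrightarrow(K^{ex}_{\bm\lambda})_{2j}$ produces a non-zero $W$-map $L_{\bm\mu}\to(\mathbb C[\mathfrak h^*])_{2(b(\bm\lambda)-j)}$, forcing $b(\bm\mu)\le b(\bm\lambda)-j\le b(\bm\lambda)$; specialising $j=0$ also supplies the $L_{\bm\lambda}$-multiplicity-one clause of (4).

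\textbf{The main obstacle} is the identification of the $b$-function with the combinatorial formula $|\lambda^{(0)}|+2a(\lambda^{(0)})+2a(\lambda^{(1)})$ for type $\mathsf{BC}$: the rest of the argument is formal once this is in hand. One either appeals to Lusztig's or Shoji's fake-degree tables, or gives a self-contained proof by constructing an explicit non-vanishing semi-invariant of degree $2b(\bm\lambda)$ in $\mathbb C[\mathfrak h^*]\otimes L_{\bm\lambda}^\vee$ via the Young symmetriser of ${}^{\mathtt t}\bm\lambda$ provided by (2), combined with a Molien-series dimension count to rule out occurrences of $L_{\bm\lambda}$ in lower degrees.
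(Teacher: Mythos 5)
The paper's own proof is a two-sentence citation: items (1)--(5) are ``read off from Carter \cite{Ca} \S 11'' and item (6) is Tokuyama's Pieri rule. Your proposal is essentially a correct unfolding of what those references establish, so you and the paper are on the same route; you are just supplying the internals. Your Clifford/Mackey parametrisation for (1), the linear-character twists for (5), the Frobenius-reciprocity reduction to two type-$\mathsf A$ $\mathsf{sgn}$-hom spaces for (2), and the projection-formula plus two-component Littlewood--Richardson argument for (6) are all accurate and match the standard derivation of the wreath-product branching rules. You also correctly read (3) as the fake-degree statement that $L_{\bm\lambda}$ first occurs in $\mathbb C[\mathfrak h]$ at degree $b(\bm\lambda)$ with multiplicity one, and you correctly isolate the one non-formal input -- the agreement of that fake degree with $|\lambda^{(0)}|+2a(\lambda^{(0)})+2a(\lambda^{(1)})$ -- as the step that must come from a table or a Molien-series computation.

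One clause of (4) is not actually delivered by the argument you wrote. You derive the bound $b(\bm\mu)\le b(\bm\lambda)$ and the $L_{\bm\lambda}$-multiplicity-one statement from ``$K^{ex}_{\bm\lambda}\subset P^*_{\mathsf{triv}}\langle 2b(\bm\lambda)\rangle$ plus (3)'', and you note the simple socle is $\mathsf{triv}\langle 2b(\bm\lambda)\rangle$. But the claim $\mathsf{gdim}\,\mathrm{hom}_W(\mathsf{triv},K^{ex}_{\bm\lambda})=t^{b(\bm\lambda)}$ asserts that $\mathsf{triv}$ is \emph{absent} as a $W$-isotype in all degrees $<2b(\bm\lambda)$, which is strictly more than a socle statement and is not implied by your inequality (applied to $\bm\mu=\mathsf{triv}$ it only gives $0\le b(\bm\lambda)-j$). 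The missing observation is that the degree-zero component $L_{\bm\lambda}\hookrightarrow S^{b(\bm\lambda)}\mathfrak h^*$ lands inside the harmonic polynomials $\mathcal H=\bigl\langle J_W\bigr\rangle^\perp$ (this is exactly ``first occurrence'' combined with Stanley's factorisation in Lemma \ref{factor}), and that $\mathcal H$ is stable under the $\mathbb C[\mathfrak h^*]$-action by derivations, so $K^{ex}_{\bm\lambda}\subset\mathcal H\langle 2b(\bm\lambda)\rangle$; since $\mathcal H^W=\mathbb C$ concentrated at degree $0$, the $\mathsf{triv}$-multiplicity of $K^{ex}_{\bm\lambda}$ is concentrated at $2b(\bm\lambda)$, where the nondegeneracy of the derivative pairing gives a nonzero constant. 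Adding this paragraph closes the gap and makes your outline complete.
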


\begin{proof}
{\bf 1)}--{\bf 5)} can be read-off from Carter \cite{Ca} \S 11. {\bf 6)} is Tokuyama \cite{To} Example 2.9.
\end{proof}

\begin{definition}[Symbols]
Let $r > 0$ and $s$ be real numbers. Fix an integer $m \gg n$ and form two sequences:
\begin{align*}
& r m \ge r (m-1) \ge \cdots \ge r \ge 0\\
& r m + s \ge r (m-1) + s \ge \cdots \ge r + s \ge s.
\end{align*}
We call this sequence ${\bm \Lambda} ^0$. For a bipartition $(\lambda^{(0)}, \lambda^{(1)})$ of $n$, we define a pair of two sequences ${\bm \Lambda} ( \lambda^{(0)}, \lambda^{(1)} )$ as:
\begin{align*}
& \lambda _1 ^{(0)} + r m \ge \lambda _2^{(0)} + r (m-1) \ge \cdots \ge \lambda _{m} ^{(0)} + r \ge 0\\
& \lambda _1 ^{(1)} + r m + s \ge \lambda _2 ^{(1)} + r (m-1) + s \ge \cdots \ge \lambda _{m} ^{(1)} + r + s \ge s.
\end{align*}
We call ${\bm \Lambda} ( \lambda^{(0)}, \lambda^{(1)} )$ the symbol (or the $(r,s)$-symbol) of a bi-partition $(\lambda^{(0)}, \lambda^{(1)})$. Let $Z ^{r,s}_n$ be the set of $(r,s)$-symbols obtained in this way (with $m$ fixed). We have a canonical identification $\Psi _{r,s} : \mathtt P (n) \stackrel{\cong}{\longrightarrow} Z ^{r,s}_n$, by which we identify bi-partitions with symbols.
\end{definition}

\begin{remark}
{\bf 1)} Adding $r$ uniformly to the sequences and add an additional last terms $0$ and $s$, we have a canonical identification of $Z ^{r,s}_n$ obtained by two different choices of $m$. We call this identification the shift equivalence. {\bf 2)} If we use ${\bm \Lambda} \in Z^{r,s}_n$ and ${\bm \Lambda} ^0 \in Z ^{r,s} _0$ simultaneously, then the value of $m$ is in common.
\end{remark}

\begin{definition}[$a$-functions, ordering, and similarity]\label{symBC}
For each ${\bm \Lambda} \in Z^{r,s}_n$, we consider ${\bm \Lambda} ^0 \in Z ^{r,s} _0$ and define
$$a ( {\bm \Lambda} ) = a _s ( {\bm \Lambda} ) := \sum _{a,b \in {\bm \Lambda}} \min ( a, b ) - \sum _{a,b \in {\bm \Lambda}^0} \min ( a, b ).$$
We might replace ${\bm \Lambda}$ with $\Psi _{r,s}^{-1} ( {\bm \Lambda} )$ if the meaning is clear from the context.\\
Two symbols ${\bm \Lambda}, {\bm \Lambda}' \in Z ^{r,s} _n$ are said to be similar if the entries of ${\bm \Lambda}$ and ${\bm \Lambda}'$ are in common (counted with multiplicities), and we denote it by ${\bm \Lambda} \sim {\bm \Lambda}'$. They are said to be strongly similar if ${\bm \Lambda}'$ is obtained from ${\bm \Lambda}$ by swapping several pairs of type $(k, k+1)$ or $(k+1, k)$ (for some $k \in \mathbb Z$) from the first and second sequences, and we denote it by ${\bm \Lambda} \approx {\bm \Lambda}'$.\\
For ${\bm \Lambda}, {\bm \Lambda}' \in Z^{r,s}_n$, we define ${\bm \Lambda} > {\bm \Lambda}'$ if $a ( {\bm \Lambda} ) < a ( {\bm \Lambda}' )$. We refer this partial ordering as the $a$-function ordering. We define a phylum associated to $Z^{r,s} _n$ as a similarity class, and a phyla associated to $Z ^{r,s} _n$ as the set of all similarity classes, ordered in an arbitrary compatible way as the $a$-function ordering.
\end{definition}

\begin{remark}
It is easy to see that the similarity classes and the strong similarity classes of $Z ^{r,s} _n$ are independent of the choice of $m$, and the $a$-function depends only on the similarity class. In particular, the $a$-function does not depend on the choice of $m \gg n$ (cf. Shoji \cite{Sh2} 1.2).
\end{remark}

In the below, we assume $r = 2$ as in \cite{L-IC, Sl} unless otherwise stated.

\begin{lemma}[Lusztig \cite{L-IC}, Slooten \cite{Sl}]\label{gSpdata}
Let $s, n \in \mathbb Z _{>0}$. If $s$ is odd, then the similarity classes and the $a$-function of $Z_n^{2,s}$ coincide with the orbits and the half of the orbit codimensions $($inside the subvariety $\mathcal N \subset \mathcal N _G$ defined in \S 3$)$ of a generalized Springer correspondence of a symplectic group.\\
Similarly, if $s \equiv 2 \mod 4$, then they coincide with those of a generalized Springer correspondence of an odd orthogonal group. If $s \equiv 0 \mod 4$, then the same is true for an even orthogonal group.
\end{lemma}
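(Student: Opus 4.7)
The plan is to identify each residue class of $s$ with a specific cuspidal datum $\mathbf{c}$ of a classical group, and then match the symbol-side combinatorics (similarity classes and the $a$-function) against the geometric data of the associated generalized Springer correspondence (orbit decomposition and orbit codimension).

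First, I will recall Lusztig's classification of cuspidal pairs in classical types from \cite{L-IC}. Each connected classical group $G$ of rank $N$ has cuspidal data whose Levi factor $L$ is isogenous to a product of type $\mathsf{A}$ factors with one classical ``cuspidal piece'': $\mathrm{Sp}_{2d(d+1)}$ for $G$ symplectic, $\mathrm{SO}_{2d^2+1}$ for $G$ odd orthogonal, and $\mathrm{SO}_{2d^2}$ for $G$ even orthogonal, with a cuspidal local system supported on the (unique) distinguished unipotent orbit of that factor. In each case the relative Weyl group $W_{\mathbf c}$ is isomorphic to $W(\mathsf{BC}_n)$ with $n = N - (\text{cuspidal rank})$. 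Reading off the parameter $s$ from the defect: symplectic $\mathbf c$ gives odd $s = 2d+1$; odd orthogonal $\mathbf c$ gives $s = 4d+2$; and even orthogonal $\mathbf c$ gives $s = 4d$. Thus each admissible $s \in \mathbb{Z}_{>0}$ is hit by exactly one parity class.

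Second, I will invoke the combinatorial description of the generalized Springer correspondence from \cite{L-IC} \S 11--14: the Springer correspondents $(x,\xi)$ appearing in the collapsing map attached to $\mathbf c$ are in canonical bijection with $\mathtt P(n)$ via $(r,s)$-symbols with $r = 2$. Under this bijection, two irreducible $W$-modules $L_{{\bm \lambda}}, L_{{\bm \mu}}$ correspond to Springer correspondents lying in the same $G$-orbit on $\mathcal N$ if and only if ${\bm \Lambda}({\bm \lambda}) \sim {\bm \Lambda}({\bm \mu})$; this is precisely the statement that the fibers of the map ``$\chi \mapsto \mathcal O_{\chi}$'' are the similarity classes (families). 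I would verify this case by case following Lusztig's recipe for the image orbit in terms of the multiset of entries of the symbol.

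Third, I will match the $a$-function with half the orbit codimension. By Theorem \ref{LusSTD} and the semi-smallness of $\mu$, one has $\dim \mathfrak B_x = \tfrac{1}{2}(\dim \mathcal N - \dim G.x)$ for $x$ in the relevant stratum. On the symbol side, a direct computation (going back to \cite{L-IC} 11.3 and reviewed in \cite{Sh2}) shows that $a_s({\bm \Lambda})$ equals $\dim \mathfrak B_x$ for the corresponding $x$: one expands the sum $\sum_{a,b \in {\bm \Lambda}} \min(a,b)$, subtracts the baseline $\sum_{a,b \in {\bm \Lambda}^0} \min(a,b)$, and identifies the result with the standard partition formula for $\dim \mathfrak B_x$ inside $\mathcal N \subset \mathcal N_G$ in the three classical types (using the Collingwood--McGovern dimension formulas). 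This is exactly half the codimension of $G.x$ in $\mathcal N$.

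The main obstacle is book-keeping rather than a genuine mathematical difficulty: I need to fix conventions (indexing of partitions, choice of $m \gg n$, the shift equivalence of symbols, and the parity convention for $s$) so that they agree across \cite{L-IC}, \cite{Sl}, and the present paper. Particular care is needed in the even-orthogonal case, where symbols with ${\bm \Lambda} \sim {\bm \Lambda}^{\mathrm{swap}}$ (degenerate symbols) and the component-group phenomena of $\mathrm{SO}_{2N}$ versus $\mathrm{O}_{2N}$ can shift the correspondence; I would handle this by restricting to $\mathcal N \subset \mathcal N_G$ as in \S 3 so that only the orbits actually meeting $\mu(\widetilde{\mathcal N})$ enter, which matches Lusztig's counting on the symbol side.
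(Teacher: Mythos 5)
Your proposal follows essentially the same route as the paper's proof: both reduce the lemma to Lusztig's bijection and codimension formula from \cite{L-IC} \S 11--14 (cited by the paper as (12.2.2)--(12.2.3), Corollary 12.4c, and \S 13), verifying that similarity classes of $(2,s)$-symbols are the equi-orbit classes and that $a_s$ computes half the orbit codimension. The paper opts for a more economical reading---it extracts $s$ directly as $1-2d$ from the symbol defect $d$ and covers the two odd residue classes modulo 4 by swapping the two rows of the symbol---whereas you propose to go through Lusztig's classification of cuspidal data explicitly and then recompute the codimension via Collingwood--McGovern partition formulas; both are sound. One caveat: the explicit sizes you quote for the orthogonal cuspidal pieces are not right as written (for $\mathrm{SO}_N$ the cuspidal pair lives on $\mathrm{SO}_{(2d+1)^2}$ in the odd case and $\mathrm{SO}_{(2d)^2}$ in the even case, not $\mathrm{SO}_{2d^2+1}$ and $\mathrm{SO}_{2d^2}$), but since the lemma only records the type of the group (symplectic versus odd versus even orthogonal), this slip does not propagate into the argument.
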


\begin{remark}
{\bf 1)} Thanks to Lemma \ref{gSpdata}, a phyla associated to $Z^{2,s} _n$ (for $s \in \mathbb Z_{> 0}$) is an admissible phyla of a generalized Springer correspondence. {\bf 2)} In the symbol notation, swapping the first and second sequences correspond to tensoring $\mathsf{Lsgn}$, which gives an equivalent but different system. The $W$-module structure we employ are those coming from the $\mathsf{sgn}$-twists of irreducible tempered modules of affine Hecke algebras as in \cite{L-CG3,Sl,CK,CKK}.
\end{remark}

\begin{proof}[Proof of Lemma \ref{gSpdata}]
By rearranging $m$ if necessary, we can assume that the last $s$-entries of each sequence of ${\bm \lambda} \in Z ^{2,s}_n$ does not have effect neither on a similarity class nor the $a$-function. Then, the bijection of \cite{L-IC} (12.2.2)--(12.2.3) can be seen as setting $s := 1 - 2 d$, where $d$ is the defect of the symbols ({\it loc. cit.} P256L-8). Here $d$ is a priori an odd integer, and hence we realize $s \equiv 1 \mod 4$. For $s \equiv 3 \mod 4$, we can swap the role of the first and second sequences whenever $d > 0$ to deduce the symbol combinatorics on similarity classes. This, together with {\it loc. cit.} Corollary 12.4c, implies that a similarity class of $Z ^{2,s} _n$ is the same as an equi-orbit class of some generalized Springer correspondence of symplectic groups. Since the constant local system on a nilpotent orbit gives rise to a Springer representation (original one, $d=1, s=-1$ case), we conclude that the $a$-function on $Z^{2,s} _n$ calculate the half of the codimensions of orbits again by {\it loc. cit.} 12.4c and the normalization condition $a _s ( \emptyset, ( n ) ) = 0$ for $s > 0$. The case of even $s$ is similar ({\it loc.cit.} \S 13).
\end{proof}

\begin{corollary}\label{refine}
Keep the setting of Lemma \ref{gSpdata}. For each positive integer $s$, every phyla associated to $Z _n ^{2,s}$ gives rise to the same solution of $(\ref{Smatrix})$.
\end{corollary}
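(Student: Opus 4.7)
The plan is to reduce the statement to the uniqueness clause of Theorem~\ref{Sh} by showing that the defining system (\ref{Smatrix}) depends on a phyla only through the similarity-class decomposition and the $a$-function ordering between similarity classes, and not on the additional choice needed to totally order similarity classes sharing the same $a$-value. Since any two phyla $\mathcal{P}_1, \mathcal{P}_2$ associated to $Z^{2,s}_n$ share the same similarity classes (by Definition~\ref{symBC}) and each refines the $a$-function ordering on them, they differ only by how one orders similarity classes of equal $a$-value.

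First I would spell out the equivalence of the zero-patterns in the two defining conditions. Since $W$ is real, Fact~\ref{typeCref} gives $\chi^{\vee} \cong \chi$, so both Malle-type conditions and the substitution $\sigma$ act trivially; in particular every phyla associated to $Z ^{2,s}_n$ is of Malle type and $K^{+} = K^{-}$. For such a phyla, the constraint (\ref{Kmatrix}) reads $K_{\chi,\eta} = 0$ whenever $\chi \gtrsim \eta \neq \chi$. Unpacking: if $\chi \sim \eta$ (i.e.\ $\chi$ and $\eta$ lie in a common similarity class) and $\chi \neq \eta$, then $\chi \gtrsim \eta$, forcing $K_{\chi,\eta} = 0$; if $\chi$ lies in a similarity class of strictly smaller $a$-value than $\eta$, then $\chi > \eta$ and again $K_{\chi,\eta} = 0$; the only remaining freedom is when $\chi$ has strictly greater $a$-value than $\eta$. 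Hence the vanishing pattern of $K$ only sees the partition of $\mathsf{Irr}\,W$ into similarity classes together with the $a$-function order among them, not the internal ordering inside a single similarity class. The analogous observation for $\Lambda$ is immediate from Theorem~\ref{Sh}: $\Lambda_{\chi,\eta} \neq 0 \Rightarrow \chi \sim \eta$ already refers only to similarity classes.

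Consequently, for any two phyla $\mathcal{P}_1$ and $\mathcal{P}_2$ associated to $Z ^{2,s}_n$, the admissible shape of $(K, \Lambda)$ in (\ref{Smatrix}) is literally the same set of matrices, and the right-hand side $\Omega$ is a datum of $W$ alone (Corollary~\ref{ajEP}). Therefore the matrix equation determining the solution is the same in both cases. Invoking the uniqueness part of Theorem~\ref{Sh} then forces the two solutions to coincide, which is the assertion of Corollary~\ref{refine}.

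The only genuine point to verify carefully is the bookkeeping in the previous paragraph, namely that for $\chi$ and $\eta$ in the same similarity class the condition $\chi \gtrsim \eta$ indeed holds regardless of which of $\mathcal{P}_1$ or $\mathcal{P}_2$ one uses (since they are declared $\sim$-equivalent in both), so that the zero forced by $K_{\chi,\eta} = 0$ does not depend on the chosen refinement. This is immediate from the definition of $\gtrsim$, so no obstacle arises; the rest is a formal appeal to Theorem~\ref{Sh}.
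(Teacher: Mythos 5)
The proposal has a genuine gap. Your case analysis of the constraint $(\ref{Kmatrix})$ omits precisely the case where $\chi$ and $\eta$ lie in \emph{distinct} similarity classes that have the \emph{same} $a$-value. For such a pair, two admissible phyla $\mathcal P_1, \mathcal P_2$ of $Z^{2,s}_n$ can put them in the opposite order: in $\mathcal P_1$ one has $\eta >_{\mathcal P_1} \chi$ (so $K_{\eta,\chi}=0$ is forced but $K_{\chi,\eta}$ is free), while in $\mathcal P_2$ one has $\chi >_{\mathcal P_2} \eta$ (so $K_{\chi,\eta}=0$ is forced but $K_{\eta,\chi}$ is free). These are \emph{different} constraints, so the admissible shape of $K$ in $(\ref{Smatrix})$ is \emph{not} the same set of matrices across all such phyla, and the appeal to the uniqueness clause of Theorem~\ref{Sh} alone does not deliver the conclusion. (Distinct similarity classes with equal $a$-value do arise; by Lemma~\ref{gSpdata} these correspond to distinct nilpotent orbits of equal codimension that are closure-incomparable, a phenomenon that occurs in classical groups.) In short, your reduction $``( \ref{Smatrix})$ depends only on similarity classes plus the $a$-function ordering'' is false, and the whole argument rests on it.

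The paper deduces the corollary from Theorem~\ref{gSp} together with Lemma~\ref{gSpdata}, and this is not a formal manoeuvre one could bypass: Theorem~\ref{gSp}, via $[\text{Lu2}]$~24.8b and Theorem~\ref{injectivity}, supplies a \emph{single} geometric Kostka matrix $([K^{\mathbf c}_\chi:L_\eta])$ that simultaneously satisfies the (\emph{a priori} different) instances of $(\ref{Smatrix})$ attached to every refinement of the closure ordering; since each of those instances has a unique solution, they must all coincide. The geometric input is exactly what rules out the pathology above, and it cannot be replaced by a purely combinatorial observation about the shape of the triangularity constraints.
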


\begin{proof}
A direct consequence of Theorem \ref{gSp} and Lemma \ref{gSpdata}.
\end{proof}

In the below, if the (complete collection of) $\mathcal P$-traces $\mathsf{P} = \{ P _{{\bm \lambda}, \mathcal P} \} _{\bm \lambda \in {\mathtt P} ( n )}$ with respect to a phyla associated to $Z^{r,s} _n$ also gives the set of $\mathcal P$-traces with respect to {\it every} phyla associated to $Z^{r,s}_n$, then we call $\mathsf{P}$ the set of $\mathcal P$-traces adapted to $Z^{r,s}_n$.

In particular, we refer a Kostka system $\mathsf{K}$ adapted to every phyla associated to $Z ^{r,s} _n$ as a Kostka system adapted to $Z^{r,s} _n$. We denote by $\{ K ^s _{\bm \lambda} \} _{\bm \lambda \in \mathtt{P} ( n )}$ the Kostka system adapted to $Z ^{2,s} _n$ for each $s \in \mathbb Z _{>0}$ (which exists by Theorem \ref{gSp}).

\begin{lemma}[Slooten \cite{Sl}]\label{generic}
For $s \not\in \mathbb Z$, a phyla associated to $Z ^{2,s} _n$ is singleton.
\end{lemma}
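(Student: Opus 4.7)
The plan is to exploit the fact that when $s \notin \mathbb{Z}$, the entries of the two rows of a symbol ${\bm \Lambda}(\lambda^{(0)}, \lambda^{(1)})$ sit in disjoint cosets of $\mathbb{Z}$ inside $\mathbb{R}$, so they can be separated purely from the underlying multiset. Concretely, I would first note that the first row has entries $\{\lambda_i^{(0)} + 2(m+1-i)\}_{i=1}^m \cup \{0\} \subset \mathbb{Z}$, while the second row has entries $\{\lambda_i^{(1)} + 2(m+1-i) + s\}_{i=1}^m \cup \{s\} \subset s + \mathbb{Z}$. Because $s \notin \mathbb{Z}$ these two subsets of $\mathbb{R}$ are disjoint; hence, given any similar symbol ${\bm \Lambda}'$, its multiset of entries splits along the $\mathbb{Z}$ versus $s+\mathbb{Z}$ residue into the same two sub-multisets as the rows of ${\bm \Lambda}$. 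This already recovers each row of ${\bm \Lambda}'$ as a multiset.

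Next, I would check that each row of a $(2,s)$-symbol is in fact a strictly decreasing tuple: since $\lambda_i^{(0)} \ge \lambda_{i+1}^{(0)}$ and $2(m+1-i) - 2(m-i) = 2$, the differences of consecutive entries in the first row are strictly positive, and the step from $\lambda_m^{(0)}+2$ to $0$ is also strict; the same argument works verbatim for the second row. Thus the multiset of entries in each row determines the row as an ordered sequence, and one recovers $\lambda_i^{(0)}$ (resp.\ $\lambda_i^{(1)}$) by subtracting $2(m+1-i)$ (resp.\ $2(m+1-i)+s$) from the $i$-th term. Therefore ${\bm \Lambda}' = {\bm \Lambda}$, i.e.\ the similarity class of ${\bm \Lambda}$ is a singleton.

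There is no real obstacle to this argument; the only mild care needed is to verify that the two ``extra'' tail entries $0$ and $s$ do not accidentally collide with the body of the opposite row, which is immediate from the same integer/non-integer dichotomy. Once every similarity class is a singleton, any ordering compatible with the $a$-function associated to $Z^{2,s}_n$ automatically has singleton phyla, which is the statement of the lemma.
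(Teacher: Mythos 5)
Your proof is correct and follows exactly the same approach as the paper's: the key observation is that when $s \notin \mathbb{Z}$, first-row entries lie in $\mathbb{Z}$ while second-row entries lie in $s + \mathbb{Z}$, so the two rows can never mix under similarity. The extra verification that each row is strictly decreasing (hence recoverable from its multiset) is a detail the paper leaves implicit, but it is the same argument.
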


\begin{proof}
An entry of the first row of a symbol of $Z ^{2,s}_n$ is always an integer, while an entry of the second row of a symbol of $Z ^{2,s} _n$ is always not an integer. Hence, they cannot mix up.
\end{proof}

\begin{proposition}[Slooten \cite{Sl} 4.2.8]\label{Wind}
Let $s \in \mathbb Z _{\ge 0}$. Let ${\bm \lambda} = ( \lambda ^{(0)}, \lambda^{(1)}) \in \mathtt P ( n - k )$ for some integer $k$. We define
\begin{align*}
X _s ( k, {\bm \lambda} ) & := \{ {\bm \mu} \in \mathtt P ( n ) \mid [ \mathsf{Ind} ^ W _{\mathfrak S_k \times W_{n-k}} ( \mathsf{triv} \boxtimes L _{\bm \lambda} ) : L _{\bm \mu}] \neq 0 \}\\
Y _s ( k, {\bm \lambda} ) & := \{ {\bm \mu} \in X _s ( k, {\bm \lambda} ) \mid a _s ( {\bm \mu}) \ge a _s ( {\bm \gamma} ) \text{ for every } {\bm \gamma} \in X _s ( k, {\bm \lambda} ) \}.
\end{align*}
Then, ${\bm \mu} = ( \mu^{(0)}, \mu^{(1)}) \in Y _s ( k, {\bm \lambda} )$ satisfies:
\begin{itemize}
\item There exists a subdivision $k = k _0 + k _1$ so that we have $\{ \mu ^{(j)}_i \}_i = \{ \lambda ^{(j)}_i \}_i \cup \{ k_j \}$ for $j=0,1$, where we allow repetitions in the both sets;
\item We can choose $p, q$ so that $\mu ^{(0)} _p = k_0$, $\mu^{(1)} _q = k_1$, and
$$k_0 + 2 q - s = k_1 + 2 p \pm 1 \text{ or } k_1 + 2 p.$$
\end{itemize}
In addition, the set $Y _s ( k, {\bm \lambda} )$ is either a singleton or a pair of strongly similar symbols of $Z^{2,s} _n$.
\end{proposition}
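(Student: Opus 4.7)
The plan is to combine the Pieri/branching rule for type $\mathsf{BC}$ with a direct computation of the $a$-function on symbols. First, I would identify $X_s(k,\bm\lambda)$ combinatorially: since $L_{\bm\lambda}$ is induced from $W^{\bm\lambda}$ by Fact~\ref{typeCref} 1) and $\mathsf{triv}$ on $\mathfrak S_k$ is the trivial representation of the symmetric group, the type-$\mathsf{BC}$ Pieri rule (or transitivity of induction with the usual Pieri rule applied to each factor) asserts that $L_{\bm\mu}$ appears in $\mathsf{Ind}^W_{\mathfrak S_k \times W_{n-k}}(\mathsf{triv} \boxtimes L_{\bm\lambda})$ precisely when there is a decomposition $k = k_0 + k_1$ so that $\mu^{(0)}/\lambda^{(0)}$ is a horizontal $k_0$-strip and $\mu^{(1)}/\lambda^{(1)}$ is a horizontal $k_1$-strip. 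This reduces the proposition to a purely combinatorial optimization problem for $a_s$ over horizontal strip additions.

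Next, I would translate each admissible choice to symbol language via $\Psi_{2,s}$: adding a box to the $p$-th row of $\lambda^{(0)}$ changes the $p$-th entry of the first sequence of $\bm\Lambda(\bm\lambda)$ from $\lambda^{(0)}_p+2(m-p+1)$ to $\lambda^{(0)}_p+2(m-p+1)+1$, and similarly for the second sequence. Using the identity
\[
a_s(\bm\Lambda) - a_s(\bm\Lambda^0) = \sum_{a,b\in \bm\Lambda}\min(a,b) - \sum_{a,b\in \bm\Lambda^0}\min(a,b),
\]
one sees that raising a single entry of the symbol by $1$ contributes an increment equal to the number of symbol entries strictly greater than it. Consequently, raising an entry of the symbol by one unit is strictly more efficient than redistributing those units over smaller entries further down either sequence: concentrating the $k_0$ added units into the single entry $\lambda^{(0)}_p + 2(m-p+1)$ and the $k_1$ units into $\lambda^{(1)}_q + 2(m-q+1)+s$ dominates any ``spread'' configuration (provided the horizontal-strip constraint is respected, which it always is for a single-row addition after choosing $p,q$ on the strict-inequality rows of the symbol). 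This establishes the first bullet point.

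Now, once the shape of the maximizer is reduced to choosing a pair $(k_0,k_1,p,q)$, the increment in $a_s$ produced by raising the two chosen entries by $k_0$ and $k_1$ is a convex function in the gap between the new values of these entries: among all $(k_0,k_1,p,q)$ with $k_0+k_1 = k$ and $\bm\mu$ still in $X_s(k,\bm\lambda)$, the maximum of $a_s$ is attained exactly when the two raised entries lie as close together as possible in the union of the two sequences. Writing these new values explicitly, the distance is $|(k_0 + 2(m-p+1)) - (k_1 + 2(m-q+1)+s)| = |k_0 - k_1 + 2(q-p) - s|$, so the optimum occurs precisely when this distance equals $0$ or $1$, which rewrites as the balancing condition $k_0 + 2q - s \in \{k_1 + 2p - 1, k_1 + 2p, k_1 + 2p+1\}$ of the statement.

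For the last assertion, the convexity analysis shows that the minimum distance is attained at either one unique value (if the parity of $k_0-k_1+2(q-p)-s$ forces distance $\ge 1$ and there is a unique pair realizing it) or two adjacent values (when distances $0$ and $1$ are both realized, or when there are two pairs at distance $1$ on opposite sides). In the latter case the two resulting symbols differ only by swapping one pair of consecutive-integer entries $(k, k+1)$ between the two sequences, i.e.\ they are strongly similar in the sense of Definition~\ref{symBC}. The main technical obstacle is the careful bookkeeping in Step~2: verifying that concentrating the added boxes in a single row of each partition strictly increases $a_s$ over any other admissible horizontal-strip distribution. This requires a pairwise exchange argument showing that, given two added boxes in distinct rows, merging them into a single row raises $a_s$ provided the merged configuration is still a legal horizontal strip, which in turn follows from the strict interlacing of symbol entries together with the horizontal-strip constraint on $\bm\lambda$.
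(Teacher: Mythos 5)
The paper's own ``proof'' of this proposition is a pointer to Slooten \cite{Sl} 4.2.8 and 4.5.2; no argument actually appears in the text, so there is nothing to compare against line by line. Your overall strategy --- reduce to the Pieri rule for $W_n$-induction and then optimize the $a$-function over horizontal-strip additions --- is the right one (and presumably what Slooten does), and both your Step~1 identification of $X_s(k,\bm\lambda)$ via horizontal strips and your Step~3 balancing/convexity observation are essentially sound once the shape of the maximizer has been pinned down. The problem is the key step in between.

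The claim in Step~2 that ``raising an entry of the symbol by one unit is strictly more efficient than redistributing those units over smaller entries further down either sequence'' is backwards. Sorting the symbol entries as $c_1 \ge c_2 \ge \cdots$, one checks that $\sum_{a,b}\min(a,b) = \sum_k (2k-1)\,c_k$, so per unit it is \emph{more} efficient to raise a \emph{smaller} entry (larger rank $k$), not less. The actual optimizer is therefore the greedy that pushes boxes into the lowest rows compatible with the interlacing (horizontal-strip) constraint, and the content of the proposition is that this greedy fill produces precisely the ``single new part of size $k_j$'' configuration in the first bullet --- something that requires the exchange lemma you flag as ``the main technical obstacle'' and do not supply. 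Relatedly, inserting a new part $k_0$ into $\lambda^{(0)}$ does \emph{not} correspond to raising one symbol entry by $k_0$: all parts $<k_0$ shift down one row (an offset of $r=2$ in the symbol each), so several entries change at once, and your formula for the new entry $\lambda^{(0)}_p+2(m-p+1)+k_0$ does not match the $\mu^{(0)}_p=k_0$ of the statement unless $\lambda^{(0)}_p=0$. With the monotonicity pointing the wrong way and the insertion/single-raise conflation, the exchange moves sketched in your final paragraph would go in the wrong direction, so as written the argument does not establish the first bullet, and hence not the proposition.
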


\begin{proof}
This is exactly the same as \cite{Sl} 4.2.8. For the compatibility with our choice of symbols, see \cite{Sl} 4.5.2.
\end{proof}

\begin{lemma}[Slooten \cite{Sl} \S 4.5]\label{char-seq}
For each strong similarity class $\mathcal S$ of $Z ^{2,s} _n$, we have a set $E ( \mathcal S )$ of entries of ${\bm \Lambda} \in \mathcal S$ with the following properties: 
\begin{itemize}
\item The assignment
$$\mathcal S \ni {\bm \Lambda} \mapsto \sigma ^s _{\bm \Lambda} := ( E ( \mathcal S ) \cap \{ \text{entries of the second row of } {\bm \Lambda} \} ) \in 2 ^{E ( \mathcal S )}$$
sets up a bijection between $\mathcal S$ and $2 ^{E ( \mathcal S )}$;
\item For ${\bm \Lambda}, {\bm \Lambda}' \in \mathcal S$, we have $a _{s + \epsilon} ( {\bm \Lambda} ) > a _{s + \epsilon} ( {\bm \Lambda}' )$ if $\sigma ^s _{\bm \Lambda} \supset \sigma ^s _{{\bm \Lambda}'}$;
\item For ${\bm \Lambda}, {\bm \Lambda}' \in \mathcal S$, we have $a _{s - \epsilon} ( {\bm \Lambda} ) > a _{s - \epsilon} ( {\bm \Lambda}' )$ if $\sigma ^s _{\bm \Lambda} \subset \sigma ^s _{{\bm \Lambda}'}$.
\end{itemize}
Here $0< \epsilon \ll 1$ is a real number.
\end{lemma}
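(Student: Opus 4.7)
The plan is to identify $E(\mathcal S)$ with the set of smaller values in each independent swap pair underlying $\mathcal S$, and to verify the three properties by a direct analysis of how the $a$-function depends on $s$.

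First I would describe the combinatorial structure of $\mathcal S$. By Definition \ref{symBC}, $\mathcal S$ is an orbit under a commuting family of involutions, each exchanging a pair of values $(k,k+1)$ between the two rows at a fixed pair of positions. Enumerate these independent swap pairs as $\{(k_\alpha,k_\alpha+1)\}_{\alpha=1}^{p}$, giving $|\mathcal S|=2^p$. Define
\[
E(\mathcal S):=\{k_1,\ldots,k_p\}.
\]
Since each $k_\alpha$ appears in exactly one row of any ${\bm \Lambda}\in\mathcal S$ (row $1$ when the $\alpha$th swap has not been performed and row $2$ when it has), the prescription $\sigma^s_{\bm \Lambda}=E(\mathcal S)\cap(\text{second row of }{\bm \Lambda})$ is a bijection $\mathcal S\to 2^{E(\mathcal S)}$, establishing the first property.

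For the remaining two properties I would write the entries of ${\bm \Lambda}$ as a first row $\{a_i\}_{i=1}^{m+1}$ (independent of $s$) and a second row $\{\tilde b_j=b_j+s\}_{j=1}^{m+1}$, and analogously for ${\bm \Lambda}^0$. Splitting $\sum_{x,y\in{\bm \Lambda}}\min(x,y)$ into within-row and cross-row contributions, the within-row part cancels against ${\bm \Lambda}^0$ (both have $m+1$ entries per row), and differentiating in $s$ at a generic value yields
\[
\frac{d}{ds}\,a_s({\bm \Lambda})=2\bigl[N({\bm \Lambda})-N({\bm \Lambda}^0)\bigr],\qquad N({\bm \Lambda}):=\#\{(i,j):a_i>\tilde b_j\}.
\]
The central computation is then: for ${\bm \Lambda},{\bm \Lambda}'\in\mathcal S$ differing by a single swap at pair $\alpha$, with $k_\alpha$ in row $1$ of ${\bm \Lambda}$ and in row $2$ of ${\bm \Lambda}'$, a direct count gives $N({\bm \Lambda}')-N({\bm \Lambda})=+1$. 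Indeed the only net change comes from the swap pair itself: the configuration $(k_\alpha, k_\alpha+1)$ contributes no cross-inversion to $N({\bm \Lambda})$, while after the swap the configuration $(k_\alpha+1, k_\alpha)$ contributes exactly one to $N({\bm \Lambda}')$; auxiliary cross-pairs involving the swapped values against the rest of the symbol cancel, because by construction no other row $1$ entry equals $k_\alpha+1$ and no other row $2$ entry equals $k_\alpha$ within $\mathcal S$. Summing over independent swaps yields $N({\bm \Lambda})=N_{\mathrm{base}}+|\sigma^s_{\bm \Lambda}|$.

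Since ${\bm \Lambda}$ and ${\bm \Lambda}'$ are similar at the original $s$ (strong similarity implies similarity, and $a_s$ depends only on the multiset of entries), we have $a_s({\bm \Lambda})=a_s({\bm \Lambda}')$, so
\[
a_{s\pm\epsilon}({\bm \Lambda})-a_{s\pm\epsilon}({\bm \Lambda}')=\pm 2\epsilon\bigl(|\sigma^s_{\bm \Lambda}|-|\sigma^s_{{\bm \Lambda}'}|\bigr)+O(\epsilon^2),
\]
which is strictly positive precisely in the cases claimed. The main obstacle, already handled in \cite{Sl} \S 4.5, is the non-generic situation in which consecutive swap pairs share a value; there the bijection and the sign-count require Slooten's finer analysis via characteristic sequences, but the resulting $E(\mathcal S)$ and derivative computation remain valid with only notational adjustments.
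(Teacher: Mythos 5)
Your overall strategy mirrors the paper's: identify a set $E(\mathcal S)$, set up the bijection, and estimate the $s$-derivative of the $a$-function by counting cross-row inversions. However, your treatment of the generators of $\mathcal S$ contains a genuine error that you cannot dismiss as a ``notational adjustment.''

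You take the independent atoms of $\mathcal{S}$ to be single pair swaps $(k_\alpha,k_\alpha+1)$ and claim $\Delta N=+1$ per swap. But when several such pairs chain into a longer run of consecutive integers $I=\{p,p+1,\dots,p+2m-1\}$ ($m\ge 2$), an individual pair swap is \emph{not} an admissible move: it places two adjacent integers into the same row, violating the $r=2$ constraint that defines an $(r,s)$-symbol, so the resulting array lies outside $Z^{2,s}_n$ entirely. The paper's argument is built precisely around this observation: the only admissible move within a run $I$ is the \emph{simultaneous} swap of $I^+ = \{p,p+2,\dots\}$ with $I^- = \{p+1,p+3,\dots\}$, and this is possible only when $|I|$ is even. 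Consequently $E(\mathcal S)$ is indexed by the even-length runs (taking each minimal entry $p$), not by individual pairs, so e.g.\ for $|I|=4$ your scheme would predict $\mathcal S$ contributes a factor $2^2$, whereas in fact it contributes a factor $2$. Moreover a single sequence swap changes the cross-inversion count $N$ by $|I|/2$, not by $1$; tracing this through, the derivative becomes
$$a_{s+\kappa}({\bm \Lambda}) - a_{s+\kappa}({\bm \Lambda}') = \kappa\Bigl(\sum_{p\in\sigma^s_{\bm \Lambda}} q_p - \sum_{p'\in\sigma^s_{{\bm \Lambda}'}} q_{p'}\Bigr),$$
with weights $q_p=|I|$ that are $\ge 2$ and genuinely variable, rather than the unweighted $\pm 2\epsilon\bigl(|\sigma^s_{\bm \Lambda}|-|\sigma^s_{{\bm \Lambda}'}|\bigr)$ you wrote. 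Since the monotonicity claims of the lemma still follow (all $q_p>0$), the final conclusion is reachable, but the route to it requires reworking the bijection and the inversion count at the level of even-length runs; the case you set aside is not a degenerate edge case but the structural heart of the argument.
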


\begin{proof}
Each sequence of a symbol cannot contain a consecutive sequence of integers (since $r = 2$). Let $I = \{ p,p+1,\ldots, q\}$ be a consecutive sequence of integers appearing in ${\bm \Lambda}$ so that $(p-1), (q+1) \not\in {\bm \Lambda}$. Then, its division $I^+ := \{p, p + 2,\ldots\}$ and $I^- := \{p+1, p+3,\ldots\}$ must belong to distinct sequences. If $\# I \ge 2$, then none of the element of $I$ appears twice in ${\bm \Lambda}$. Hence, we can swap $I^+$ and $I^-$ simultaneously (if $\# I^+ = \# I ^-$), but not individually. Therefore, a symbol is characterized (inside its strong similarity class) by the behaviour of such sequences with even length. As a consequence, the set $E ( \mathcal S )$ consisting of minimal entries ($p$ in the above) of such sequences $I$ satisfies the first assertion. We write $q_p$ the length of the sequence $I \ni p \in E ( \mathcal S)$. Then, for each ${\bm \Lambda}, {\bm \Lambda}' \in \mathcal S$ and $|\kappa| \ll 1$, we have
$$a _{s + \kappa} ( {\bm \Lambda} ) - a _{s + \kappa} ( {\bm \Lambda}' ) = \kappa ( \sum _{p \in \sigma ^s _{{\bm \Lambda}}} q_p - \sum _{p' \in \sigma ^s _{{\bm \Lambda}'}} q_{p'} )$$
by inspection. This is enough to prove the other two assertions.
\end{proof}

\begin{theorem}[Slooten \cite{Sl2}, Ciubotaru-K \cite{CK, CKK}]\label{delimits}
For each $s \in \mathbb Z_{> 0}$ and $0 < \epsilon < 1$, we have a collection $\{ K ^{s + \epsilon} _{\bm \lambda} \} _{{\bm \lambda} \in {\mathtt P} ( n )}$ of indecomposable $A_W$-modules with the following properties:
\begin{enumerate}
\item The module $K ^{s + \epsilon} _{\bm \lambda}$ is a quotient of $P_{\bm \lambda}$, and we have $[K ^{s + \epsilon} _{\bm \lambda} : L _{\bm \lambda}] = 1$;
\item Let $\mathcal S \subset Z^{2,s}_n$ be the strong similarity class which contains ${\bm \lambda}$. We have
$$\mathsf{gch} \, K^{s+\epsilon} _{\bm \lambda} \equiv \sum _{{\bm \gamma} \in \mathcal S, \, \sigma ^s _{\bm \gamma} \subset \sigma ^s _{\bm \lambda}} \mathsf{gch} \, K ^{s} _{\bm \gamma} \mod ( t - 1 );$$
\item Let $\mathcal S \subset Z^{2,s+1}_n$ be the strong similarity class which contains ${\bm \lambda}$. We have
$$\mathsf{gch} \, K^{s+\epsilon} _{\bm \lambda} \equiv \sum _{{\bm \gamma} \in \mathcal S, \, \sigma ^{s+1} _{\bm \gamma} \supset \sigma ^{s+1} _{\bm \lambda}} \mathsf{gch} \, K ^{s+1} _{\bm \gamma} \mod ( t - 1 ).$$
\end{enumerate}
\end{theorem}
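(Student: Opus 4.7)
The plan is to realize each $K^{s+\epsilon}_{\bm{\lambda}}$ as the specialization at $z=0$ of a flat $\mathbb C^{\times}$-equivariant module over the graded affine Hecke algebra $\mathbf H^{\mathbf c}$ of type $\mathsf{BC}_n$ attached to a cuspidal datum $\mathbf c$ whose underlying Weyl group is $W$ and whose real parameter is set to $s+\epsilon$. This algebra, with its standard $\mathbb C^{\times}$-dilation rescaling the central parameter $z$, satisfies the hypotheses of Proposition~\ref{fl} and specializes to $A_W$ at $z=0$. The crucial non-geometric input, coming from \cite{Sl,Sl2,CK,CKK}, is that for every real parameter value $s+\epsilon$ with $0<\epsilon<1$ there is a canonical bijection $\bm{\lambda}\mapsto M^{s+\epsilon}_{\bm{\lambda}}$ between $\mathtt P(n)$ and the isomorphism classes of irreducible tempered $\mathbf H^{\mathbf c}$-modules on which $z$ acts by a nonzero scalar, whose $W$-restriction contains $L_{\bm{\lambda}}$ with multiplicity one. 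I would apply Proposition~\ref{fl} to each such $M^{s+\epsilon}_{\bm{\lambda}}$ to obtain an indecomposable graded $A_W$-module $K^{s+\epsilon}_{\bm{\lambda}}$ equipped with a surjection from $P_{\bm{\lambda}}$; the multiplicity-freeness transports to $[K^{s+\epsilon}_{\bm{\lambda}}:L_{\bm{\lambda}}]=1$, which establishes item~(1).

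For item~(2), the next step is to construct a flat $\mathbb C[z]$-form $\widetilde{\mathcal M}^{s+\epsilon}_{\bm{\lambda}}$ of $M^{s+\epsilon}_{\bm{\lambda}}$ whose fibre at $z=s$ is identified with the $\mathbf H^{\mathbf c}$-module obtained by specializing the Hecke parameter from $s+\epsilon$ down to the integer $s$. The walling analysis of \cite{CK,CKK} combined with Lemma~\ref{char-seq} forces this specialization, as a $W$-module, to coincide with $\bigoplus_{\bm{\gamma}} M^s_{\bm{\gamma}}$ where $\bm{\gamma}$ runs over the strong similarity class of $\bm{\lambda}$ subject to $\sigma^s_{\bm{\gamma}}\subset\sigma^s_{\bm{\lambda}}$: these are exactly the elements with $a_{s+\epsilon}(\bm{\gamma})\le a_{s+\epsilon}(\bm{\lambda})$ that survive inside the standard module built from $\bm{\lambda}$. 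Feeding this family into the second assertion of Proposition~\ref{fl} produces a decreasing filtration of $K^{s+\epsilon}_{\bm{\lambda}}$ whose associated graded has the same $W$-character as $\bigoplus_{\bm{\gamma}} K^s_{\bm{\gamma}}$, and specializing $t=1$ recovers the congruence of~(2). Item~(3) is handled identically, interpolating between $s+\epsilon$ and $s+1$ and invoking the sign-flipped third bullet of Lemma~\ref{char-seq}, which forces the reversed containment $\sigma^{s+1}_{\bm{\gamma}}\supset\sigma^{s+1}_{\bm{\lambda}}$.

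The main obstacle will be the construction of this flat $\mathbb C[z]$-family across the integer wall $z=s$ so that the prescribed $W$-character at the wall is actually realized. A naive deformation produces only a family that is flat over a punctured disc, and extending to the wall requires controlling how the irreducible tempered module at $s+\epsilon$ degenerates to a possibly reducible, non-tempered module at $s$, and then re-expressing this degeneration as a direct sum of tempered modules at the integer parameter. This is precisely the content of the mid-module constructions of \cite{CKK} together with the graded refinement underlying Theorem~\ref{fmain}: the strong similarity class data of Lemma~\ref{char-seq} pin down the tempered summands uniquely, and their $W$-characters are forced by Slooten's generic algorithm \cite{Sl2}. Once this flat family is in hand, transferring the resulting filtration to the $A_W$-side via Proposition~\ref{fl} is a matter of bookkeeping.
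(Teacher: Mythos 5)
Your first paragraph is essentially the paper's strategy: identify the correct graded Hecke algebra $\mathbf H$ (verifying the hypotheses of Proposition~\ref{fl} via \cite{Lu}), use the Slooten--Ciubotaru--Kato bijection between $\mathtt P(n)$ and irreducible tempered modules with real central characters in the generic parameter region, and apply Proposition~\ref{fl} to each $M^{s+\epsilon}_{\bm\lambda}$ to produce an indecomposable quotient of $P_{\bm\lambda}$ with $[K^{s+\epsilon}_{\bm\lambda}:L_{\bm\lambda}]=1$. One omission here: the paper must check that the Slooten labeling agrees with its own conventions so that the module obtained actually has head $L_{\bm\lambda}$ and not some permuted $L_{\varphi(\bm\lambda)}$; this is the content of Claim~\ref{Slabel} and the subsequent argument that $\varphi=\mathrm{id}$, and you should not wave it away.

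For items (2) and (3), you go off on a route that is both harder than necessary and misaligned with the tool you invoke. Proposition~\ref{fl} deforms the \emph{grading variable} $z$ (the central element of Lusztig's graded Hecke algebra), with $z=0$ giving $A_W$; the Hecke \emph{parameter ratio} $(s+\epsilon)/2$ is fixed throughout. You instead propose a flat family $\widetilde{\mathcal M}^{s+\epsilon}_{\bm\lambda}$ whose fibre at $z=s$ is a module at a \emph{different parameter ratio} $s/2$, and then feed this into the second part of Proposition~\ref{fl} to get a filtration. This conflates two unrelated one-parameter degenerations, and Proposition~\ref{fl} simply does not apply to the family you want. The ``main obstacle'' you flag — degenerating $M^{s+\epsilon}_{\bm\lambda}$ across the integer wall and re-expressing the limit as tempered modules at parameter $s$ — is not an obstacle in the paper's proof, because the paper never crosses that wall with a module. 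Item (2) is only a congruence between \emph{ungraded} $W$-characters; it follows immediately from (a) Proposition~\ref{fl} giving $\mathsf{gch}\,K^{s+\epsilon}_{\bm\lambda}\equiv\mathsf{ch}\,M^{s+\epsilon}_{\bm\lambda}\bmod(t-1)$ and (b) the $W$-character identity $\mathsf{ch}\,M^{s+\epsilon}_{\bm\lambda}=\sum_{\bm\gamma}\mathsf{ch}\,K^s_{\bm\gamma}$, which is an already-established combinatorial fact from the delimit sequences of \cite{CKK} 3.15, 3.16, 3.25 (together with the matching of indexing via Proposition~\ref{Wind} and Lemma~\ref{char-seq}). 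No new degenerating family is needed; the filtration realizing the graded refinement is postponed to Lemma~\ref{filt} and is not part of Theorem~\ref{delimits} at all.
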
 

\begin{proof}
First, we observe that the integer $s$ corresponds to the graded Hecke algebra parameter ratio $s / 2$ by Lemma \ref{gSpdata} (and its proof) and Lusztig \cite{L-CG1} 2.13 (cf. \cite{Sl} 3.6.1). We have the set of (isomorphism classes of) irreducible tempered modules $\{ M _{\bm \lambda} ^{s + \epsilon} \} _{\bm \lambda}$ of a graded Hecke algebra $\mathcal H$ of type $\mathsf{BC}$ (see \cite{CK} \S 1.2 for the definition) with real central characters whose parameter ratio is $(s+\epsilon) / 2$. The set $\{ M _{\bm \lambda} ^{s + \epsilon} \} _{\bm \lambda}$ is known to be in bijection with the set of irreducible representations of $W$ by Lusztig \cite{L-CG3} 1.21 (cf. \cite{L-CG2} 10.13 and \cite{L-IC}) when $\epsilon \in \{ 0, \frac{1}{2}, 1 \}$, and by \cite{CK} Theorem C and \S 4.3 for $0 < \epsilon < 1$.

Thanks to Opdam \cite{Op} and Slooten \cite{Sl2} (cf. \cite{CK} Theorem C), we know that $M _{\bm \lambda} ^{s + \epsilon}$ is written as a unique irreducible induction from a discrete series representation. In addition, its $W$-module structure is
\begin{equation}
M _{\bm \lambda} ^{s + \epsilon} \cong \mathsf{Ind} ^W _{( \mathfrak S _{\lambda^{\mathsf{A}}} \times W_{(n-k)} )} \mathbb C \boxtimes M ^{s + \epsilon} _{{\bm \lambda} ^{\mathsf{C}}},\label{genT}
\end{equation}
where $\lambda^{\mathsf{A}}$ is a partition of $k$, ${\bm \lambda} ^{\mathsf{C}}$ is a bi-partition of $(n-k)$, and $M^{s+\epsilon} _{{\bm \lambda} ^{\mathsf{C}}}$ is a discrete series representation of graded Hecke algebra $\mathcal H'$ of type $\mathsf{BC}$ with the same parameter ratio $(s+\epsilon)/2$, but has rank $(n-k)$.

\begin{claim}[Slooten \cite{Sl}]\label{Slabel}
The module $L_{\bm \lambda}$ in $(\ref{genT})$ is the $W$-irreducible constituent of $\mathsf{Ind} ^W _{( \mathfrak S _{\lambda^{\mathsf{A}}} \times W_{(n-k)} )} \mathbb C \boxtimes L _{{\bm \lambda} ^{\mathsf{C}}}$ whose label attains the maximal $a_{s+\epsilon}$-function value $($which is in fact unique$)$. Moreover, it defines a unique bijection between the set of tempered modules of $\mathcal H$ with real central characters and $\mathsf{Irr} \, W$ so that $L_{\bm \lambda} \subset M _{\bm \lambda} ^{s + \epsilon}$ $($as $W$-modules$)$.
\end{claim}

\begin{proof}
The first assertion is established in Slooten (\cite{Sl} 4.5.6) up to the property $L_{\bm \lambda} \subset M _{\bm \lambda} ^{s + \epsilon}$. By construction, it is enough to check it for discrete series. This is given in \cite{CK} \S 4.4 as the matching of Lusztig's $W$-types (of a generalized Springer correspondence of a Spin group) and Slooten's combinatorics.

In addition, \cite{CK} \S 4.5 and \cite{L-CG3} shows that the $W$-characters of $\{ M _{\bm \lambda} ^{s + \epsilon} \} _{\bm \lambda}$ is equal to those of $\{ K^{\mathbf c} _{\bm \lambda}\} _{\bm \lambda}$ for some cuspidal datum $\mathbf c$. Thanks to the triangularity condition of the matrix $K$ in the Lusztig-Shoji algorithm (Theorem \ref{Sh}), we deduce that a bijection in the assertion must be unique as required. 
\end{proof}

We return to the proof of Theorem \ref{delimits}. Thanks to \cite{CKK} 3.16, each $M^{s+\epsilon} _{{\bm \lambda} ^{\mathsf{C}}}$ is isomorphic to (two) irreducible tempered modules of $\mathcal H'$ with their parameter ratios $s/2$ and $(s+1)/2$ as $W$-modules. By \cite{L-CG3} 1.17, 1.21, 1.22 (and Theorem \ref{gSp}), we identify $\{K^{s} _{\bm \lambda}\} _{\bm \lambda}$ with the set of irreducible tempered modules (viewed as $W$-modules) with real central characters of $\mathcal H$ with its parameter ratio $s/2$. By utilizing \cite{CKK} 3.15, 3.25 (cf. \cite{Sl2} 3.5.3), we deduce that the ungraded $W$-character
$$\mathsf{ch} \, M _{\bm \lambda} ^{s + \epsilon} \in \mathbb Z \mathsf{Irr} \, W \subset \mathbb Z (\!( t )\!) \mathsf{Irr} \, W$$
satisfies
\begin{equation}
\mathsf{ch} \, M _{\bm \lambda} ^{s + \epsilon} \equiv \sum _{{\bm \mu} \in \mathcal T _{\bm \lambda}} \mathsf{gch} \, K^{s} _{\bm \mu} \mod ( t - 1 )\label{delim}
\end{equation}
for some set $\mathcal T _{\bm \lambda} \subset \mathtt{P} ( n )$. Put $\mathcal S _{\bm \lambda} := \{ {\bm \mu} \in \mathcal S \!\mid \sigma ^s _{\bm \mu} \subset \sigma ^s _{\bm \lambda} \}$. By the comparison of \cite{CKK} 3.15, 3.22 with Proposition \ref{Wind}, Lemma \ref{char-seq} (cf. \cite{Sl2} 3.4.4), we obtain a bijection $\mathcal S _{\bm \lambda} \cong \mathcal T _{\bm \lambda}$ so that $\mathcal S _{\bm \lambda} \subset \mathcal S _{\bm \lambda'}$ implies $\mathcal T _{\bm \lambda} \subset \mathcal T _{\bm \lambda'}$ for each ${\bm \lambda'} \in \mathcal S$. In view of \cite{CKK} 3.24 and 3.25, the bijections $\mathcal S \cong \mathcal T$ yield a bijection $\varphi : \mathtt{P} (n) \cong \mathtt{P} (n)$ so that $\varphi ( {\bm \lambda}) \in \mathcal T _{\bm \lambda}$ and
$$L _{\varphi ( {\bm \lambda} )} \subset K ^{s} _{\varphi ( {\bm \lambda} )} \subset M ^{s+\epsilon} _{{\bm \lambda}} \hskip 5mm \text{ as $W$-modules}$$
for each ${\bm \lambda} \in \mathtt{P} (n)$. By the uniqueness part of Claim \ref{Slabel}, we deduce $\varphi = \mathrm{id}$. In particular, we conclude $\mathcal T _{\bm \lambda} = \mathcal S _{\bm \lambda}$. Thanks to \cite{Lu} 4.13, Proposition \ref{fl} 1)--3) is satisfied. Applying Proposition \ref{fl}, we obtain a collection of modules $\{ K^{s+\epsilon} _{\bm \lambda} \} _{\bm \lambda}$ which satisfies the condition {\bf 1)}, and $\mathsf{gch} \, K^{s+\epsilon} _{\bm \lambda} \equiv \mathsf{ch} \, M _{\bm \lambda} ^{s + \epsilon} \mod ( t - 1 )$. Combined with (\ref{delim}), we deduce the condition {\bf 2)}.

The condition {\bf 3)} follows from a similar argument as above by replacing $\{ K ^{s} _{\bm \lambda} \} _{\bm \lambda}$ with $\{ K ^{s+1} _{\bm \lambda} \} _{\bm \lambda}$, identified with the set of irreducible tempered modules of a graded Hecke algebra of type $\mathsf{BC}$ whose parameter ratio is $(s+1)/2$.
\end{proof}

\begin{corollary}\label{spin}
Keep the setting of Theorem \ref{delimits}. The collection $\{ K ^{s + \epsilon} _{\bm \lambda} \} _{{\bm \lambda} \in {\mathtt P} ( n )}$ is a Kostka system adapted to an admissible phyla of a generalized Springer correspondence of a $\mathrm{Spin}$-group.
\end{corollary}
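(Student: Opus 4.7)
The plan is to identify the collection $\{K^{s+\epsilon}_{\bm \lambda}\}_{\bm \lambda \in \mathtt{P}(n)}$ produced by Theorem~\ref{delimits} with the Kostka system arising, via Theorem~\ref{gSp}, from a suitable generalized Springer correspondence of a Spin group, and then invoke Theorem~\ref{gSp} directly. Once the identification is made, since $s+\epsilon \notin \mathbb Z$, Lemma~\ref{generic} tells us that the phyla associated to $Z^{2,s+\epsilon}_n$ is singleton, and this matches an admissible phyla of the Spin-group generalized Springer correspondence (each equi-orbit class of Springer correspondents being a singleton at generic parameters).

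\textbf{Step 1 (selection of cuspidal datum).} Using the dictionary from \cite{CK} \S 4.4 and \cite{L-CG3} (already invoked in the proof of Claim~\ref{Slabel}), I would pinpoint a cuspidal datum $\mathbf{c}$ for a Spin group whose attached Weyl group is $W$ and whose graded Hecke algebra parameter ratio (in the sense of Lemma~\ref{gSpdata} and \cite{L-CG1} 2.13) is $(s+\epsilon)/2$. By those references, the $W$-characters of the generalized Springer representations $\{K^{\mathbf{c}}_{\bm \lambda}\}_{\bm \lambda}$ coincide with those of the irreducible tempered $\mathcal H$-modules $\{M^{s+\epsilon}_{\bm \lambda}\}_{\bm \lambda}$ with real central characters. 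In particular, the labelling conventions already match by the uniqueness part of Claim~\ref{Slabel}.

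\textbf{Step 2 (matching the graded $A_W$-structures).} Both $K^{s+\epsilon}_{\bm \lambda}$ and $K^{\mathbf{c}}_{\bm \lambda}$ are constructed by applying Proposition~\ref{fl} to the same input, once this is recognized. On the one hand, $K^{s+\epsilon}_{\bm \lambda}$ is defined in Theorem~\ref{delimits} by applying Proposition~\ref{fl} to $M^{s+\epsilon}_{\bm \lambda}$. On the other hand, Lusztig's realization (\cite{L-CG1,L-CG2}, whose positive-characteristic version underlies our \S 3) equips $H^{\bullet}_{Z_G(x)^{\circ}}(\mathfrak B_x, \dot{\mathcal L})_{\xi}$ with a flat $\mathbb C[z]$-deformation of the $A_W$-module $K^{\mathbf{c}}_{\bm \lambda}$, whose specialization at $z \neq 0$ (after the $\mathbb C^\times$-rescaling of parameters) yields the tempered module $M^{s+\epsilon}_{\bm \lambda}$. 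Hence Proposition~\ref{fl}, together with the latter assertion of that proposition applied to this flat $\mathcal{A}$-equivariant family, forces $K^{s+\epsilon}_{\bm \lambda} \cong K^{\mathbf{c}}_{\bm \lambda}$ as graded $A_W$-modules.

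\textbf{Step 3 (conclude).} With the identification of collections in hand, Theorem~\ref{gSp} applied to $\mathbf{c}$ immediately yields that $\{K^{s+\epsilon}_{\bm \lambda}\}_{\bm \lambda}$ is a Kostka system adapted to every admissible phyla of $\mathbf{c}$, proving the corollary. \textbf{The main obstacle} is Step 2: Proposition~\ref{fl} pins down its output only up to grading shift, so I must actually supply the $\mathbb C^\times$-equivariant flat family whose specialization at $z=1$ is $M^{s+\epsilon}_{\bm \lambda}$ and whose specialization at $z=0$ is $K^{\mathbf{c}}_{\bm \lambda}$ with the correct grading. The natural candidate is the equivariant cohomology of $\mathfrak B_x$ with coefficients in $\dot{\mathcal L}$, equipped with the $\mathbf{H}$-action of \cite{L-CG2} 8.11 (whose $\mathbf{r}$-specialization recovers $A_W$ by Theorem~\ref{LusSTD} 2)); normalizing the grading so that $P_{\bm \lambda}$ surjects onto it in degree $0$, which is fixed by the purity statement of Theorem~\ref{LusSTD} 9) used inside the proof of Theorem~\ref{injectivity}, then forces the two grading normalizations to agree.
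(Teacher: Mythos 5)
Your Step 1 matches the opening of the paper's proof: via Claim~\ref{Slabel} (which rests on \cite{CK}~\S~4.5 and \cite{L-CG3}), the tempered modules $M^{s+\epsilon}_{\bm\lambda}$ have the same $W$-characters as $\{K^{\mathbf c}_{\bm\lambda}\}$ for a Spin cuspidal datum $\mathbf c$, and Proposition~\ref{fl} gives $K^{s+\epsilon}_{\bm\lambda}\cong M^{s+\epsilon}_{\bm\lambda}$ as $W$-modules; so the two collections agree as \emph{ungraded} $W$-modules. At Step 2 you diverge from the paper, and the paper's route is much shorter than the one you attempt. The paper observes that both $K^{s+\epsilon}_{\bm\lambda}$ and $K^{\mathbf c}_{\bm\lambda}$ are graded quotients of $P_{\bm\lambda}$ (Theorem~\ref{delimits}~1) and Theorem~\ref{injectivity}), and that $K^{\mathbf c}_{\bm\lambda}=P_{\bm\lambda,\mathcal P}$ is a $\mathcal P$-trace. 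By Lemma~\ref{invert}, a $\mathcal P$-trace has no constituents $L_\eta$ with $\eta<_{\mathcal P}{\bm\lambda}$, nor with $\eta\sim{\bm\lambda}$, $\eta\neq{\bm\lambda}$. Since the ungraded constituents of $K^{s+\epsilon}_{\bm\lambda}$ and $K^{\mathbf c}_{\bm\lambda}$ coincide, $K^{s+\epsilon}_{\bm\lambda}$ has none of these either; consequently every positive-degree map $P_\eta\to P_{\bm\lambda}$ with $\eta\lesssim{\bm\lambda}$ composes to zero with $P_{\bm\lambda}\twoheadrightarrow K^{s+\epsilon}_{\bm\lambda}$, so the latter quotient factors through $P_{\bm\lambda,\mathcal P}$. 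Equality of total dimension then forces the resulting surjection $P_{\bm\lambda,\mathcal P}\twoheadrightarrow K^{s+\epsilon}_{\bm\lambda}$ to be an isomorphism. That is what the phrase ``by the $\mathcal P$-trace characterization'' encodes: among graded quotients of $P_{\bm\lambda}$, a $\mathcal P$-trace is uniquely pinned down by its ungraded $W$-module structure, so no deformation is needed.

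Your detour through Proposition~\ref{fl} is reasonable in spirit but has concrete gaps. The candidate family you name, $H^\bullet_{Z_G(x)^\circ}(\mathfrak B_x,\dot{\mathcal L})_\xi$, is not a $\mathbb C[z]$-module: $Z_G(x)^\circ$-equivariance is what supplies the $R_x$-action of Corollaries~\ref{filt-eo}--\ref{Lp}, whereas the $z=\mathbf r$-variable comes from a separate $\mathbb G_m$-equivariance in Lusztig's construction of $\mathbf H$ (\cite{L-CG2}~8.11). Even with the correct $\mathbb G_m$-equivariant family in hand, you would still have to identify its $z\neq0$ fiber with the specific \emph{irreducible} tempered module $M^{s+\epsilon}_{\bm\lambda}$ (a priori the geometry yields a standard module, whose irreducibility at the relevant parameter value is a nontrivial input from \cite{L-CG3}), and you would have to verify that the saturated submodule $\mathcal M^\flat$ in Proposition~\ref{fl} returns the non-equivariant homology $K^{\mathbf c}_{\bm\lambda}$ at $z=0$, which is essentially the purity statement of Theorem~\ref{LusSTD}~9) already used in the proof of Theorem~\ref{injectivity}. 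All of this bookkeeping is circumvented by the $\mathcal P$-trace universality argument.
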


\begin{proof}
By the proof of Theorem \ref{delimits}, $\{ K ^{s + \epsilon} _{\bm \lambda} \} _{{\bm \lambda}}$ is isomorphic to the Kostka system in the assertion as a set of $W$-modules. Since each $K ^{s + \epsilon} _{\bm \lambda}$ is a quotient of $P _{\bm \lambda}$, we conclude the isomorphism as a set of graded $A_W$-modules by the $\mathcal P$-trace characterization of Kostka systems (Definition \ref{kos} {\bf 1)}).
\end{proof}

\section{Transition of Kostka systems in type $\mathsf{BC}$}\label{transBC}

Keep the setting of the previous section.

\begin{lemma}\label{filt}
Let $s \in \mathbb Z _{>0}$ and $0 < \epsilon < 1$. For each strong similarity class $\mathcal S \subset Z ^{2,s}_n$ and ${\bm \lambda} \in \mathcal S$, the $A_W$-module $K ^{s+\epsilon} _{\bm \lambda}$ $($borrowed from Theorem \ref{delimits}$)$ admits a filtration whose successive quotients are of the form $\{ K ^s _{\bm \mu} \} _{\bm \mu \in \mathcal S}$ up to grading shifts. If $s > 1$, then $K ^{(s-1)+\epsilon} _{\bm \lambda}$ also admits a filtration whose successive quotients are of the form $\{ K ^{s} _{\bm \mu} \} _{\bm \mu \in \mathcal S}$ up to grading shifts.
\end{lemma}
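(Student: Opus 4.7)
The plan is to obtain the filtration by deforming the parameter of the graded Hecke algebra from $s+\epsilon$ to $s$, producing a flat family of tempered modules whose $\tau=0$ specialization is naturally filtered by the $M^s_{\bm\gamma}$, and then transporting this filtration to the $A_W$-side via Proposition~\ref{fl}. Concretely, I would consider the $\mathbb C[\tau]$-algebra $\mathcal H_\tau$ of type $\mathsf{BC}_n$ with parameter ratio $(s+\epsilon\tau)/2$, so that $\mathcal H_0 = \mathcal H(1,s/2)$ and $\mathcal H_1 = \mathcal H(1,(s+\epsilon)/2)$. By the continuous parameter dependence of tempered modules (Opdam \cite{Op}, \cite{CKK} 3.15 and 3.25, applied as in the proof of Theorem~\ref{delimits}), the irreducible tempered module $M^{s+\epsilon}_{\bm\lambda}$ used to define $K^{s+\epsilon}_{\bm\lambda}$ extends to a $\mathbb C[\tau]$-flat family $\mathcal M_\tau$: the generic fibre is irreducible, while $\mathcal M_0$ is a (generally reducible) tempered $\mathcal H_0$-module whose composition factors are exactly $\{M^s_{\bm\gamma}\}_{\bm\gamma\in\mathcal T}$ with $\mathcal T := \{\bm\gamma\in\mathcal S \,:\, \sigma^s_{\bm\gamma}\subset\sigma^s_{\bm\lambda}\}$.

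Because $\mathcal M_\tau$ is $\mathbb C[\tau]$-flat with irreducible generic fibre, the degeneration at $\tau=0$ equips $\mathcal M_0$ with a natural $\mathcal H_0$-module filtration whose successive quotients are the $M^s_{\bm\gamma}$, ordered in accordance with the $a_{s+\epsilon}$-function (which by Lemma~\ref{char-seq} places $\bm\lambda$ at the top, since $\sigma^s_{\bm\gamma}\subset\sigma^s_{\bm\lambda}$ for every $\bm\gamma\in\mathcal T$). I would then combine this $\tau$-deformation with the $\mathbb C[z]$-scaling of Proposition~\ref{fl}, which degenerates $\mathcal H_\tau$ to $A_W$ as $z\to 0$. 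This yields a two-parameter family whose specialization at $(\tau,z)=(1,0)$ recovers $K^{s+\epsilon}_{\bm\lambda}$ and whose specialization at $(\tau,z)=(0,0)$ carries an induced $A_W$-module filtration whose successive quotients are precisely the modules produced by Proposition~\ref{fl} applied to each $M^s_{\bm\gamma}$, namely the $K^s_{\bm\gamma}$ up to grading shifts. Rigidity of $W$-modules combined with the $\tau$-constancy of the $W$-character (a consequence of Theorem~\ref{delimits}~2)) then identifies the two $z=0$ specializations as isomorphic graded $A_W$-modules, transferring the filtration to $K^{s+\epsilon}_{\bm\lambda}$.

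The main obstacle will be establishing the $\mathbb C[\tau,z]$-flatness of the two-parameter family and verifying that the $\mathcal H_0$-filtration of $\mathcal M_0$ lifts to a $\mathbb C[z]$-flat filtration compatible with the submodule $\mathcal M^{\flat}$ produced by Proposition~\ref{fl}; this amounts to checking that the Bernstein-type presentation of $\mathcal H_\tau$ behaves uniformly in $\tau$ and that the $\mathbb C^\times$-action of Proposition~\ref{fl} (which is responsible for the grading on $A_W$) commutes with the $\tau$-deformation. Here the explicit parametrization of tempered modules from \cite{CKK} \S 3 and the combinatorics of Slooten \cite{Sl2} (in particular their compatibility with Lemma~\ref{char-seq}) are what make the argument tractable. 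The second assertion, for $K^{(s-1)+\epsilon}_{\bm\lambda}$, is proved by the same argument with the $\tau$-interpolation now running between parameter ratios $(s-1+\epsilon)/2$ and $s/2$, and with Theorem~\ref{delimits}~3) replacing 2) in the identification of the relevant set of composition factors.
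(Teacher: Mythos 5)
Your route is genuinely different from the paper's, and as you yourself flag, it has real unresolved gaps. The paper's proof of Lemma~\ref{filt} never leaves the graded $A_W$-module side: from Theorem~\ref{delimits}~2) one knows the ungraded $W$-character of $K^{s+\epsilon}_{\bm\lambda}$ is $\sum_{\bm\gamma}\mathsf{ch}\,K^s_{\bm\gamma}$ (sum over $\bm\gamma\in\mathcal S$ with $\sigma^s_{\bm\gamma}\subset\sigma^s_{\bm\lambda}$); one then builds the filtration $0=M^0\subset M^1\subset\cdots$ of $K^{s+\epsilon}_{\bm\lambda}$ step by step so that each successive quotient $M^i/M^{i-1}$ has at most one constituent with $a_s$-value equal to $a_s(\bm\lambda)$, observes that each such quotient is therefore a quotient of some $K^s_{\bm\mu}$ (because $K^s_{\bm\mu}$ is the $\mathcal P$-trace for $Z^{2,s}_n$), and finally forces each quotient to \emph{equal} the appropriate $K^s_{\bm\mu}$ by a dimension count against the $W$-character identity. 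No deformation of the Hecke-algebra parameter is performed; one only uses the already-computed character data as a numerical constraint.

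The gap in your proposal is precisely the point you identify as the ``main obstacle'' but do not resolve: the existence of a $\mathbb C[\tau]$-flat family $\mathcal M_\tau$ extending the irreducible tempered module \emph{all the way to the integer wall} $\tau=0$, equipped with a natural $\mathcal H_0$-filtration by $M^s_{\bm\gamma}$, and then compatibility of that filtration with the $\mathbb C[z]$-scaling of Proposition~\ref{fl}. A flat $\mathbb C[\tau]$-family with irreducible generic fibre only determines the multiset of composition factors of $\mathcal M_0$, not a canonical filtration, and establishing $\mathbb C[\tau,z]$-flatness of a two-parameter family together with commutation of the two specializations is substantial extra work that the paper deliberately avoids (Proposition~\ref{fl} is only set up for a one-parameter $\mathbb C[z]$-dilation). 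Your approach, if it could be completed, would prove something stronger (an honest degeneration realizing the filtration geometrically), but as written it falls short of a proof, whereas the paper's purely algebraic argument closes the loop with just the character formula and the $\mathcal P$-trace property.
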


\begin{proof}
Since the proofs of the both cases are essentially the same, we prove only the first half of the assertion. By Theorem \ref{delimits} 2), we deduce that
\begin{equation}
[ K _{\bm \lambda}^{s+\epsilon} : L _{\bm \mu} ] \MID _{t=1} = 1 \hskip 3mm \text{(}{\bm \mu} \in \mathcal S \text{ and } \sigma ^s _{\bm \mu} \subset \sigma ^s _{\bm \lambda} \text{) , and} \hskip 3mm 0 \hskip 3mm \text{(otherwise)}\label{ceq}
\end{equation}
for each ${\bm \mu} \in \mathtt P ( n )$ such that $a _{s} ( {\bm \mu} ) \ge a _{s} ( {\bm \lambda} )$. We set $M^0 := \{ 0 \} \subset K^{s+\epsilon} _{\bm \lambda}$. Then, by assuming the existence of the submodule $M^{i-1}$, we construct an $A_W$-submodule $M^i$ of $K^{s+\epsilon} _{\bm \lambda}$ which is spanned by $M^{i-1}$ and a unique $L _{\bm \mu}$ with $a _s ( {\bm \mu} ) = a _s ( {\bm \lambda} )$ such that $M^i / M^{i-1}$ contains no other irreducible $W$-constituent of type $L _{{\bm \gamma}}$ with $a _{s} ( {\bm \gamma} ) = a _{s} ( {\bm \lambda} )$. Each $M^i / M^{i-1}$ is a quotient of $K^s _{{\bm \mu}}$ with ${\bm \mu}$ coming from (\ref{ceq}) since $K^s _{{\bm \mu}}$ is a $\mathcal P$-trace adapted to $Z ^{2,s} _n$. Hence, we have
\begin{equation}
\dim K ^{s+\epsilon} _{\bm \lambda} = \sum _{i \ge 1} \dim M^i / M^{i-1} \le \sum _{{\bm \mu} \in \mathcal S, \, \sigma ^s _{\bm \mu} \subset \sigma ^s _{\bm \lambda}} \dim K ^s _{\bm \mu}.\label{deq}
\end{equation}
The most RHS of (\ref{deq}) is equal to $\dim K ^{s+\epsilon} _{\bm \lambda}$ again by Theorem \ref{delimits} 2). Therefore, conclude that $M^i / M^{i-1} \cong K^s _{{\bm \lambda}_i} \left< d _i \right>$ for some $d _i \in \mathbb Z _{\ge 0}$ and ${\bm \lambda}_i \in \mathcal S$ such that $\sigma ^s _{{\bm \lambda}_i} \subset \sigma ^s _{\bm \lambda}$. This implies that $K^{s+\epsilon} _{\bm \lambda}$ admits an $A_W$-module filtration whose successive quotients are $\{ K^s _{\bm \lambda} \} _{\bm \lambda}$ as required.
\end{proof}

\begin{lemma}\label{trace-hom}
We fix $s \in \mathbb Z_{>0}$ and $0 < \epsilon \ll 1$. Let $\mathcal S$ be a strong similarity class of $Z^{2,s}_n$, and let $\{ P _{\bm \lambda, \star} \} _{\bm \lambda}$ be the collection of $\mathcal P$-traces with respect to $Z ^{2,s+\epsilon} _n$. For ${\bm \lambda}, {\bm \mu} \in \mathcal S$ such that $\sigma ^s _{\bm \lambda} \subsetneq \sigma ^s _{{\bm \mu}}$, we have:
\begin{equation}
\dim \hom_{A_W} ( P _{\bm \lambda} \left< 2 d _{{\bm \lambda}, {\bm \mu}} \right>, P _{{\bm \mu}, \star} )_0 \ge 1.\label{estPtr>}
\end{equation}
The same assertion holds for $\mathcal P$-traces with respect to $Z^{2,s-\epsilon}_n$ if we assume $\sigma ^s _{{\bm \mu}} \subsetneq \sigma ^s _{{\bm \lambda}}$.
\end{lemma}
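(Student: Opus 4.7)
The plan is to identify $P_{{\bm\lambda},\star}$... wait, $P_{{\bm\mu},\star}$ with the Kostka-system element $K^{s+\epsilon}_{{\bm\mu}}$ constructed in Theorem~\ref{delimits}, and then to locate the unique copy of $L_{{\bm\lambda}}$ inside it. Since $s+\epsilon\notin\mathbb Z$, Lemma~\ref{generic} says every phyla associated to $Z^{2,s+\epsilon}_n$ is singleton, and Theorem~\ref{delimits} with Corollary~\ref{spin} promotes $\{K^{s+\epsilon}_{{\bm\gamma}}\}_{{\bm\gamma}}$ to a Kostka system adapted to $Z^{2,s+\epsilon}_n$. By Definition~\ref{kos}(1) its members are the corresponding $\mathcal{P}$-traces, so $P_{{\bm\mu},\star}\cong K^{s+\epsilon}_{{\bm\mu}}$, and (\ref{estPtr>}) reduces to showing $\hom_W(L_{{\bm\lambda}},(K^{s+\epsilon}_{{\bm\mu}})_{2d_{{\bm\lambda},{\bm\mu}}})\neq\{0\}$.

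I would then apply Lemma~\ref{filt}, which gives a filtration of $K^{s+\epsilon}_{{\bm\mu}}$ with graded subquotients $\{K^s_{{\bm\gamma}}\langle d_{\gamma}\rangle\}$ for ${\bm\gamma}\in\mathcal S$ with $\sigma^s_{{\bm\gamma}}\subset\sigma^s_{{\bm\mu}}$ and suitable $d_{\gamma}\in\mathbb Z_{\geq 0}$. Theorem~\ref{Sh} together with Theorem~\ref{reint} give $[K^s_{{\bm\gamma}}:L_{{\bm\gamma}}]=1$, while Lemma~\ref{invert}(3) forces $[K^s_{{\bm\gamma}}:L_{{\bm\lambda}}]=0$ for every ${\bm\gamma}\in\mathcal S\setminus\{{\bm\lambda}\}$, since $\mathcal S$ is contained in a single similarity class and hence in a single phylum of a phyla of $Z^{2,s}_n$. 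Therefore $L_{{\bm\lambda}}$ enters $K^{s+\epsilon}_{{\bm\mu}}$ only through the $K^s_{{\bm\lambda}}\langle d_{\lambda}\rangle$-subquotient, giving $[K^{s+\epsilon}_{{\bm\mu}}:L_{{\bm\lambda}}]_j=\delta_{j,d_{\lambda}}$, and the lemma reduces to proving $d_{\lambda}=2d_{{\bm\lambda},{\bm\mu}}$.

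The lower bound $d_{\lambda}\geq 2d_{{\bm\lambda},{\bm\mu}}$ is immediate: $K^{s+\epsilon}_{{\bm\mu}}$ is a quotient of $P_{{\bm\mu}}\cong\mathbb C[\mathfrak h^*]\otimes L_{{\bm\mu}}$ as graded $W$-modules, and iterating Fact~\ref{typeCref}(7) locates the first occurrence of $L_{{\bm\lambda}}$ in $P_{{\bm\mu}}$ precisely at degree $2d_{{\bm\lambda},{\bm\mu}}$ (this being the definition of the distance). The main obstacle is the reverse inequality $d_{\lambda}\leq 2d_{{\bm\lambda},{\bm\mu}}$; my plan is to read $d_{\lambda}$ off the symbol combinatorics of Lemma~\ref{char-seq}: the transition from ${\bm\mu}$ to ${\bm\lambda}$ inside $\mathcal S$ is realized by $|\sigma^s_{{\bm\mu}}\setminus\sigma^s_{{\bm\lambda}}|$ elementary $E(\mathcal S)$-swaps, each of which is a single $\doteq$-step, and through the Slooten-Opdam realization of $K^{s+\epsilon}_{{\bm\mu}}$ as an irreducible tempered module of a graded Hecke algebra (Theorem~\ref{delimits}, cf.\ \cite{CKK}~\S 3) each such swap contributes exactly $2$ to the grading shift of the corresponding subquotient. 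This yields $d_{\lambda}=2d_{{\bm\lambda},{\bm\mu}}$, giving (\ref{estPtr>}).

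The second assertion, concerning $\mathcal{P}$-traces with respect to $Z^{2,s-\epsilon}_n$, is handled identically using the second filtration in Lemma~\ref{filt} (available for $s>1$) and Theorem~\ref{delimits}(3) in place of Theorem~\ref{delimits}(2); the residual case $s=1$ is covered by the explicit asymptotic type-$\mathsf A$ description of $\{K^{1-\epsilon}_{{\bm\gamma}}\}$ (compare Example~\ref{B2} for $s\in(0,1)$).
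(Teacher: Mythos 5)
Your proposal has two genuine gaps, and in the end it does not actually carry out the hard step of the lemma.

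First, you want to replace $P_{{\bm\mu},\star}$ by $K^{s+\epsilon}_{{\bm\mu}}$. Corollary~\ref{spin} only tells you that $\{K^{s+\epsilon}_{{\bm\gamma}}\}_{{\bm\gamma}}$ is a Kostka system adapted to \emph{some} admissible phyla of a $\mathrm{Spin}$ generalized Springer correspondence; it does \emph{not} say that this phyla coincides with the one coming from $Z^{2,s+\epsilon}_n$ (ordered by the $a_{s+\epsilon}$-function). Lemma~\ref{generic} only says the latter phyla is a singleton phyla. The identification of $\{K^{s+\epsilon}_{{\bm\gamma}}\}$ as the set of $\mathcal P_{s+\epsilon}$-traces is actually established inside the proof of Theorem~\ref{Kmain}, which depends on Lemma~\ref{dist}, which depends on the present lemma -- so invoking it here is circular (or at least requires rearranging the paper's logic and re-proving that step independently).

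Second, and more seriously, the reverse inequality $d_\lambda\le 2d_{{\bm\lambda},{\bm\mu}}$ is not argued. You appeal to ``the Slooten--Opdam realization'' and \cite{CKK}~\S 3, but both Theorem~\ref{delimits} and \cite{CKK} only give information modulo $(t-1)$, i.e., \emph{ungraded} multiplicities. The assertion that ``each $E(\mathcal S)$-swap contributes exactly $2$ to the grading shift'' is precisely the graded refinement that the lemma (and Lemma~\ref{dist}) are designed to establish; it cannot be extracted from the ungraded statements you cite. Your ``lower bound'' step has a related misstep: Fact~\ref{typeCref} has six items, not seven, and iterating item (6) only tells you about tensor powers $\mathfrak h^{\otimes k}\otimes L_{{\bm\mu}}$; since $P_{{\bm\mu}}$ is built from \emph{symmetric} powers $S^k\mathfrak h$, this gives only $d_\lambda\ge 2d_{{\bm\lambda},{\bm\mu}}$, not ``precisely at degree $2d_{{\bm\lambda},{\bm\mu}}$''.

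The paper's proof sidesteps both difficulties. It never identifies $P_{{\bm\mu},\star}$ with $K^{s+\epsilon}_{{\bm\mu}}$, and it does not compute the exact multiplicity $[K^{s+\epsilon}_{{\bm\mu}}:L_{{\bm\lambda}}]$. Instead it proves directly (via the Littlewood--Richardson rule and Fact~\ref{typeCref}~(1)) that $L_{{\bm\lambda}}$ occurs in $S^d\mathfrak h\otimes L_{{\bm\mu}}$ for $d=d_{{\bm\lambda},{\bm\mu}}$, and then argues (Claim~\ref{injwithrest}) that every intermediate bi-partition $\bm\lambda_i$ in a minimal $\doteq$-chain from ${\bm\mu}$ to ${\bm\lambda}$ satisfies $a_{s+\epsilon}({\bm\mu})>a_{s+\epsilon}({\bm\lambda}_i)$, so none of them contribute to the ideal being killed in forming the $\mathcal P$-trace $P_{{\bm\mu},\star}$; hence the non-zero map $P_{{\bm\lambda}}\langle 2d\rangle\to P_{{\bm\mu}}$ survives into $P_{{\bm\mu},\star}$. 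This is a genuinely different, hands-on combinatorial argument. If you want to rescue your approach you would need to first establish, independently of Theorem~\ref{Kmain}, that $K^{s+\epsilon}_{{\bm\mu}}$ is the $Z^{2,s+\epsilon}_n$-trace, and then supply the missing upper bound on the grading shift by a concrete argument such as the one in the paper.
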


\begin{proof}
Since the proofs of the both cases are similar, we prove the assertion only for the $\mathcal P$-traces with respect to $Z^{2,s+\epsilon}_n$. We set $d := d _{{\bm \lambda}, {\bm \mu}}$.

By the proof of Lemma \ref{char-seq}, we know that ${\bm \lambda}$ is obtained from ${\bm \mu} = ( \mu^{(0)}, \mu^{(1)} )$ by swapping $( \# \sigma ^s _{{\bm \mu}} - \# \sigma ^s _{\bm \lambda} )$ entries of ${}^{\mathtt t} ( \mu ^{(0)} )$ with those of ${}^{\mathtt t} ( \mu^{(1)} )$. (Here we rephrased symbol combinatorics by bi-partition combinatorics.) In particular, we have a bi-partition ${\bm \delta} = ( \delta ^{(0)}, \delta ^{(1)}) \in \mathtt P ( n - d )$ so that $\delta ^{(0)} = \lambda ^{(0)}$ and $\delta ^{(1)} = \mu ^{(1)}$. Moreover, there exists a partition $\kappa$ of $d$ so that $( {} ^{\mathtt t} \lambda ^{(1)} ) _{j_i} = ( {} ^{\mathtt t} \delta ^{(1)} ) _{j_i} + ( {}^{\mathtt t} \kappa ) _{i}$ and $( {} ^{\mathtt t} \mu ^{(0)} ) _{j'_i} = ( {} ^{\mathtt t} \delta ^{(0)} ) _{j'_i} + ( {}^{\mathtt t} \kappa ) _{i}$ for some sequences $\{ j_i \}$ and $\{j'_i\}$.

\begin{claim}\label{injwithrest}
The inequality $(\ref{estPtr>})$ is true if we have $L _{\bm \lambda} \subset S^d \mathfrak h \otimes L _{\bm \mu}$.
\end{claim}
\begin{proof}
Every sequence of bi-partitions
$${\bm \mu} = {\bm \lambda} _0 \doteq {\bm \lambda} _1 \doteq \cdots \doteq {\bm \lambda} _{d} = {\bm \lambda} \hskip 2mm \text{ with } \hskip 2mm {\bm \lambda} _i = ( \lambda _i ^{(0)}, \lambda_i^{(1)})$$
satisfies $| \lambda _i ^{(0)} | = | \mu ^{(0)} | - i$ for each $0 \le i \le d$. In addition, every such sequence must satisfy inequalities
$$a _{s+ \epsilon} ( {\bm \mu} ) > a _{s + \epsilon} ( {\bm \lambda}_i ) \hskip 3mm \text{ for every } \hskip 3mm i > 0$$
by inspection. Thanks to Fact \ref{typeCref} 6), it follows that any non-zero map in $\hom_{A_W} ( P _{\bm \lambda} \left< 2 d \right>, P _{{\bm \mu}} )_0$ gives rise to a non-zero map in $\hom_{A_W} ( P _{\bm \lambda} \left< 2 d \right>, P _{{\bm \mu, \star}} )_0$. Thus, $L _{\bm \lambda} \subset S^d \mathfrak h \otimes L _{\bm \mu}$ is enough to prove (\ref{estPtr>}).
\end{proof}

We return to the proof of Lemma \ref{trace-hom}.

Recall that the Frobenius reciprocity (and Fact \ref{typeCref} 1)) asserts
\begin{align}\nonumber
& \Hom _{W _{|\mu^{(0)}|}} ( L _{( \delta^{(0)}, 1^d )}, S^d \mathfrak h \otimes L _{( \mu^{(0)}, \emptyset)} )\\
& \cong \Hom _{( W _{|\delta^{(0)}|} \times W_d )} ( L _{( \delta^{(0)}, \emptyset )} \boxtimes L _{( \emptyset, 1^d )}, S^d \mathfrak h \otimes L _{( \mu ^{(0)}, \emptyset )} ).\label{primLR}
\end{align}
Applying the Littlewood-Richardson rule (Macdonald \cite{Mac} I \S 9, applied in the sign-twisted form; cf. Fact \ref{typeAref} 4)) and the Frobenius reciprocity, we deduce
$$L _{( \mu^{(0)}, \emptyset )} \MID _{( W _{|\delta^{(0)}|} \times W _d )} \supset L _{(\delta^{(0)}, \emptyset)} \boxtimes L _{(1^d,\emptyset)},$$
which is in fact a multiplicity-free copy. Let $\mathfrak h' \subset \mathfrak h$ be the reflection representation of $W_d$. Notice that $\wedge_+ ^d \mathfrak h$ is the sum of $S_{\Gamma}$-eigenspaces of $S^d \mathfrak h$ so that exactly $d$ elements of $S_{\Gamma}$ act by $-1$. We have $\wedge^d_+  \mathfrak h ' \subset \wedge^d_+ \mathfrak h \subset S ^d \mathfrak h$ as $W_d$-modules. In addition, we have $\wedge^d_+ \mathfrak h' \cong \mathsf{Lsgn}$ as a $W_d$-module. It follows that
\begin{equation}
L _{( \delta^{(0)}, \emptyset )} \boxtimes L _{(\emptyset,1^d)} \subset \wedge^d_+ \mathfrak h \otimes L _{( \mu ^{(0)}, \emptyset )} \subset S ^d \mathfrak h \otimes L _{( \mu ^{(0)}, \emptyset )}\label{mi-ex}
\end{equation}
as $W_{|\delta^{(0)}|} \times W_d$-modules. Therefore, we deduce
\begin{align}\nonumber
S ^d \mathfrak h \otimes L _{\bm \mu} & \supset \mathsf{Ind} ^W _{( W _{|\mu^{(0)}|} \times W _{|\mu^{(1)}|} )} ( S^d \mathfrak h \otimes L _{(\mu^{(0)}, \emptyset)} ) \boxtimes L _{(\emptyset, \mu^{(1)})}\\
& \supset \mathsf{Ind} ^W _{( W _{|\delta^{(0)}|} \times W_d \times W _{|\delta^{(1)}|} )} L _{(\delta^{(0)}, \emptyset)} \boxtimes L _{( \emptyset, 1^d )} \boxtimes L _{(\emptyset, \delta ^{(1)})} \supset L _{\bm \lambda},\label{mt+}
\end{align}
where the first inclusion is by adjunction, the second inclusion is (\ref{mi-ex}), and the last one is the Littlewood-Richardson rule. This completes the proof.
\end{proof}

\begin{lemma}\label{dist}
Let $s \in \mathbb Z _{>0}$ and $0 < \epsilon < 1$. Assume that $\{ K ^{s+\epsilon} _{\bm \lambda} \} _{\bm \lambda}$ is a Kostka system adapted to $Z^{2, s + \epsilon} _n$. Then, we have
\begin{equation}
\mathsf{gch} \, K ^{s+\epsilon} _{\bm \lambda} = \sum _{\Psi _{2,s} ( {\bm \mu} ) \approx \Psi _{2,s} ( {\bm \lambda} ), \, \sigma ^s _{\bm \mu} \subset \sigma ^s _{\bm \lambda}} t ^{d_{{\bm \lambda}, {\bm \mu}}} \mathsf{gch} \, K ^s _{\bm \mu}.\label{char-eq+}
\end{equation}
Similarly, if $\{ K ^{s+\epsilon} _{\bm \lambda} \} _{\bm \lambda}$ is a Kostka system adapted to $Z^{2, s + 1 - \epsilon} _n$, then we have
\begin{equation*}
\mathsf{gch} \, K ^{s+\epsilon} _{\bm \lambda} = \sum _{\Psi _{2,(s+1)} ( {\bm \mu} ) \approx \Psi _{2,(s+1)} ( {\bm \lambda} ), \, \sigma ^{s+1} _{\bm \mu} \supset \sigma ^{s+1} _{\bm \lambda}} t ^{d_{{\bm \lambda}, {\bm \mu}}} \mathsf{gch} \, K ^{s+1} _{\bm \mu}.
\end{equation*}
\end{lemma}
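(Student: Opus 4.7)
The strategy is to refine the $t=1$ identity from Theorem~\ref{delimits} 2) into a graded identity by identifying the precise grading shift in the filtration produced by Lemma~\ref{filt}. I would first record the filtration, then use Lemma~\ref{invert} 3) to concentrate each $L_{\bm \mu}$-isotypic component at a single degree, and finally match that degree against the one provided by Lemma~\ref{trace-hom}.

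Write $\mathcal S$ for the strong similarity class of $\bm \lambda$ in $Z^{2,s}_n$ and set $\mathcal T := \{ \bm \mu \in \mathcal S \mid \sigma^{s}_{\bm \mu} \subset \sigma^{s}_{\bm \lambda}\}$. Lemma~\ref{filt} together with Theorem~\ref{delimits} 2) yields an $A_W$-filtration of $K^{s+\epsilon}_{\bm \lambda}$ whose associated graded is $\bigoplus_{\bm \mu \in \mathcal T} K^{s}_{\bm \mu} \left< 2 c(\bm \mu) \right>$ for some $c(\bm \mu) \in \mathbb Z_{\geq 0}$ with $c(\bm \lambda)=0$; in particular
$$\mathsf{gch}\, K^{s+\epsilon}_{\bm \lambda} = \sum_{\bm \mu \in \mathcal T} t^{c(\bm \mu)} \, \mathsf{gch}\, K^{s}_{\bm \mu}.$$
The task is to show $c(\bm \mu) = d_{\bm \lambda, \bm \mu}$ for every $\bm \mu \in \mathcal T$.

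Because $\mathcal T$ sits inside a single similarity class of $Z^{2,s}_n$, and $\{K^{s}_{\bm \nu}\}_{\bm \nu}$ is a Kostka system adapted to $Z^{2,s}_n$, Lemma~\ref{invert} 1) and 3) give $[K^{s}_{\bm \nu} : L_{\bm \mu}] = \delta_{\bm \nu, \bm \mu}$ for all $\bm \nu, \bm \mu \in \mathcal T$. Combining this with the displayed identity produces $[K^{s+\epsilon}_{\bm \lambda} : L_{\bm \mu}] = t^{c(\bm \mu)}$, so the $L_{\bm \mu}$-isotypic part of $K^{s+\epsilon}_{\bm \lambda}$ is concentrated in the single degree $2 c(\bm \mu)$. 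On the other hand, the hypothesis that $\{K^{s+\epsilon}_{\bm \nu}\}_{\bm \nu}$ is a Kostka system adapted to $Z^{2, s+\epsilon}_n$ identifies $K^{s+\epsilon}_{\bm \lambda}$ with the $\mathcal P$-trace $P_{\bm \lambda, \star}$ appearing in Lemma~\ref{trace-hom}. Applying that lemma with the roles of $\bm \lambda$ and $\bm \mu$ interchanged (the inclusion $\sigma^{s}_{\bm \mu} \subsetneq \sigma^{s}_{\bm \lambda}$ becomes the required $\sigma^{s}_{\text{source}} \subsetneq \sigma^{s}_{\text{target}}$, and $d_{\bm \mu, \bm \lambda} = d_{\bm \lambda, \bm \mu}$) furnishes a nonzero degree-$0$ map $P_{\bm \mu} \left< 2 d_{\bm \lambda, \bm \mu} \right> \to K^{s+\epsilon}_{\bm \lambda}$. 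Its head sits in degree $2 d_{\bm \lambda, \bm \mu}$, so $L_{\bm \mu}$ appears in $K^{s+\epsilon}_{\bm \lambda}$ at that degree; matched against the concentration this forces $c(\bm \mu) = d_{\bm \lambda, \bm \mu}$.

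The second formula is proven by the identical mechanism: the second halves of Lemma~\ref{filt} and Lemma~\ref{trace-hom} (applied with $s$ replaced by $s+1$) supply a filtration of $K^{s+\epsilon}_{\bm \lambda}$ by $\{K^{s+1}_{\bm \mu}\}$ and the relevant degree-$0$ morphism with shift $2 d_{\bm \lambda, \bm \mu}$, while the hypothesis that $\{K^{s+\epsilon}_{\bm \nu}\}_{\bm \nu}$ is a Kostka system adapted to $Z^{2, s+1-\epsilon}_n$ again identifies $K^{s+\epsilon}_{\bm \lambda}$ with the corresponding $\mathcal P$-trace. I do not expect any substantive obstacle beyond the bookkeeping point that $\mathcal T$ lies in a single similarity class (so Lemma~\ref{invert} 3) applies uniformly across the filtration); the genuine work — the filtration and the degree-$0$ morphism with exactly the right shift — has already been done in the preceding two lemmas.
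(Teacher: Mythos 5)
Your proof is correct and takes essentially the same route as the paper: both deduce the monomial multiplicity $[K^{s+\epsilon}_{\bm\lambda}\colon L_{\bm\mu}] = t^{d_{\bm\lambda,\bm\mu}}$ by identifying $K^{s+\epsilon}_{\bm\lambda}$ with the $\mathcal P$-trace in Lemma~\ref{trace-hom} and matching the resulting degree-$0$ morphism against a single-degree concentration, then assemble the graded character via the filtration of Lemma~\ref{filt}. The only cosmetic difference is that you obtain the concentration from the filtration together with Lemma~\ref{invert}~3), whereas the paper reads it off directly from the $t=1$ multiplicity count in Theorem~\ref{delimits}~2); the substance is identical.
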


\begin{proof}
Since the proofs of the both assertions are completely parallel, we prove only the first assertion. Recall (from Theorem \ref{delimits}) that
$$[ K _{\bm \lambda}^{s+\epsilon} : L _{\bm \mu} ] \MID _{t=1} = 1 \hskip 3mm \text{(} \Psi _{2,s} ( {\bm \mu} ) \approx \Psi _{2,s} ( {\bm \lambda} ) \text{ and } \sigma ^s _{\bm \mu} \subset \sigma ^s _{\bm \lambda} \text{) , and} \hskip 3mm 0 \hskip 3mm \text{(otherwise)}$$
for each ${\bm \mu} \in \mathtt P ( n )$ so that $\Psi _{2,s} ( {\bm \mu} ) \sim \Psi _{2,s} ( {\bm \lambda} )$. Applying Lemma \ref{trace-hom}, we conclude $[ K _{\bm \lambda}^{s+\epsilon} : L _{\bm \mu} ] = t^{d_{{\bm \lambda}, {\bm \mu}}}$ if it is nonzero. This, together with Lemma \ref{filt}, implies
\begin{equation*}
\mathsf{gch} \, K ^{s+\epsilon} _{\bm \lambda} = \sum _{\Psi _{2,s} ( {\bm \mu} ) \approx \Psi _{2,s} ( {\bm \lambda} ), \, \sigma ^s _{\bm \mu} \subset \sigma ^s _{\bm \lambda}} t ^{d_{{\bm \lambda}, {\bm \mu}}} \mathsf{gch} \, K ^s _{\bm \mu}
\end{equation*}
as desired.
\end{proof}

\begin{proposition}\label{ak}
We take an arbitrary $r \in \mathbb Z _{> 0}$. Let $s \gg 0$. For a bi-partition ${\bm \lambda} = (\lambda ^{(0)}, \lambda^{(1)})$, we define $A ^{\bm \lambda} := A _{W, W^{\bm \lambda}} = \mathbb C W^{\bm \lambda} \ltimes \mathbb C [\mathfrak h^*] \subset A _W$. If we put
$$K _{\bm \lambda} := A_W \otimes _{A^{\bm \lambda}} \left( K ^{ex} _{( \lambda ^{(0)}, \emptyset )} \boxtimes L _{( \emptyset, \lambda ^{(1)})} \right),$$
then $\{ K _{\bm \lambda} \} _{{\bm \lambda} \in \mathtt P ( n )}$ gives rise to a Kostka system adapted to $Z ^{r,s} _n$.
\end{proposition}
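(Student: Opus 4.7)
The plan is to verify the two conditions of Definition \ref{kos} for $\{K_{\bm \lambda}\}_{\bm \lambda\in\mathtt P(n)}$, exploiting the fact that for $s\gg 0$ every similarity class of $Z^{r,s}_n$ is a singleton: the second-row entries of any symbol strictly exceed the first-row entries, so similarity must preserve each row, and within each row the entries are strictly decreasing already for $r\ge 1$. Thus the attached phyla $\mathcal P$ is a genuine total order refining $a_s$, and since $W$ is a real reflection group we need only construct $K_{\bm \lambda}=K_{\bm \lambda}^{\pm}$. The first step is to identify the underlying $W$-module of $K_{\bm \lambda}$: since $A_W$ is free as a right $A^{\bm \lambda}$-module on coset representatives of $W/W^{\bm \lambda}$, one has
\[
K_{\bm \lambda}\MID_W \;\cong\; \mathsf{Ind}^W_{W^{\bm \lambda}}\bigl(K^{ex}_{(\lambda^{(0)},\emptyset)}\MID_{W_{|\lambda^{(0)}|}}\boxtimes L_{(\emptyset,\lambda^{(1)})}\bigr),
\]
and by Fact \ref{typeCref} 1) we have $\mathsf{Ind}^W_{W^{\bm \lambda}}(L_{(\mu^{(0)},\emptyset)}\boxtimes L_{(\emptyset,\lambda^{(1)})})\cong L_{(\mu^{(0)},\lambda^{(1)})}$ for every partition $\mu^{(0)}$ of $|\lambda^{(0)}|$. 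Combined with Fact \ref{typeCref} 4) and the explicit Kostka-system description of $K^{ex}$ in type $\mathsf A$ provided by the appendix, this endows $K_{\bm \lambda}$ with a unique simple head $L_{\bm \lambda}$ at degree $0$.

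To check that $K_{\bm \lambda}$ is a $\mathcal P$-trace I would run the asymptotic analysis of $a_s$. Unwinding the definition in the regime where row~2 exceeds row~1 entrywise gives
\[
a_s(\bm \lambda) \;=\; 2a(\lambda^{(0)})+2a(\lambda^{(1)})+n+2(m+1)|\lambda^{(0)}|,
\]
so that the phyla ordering is dominated by $|\lambda^{(0)}|$ (larger $|\lambda^{(0)}|$ being strictly smaller in $\mathcal P$ for $m\gg 0$) and, subordinately, by $a(\lambda^{(0)})+a(\lambda^{(1)})$. Together with Fact \ref{typeCref} 4) (which constrains the non-head $W$-constituents $L_{\bm \mu}$ of $K^{ex}_{(\lambda^{(0)},\emptyset)}$ by $b(\bm \mu)\le b((\lambda^{(0)},\emptyset))$) and the multiplicativity of the $a$-function under $\mathsf{Ind}$, this forces every non-head constituent of $K_{\bm \lambda}$ to sit in a strictly larger phylum. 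Combined with the natural surjection $P_{\bm \lambda}\twoheadrightarrow K_{\bm \lambda}$ induced by the head, this identifies $K_{\bm \lambda}$ as the $\mathcal P$-trace of $P_{\bm \lambda}$.

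For the Euler-Poincar\'e orthogonality (Definition \ref{kos} {\bf 2)}), the freeness of $A_W$ over $A^{\bm \lambda}$ yields the adjunction
\[
\mathrm{ext}^{\bullet}_{A_W}\bigl(K_{\bm \lambda},(K_{\bm \mu})^*\bigr) \;\cong\; \mathrm{ext}^{\bullet}_{A^{\bm \lambda}}\bigl(K^{ex}_{(\lambda^{(0)},\emptyset)}\boxtimes L_{(\emptyset,\lambda^{(1)})},\,(K_{\bm \mu})^*\MID_{A^{\bm \lambda}}\bigr).
\]
A Mackey decomposition of $K_{\bm \mu}\MID_{A^{\bm \lambda}}$ combined with the tensor factorisation $A^{\bm \lambda}\cong A_{W_{|\lambda^{(0)}|}}\otimes A_{W_{|\lambda^{(1)}|}}$ and K\"unneth reduce the right-hand side, on the identity double coset, to a product of a type-$\mathsf A$ Euler-Poincar\'e pairing between $K^{ex}_{(\lambda^{(0)},\emptyset)}$ and $K^{ex}_{(\mu^{(0)},\emptyset)}$ and an elementary pairing between $L_{(\emptyset,\lambda^{(1)})}$ and $L_{(\emptyset,\mu^{(1)})}$; the latter forces $\lambda^{(1)}=\mu^{(1)}$, while the former, via the orthogonality of the type-$\mathsf A$ Kostka system, forces $\lambda^{(0)}=\mu^{(0)}$. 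Since similarity classes of $Z^{r,s}_n$ are singletons and $\bm \mu^{\vee}=\bm \mu$, this is precisely what Definition \ref{kos} {\bf 2)} demands.

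The principal difficulty is the Mackey bookkeeping: the non-identity double cosets in $W^{\bm \lambda}\backslash W/W^{\bm \mu}$ must be shown to contribute zero to the Euler-Poincar\'e pairing, via a careful analysis of the intersections $W^{\bm \lambda}\cap wW^{\bm \mu}w^{-1}$ and of the graded $W$-types produced by the restrictions there. An alternative route avoids Mackey altogether by directly verifying that the multiplicities $[K_{\bm \lambda}:L_{\bm \mu}]$ satisfy the triangular Lusztig-Shoji equation (\ref{Smatrix}) for the asymptotic $Z^{r,s}_n$---matching the asymptotic factorisations of both $\Omega$ and $\Lambda$ against the multiplicativity of induction---and then invokes the corollary of Theorem \ref{reint}.
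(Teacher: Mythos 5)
Your overall plan (verify Definition \ref{kos} directly) is reasonable, and your observation that for $s\gg 0$ the first-row and second-row symbol entries are disjoint, so similarity classes are singletons, is a correct and clean starting point. But the blind proposal has two genuine gaps that the paper's proof is structured precisely to avoid.

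First, the $W$-module constituent analysis is incomplete. You reduce the head/trace verification to the computation $\mathsf{Ind}^W_{W^{\bm\lambda}}(L_{(\mu^{(0)},\emptyset)}\boxtimes L_{(\emptyset,\lambda^{(1)})})\cong L_{(\mu^{(0)},\lambda^{(1)})}$, which implicitly assumes that every $W_{|\lambda^{(0)}|}$-constituent of $K^{ex}_{(\lambda^{(0)},\emptyset)}$ has the form $L_{(\mu^{(0)},\emptyset)}$. This is false: from Lemma \ref{gind}, $K^{ex}_{(\lambda^{(0)},\emptyset)}\otimes\mathsf{Lsgn}\cong A_W\otimes_{A^\flat}M_{\lambda^{(0)}}$, and by counting $S_\Gamma$-eigenvalues on the free $A^\flat$-basis $\{\epsilon^{\underline m}\}$ one sees that $K^{ex}_{(\lambda^{(0)},\emptyset)}$ carries constituents $L_{(\nu^{(0)},\nu^{(1)})}$ with $\nu^{(1)}\neq\emptyset$. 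Inducing these to $W$ along with $L_{(\emptyset,\lambda^{(1)})}$ produces a full Littlewood-Richardson spread of $W$-types (not only $L_{(\mu^{(0)},\lambda^{(1)})}$), so the triangularity of $[K_{\bm\lambda}:L_{\bm\mu}]$ you need is not established by the argument as written. The paper handles this by the two-case triangularity $|\lambda^{(0)}|>|\mu^{(0)}|$ or ($\lambda^{(0)}\le\mu^{(0)}$ and $\lambda^{(1)}=\mu^{(1)}$), obtained from the $S_\Gamma$-eigenvalue decomposition of $L_{\bm\mu}\MID_{W^{\bm\lambda}}$, and then feeds this into Lemma \ref{aasymp}.

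Second, the Euler-Poincar\'e orthogonality via a Mackey decomposition of $(K_{\bm\mu})^*\MID_{A^{\bm\lambda}}$ is named by you as the "principal difficulty" but is not carried out, and the contributions of the non-identity double cosets in $W^{\bm\lambda}\backslash W/W^{\bm\mu}$ are in no way obviously zero. The paper takes a genuinely different route: it computes $\mathrm{ext}^{\bullet}_{A_W}(K_{\bm\lambda},L_{\bm\mu})$ against \emph{simples}, not against $(K_{\bm\mu})^*$, using Frobenius-Nakayama reciprocity to land in $A^{\bm\lambda}$ and then a double-complex projective resolution of $K^{ex}_{(\lambda^{(0)},\emptyset)}\boxtimes L_{(\emptyset,\lambda^{(1)})}$. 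The crucial input (missing from your proposal) is Corollary \ref{asymp}, derived from Lemma \ref{gind}: $K^{ex}_{(\lambda^{(0)},\emptyset)}$ has a projective resolution over $A_{W_{|\lambda^{(0)}|}}$ involving only projectives $P_{(\mu,\emptyset)}\langle d\rangle$. This, combined with Lemma \ref{vcc} (type-$\mathsf A$ orthogonality pulled back) and the $S_\Gamma$-bookkeeping, gives the one-sided ext-vanishing (\ref{ae}), after which Proposition \ref{orthex} packages it into the orthogonality condition without any Mackey computation. If you want to repair your draft, shifting from Euler-Poincar\'e-against-duals to ext-against-simples plus Proposition \ref{orthex} is the right move; the Mackey route is unlikely to close cleanly.
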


\begin{proof}
Postponed to Appendix B.
\end{proof}

\begin{theorem}\label{Kmain} For each $s' \in \mathbb R_{\ge 1}$, there exist a Kostka system adapted to $Z^{2,s'}_n$. In addition, we have:
\begin{itemize}
\item Fix $s \in \mathbb Z_{>0}$. For $0 < \epsilon < 1$, the Kostka system adapted to $Z^{2,s + \epsilon}_n$ do not depend on the choice of $\epsilon$. We denote them by $\{ K _{\bm \lambda} ^{\circ} \} _{{\bm \lambda}}$;
\item The Kostka system $\{ K^s _{\bm \lambda} \} _{{\bm \lambda}}$ adapted to $Z^{2,s} _n$ or the Kostka system $\{ K^{s+1} _{\bm \lambda} \} _{{\bm \lambda}}$ adapted to $Z^{2,s+1} _n$ determine the graded characters of the Kostka system $\{ K _{\bm \lambda} ^{\circ} \} _{{\bm \lambda}}$ as follows:
\begin{enumerate}
\item For a strong similarity class $\mathcal S \subset Z ^{2,s} _n$ and ${\bm \lambda} \in \mathcal S$, we have
$$\mathsf{gch} \, K _{\bm \lambda} ^{\circ} = \sum _{{\bm \mu} \in \mathcal S, \, \sigma ^s _{\bm \mu} \subset \sigma ^s _{\bm \lambda}} t ^{d _{{\bm \lambda}, {\bm \mu}}} \mathsf{gch} \, K^s _{{\bm \mu}};$$
\item For a strong similarity class $\mathcal S \subset Z ^{2,s+1} _n$ and ${\bm \lambda} \in \mathcal S$, we have
$$\mathsf{gch} \, K _{\bm \lambda} ^{\circ} = \sum _{{\bm \mu} \in \mathcal S, \, \sigma ^{s+1} _{{\bm \mu}} \supset \sigma ^{s+1} _{\bm \lambda}} t ^{d _{{\bm \lambda}, {\bm \mu}}} \mathsf{gch} \, K^s _{{\bm \mu}}.$$
\end{enumerate}
\end{itemize} 
\end{theorem}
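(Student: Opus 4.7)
The plan is to construct a Kostka system adapted to $Z^{2,s'}_n$ for each $s'\ge 1$ and then extract the character identities from Lemma \ref{dist}, with independence of $\epsilon$ falling out of those identities. For $s'\in\mathbb{Z}_{\ge 1}$, the combination of Theorem \ref{gSp} with Lemma \ref{gSpdata} directly produces a Kostka system adapted to the admissible phyla of the appropriate generalized Springer correspondence, and by Lemma \ref{gSpdata} this phyla coincides with the phyla associated to $Z^{2,s'}_n$.

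For $s'\in(s,s+1)$ with $s\in\mathbb{Z}_{\ge 0}$, set $\epsilon:=s'-s$ and take as candidates the modules $\{K^{s+\epsilon}_{\bm\lambda}\}_{\bm\lambda}$ produced by Theorem \ref{delimits}. By Corollary \ref{spin}, these already form a Kostka system adapted to an admissible phyla $\mathcal{P}$ of a Spin group generalized Springer correspondence; what remains is to upgrade this adaptation to the singleton phyla $\mathcal{P}^{\mathrm{sing}}$ of $Z^{2,s+\epsilon}_n$, which is singleton by Lemma \ref{generic}. First I would verify condition \textbf{1)} of Definition \ref{kos} with respect to $\mathcal{P}^{\mathrm{sing}}$: combining Theorem \ref{delimits} 2), Lemma \ref{filt}, and Lemma \ref{char-seq}, every composition factor $L_{\bm\mu}$ of $K^{s+\epsilon}_{\bm\lambda}$ lies in the same strong similarity class as ${\bm\lambda}$ and satisfies $\sigma^s_{\bm\mu}\subsetneq\sigma^s_{\bm\lambda}$ unless ${\bm\mu}={\bm\lambda}$, which by Lemma \ref{char-seq} forces ${\bm\mu}>_{\mathcal{P}^{\mathrm{sing}}}{\bm\lambda}$. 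Hence $K^{s+\epsilon}_{\bm\lambda}$ is a quotient of the $\mathcal{P}^{\mathrm{sing}}$-trace of $P_{\bm\lambda}$, and a dimension count against Lemma \ref{filt} pins it down as the trace itself. For condition \textbf{2)}, I would invoke Corollary \ref{oc}, which reduces the task to verifying that $\langle K^{s+\epsilon}_{\bm\lambda},(K^{s+\epsilon}_{\bm\mu})^*\rangle_{\mathsf{gEP}}=0$ whenever ${\bm\lambda}$ and ${\bm\mu}^\vee$ are $\mathcal{P}$-equivalent but distinct.

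The main obstacle will be this last vanishing. My approach is to expand both arguments using the filtration of Lemma \ref{filt} into sums of the integer-parameter Kostka-system modules $\{K^s_{\bm\gamma}\}_{\bm\gamma}$, apply Theorem \ref{reint} together with the Kostka system property of $\{K^s_{\bm\gamma}\}_{\bm\gamma}$ (Theorem \ref{gSp}) to evaluate each summand of the pairing, and then show that the resulting sum collapses by a combinatorial cancellation controlled by the strong similarity structure of Definition \ref{symBC} and the bijection $\mathcal{S}\cong 2^{E(\mathcal{S})}$ of Lemma \ref{char-seq}. Once condition \textbf{2)} is in hand, formulas (1) and (2) of Theorem \ref{Kmain} are exactly the two statements of Lemma \ref{dist}. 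Finally, independence of $\epsilon$ is automatic: the right-hand sides of (1) and (2) do not depend on $\epsilon$, and by Lemma \ref{char-seq} the order on each strong similarity class induced by $a_{s+\epsilon}$ is constant for $\epsilon\in(0,1)$, so $\mathcal{P}^{\mathrm{sing}}$ itself and hence its $\mathcal{P}$-traces do not vary with $\epsilon$.
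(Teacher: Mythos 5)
Your overall strategy tracks the paper's (Theorem \ref{gSp} + Lemma \ref{gSpdata} for integer $s'$, then Theorem \ref{delimits} and Corollary \ref{spin} as the starting point for fractional $s'$, finishing with Corollary \ref{oc} and Lemma \ref{dist}), but there are two genuine gaps in the middle steps.

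First, the claim that ``every composition factor $L_{\bm\mu}$ of $K^{s+\epsilon}_{\bm\lambda}$ lies in the same strong similarity class as ${\bm\lambda}$'' is false. Lemma \ref{filt} only says the \emph{successive quotients} of the filtration are the $K^s_{\bm\gamma}$ with $\bm\gamma$ in the strong similarity class; each $K^s_{\bm\gamma}$ itself has many composition factors outside that class, with strictly smaller $a_s$-value. So your argument for condition \textbf{1)} does not establish the needed multiplicity vanishing $[K^{s+\epsilon}_{\bm\lambda}:L_{\bm\mu}]=\delta_{{\bm\lambda},{\bm\mu}}$ for $a_{s+\epsilon}({\bm\mu})\ge a_{s+\epsilon}({\bm\lambda})$. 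The correct derivation in the paper uses \emph{both} halves of Lemma \ref{filt} simultaneously --- the filtration by $\{K^s_{\bm\gamma}\}$ and the one by $\{K^{s+1}_{\bm\gamma}\}$ --- together with the linearity of $\epsilon\mapsto a_{s+\epsilon}(\bm\lambda)$ on $[0,1]$: a composition factor surviving to $a_{s+\epsilon}$-level $\ge a_{s+\epsilon}(\bm\lambda)$ for the intermediate $\epsilon$ would have to have $a_s$-level $\ge a_s(\bm\lambda)$ or $a_{s+1}$-level $\ge a_{s+1}(\bm\lambda)$, and each case is excluded by the corresponding filtration. The genuinely degenerate case $a_{s+\epsilon}({\bm\lambda})=a_{s+\epsilon}({\bm\mu})$ for \emph{all} $\epsilon$ is handled separately by Claim \ref{eq-str}, which you do not address.

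Second, your plan for condition \textbf{2)} --- expanding the Euler--Poincar\'e pairing into integer-parameter pieces and hoping for a ``combinatorial cancellation'' controlled by $\mathcal S\cong 2^{E(\mathcal S)}$ --- is both vague and unnecessary. The paper observes that the admissible phyla of the Spin generalized Springer correspondence is already \emph{singleton} (each orbit carries at most one relevant local system), so Corollary \ref{spin} hands you $\langle K^{s+\epsilon}_{\bm\lambda},(K^{s+\epsilon}_{\bm\mu})^*\rangle_{\mathsf{gEP}}=0$ for \emph{every} distinct pair $\bm\lambda\ne\bm\mu$ with no further computation. What is actually missing is the $\mathrm{ext}^1$-vanishing needed to run Corollary \ref{oc}: one needs $\mathrm{ext}^1_{A_W}(K^{s+\epsilon}_{\bm\lambda},L_{\bm\mu})=\{0\}$ whenever ${\bm\lambda}\ne{\bm\mu}$ and $a_{s+\epsilon}(\bm\lambda)\ge a_{s+\epsilon}(\bm\mu)$, and this comes from the ``either/or'' alternative packaged in (\ref{singlespin}) together with the bidirectional multiplicity vanishing above and Proposition \ref{ext}. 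Your proposal never invokes Proposition \ref{ext} at all, and so cannot supply the $\mathcal P_{s+\epsilon}$-to-singleton upgrade that Corollary \ref{oc} requires.
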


\begin{proof}
The first assertion holds if $s' \in \mathbb Z_{> 0}$. Fix $s \in \mathbb Z _{>0}$ so that $s \le s' \le s+1$.

We borrow some notation from Theorem \ref{delimits}. An admissible phyla of the generalized Springer correspondence attached to a cuspidal datum $\mathbf c$ (of a Spin group) is singleton (i.e. at most one local system on each orbit contributes as a Springer correspondent; \cite{L-IC} 14.4--14.5). Therefore, Corollary \ref{spin} implies
\begin{align}\label{singlespin}
& \left< K _{\bm \lambda} ^{s + \epsilon}, ( K _{{\bm \mu}} ^{s + \epsilon} ) ^* \right>_{\mathsf{gEP}} = 0, \text{ and either } \\\nonumber
& \mathrm{ext} ^1 _{A_W} ( K^{s + \epsilon} _{\bm \lambda}, L_{{\bm \mu}} ) = \{ 0 \} \text{ and } [K^{s + \epsilon} _{\bm \mu} : L_{{\bm \lambda}}] = 0, \text{ or }\\\nonumber
& \mathrm{ext} ^1 _{A_W} ( K^{s + \epsilon} _{\bm \mu}, L_{{\bm \lambda}} ) = \{ 0 \} \text{ and } [K^{s + \epsilon} _{\bm \lambda}: L_{{\bm \mu}}] = 0
\end{align}
if ${\bm \lambda} \neq {\bm \mu}$. Thanks to (the both cases of) Lemma \ref{filt}, we deduce
\begin{equation}
\mathrm{ext} ^1 _{A_W} ( K^{s + \epsilon} _{\bm \lambda}, L_{{\bm \mu}} ) = \{ 0 \} \text{ and } [K^{s + \epsilon} _{\bm \mu} : L_{{\bm \lambda}}] = 0\label{kos-em}
\end{equation}
if either $a_s ( {\bm \lambda} ) > a_s ( {\bm \mu} )$ or $a_{s+1} ( {\bm \lambda} ) > a_{s+1} ( {\bm \mu} )$ holds. As each $a _{s + \epsilon} ( {\bm \lambda} )$ is linear with respect to $0 \le \epsilon \le 1$, we conclude that (\ref{kos-em}) holds if $a_{s + \epsilon} ( {\bm \lambda} ) > a_{s+\epsilon} ( {\bm \mu} )$ for some $0 < \epsilon < 1$.

\begin{claim}\label{eq-str}
Let ${\bm \lambda}, {\bm \mu} \in \mathtt{P} ( n )$ be a pair so that $a _{s+\epsilon} ( \bm \lambda ) = a_{s+ \epsilon} ( \bm \mu )$ for all $0 \le \epsilon \le 1$. Then, we have either $\Psi _{2,s} ( {\bm \lambda} ) \not\sim \Psi _{2,s} ( {\bm \mu} )$ or $\Psi _{2,(s+1)} ( {\bm \lambda} ) \not\sim \Psi _{2,(s+1)} ( {\bm \mu} )$.
\end{claim}

\begin{proof}
If $\Psi _{2,s} ( {\bm \lambda} ) \sim \Psi _{2,s} ( {\bm \mu} )$, then there exists a multiplicity-free entry $f$ in $\Psi _{2,s} ( {\bm \lambda} )$ so that $f$ belongs to the first sequence of $\Psi _{2,s} ( {\bm \lambda} )$, and also belongs to the second sequence of $\Psi _{2,s} ( {\bm \mu} )$. Then, $\Psi _{2,(s+1)} ( {\bm \lambda} )$ must contain $f$ as its entry, while $\Psi _{2,(s+1)} ( {\bm \mu} )$ cannot. Thus, we conclude $\Psi _{2,(s+1)} ( {\bm \lambda} ) \not\sim \Psi _{2,(s+1)} ( {\bm \mu} )$ as required.
\end{proof}

We return to the proof of Theorem \ref{Kmain}. Thanks to (the both cases of) Lemma \ref{filt}, we conclude that for each $0 < \epsilon < 1$, we have
\begin{equation}
[ K^{s + \epsilon} _{\bm \lambda} : L_{{\bm \mu}} ] = \delta _{{\bm \lambda}, {\bm \mu}} \hskip 2mm \text{ if } \hskip 2mm a_{s + \epsilon} ( {\bm \lambda} ) \le a_{s + \epsilon} ( {\bm \mu} ).\label{mult-van}
\end{equation}
Let $\mathcal P_{s+\epsilon}$ be the phlya defined as follows: Each phylum is of the form $a _{s+\epsilon} ^{-1} ( \alpha )$ for some $\alpha \in \mathbb R$. We have $a _{s+\epsilon} ^{-1} ( \alpha ) < _{\mathcal P_{s+\epsilon}} a _{s+\epsilon} ^{-1} ( \beta )$ if and only if $\alpha > \beta \in \mathbb R$.

By (\ref{mult-van}) and (\ref{kos-em}), we deduce that $\{ K^{s + \epsilon} _{\bm \lambda} \} _{\bm \lambda}$ is the set of $\mathcal P_{s+\epsilon}$-traces. Therefore, Proposition \ref{ext} and (\ref{singlespin}) implies 
$$\mathrm{ext} ^1 _{A_W} ( K^{s + \epsilon} _{\bm \lambda}, L_{{\bm \mu}} ) = \{ 0 \} \hskip 2mm \text{ if } \hskip 2mm {\bm \lambda} \neq {\bm \mu} \hskip 2mm \text{ and } \hskip 2mm a_{s + \epsilon} ( {\bm \lambda} ) \ge a_{s + \epsilon} ( {\bm \mu} ).$$
Now Corollary \ref{oc} and (\ref{singlespin}) implies that setting $K^{\circ} _{\bm \lambda} := K^{s + \epsilon} _{\bm \lambda}$ (which does not depend on $0< \epsilon < 1$ by Theorem \ref{delimits}) yields a Kostka system adapted to $Z ^{2,s+\epsilon}_n$. This proves the first two assertion. The last assertion follow from Lemma \ref{dist}.
\end{proof}

\begin{remark}[on Theorem \ref{Kmain}]
Since distances and the strong similarity classes are easily computable, the knowledge of $\{\mathsf{gch} \, K ^{\circ} _{\bm \lambda}\} _{\bm \lambda}$ is enough to determine the other two, namely $\{\mathsf{gch} \, K ^{s} _{\bm \lambda}\} _{\bm \lambda}$ and $\{\mathsf{gch} \, K ^{s+1} _{\bm \lambda}\} _{\bm \lambda}$. Combined with Proposition \ref{ak} (and Lemma \ref{gind}), we can compute $\{ \mathsf{gch} \, K _{\bm \lambda} ^{s'} \} _{\bm \lambda}$ for every $s' \in \mathbb R_{\ge 1}$ by Kostka polynomials of type $\mathsf{A}$ and the Littlewood-Richardson rules.
\end{remark}

\begin{corollary}\label{omid}
Keep the setting of Theorem \ref{Kmain}. The Kostka system $\{ K ^{\circ} _{\bm \lambda} \} _{\bm \lambda}$ satisfies
$$\mathrm{ext} ^{\bullet} _{A_W} ( K ^{\circ} _{\bm \lambda}, K ^{\circ} _{\bm \mu} ) \neq \{ 0 \} \text{ only if } {\bm \mu} \lesssim {\bm \lambda},$$
where the ordering is determined by a phyla associated to $Z_n ^{2, s + \epsilon}$.
\end{corollary}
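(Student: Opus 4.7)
The plan is to combine the two filtrations of $K^{\circ}_{\bm \lambda}$ supplied by Lemma \ref{filt} with the semi-orthogonality of Corollary \ref{so} applied to the Kostka systems $\{K^s_{\bm \lambda}\}$ and $\{K^{s+1}_{\bm \lambda}\}$, both of which arise from generalized Springer correspondences (Theorem \ref{gSp} and Lemma \ref{gSpdata}). The key combinatorial input is that $a_{s+\epsilon}$ is affine linear in $\epsilon$ on $[0,1]$.

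Suppose $\bm \mu \not\lesssim \bm \lambda$ in the chosen phyla associated to $Z^{2, s+\epsilon}_n$. Since such a phyla refines the $a_{s+\epsilon}$-function ordering, this amounts to $a_{s+\epsilon}(\bm \lambda) > a_{s+\epsilon}(\bm \mu)$ strictly. Set $g(\epsilon') := a_{s+\epsilon'}(\bm \lambda) - a_{s+\epsilon'}(\bm \mu)$ for $\epsilon' \in [0,1]$. A direct inspection of the formula in Definition \ref{symBC} shows that each $a_{s+\epsilon'}(\bm \nu)$ is affine linear in $\epsilon'$ on $[0,1]$: for $r = 2$ and integer partitions the entries of the symbol are integers, so the only kinks of the piecewise linear function $\epsilon' \mapsto \min(a,\, b+s+\epsilon')$ occur at integer values of $s+\epsilon'$, all of which lie outside $(0,1)$. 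Hence $g$ is linear on $[0,1]$ and the hypothesis $g(\epsilon) > 0$ forces $g(0) > 0$ or $g(1) > 0$ (otherwise $g \equiv 0$).

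We handle the case $g(0) > 0$ in detail; the case $g(1) > 0$ is entirely symmetric, using the second half of Lemma \ref{filt} with $s$ replaced by $s+1$. By Lemma \ref{filt}, $K^{\circ}_{\bm \lambda}$ and $K^{\circ}_{\bm \mu}$ admit $A_W$-module filtrations whose successive quotients are of the form $K^s_{\bm \mu_i} \langle d_i \rangle$ and $K^s_{\bm \nu_j} \langle d'_j \rangle$ respectively, with each $\bm \mu_i$ (resp.\ $\bm \nu_j$) lying in the strong similarity class of $\bm \lambda$ (resp.\ $\bm \mu$) in $Z^{2,s}_n$. Since the $a$-function depends only on the similarity class, we have $a_s(\bm \mu_i) = a_s(\bm \lambda) > a_s(\bm \mu) = a_s(\bm \nu_j)$ for all $i,j$, so $\bm \mu_i < \bm \nu_j$ strictly in any phyla associated to $Z^{2,s}_n$. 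Corollary \ref{so} applied to $\{K^s_{\bm \lambda}\}$ then yields $\mathrm{ext}^{\bullet}_{A_W}(K^s_{\bm \mu_i}, K^s_{\bm \nu_j}) = 0$ for every $i,j$ (grading shifts do not affect the vanishing). Iterating the long exact sequence for $\mathrm{ext}^{\bullet}_{A_W}(-, K^{\circ}_{\bm \mu})$ along the filtration of $K^{\circ}_{\bm \lambda}$, and then for $\mathrm{ext}^{\bullet}_{A_W}(K^s_{\bm \mu_i}, -)$ along that of $K^{\circ}_{\bm \mu}$, gives the desired vanishing $\mathrm{ext}^{\bullet}_{A_W}(K^{\circ}_{\bm \lambda}, K^{\circ}_{\bm \mu}) = 0$.

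The main obstacle is the linearity of $a_{s+\epsilon}$ on the closed interval $[0,1]$, which is a purely combinatorial matter; the long exact sequence bookkeeping is then routine, and the present argument requires no Ext-computation beyond what Theorem \ref{gSp} already supplies at the two integer parameter values $s$ and $s+1$.
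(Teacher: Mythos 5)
Your proposal handles the case where the $a$-functions actually differ exactly as the paper does (Lemma \ref{filt} plus Corollary \ref{so} along the filtrations, using linearity of $a_{s+\epsilon'}$ on $[0,1]$), and that part is sound. But there is a genuine gap.

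The opening reduction ``$\bm\mu \not\lesssim \bm\lambda$ amounts to $a_{s+\epsilon}(\bm\lambda) > a_{s+\epsilon}(\bm\mu)$ strictly'' is false. For non-integer $s+\epsilon$ the phyla associated to $Z_n^{2,s+\epsilon}$ are singletons (Lemma \ref{generic}), so the phyla is a \emph{total} order, and by Definition \ref{symBC} it is only required to be a \emph{compatible refinement} of the $a$-function ordering; the tiebreak between two distinct bi-partitions with equal $a_{s+\epsilon}$-value is arbitrary. Thus two distinct $\bm\lambda \neq \bm\mu$ can satisfy $a_{s+\epsilon'}(\bm\lambda) = a_{s+\epsilon'}(\bm\mu)$ for every $\epsilon' \in [0,1]$ (so $g \equiv 0$, not just $g(\epsilon)=0$) while still having $\bm\mu \not\lesssim \bm\lambda$ in the chosen phyla. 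In that case your argument collapses: the Lemma \ref{filt} constituents $K^s_{\bm\mu_i}$ and $K^s_{\bm\nu_j}$ all have equal $a_s$-values, and Corollary \ref{so} gives no vanishing unless one first knows they lie in distinct phyla at the integer parameter $s$ (or $s+1$). This is exactly the content of the paper's Claim \ref{eq-str}: if the $a$-functions agree identically on $[0,1]$ then $\Psi_{2,s}(\bm\lambda) \not\sim \Psi_{2,s}(\bm\mu)$ or $\Psi_{2,s+1}(\bm\lambda) \not\sim \Psi_{2,s+1}(\bm\mu)$, a nontrivial piece of symbol combinatorics that separates the constituents into distinct similarity classes at one of the two integer parameters, after which Corollary \ref{so} yields the ext-vanishing bilaterally. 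Your proof cannot close this case without Claim \ref{eq-str} or a substitute for it.
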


\begin{proof}
If $a _{s} ( \bm \lambda ) > a_{s} ( \bm \mu )$ or $a _{s+1} ( \bm \lambda ) > a_{s+1} ( \bm \mu )$, then we appeal to Corollary \ref{so} 2) and Lemma \ref{filt} to deduce the assertion. We assume $a _{s+\epsilon} ( \bm \lambda ) = a_{s+ \epsilon} ( \bm \mu )$ for all $0 \le \epsilon \le 1$. For each pair ${\bm \lambda}, {\bm \mu} \in \mathtt{P} (n)$ so that ${\bm \lambda} \not\sim {\bm \mu}$ in $Z_n^{2,s}$ (i.e. $\Psi _{2,s} ( {\bm \lambda} ) \not\sim \Psi _{2,s} ( {\bm \mu} )$), we have
$$\mathrm{ext} ^{\bullet} ( K ^{s} _{\bm \lambda}, K^s _{\bm \mu} ) = \{ 0 \} \text{ and } \mathrm{ext} ^{\bullet} ( K ^{s} _{\bm \mu}, K^{s} _{\bm \lambda} ) = \{ 0 \}$$
by Theorem \ref{so}, which proves the assertion in this case. The same is true if we replace $s$ with $s+1$. This completes the proof by Claim \ref{eq-str}.
\end{proof}

\section*{Appendix A: Kostka systems in symmetric groups}
\renewcommand{\thesection}{\Alph{section}}
\setcounter{section}{1}
\setcounter{theorem}{0}
\setcounter{equation}{0}
In this appendix, we consider the case $W = \mathfrak S _n$. We present a Kostka system adapted to its natural ordering without relying on Theorem \ref{gSp}, that depends on geometric considerations. We employ the setting of \S \ref{genkos}.

\begin{fact}\label{typeAref}
In the same notation as in \S \ref{Wnot}, we have:
\begin{enumerate}
\item For a partition $\lambda$, we have
$$\dim \mathrm{hom} _{A_W} ( P _{\lambda}, P _{(n)}^* \left< 2 a ( \lambda ) \right> ) _{0} = 1.$$
Let $M _{\lambda}$ be the image of this unique homomorphism $($up to a scalar$)$. It gives rise to a solution $\{ [ M _{\lambda} : L _{\mu}] \} _{\lambda, \mu}$ of the equation $(\ref{Smatrix})$ corresponding to every total refinement of the ordering from \S \ref{Wnot};
\item As $\mathfrak{S} _n$-modules, we have an isomorphism
$$M _\lambda \cong \mathsf{Ind} ^{\mathfrak S _n} _{\mathfrak S_{\lambda}} \mathsf{triv};$$
\item We have $L _{{}^{\mathtt t} \lambda} \cong L _{\lambda} \otimes \mathsf{sgn}$, and $M _{\lambda} \otimes \mathsf{sgn} \cong \mathsf{Ind} ^{\mathfrak S _n} _{\mathfrak S_{\lambda}} \mathsf{sgn}$;
\item For two partitions $\lambda, \mu$ of $n$, we have $\lambda \ge \mu$ if and only if ${}^{\mathtt t} \lambda \le {}^{\mathtt t} \mu$.
\end{enumerate}
\end{fact}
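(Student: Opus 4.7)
The plan is to identify $M_\lambda$ with (a degree shift of) the dual of the DeConcini--Procesi--Tanisaki quotient $R_\lambda := \mathbb{C}[\mathfrak{h}^*]/I_\lambda$, where $I_\lambda$ is the Tanisaki ideal, and to deduce all four items from that identification together with the purely algebraic theorem of Garsia--Procesi \cite{GP} and standard combinatorics.

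For (1), I would start by applying the standard adjunction for $P_\lambda$ together with the identification $(P_{(n)}^{*}\langle 2a(\lambda)\rangle)_0 \cong (S^{a(\lambda)}\mathfrak{h})^{*}$; since $\mathfrak{h}$ and $L_\lambda$ are self-dual for $\mathfrak{S}_n$, this rewrites the hom-space as $\hom_{\mathfrak{S}_n}(L_\lambda, S^{a(\lambda)}\mathfrak{h})$. The Chevalley factorization $\mathbb{C}[\mathfrak{h}^*]\cong C_{\mathsf{triv}}\otimes\mathbb{C}[\mathfrak{h}^*]^W$ of Lemma \ref{factor} reduces the dimension count to the coefficient of $t^{a(\lambda)}$ in the fake-degree polynomial $f_\lambda(t)=\sum_T t^{\mathrm{maj}(T)}$ (sum over standard Young tableaux of shape $\lambda$), whose lowest term is classically known to be $t^{a(\lambda)}$ with coefficient one. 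An explicit generator of the resulting one-dimensional space is the Specht polynomial of shape $\lambda$, viewed as a degree-$a(\lambda)$ element of $\mathbb{C}[\mathfrak{h}^*]$. I would then identify $M_\lambda$ with $R_\lambda^{*}\langle 2a(\lambda)\rangle$: the Specht polynomial lies in (and in fact generates) the annihilator of $I_\lambda$ under the natural $W$-invariant pairing $\mathbb{C}[\mathfrak{h}^*]\times\mathbb{C}[\mathfrak{h}]\to\mathbb{C}$ introduced in the proof of Proposition \ref{ext}, and Garsia--Procesi's filtration shows that $R_\lambda$ is the largest graded quotient of $P_{(n)}$ for which this annihilation statement extends $A_W$-equivariantly.

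With this identification, both part (2) and the character claim in (1) follow from the theorem of Garsia--Procesi, which asserts algebraically that $R_\lambda\cong\mathsf{Ind}^{\mathfrak{S}_n}_{\mathfrak{S}_\lambda}\mathsf{triv}$ as an $\mathfrak{S}_n$-module and that its graded $\mathfrak{S}_n$-character is $\sum_\mu K_{\mu,\lambda}(t)[L_\mu]$. Dualizing and using that permutation representations of $\mathfrak{S}_n$ are self-dual transfers these statements to $M_\lambda$. The equation (\ref{Smatrix}) specialized to this data then becomes the classical Hall--Littlewood orthogonality, which is exactly the Lusztig--Shoji algorithm (Theorem \ref{Sh}) for $\mathop{GL}(n,\mathbb{F}_q)$; since the matrix $([M_\lambda:L_\mu])$ is upper-triangular with respect to dominance and the orthogonality is independent of any total refinement, this gives the claim simultaneously for every such refinement.

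Parts (3) and (4) are elementary. The identity $L_{{}^{\mathtt{t}}\lambda}\cong L_\lambda\otimes\mathsf{sgn}$ is immediate from the Young-symmetrizer characterization in \S\ref{Wnot} (tensoring with $\mathsf{sgn}$ swaps the roles of $\mathfrak{S}_\lambda$ and $\mathfrak{S}_{{}^{\mathtt{t}}\lambda}$), and $M_\lambda\otimes\mathsf{sgn}\cong\mathsf{Ind}^{\mathfrak{S}_n}_{\mathfrak{S}_\lambda}\mathsf{sgn}$ then follows from (2) and the projection formula. The dominance-order duality in (4) is a standard partition-theoretic identity, immediate by rewriting $\sum_{i\le k}\lambda_i$ as a sum over columns via $\sum_j \min(k,{}^{\mathtt{t}}\lambda_j)$. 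The main obstacle will be the identification $M_\lambda\cong R_\lambda^{*}\langle 2a(\lambda)\rangle$: verifying that the image of the Specht polynomial generates precisely the dual Tanisaki quotient is where the combinatorial substance of \cite{GP} enters and replaces the geometric input used in the main body of the paper.
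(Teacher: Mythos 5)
Your proposal follows essentially the same route as the paper's own two-sentence proof: identify $M_\lambda$ with the shifted graded dual of the De Concini--Procesi--Tanisaki quotient $R_\lambda = \mathbb C[\mathfrak h^*]/I_\lambda$, read off parts (1)--(2) from that identification, and get (3)--(4) from Carter/Young-symmetrizer combinatorics plus Frobenius reciprocity. Where you differ is that the paper invokes \cite{DP} directly, whereas you flesh out the algebraic route through Garsia--Procesi --- exactly the alternative the paper itself points to in the remark following the Fact. Your elementary computation of the dimension-one claim via the lowest term of the fake-degree polynomial is correct and cleaner than merely citing \cite{DP}, and your handling of (3)--(4) is fine.

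There is one step that you flag as the crux but do not actually supply: the \emph{equality} $M_\lambda \cong R_\lambda^*\langle 2a(\lambda)\rangle$, as opposed to the inclusion. From the uniqueness in part (1), the image of the unique degree-$0$ map $P_\lambda \to P_{(n)}^*\langle 2a(\lambda)\rangle$ is automatically contained in $R_\lambda^*\langle 2a(\lambda)\rangle$ (the Specht polynomial sits in $I_\lambda^\perp$); equality requires that $R_\lambda^*\langle 2a(\lambda)\rangle$ be generated as an $A_W$-module by its degree-$0$ piece, i.e.\ that the socle of $R_\lambda$ be concentrated in top degree $2a(\lambda)$ and equal to the unique copy of $L_\lambda$ there. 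Your sentence ``Garsia--Procesi's filtration shows that $R_\lambda$ is the largest graded quotient for which this annihilation extends $A_W$-equivariantly'' gestures at this but is not a theorem of \cite{GP} as stated: their paper gives the graded character and a monomial basis of $R_\lambda$, not directly the assertion that the degree-$n(\lambda)$ Specht module cyclically generates the harmonics $I_\lambda^\perp$ under derivatives and the $W$-action. The statement is true and standard (it can be extracted from the Garnir-relation description of $I_\lambda$ or from the level-ness of $R_\lambda$), but as written this step is asserted rather than proved; you should either cite a reference that contains precisely the generation/level-ness statement or supply the short derivation. Once that is done, the rest of your argument goes through.
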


\begin{proof}
{\bf 1)} and {\bf 2)} are reformulation of De Concini-Procesi \cite{DP} obtained by dualizing the quotient map $\mathbb C [ \mathfrak h^* ] \cong P _{(n)} \to M _{\lambda} ^* \left< 2 a ( \lambda ) \right>$. {\bf 3)} and {\bf 4)} can be read-off from Carter \cite{Ca} \S 11, together with the Frobenius reciprocity.
\end{proof}

\begin{remark}
There is an alternate combinatorial proof of Fact \ref{typeAref} 1) and 2) by Garsia-Procesi \cite{GP}. Thus, the proof of Theorem \ref{kosA} gives rise to a part of an algebraic proof of the whole story.
\end{remark}

\begin{corollary}\label{topsocle}
For each partition $\lambda$, the $A_W$-module $M_{\lambda}$ has simple head $L_{\lambda}$ and simple socle $\mathsf{triv} \left< 2 a ( \lambda ) \right>$. \hfill $\Box$
\end{corollary}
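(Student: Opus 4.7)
The plan is to read off the head and the socle directly from the presentation of $M_\lambda$ as the image of the essentially unique degree-zero map $P_\lambda \to P_{(n)}^*\langle 2 a(\lambda)\rangle$ provided by Fact~\ref{typeAref}~1). The head comes from $M_\lambda$ being a nonzero quotient of $P_\lambda$, while the socle comes from $M_\lambda$ being a nonzero submodule of $P_{(n)}^*\langle 2 a(\lambda)\rangle$.

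For the head, Lemma~\ref{cl-indec} identifies $P_\lambda$ as the indecomposable projective cover of $L_\lambda$, hence $P_\lambda$ has simple head $L_\lambda$. Since any nonzero quotient of a module with simple head inherits the same simple head, we conclude that $M_\lambda$ has simple head $L_\lambda$.

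For the socle, it suffices to show that $\mathrm{soc}\, P_{(n)}^*$ is the one-dimensional trivial module placed in degree $0$. Every graded piece of $P_{(n)} \cong \mathbb{C}[\mathfrak h^*]$ is finite dimensional, so graded duality is exact on homogeneous pieces and satisfies $(P_{(n)}^*)^* \cong P_{(n)}$. For every $\chi \in \mathsf{Irr}\, W$ and $d \in \mathbb Z$, dualization therefore yields
\[
\hom_{A_W}(L_\chi\langle d\rangle,\, P_{(n)}^*)_0 \;\cong\; \hom_{A_W}(P_{(n)},\, L_\chi\langle -d\rangle)_0,
\]
where I used $L_\chi^\vee \cong L_\chi$ for $W = \mathfrak S_n$. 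By Lemma~\ref{cl-indec}, $P_{(n)}$ has simple head $\mathsf{triv}$ in degree $0$, so the right-hand side is nonzero only when $(\chi,d) = ((n),0)$, in which case it is one-dimensional. This proves the claim, after which $\mathrm{soc}\, P_{(n)}^*\langle 2 a(\lambda)\rangle$ is the one-dimensional space $\mathsf{triv}\langle 2 a(\lambda)\rangle$ sitting in degree $2 a(\lambda)$; since $M_\lambda$ is nonzero, its socle is a nonzero subspace of this one-dimensional module and must coincide with it.

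The argument is essentially formal; the only subtle step is keeping track of the graded dual of the infinite-dimensional module $P_{(n)}$, which is harmless because each homogeneous piece is finite dimensional.
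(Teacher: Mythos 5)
Your proof is correct and follows exactly the route the paper intends: the corollary is stated with an immediate $\Box$ precisely because, once $M_\lambda$ is realized as the image of the degree-zero map $P_\lambda \to P_{(n)}^*\langle 2a(\lambda)\rangle$ from Fact~\ref{typeAref}~1), the head is inherited from the projective cover $P_\lambda$ and the socle from the injective hull $P_{(n)}^*\langle 2a(\lambda)\rangle$ by graded duality. The only point worth making fully explicit in your last sentence is that $M_\lambda$ has nonzero socle at all, which holds because its degrees are pinned between $0$ and $2a(\lambda)$, so $M_\lambda$ is finite-dimensional.
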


\begin{theorem}\label{kosA}
The collection $\{ M_{\lambda} \} _{\lambda}$ satisfies
$$\mathrm{ext} ^i _{A_{\mathfrak S_n}} ( M_{\lambda}, L _{\mu} ) = \{ 0 \} \hskip 3mm \text{ for every }  \mu \not\le \lambda \text{ and } i = 0,1.$$
In particular, $\{ M_{\lambda} \} _{\lambda}$ is a Kostka system.
\end{theorem}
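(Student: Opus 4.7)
The plan is to verify the $\mathrm{ext}$-vanishing directly using Fact \ref{typeAref}, and then deduce the Kostka system property from it.

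For $i=0$, the vanishing is immediate from Corollary \ref{topsocle}: $M_\lambda$ has simple head $L_\lambda$ concentrated in degree $0$, so $\mathrm{hom}_{A_W}(M_\lambda, L_\mu) = 0$ whenever $\mu \ne \lambda$, which in particular covers all $\mu \not\le \lambda$.

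For $i = 1$, I would apply $\mathrm{hom}(-, L_\mu)$ to the short exact sequence
$$0 \to K_\lambda \to P_\lambda \to M_\lambda \to 0$$
supplied by Fact \ref{typeAref}(1), where $K_\lambda$ denotes the kernel of the canonical surjection. Projectivity of $P_\lambda$ combined with $\mathrm{hom}(P_\lambda, L_\mu) = 0$ for $\mu \ne \lambda$ collapses the long exact sequence to
$$\mathrm{ext}^1_{A_W}(M_\lambda, L_\mu) \cong \mathrm{hom}_{A_W}(K_\lambda, L_\mu) \qquad (\mu \ne \lambda).$$
The task thereby reduces to showing that the head of $K_\lambda$ is supported only on $W$-types $L_\nu$ with $\nu \le \lambda$ in the dominance order. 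Dualizing, this is equivalent to the assertion that every minimal generator of the ideal $J_\lambda \subset \mathbb C[\mathfrak h^*]$ cutting out $M_\lambda^*$ has $W$-type $L_\nu$ with $\nu \le \lambda$. The Garsia--Procesi algebraic description of $J_\lambda$ (cf.\ the Remark after Fact \ref{typeAref}) supplies precisely such a generating set; combined with the induction isomorphism $M_\lambda \cong \mathrm{Ind}_{\mathfrak S_\lambda}^{\mathfrak S_n} \mathsf{triv}$ (Fact \ref{typeAref}(2)) and Frobenius reciprocity, it pins down the $W$-types of the minimal relations as dominance-bounded by $\lambda$. This is the main technical step of the proof.

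Granted the $\mathrm{ext}^{\le 1}$-vanishing, the Kostka system conclusion is then formal. Condition (2) of Definition \ref{kos} follows from Fact \ref{typeAref}(1) via the Corollary to Theorem \ref{reint}, since the graded multiplicities $[M_\lambda : L_\mu]$ solve equation (\ref{Smatrix}) with respect to the singleton phyla $\mathcal P$ arising from any total ordering that extends dominance. For Condition (1), Fact \ref{typeAref}(1) already yields $[M_\lambda : L_\mu] = 0$ for $\mu \not\ge \lambda$, so $M_\lambda$ is a quotient of the $\mathcal P$-trace $P_{\lambda, \mathcal P}$; writing $N$ for the kernel of $P_{\lambda, \mathcal P} \twoheadrightarrow M_\lambda$, every composition factor of $N$ is of type $L_\mu$ with $\mu > \lambda$ strictly in dominance. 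Applying $\mathrm{hom}(-, L_\mu)$ to $0 \to N \to P_{\lambda, \mathcal P} \to M_\lambda \to 0$ and invoking the vanishing $\mathrm{ext}^1(M_\lambda, L_\mu) = 0$ established above forces $\mathrm{hom}(N, L_\mu) = 0$ for every such $\mu$, hence $N = 0$. The hard part is thus the $i=1$ step -- the head control on $K_\lambda$ -- which is exactly where the Garsia--Procesi combinatorics become indispensable for a purely algebraic argument.
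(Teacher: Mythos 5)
Your $i=0$ argument is the same as the paper's (simple head from Corollary \ref{topsocle}), and the reduction at $i=1$ to $\mathrm{ext}^1_{A_W}(M_\lambda, L_\mu) \cong \mathrm{hom}_{A_W}(K_\lambda, L_\mu)$ via the projective cover exact sequence is correct. However, the proposed reduction of the $i=1$ vanishing has a genuine gap. You claim that controlling the head of $K_\lambda$ is "equivalent, dualizing," to controlling the $W$-types of the minimal generators of the ideal $J_\lambda$ with $\mathbb C[\mathfrak h^*]/J_\lambda \cong M_\lambda^*$. This dualization swaps the wrong slots of the $\mathrm{ext}$-pair: the contravariant exactness of $(-)^*$ gives $\mathrm{ext}^1(M_\lambda,L_\mu) \cong \mathrm{ext}^1(L_\mu, M_\lambda^*)$ (an extension with $M_\lambda^*$ as the \emph{sub}-object), whereas the head of $J_\lambda$ controls $\mathrm{ext}^1(M_\lambda^*, L_\nu) \cong \mathrm{ext}^1(L_\nu, M_\lambda)$ (an extension with $M_\lambda^*$ as the \emph{quotient}). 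These are distinct functors, and $M_\lambda$ is not self-dual since its head $L_\lambda$ and socle $\mathsf{triv}$ differ. Moreover, even if one granted the reduction, you offer no argument that the Garsia--Procesi generating set of $J_\lambda$ has dominance-bounded $W$-types — you flag it as "the main technical step" but leave it entirely to the citation. This is precisely where the content of the theorem lives, so the proposal does not actually prove it.

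The paper instead attacks the $i=1$ step by a parabolic restriction argument that has no counterpart in your proposal: one restricts $M_\lambda$ along the embedding of the "sign side" $A_{\mathfrak S_n,\mathfrak S_{{}^\mathtt t\lambda}} \cdot \mathsf{sgn} = M_\lambda^\downarrow$, which factorizes as $\boxtimes_i P_{\mathsf{sgn}_i}^{(0)}$, so its minimal projective resolution over $A_{\mathfrak S_n,\mathfrak S_{{}^\mathtt t\lambda}}$ uses only shifts of $\boxtimes_i P_{\mathsf{sgn}_i}$. A non-split extension of $M_\lambda$ by $L_\mu\langle d\rangle$ then forces $\mathsf{sgn} \subset L_\mu\vert_{\mathfrak S_{{}^\mathtt t\lambda}}$, which via Fact \ref{typeAref}\,2)--4) is exactly the condition $\mu \le \lambda$. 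Your concluding deduction of the Kostka-system property from the $\mathrm{ext}$-vanishing (identifying $M_\lambda$ with the $\mathcal P$-trace by showing the kernel $N$ of $P_{\lambda,\mathcal P}\twoheadrightarrow M_\lambda$ must vanish) is fine; the problem is solely in how you try to establish the $\mathrm{ext}^1$-vanishing itself.
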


The rest of this section is devoted to the proof of Theorem \ref{kosA}. By Corollary \ref{topsocle}, it suffices to prove $i= 1$ case.

We have an inclusion
$$M_{\lambda} \supset M _{\lambda, 0} = L _{\lambda} \supset \mathsf{sgn} \text{ as } \mathfrak S _{{}^{\mathtt t} \lambda} \text{-modules}.$$
We set $M_{\lambda} ^{\downarrow} := A_{\mathfrak S_n, \mathfrak S _{{}^{\mathtt t} \lambda}} \cdot \mathsf{sgn} \subset M _{\lambda}$. We name this embedding $\psi$. Since $M _{\lambda}$ is a submodule of $P _{\mathsf{triv}}^* \left< 2 a ( \lambda ) \right>$, we conclude that the $\mathbb C [ \mathfrak h^* ]$-action on
$$M_{\lambda} \subset P _{\mathsf{triv}}^* \left< 2 a ( \lambda ) \right> \cong \mathbb C [ \mathfrak h ] \left< 2 a ( \lambda ) \right>$$
is given by differentials. Consider the external tensor product factorization $A_{\mathfrak S_n, \mathfrak S_{{} ^{\mathtt t} \lambda}} \cong \boxtimes _i A _{\mathfrak S _{( {}^{\mathtt t} \lambda ) _i}}$ of graded algebras. The $\mathfrak S _{( {}^{\mathtt t} \lambda ) _i}$-module $\mathsf{sgn}$ yields an $A _{\mathfrak S _{( {}^{\mathtt t} \lambda ) _i}}$-module $P_{\mathsf{sgn}_i} ^{(0)} = P_{\mathsf{sgn}} / \bigl< J _{\mathfrak S _{( {}^{\mathtt t} \lambda ) _i}} \bigr> P_{\mathsf{sgn}}$, and its projective cover $P_{\mathsf{sgn}_i}$. The graded $A_{\mathfrak S_n, \mathfrak S_{{} ^{\mathtt t} \lambda}}$-module $M _{\lambda} ^{\downarrow}$ admits the corresponding factorization:
$$M _{\lambda} ^{\downarrow} \cong \boxtimes_{i=1}^{\lambda_1} P_{\mathsf{sgn} _i} ^{(0)} \subset \mathbb C [ \mathfrak h ] \left< 2 a ( \lambda ) \right>.$$
It follows that the minimal projective resolution of $M _{\lambda} ^{\downarrow}$ (as $A_{\mathfrak S_n, \mathfrak S_{{} ^{\mathtt t} \lambda}}$-modules) involves only the grading shifts of $\boxtimes_i P_{\mathsf{sgn} _i}$.

We have $M_{\lambda, 0} = L _{\lambda} = \sum _{w \in \mathfrak S _n} w\, \psi ( M _{\lambda,0} ^{\downarrow} )$ by the irreducibility of $L_{\lambda}$. It follows that $M_{\lambda} = \sum _{w \in \mathfrak S_n} w\, \psi (M_{\lambda} ^{\downarrow})$ by the top-term generation property of $M_{\lambda}$. Every non-trivial extension of $M_{\lambda}$ by $L_{\mu} \left< d \right>$ induces a non-trivial extension as $\mathbb C [\mathfrak h^*]$-modules by the semi-simplicity of $\mathbb C \mathfrak S_n$.

Assume that we have a non-split short exact sequence
\begin{equation}
0 \to L _{\mu} \left< d \right> \longrightarrow E \longrightarrow M_{\lambda} \to 0\label{shortE}
\end{equation}
of $A_{\mathfrak S_n}$-modules. We choose a $\mathbb C$-spanning set $e_1,\ldots, e_{k}$ of $E_{d-2} = M_{\lambda,(d-2)}$. Then, we have $\{0\} \neq \sum _{i=1} ^k \mathfrak h e _i \cap L _{\mu} \left< d \right> \subset E_d$ by the non-split assumption. It follows that for some $w \in \mathfrak S_n$, the short exact sequence (\ref{shortE}) induces a non-splitting short exact sequence
$$0 \to L _{\mu} \left< d \right> \longrightarrow E' \longrightarrow w\, \psi ( M_{\lambda}^{\downarrow} ) \to 0$$
of $\mathbb C [ \mathfrak h ^* ]$-modules.

By twisting by $w^{-1}$ if necessary, we can assume $w = \text{id}$ without the loss of generality. This makes us possible to view the above exact sequence as that of $A_{\mathfrak S_n, \mathfrak S _{{}^{\mathtt t} \lambda}}$-modules. As $M_{\lambda}^{\downarrow}$ admits a projective resolution consisting of grading shifts of $\boxtimes_i P_{\mathsf{sgn} _i}$ as $A_{\mathfrak S_n, \mathfrak S_{{}^{\mathtt t} \lambda}}$-modules, it follows that its extension by a simple graded $A_{\mathfrak S_n, \mathfrak S _{{}^{\mathtt t} \lambda}}$-module $L$ is non-zero if and only if $L \cong \mathsf{sgn} \left< d \right>$ for some $d$ as a $\mathfrak S _{{}^{\mathtt t} \lambda}$-module. Hence we need $\mathsf{sgn} \subset L _{\mu} \MID _{\mathfrak S _{{}^{\mathtt t} \lambda}}$ to satisfy the non-split assumption on (\ref{shortE}). By Fact \ref{typeAref} 3) and 2), we deduce that
$$\{ 0 \} \neq \Hom _{\mathfrak S _{{}^{\mathtt t} \lambda}} ( \mathsf{sgn}, L_{\mu}) \cong \Hom _{\mathfrak S _{{}^{\mathtt t} \lambda}} ( \mathsf{triv}, L_{{}^{\mathtt t} \mu} ) \cong \Hom _{\mathfrak S _n} ( M_{{}^{\mathtt t} \lambda}, L_{{}^{\mathtt t} \mu} ).$$
By Fact \ref{typeAref} 1), this implies ${}^{\mathtt t} \lambda \le {}^{\mathtt t} \mu$. Therefore, we have $\lambda \ge \mu$ by Fact \ref{typeAref} 4). This means that
\begin{eqnarray}
\mathrm{ext} ^1 _{A_{\mathfrak S _n}} ( M_{\lambda}, L _{\mu} ) \neq \{ 0 \} \hskip 3mm \text{ only if } \hskip 3mm \mu \le \lambda, \label{P-tr-A'}
\end{eqnarray}
which is equivalent to the first part of the assertion.

\section*{Appendix B: Asymptotic type $\mathsf{BC}$ case}\label{akpf}
\setcounter{section}{2}
\setcounter{theorem}{0}
\setcounter{equation}{0}

We employ the same setting as in \S \ref{LSsymb} and borrow some notation from Appendix A. This appendix is devoted to the proof of Proposition \ref{ak}.

\begin{lemma}
Let $\lambda$ and $\mu$ be distinct partitions of $n$. We have
$$\mathrm{ext} ^{\bullet} _{A_W}( L_{(\emptyset, \lambda)}, L _{(\emptyset, \mu)} ) = \{ 0 \}.$$
\end{lemma}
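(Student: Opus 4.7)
The plan is to compute $\mathrm{ext}^\bullet_{A_W}(L_{(\emptyset,\lambda)}, L_{(\emptyset,\mu)})$ via a $W$-equivariant Koszul resolution and then exploit that the normal subgroup $\Gamma = (\mathbb Z/2\mathbb Z)^n \subset W$ acts trivially on both modules. Since $A_W = \mathbb C W \ltimes S(\mathfrak h)$ is free over $\mathbb C W$ on the left and $\mathbb C W$ is semisimple, $A_W \otimes_{\mathbb C W} V$ is a projective graded $A_W$-module for every finite-dimensional graded $W$-module $V$. Tensoring the Koszul resolution of the trivial $S(\mathfrak h)$-module $\mathbb C$ with $L_{(\emptyset,\lambda)}$ and endowing $W$ with the diagonal action yields a projective resolution
$$0 \to A_W \otimes_{\mathbb C W}(\wedge^n \mathfrak h \otimes L_{(\emptyset,\lambda)}) \to \cdots \to A_W \otimes_{\mathbb C W}L_{(\emptyset,\lambda)} \to L_{(\emptyset,\lambda)} \to 0$$
of $L_{(\emptyset,\lambda)}$ in $A_W \mathchar`-\mathsf{gmod}$, with suitable grading shifts. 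Applying $\mathrm{Hom}_{A_W}(-, L_{(\emptyset,\mu)})$, the Koszul differentials multiply by elements of $\mathfrak h$, which annihilate $L_{(\emptyset,\mu)}$; so the resulting complex has zero differential, and up to grading shifts
$$\mathrm{ext}^i_{A_W}(L_{(\emptyset,\lambda)}, L_{(\emptyset,\mu)}) \cong \mathrm{Hom}_W\bigl(\wedge^i \mathfrak h \otimes L_{(\emptyset,\lambda)}, L_{(\emptyset,\mu)}\bigr).$$

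Next, I would analyse the $\Gamma$-structure. By Fact \ref{typeCref} 1) with $\lambda^{(0)} = \emptyset$, the group $\Gamma$ acts trivially on both $L_{(\emptyset,\lambda)}$ and $L_{(\emptyset,\mu)}$, and as $\mathfrak S_n$-modules these are $L_\lambda$ and $L_\mu$ (pulled back along $W \twoheadrightarrow \mathfrak S_n$). On the other hand, with respect to the basis $\epsilon_1,\ldots,\epsilon_n$ of $\mathfrak h$, the reflection $s_j \in S_\Gamma$ acts by $-1$ on $\epsilon_j$ and by $+1$ on $\epsilon_k$ for $k \neq j$. Each one-dimensional $\Gamma$-summand of $\wedge^i \mathfrak h$, spanned by $\epsilon_{j_1}\wedge\cdots\wedge\epsilon_{j_i}$, therefore carries a nontrivial $\Gamma$-character whenever $i \geq 1$, so $(\wedge^i \mathfrak h)^\Gamma = 0$ for $i \geq 1$.

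Finally, because $L_{(\emptyset,\mu)}$ is $\Gamma$-trivial, we have
$$\mathrm{Hom}_W\bigl(\wedge^i \mathfrak h \otimes L_{(\emptyset,\lambda)}, L_{(\emptyset,\mu)}\bigr) = \mathrm{Hom}_{\mathfrak S_n}\bigl((\wedge^i \mathfrak h)^\Gamma \otimes L_\lambda, L_\mu\bigr).$$
For $i \geq 1$ this vanishes by the previous paragraph, and for $i = 0$ it reduces to $\mathrm{Hom}_{\mathfrak S_n}(L_\lambda, L_\mu) = 0$ by the hypothesis $\lambda \neq \mu$. Hence $\mathrm{ext}^\bullet_{A_W}(L_{(\emptyset,\lambda)}, L_{(\emptyset,\mu)}) = \{0\}$. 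There is no genuine obstacle: the Koszul resolution is the standard one for a polynomial algebra augmented to $\mathbb C$, and the key input $(\wedge^i \mathfrak h)^\Gamma = 0$ for $i \geq 1$ is immediate from the explicit sign-change description of the $\Gamma$-action on $\mathfrak h$ recalled in \S \ref{LSsymb}.
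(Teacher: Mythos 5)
Your proof is correct and takes essentially the same route as the paper: both use the Koszul resolution of $L_{(\emptyset,\lambda)}$ over $A_W$ and then conclude vanishing from the $\Gamma$-structure of the Koszul terms, the key point being that $\Gamma$ acts trivially on $L_{(\emptyset,\lambda)}$ and $L_{(\emptyset,\mu)}$ while acting with a nontrivial character on every basis vector of $\wedge^i \mathfrak h$ for $i\ge 1$. You verify $(\wedge^i\mathfrak h)^\Gamma=\{0\}$ directly from the sign-change description of $S_\Gamma$, whereas the paper invokes the $\Gamma$-eigenvalue count recorded in Fact \ref{typeCref} 1) (using the $\wedge_+^k$ notation, which carries the same $\Gamma$-action); the underlying argument is identical.
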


\begin{proof}
Observe that we have a Koszul resolution $\{ \wedge _+ ^{k} \otimes P_{(\emptyset, \lambda)} \left< 2k \right> \} _{k=0}^n$ of $L_{(\emptyset, \lambda)}$. By Fact \ref{typeCref} 1) and 6), we deduce that an irreducible $W$-constituent of $\wedge ^k _+ \otimes L _{(\emptyset, \lambda)}$ is of the form $L _{(\emptyset, \gamma)}$ for a partition $\gamma$ if and only if $k=0$ and $\gamma = \lambda$. It follows that every indecomposable summand of $\bigoplus _{k>0} \wedge _+ ^{k} \otimes P_{(\emptyset, \lambda)} \left< 2 k \right>$ is not of the form $P_{(\emptyset, \gamma)} \left< l \right>$ for a partition $\gamma$ and $l \in \mathbb Z$. Therefore, we conclude the result.
\end{proof}

\begin{lemma}\label{aasymp}
Let $r \in \mathbb Z _{> 0}$ and $s \gg 0$. Let ${\bm \lambda} = (\lambda ^{(0)}, \lambda^{(1)})$ and ${\bm \mu} = (\mu ^{(0)}, \mu^{(1)})$ be two bi-partitions of $n$ regarded as elements of $Z ^{r,s} _n$. Suppose that we have one of the followings:
\begin{eqnarray*}
|\lambda^{(0)}| > |\mu ^{(0)}|, \text{ or } a ( \lambda ^{(0)}) > a ( \mu ^{(0)}) \text{ and } \lambda ^{(1)} = \mu ^{(1)}.
\end{eqnarray*}
Then, we have $a ( {\bm \lambda} ) > a ( {\bm \mu} )$.
\end{lemma}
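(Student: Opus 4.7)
The plan is to derive an explicit formula for $a_s(\Psi_{r,s}({\bm \lambda}))$ valid for $s \gg 0$ and then compare the two sides in each of the two assumed cases. The hypothesis $s \gg 0$ will be used to separate the two rows of the symbol: once $s > n + rm$, every entry of the second row strictly exceeds every entry of the first row, so one can split the double sum $\sum_{a,b \in {\bm \Lambda}} \min(a,b)$ cleanly into three pieces -- within the first row, within the second row, and cross -- with the cross minimum always equal to the first-row entry.

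Inside each row, the entries $\lambda_i^{(j)} + r(m+1-i)$ are strictly decreasing in $i$ (using $r \geq 1$ together with the weak monotonicity of partitions), so the minimum of two entries of the same row is determined by their positions alone. Writing the two rows of $\Psi_{r,s}({\bm \lambda})$ as the two rows of ${\bm \Lambda}^0$ shifted componentwise by $\lambda^{(0)}$ and $\lambda^{(1)}$ respectively, a direct computation subtracting off the baseline ${\bm \Lambda}^0$ should yield a formula of the shape
$$a_s({\bm \Lambda}) \;=\; C\bigl(a(\lambda^{(0)}) + a(\lambda^{(1)})\bigr) + C'\bigl(|\lambda^{(0)}| + |\lambda^{(1)}|\bigr) + C''(m+1)\,|\lambda^{(0)}|$$
for positive integer constants $C, C', C''$ (whose exact values depend on whether the sum in Definition \ref{symBC} is read over ordered or unordered pairs, a point that is immaterial here). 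The first summand is what the within-row pieces reduce to via the elementary identity $\sum_{j \geq 1}(2j-1)\lambda_j = 2a(\lambda) + |\lambda|$ (or its unordered analogue), while the last summand comes from the cross pairs, which contribute $(m+1)$ times the change in $\sum A$, namely $|\lambda^{(0)}|$.

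The comparison is then immediate. Since $|\lambda^{(0)}| + |\lambda^{(1)}| = n$ is common to both bi-partitions, the middle summand cancels in the difference $a_s({\bm \lambda}) - a_s({\bm \mu})$. In Case 1 ($|\lambda^{(0)}| > |\mu^{(0)}|$), the last summand contributes at least $C''(m+1)$, which dominates the bounded differences $|a(\lambda^{(j)}) - a(\mu^{(j)})| \leq \binom{n}{2}$ once $m \gg n$, as already assumed in Definition 4.1. In Case 2 ($\lambda^{(1)} = \mu^{(1)}$), equating totals $|\lambda^{(0)}| + |\lambda^{(1)}| = n = |\mu^{(0)}| + |\mu^{(1)}|$ forces $|\lambda^{(0)}| = |\mu^{(0)}|$, so the last term also cancels, and what survives is $C(a(\lambda^{(0)}) - a(\mu^{(0)})) > 0$. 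No deep obstacle is anticipated: the only nontrivial step is the explicit formula above, which is entirely mechanical.
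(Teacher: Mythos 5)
Your proof is correct and follows essentially the same strategy as the paper's: the paper compresses your explicit three-piece decomposition into the terse observation that each element of $\lambda^{(0)}$ contributes more than $n$ times (owing to the cross pairs) while each element of $\lambda^{(1)}$ contributes at most $n-1$ times, from which both cases are read off. Your version simply makes the formula $a_s({\bm \Lambda}) = C\bigl(a(\lambda^{(0)})+a(\lambda^{(1)})\bigr) + C'n + C''(m+1)|\lambda^{(0)}|$ explicit rather than leaving it implicit.
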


\begin{proof}
Notice that each element of $\lambda ^{(0)}$ contributes more than $n$-times, while each element of $\lambda ^{(1)}$ contributes less than or equal to $(n-1)$-times. Therefore, if $m \gg n + s/r \gg n$, then the first case follows. The other case is immediate.
\end{proof}

\begin{lemma}\label{gind}
We define $A ^{\flat} := \mathbb C W \ltimes \mathbb C [\epsilon_1 ^2, \ldots, \epsilon _n^2] \subset A_W$. Consider the natural degree-doubling embedding $A _{\mathfrak S _n} \subset A ^{\flat}$ and regard $M _{\lambda}$ as an $A ^{\flat}$-module by letting $\Gamma$ act trivially. Then we have
$$K^{ex} _{( \lambda, \emptyset )} \otimes \mathsf{Lsgn} \cong A_W \otimes _{A^{\flat}} M _{\lambda}$$
for each partition $\lambda$ of $n$.
\end{lemma}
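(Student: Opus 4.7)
The plan is to realize $K^{ex}_{(\lambda,\emptyset)}$ explicitly as an $A_W$-submodule of $P^{*}_{\mathsf{triv}}\langle 2b\rangle=\mathbb{C}[\eta_{1},\ldots,\eta_{n}]\langle 2b\rangle$ (with $b=n+2a(\lambda)$), identify its $\mathsf{Lsgn}|_{\Gamma}$-isotypic part---as an $A^{\flat}$-module---with $M_{\lambda}$, and extract the asserted isomorphism via Frobenius adjunction together with a graded-dimension count.

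I would fix a highest-weight vector $\mathfrak{v}_{\lambda}\in\mathbb{C}[y_{1},\ldots,y_{n}]_{a(\lambda)}$ spanning the unique $L_{\lambda}$-copy guaranteed by Fact \ref{typeAref}\,(1), and set $\omega:=\eta_{1}\cdots\eta_{n}$. The element $\omega\cdot\mathfrak{v}_{\lambda}(\eta_{1}^{2},\ldots,\eta_{n}^{2})$ generates a copy of $L_{(\lambda,\emptyset)}=L_{\lambda}\otimes\mathsf{Lsgn}$ at degree $0$ of $P^{*}_{\mathsf{triv}}\langle 2b\rangle$; by the multiplicity-one statement of Fact \ref{typeCref}\,(3) it must agree (up to scalar) with the image of the head under $P_{(\lambda,\emptyset)}\to P^{*}_{\mathsf{triv}}\langle 2b\rangle$, so $K^{ex}_{(\lambda,\emptyset)}=A_{W}\cdot(\omega\mathfrak{v}_{\lambda}(\eta^{2}))$. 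Since each $\partial/\partial\eta_{i}$ flips exactly one $\Gamma$-eigenvalue, the $\mathsf{Lsgn}$-isotypic part of $K^{ex}_{(\lambda,\emptyset)}$ under $\Gamma$ equals $\mathbb{C}[\epsilon_{1}^{2},\ldots,\epsilon_{n}^{2}]\cdot(\omega\mathfrak{v}_{\lambda}(\eta^{2}))\subset\omega\cdot\mathbb{C}[\eta_{1}^{2},\ldots,\eta_{n}^{2}]$, and for each $\beta\in\{0,1\}^{n}$ the $\chi_{\beta}$-isotype is obtained by applying $\partial^{1-\beta}:=\prod_{i}(\partial/\partial\eta_{i})^{1-\beta_{i}}$, with resulting degree shift $+2(n-|\beta|)$.

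The key computation is a direct chain-rule identity
\[
\epsilon_{i}^{2}\cdot\bigl(\omega\,f(\eta^{2})\bigr)\;=\;\omega\cdot\bigl(6\,\partial_{y_{i}}f+4\,y_{i}\,\partial^{2}_{y_{i},y_{i}}f\bigr)(\eta^{2})\qquad(f\in\mathbb{C}[y_{1},\ldots,y_{n}]),
\]
which shows that $\mathbb{C}[\epsilon_{1}^{2},\ldots,\epsilon_{n}^{2}]$ acts on $\omega\cdot\mathbb{C}[\eta^{2}]\cong\mathbb{C}[y]$ via the twisted operators $D_{i}:=6\,\partial_{y_{i}}+4\,y_{i}\,\partial^{2}_{y_{i},y_{i}}$. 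I would intertwine $\partial_{y_{i}}$ with $D_{i}$ via the rescaling $\phi\colon\mathbb{C}[y]\to\mathbb{C}[y]$, $y^{\mu}\mapsto c_{\mu}\,y^{\mu}$, where $c_{\mu}:=\prod_{i}\bigl(2^{\mu_{i}}(2\mu_{i}+1)!!\bigr)^{-1}$ satisfies the consistent recursion $c_{\mu-e_{i}}=2(2\mu_{i}+1)\,c_{\mu}$. The scalars $c_{\mu}$ are symmetric in $\mu$, so $\phi$ is also $\mathfrak{S}_{n}$-equivariant, and the multiplicity-one statement of Fact \ref{typeAref}\,(1) forces $\phi(L_{\lambda})=L_{\lambda}$; hence $\phi$ restricts to an $A_{\mathfrak{S}_{n}}$-isomorphism from $M_{\lambda}$ onto $\mathbb{C}[D_{1},\ldots,D_{n}]\cdot L_{\lambda}$. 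Composing with multiplication by $\omega$ (which twists the $\Gamma$-action by $\mathsf{Lsgn}$) then yields an $A^{\flat}$-isomorphism $M_{\lambda}\cong(K^{ex}_{(\lambda,\emptyset)}\otimes\mathsf{Lsgn})^{\Gamma}$.

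Frobenius adjunction converts the composite $M_{\lambda}\to(K^{ex}_{(\lambda,\emptyset)}\otimes\mathsf{Lsgn})^{\Gamma}\hookrightarrow K^{ex}_{(\lambda,\emptyset)}\otimes\mathsf{Lsgn}$ into an $A_{W}$-morphism $\psi\colon A_{W}\otimes_{A^{\flat}}M_{\lambda}\to K^{ex}_{(\lambda,\emptyset)}\otimes\mathsf{Lsgn}$; its image contains the head $L_{(\emptyset,\lambda)}$, which generates the target as an $A_{W}$-module, so $\psi$ is surjective. For injectivity I match graded dimensions: $A_{W}$ is graded-free of rank $2^{n}$ over $A^{\flat}$ on the basis $\{\epsilon^{\alpha}\}_{\alpha\in\{0,1\}^{n}}$ with $\sum_{\alpha}t^{|\alpha|}=(1+t)^{n}$, so $\mathsf{gdim}(A_{W}\otimes_{A^{\flat}}M_{\lambda})=(1+t)^{n}\cdot\mathsf{gdim}(M_{\lambda})(t^{2})$; on the target side each $\partial/\partial\eta_{i}$ is injective on $\omega\cdot\mathbb{C}[\eta^{2}]$ (a short check: $\partial_{\eta_{i}}(\omega g(\eta^{2}))=0$ forces $g=0$ by comparing $y_{i}$-expansion coefficients), so the maps $\partial^{1-\beta}$ assemble to give $\mathsf{gdim}(K^{ex}_{(\lambda,\emptyset)}\otimes\mathsf{Lsgn})=(1+t)^{n}\cdot\mathsf{gdim}(M_{\lambda})(t^{2})$ as well. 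The main technical hurdle is the rescaling step above---verifying the chain-rule identity and the recursion for $c_{\mu}$; once these are in hand, the remainder is a formal adjunction-plus-dimension argument.
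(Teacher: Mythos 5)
Your proof is correct, but it follows a genuinely different route from the paper's. The paper's proof stays abstract: it observes that $A_W$ is graded free over $A^{\flat}$ on the basis $\{\epsilon^{\alpha}\}_{\alpha\in\{0,1\}^n}$, so the induction functor $A_W\otimes_{A^{\flat}}(\,\cdot\,)$ is exact and sends indecomposable projectives to indecomposable projectives (in particular $\widetilde{P_{\lambda}}\mapsto P_{(\emptyset,\lambda)}$) and the cofree module $\widetilde{P^{*}_{(n)}}$ to $P^{*}_{\mathsf{Lsgn}}\langle 2n\rangle$ (via the socle argument: $A_W\otimes_{A^{\flat}}\mathsf{triv}$ has simple socle $\mathsf{Lsgn}\langle 2n\rangle$); it then applies this functor to the unique degree-$0$ map $P_{\lambda}\to P^{*}_{(n)}\langle 2a(\lambda)\rangle$ of Fact \ref{typeAref}~1) and uses the uniqueness of Fact \ref{typeCref}~3) to identify the image with $K^{ex}_{(\lambda,\emptyset)}\otimes\mathsf{Lsgn}$. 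You instead ``open up'' the induction: you realize $K^{ex}_{(\lambda,\emptyset)}$ concretely inside $\mathbb C[\eta]$, decompose by $\Gamma$-isotypes, identify $(K^{ex}_{(\lambda,\emptyset)}\otimes\mathsf{Lsgn})^{\Gamma}$ with $M_{\lambda}$ by an explicit rescaling $\phi$ intertwining $\partial_{y_{i}}$ with the second-order operators $D_{i}$ produced by the chain rule, and then close with adjunction plus a $(1+t)^{n}$-dimension count. The chain-rule identity, the operator $D_{i}(y^{\mu})=2\mu_{i}(2\mu_{i}+1)y^{\mu-e_{i}}$, and the recursion $c_{\mu-e_{i}}=2(2\mu_{i}+1)c_{\mu}$ all check out, as does the graded-dimension match. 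What each approach buys: the paper's is shorter, avoids any explicit polynomial realization, and generalizes silently (all it needs is the Frobenius-like behaviour of $A^{\flat}\subset A_W$); yours is more elementary and self-contained, makes the isomorphism visible as a differential-operator intertwiner, and sidesteps having to establish $A_W\otimes_{A^{\flat}}P^{*}_{(n)}\cong P^{*}_{\mathsf{Lsgn}}\langle 2n\rangle$ as a statement about preservation of injective envelopes. Two small imprecisions, neither fatal: the $\mathsf{Lsgn}$-isotype is $A^{\flat}\cdot(\omega\mathfrak v_{\lambda}(\eta^{2}))=\mathbb{C}[\epsilon^{2}]\cdot\omega\,L_{\lambda}(\eta^{2})$, i.e.\ the $\mathfrak S_{n}$-span should appear rather than just the $\mathbb{C}[\epsilon^{2}]$-span of the single vector (you later write the correct $\mathbb{C}[D]\cdot L_{\lambda}$); and the injectivity check for $\partial_{\eta_{i}}$ must be iterated over all factors of $\partial^{1-\beta}$, not just the first application to $\omega\mathbb{C}[\eta^{2}]$ (an analogous computation on $\widehat{\omega}_{i}\mathbb{C}[\eta^{2}]$ works).
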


\begin{proof}
The algebra $A_W$ is a free $A^{\flat}$-module with its free basis
\begin{equation}
1, \epsilon_1,\epsilon_2, \ldots, \epsilon_n,\epsilon _1 \epsilon _2, \epsilon_1 \epsilon_3, \ldots, \epsilon _1 \epsilon _2 \cdots \epsilon_n.\label{wbasis}
\end{equation}
It follows that the induction functor $A_W \otimes _{A ^{\flat}} \bullet$ preserves projective objects, and preserves the indecomposability. The indecomposable $A_W$-module $A_W \otimes _{A ^{\flat}} \mathsf{triv}$ has simple socle $\mathsf{Lsgn} \left< 2 n \right>$. Hence, we apply the induction functor to Fact \ref{typeAref} 3),4) to obtain a non-zero degree $0$ morphism
$$P _{( \emptyset, \lambda )} \to P ^* _{\mathsf{Lsgn}} \left< 4 a ( \lambda ) + 2 n \right>.$$
By twisting $\mathsf{Lsgn}$ to the both sides and applying Fact \ref{typeCref} 2) with an identity $2 a ( \lambda ) + n = b ( \lambda, \emptyset )$, we conclude the result.
\end{proof}

\begin{corollary}\label{asymp}
The module $K^{ex} _{(\lambda, \emptyset )}$ admits a graded projective resolution by using only $\{ P_{( \mu, \emptyset )} \left< d \right> \} _{\mu, d}$'s.
\end{corollary}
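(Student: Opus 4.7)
The strategy is to lift the Appendix A construction to $A_W$ through the intermediate algebra $A^\flat$, using the identification $K^{ex}_{(\lambda, \emptyset)} \otimes \mathsf{Lsgn} \cong A_W \otimes_{A^\flat} M_\lambda$ from Lemma \ref{gind}.

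The core of the argument is to exhibit a graded projective resolution of $M_\lambda$ in the category of finitely generated graded $A^\flat$-modules, all of whose terms are direct sums of shifts of $\{P^{A^\flat}_{(\emptyset, \mu)}\}_{\mu}$, where $P^{A^\flat}_{(\emptyset, \mu)} := A^\flat e_{(\emptyset, \mu)}$ is the $A^\flat$-projective cover of the simple $L_{(\emptyset, \mu)}$. The key structural observation is that $P^{A^\flat}_{(\emptyset, \mu)}$ decomposes, as a $W$-module, as $\mathbb{C}[\epsilon_1^2, \ldots, \epsilon_n^2] \otimes L_{(\emptyset, \mu)}$, and both factors carry a trivial $\Gamma$-action --- the polynomial generators $\epsilon_i^2$ are $\Gamma$-fixed, and $L_{(\emptyset, \mu)}$ is pulled back from $\mathfrak{S}_n$ along $W \twoheadrightarrow \mathfrak{S}_n$ by Fact \ref{typeCref}~(1). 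Since $M_\lambda$ itself is $\Gamma$-trivial by construction (Lemma \ref{gind}), its head is a direct sum of copies of $L_{(\emptyset, \mu)}$'s, and the kernel of the projective cover $P^{A^\flat}_{(\emptyset, \lambda)} \twoheadrightarrow M_\lambda$ is again $\Gamma$-trivial as a submodule of a $\Gamma$-trivial projective. Iterating the argument on successive syzygies produces the desired resolution; its finiteness follows from the finite global dimension of $A^\flat$, established by the same McConnell--Robson--Small argument that the paper invokes for $A_W$.

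Then I apply the functor $A_W \otimes_{A^\flat} (-)$, which is exact because $A_W$ is a free $A^\flat$-module via the basis (B.3) from the proof of Lemma \ref{gind}. This functor sends $A^\flat e_{(\emptyset, \mu)}\langle d \rangle$ to $A_W e_{(\emptyset, \mu)} \langle d \rangle = P_{(\emptyset, \mu)}\langle d \rangle$ and $M_\lambda$ to $K^{ex}_{(\lambda, \emptyset)} \otimes \mathsf{Lsgn}$ by Lemma \ref{gind}. Finally I twist the resulting $A_W$-resolution by $\mathsf{Lsgn}$, which is an auto-equivalence of the category of graded $A_W$-modules. By Fact \ref{typeCref}~(5), this sends $L_{(\emptyset, \mu)}$ to $L_{(\mu, \emptyset)}$ and therefore $P_{(\emptyset, \mu)}$ to $P_{(\mu, \emptyset)}$, producing the required projective resolution of $K^{ex}_{(\lambda, \emptyset)}$. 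The content of the proof lies in the $\Gamma$-invariance bookkeeping of the middle paragraph; the induction of projectives and the $\mathsf{Lsgn}$-twist are purely formal.
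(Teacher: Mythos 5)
Your proof is correct and proceeds by essentially the same route as the paper: induce along $A_W \otimes_{A^\flat} (\,\cdot\,)$, which is exact because $A_W$ is free over $A^\flat$ (basis (\ref{wbasis})) and sends $A^\flat e_{(\emptyset,\mu)}$ to $P_{(\emptyset,\mu)}$, and then twist by $\mathsf{Lsgn}$ using Fact \ref{typeCref}~5). The only difference is presentational: the paper implicitly uses that the inflated $A_{\mathfrak S_n}$-minimal resolution of $M_\lambda$ already is an $A^\flat$-projective resolution (since $A^\flat e_{(\emptyset,\mu)}$ is precisely the inflation of $P^{\mathfrak S_n}_\mu$), whereas you rebuild the $A^\flat$-resolution directly via the $\Gamma$-triviality of successive syzygies — both are valid and amount to the same observation.
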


\begin{proof}
The induction functor $A_W \otimes _{A^{\flat}} \bullet$ sends an indecomposable module $P _{\lambda}$ to $P _{( \emptyset, \lambda )}$. Hence, $K ^{ex} _{(\lambda, \emptyset)} \otimes \mathsf{Lsgn}$ admits a graded projective resolution by using only $\{ P_{( \emptyset, \mu )} \left< d \right> \} _{\mu, d}$'s. By twisting $\mathsf{Lsgn}$ as in Lemma \ref{gind}, we conclude the assertion.
\end{proof}

\begin{lemma}\label{vcc}
For two distinct partitions $\lambda, \mu$ of $n$, we have
$$\left< K ^{ex} _{( \lambda, \emptyset )}, ( K ^{ex} _{( \mu, \emptyset )} )^* \right> _{\mathsf{gEP}} = 0.$$
Assume that Corollary \ref{so} holds for type $\mathsf{A}$. Then, we have
$$\mathrm{ext} ^{\bullet} _{A_W} ( K ^{ex} _{( \lambda, \emptyset )}, L _{( \mu, \emptyset )} ) = \{ 0 \} \hskip 3mm \text{ for each } \hskip 3mm \mu \not\le \lambda.$$
\end{lemma}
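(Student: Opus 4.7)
The plan is to reduce both assertions to their type $\mathsf{A}$ counterparts via two standard steps. First, since $A_W$ is a free $A^{\flat}$-module on the square-free monomials $\{\epsilon^a : a \in \{0,1\}^n\}$ (as in the proof of Lemma~\ref{gind}), the induction functor $A_W \otimes_{A^{\flat}}(-)$ is exact and preserves projectives, giving
\[
\mathrm{ext}^\bullet_{A_W}(A_W \otimes_{A^{\flat}} M, N) \cong \mathrm{ext}^\bullet_{A^{\flat}}(M, N|_{A^{\flat}})
\]
for every $A^{\flat}$-module $M$ and $A_W$-module $N$. Combined with Lemma~\ref{gind}'s identification $K^{ex}_{(\lambda,\emptyset)}\otimes\mathsf{Lsgn} \cong A_W \otimes_{A^{\flat}} M_\lambda$, this transfers both extension computations from $A_W$ to $A^{\flat}$. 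Second, a direct verification shows that $e_0 := \tfrac{1}{|\Gamma|}\sum_{g\in\Gamma} g$ is a central idempotent of $A^{\flat}$: indeed $\mathfrak{S}_n$ permutes $\Gamma$ and so fixes $e_0$; each $g \in \Gamma$ satisfies $ge_0 = e_0 = e_0 g$; and $\mathbb{C}[\epsilon^2]$ commutes with $\Gamma$. This yields an algebra decomposition $A^{\flat} = A^{\flat}e_0 \oplus A^{\flat}(1-e_0)$, with the map $a \mapsto ae_0$ inducing an isomorphism $A_{\mathfrak{S}_n}^{(2)} \xrightarrow{\sim} A^{\flat}e_0$, where $A_{\mathfrak{S}_n}^{(2)}$ denotes $A_{\mathfrak{S}_n}$ with its grading doubled. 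Consequently, any two $A^{\flat}$-modules on which $\Gamma$ acts trivially live entirely in the $e_0$-summand, and extensions between them agree with extensions over $A_{\mathfrak{S}_n}$ up to a harmless degree-doubling.

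For the first assertion, the two $\mathsf{Lsgn}$-twists on the two sides of the pairing cancel, so the above reduction gives
\[
\langle K^{ex}_{(\lambda,\emptyset)}, (K^{ex}_{(\mu,\emptyset)})^*\rangle_{\mathsf{gEP}} = \sum_i (-1)^i\,\mathsf{gdim}\,\mathrm{ext}^i_{A^{\flat}}(M_\lambda, (A_W \otimes_{A^{\flat}} M_\mu)^*|_{A^{\flat}}).
\]
The dual on the right does not itself have trivial $\Gamma$-action, but the $\Gamma$-isotypic decomposition $A_W \otimes_{A^{\flat}} M_\mu \cong \bigoplus_{S\subseteq \{1,\ldots,n\}} \epsilon^S \otimes M_\mu$ (in which $\Gamma$ acts on the $S$-summand by the character $\chi_S$) shows that the $e_0$-component of the induced module is the $A^{\flat}$-submodule $1 \otimes M_\mu$, canonically isomorphic to $M_\mu$ as an $A^{\flat}$-module. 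Dualizing, the $e_0$-component of $(A_W \otimes_{A^{\flat}} M_\mu)^*$ is canonically $M_\mu^*$ with trivial $\Gamma$-action. Since $M_\lambda$ itself lives in the $e_0$-factor, only this component contributes to the Ext, and hence
\[
\langle K^{ex}_{(\lambda,\emptyset)}, (K^{ex}_{(\mu,\emptyset)})^*\rangle_{\mathsf{gEP}} = \langle M_\lambda, M_\mu^*\rangle_{\mathsf{gEP},\,A_{\mathfrak{S}_n}}
\]
up to the (vanishing-irrelevant) substitution $t \mapsto t^2$ from degree-doubling. The right-hand side vanishes for $\lambda \neq \mu$ by the Kostka-system orthogonality (Definition~\ref{kos}(2)) of $\{M_\lambda\}_\lambda$ established in Theorem~\ref{kosA}.

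For the second assertion, Fact~\ref{typeCref}(5) identifies $L_{(\mu,\emptyset)}\otimes\mathsf{Lsgn} \cong L_{(\emptyset,\mu)}$, and $L_{(\emptyset,\mu)}$ has trivial $\Gamma$-action since it is pulled back from $\mathfrak{S}_n$ along the quotient $W \twoheadrightarrow \mathfrak{S}_n$. Both reductions now apply cleanly to give $\mathrm{ext}^\bullet_{A_W}(K^{ex}_{(\lambda,\emptyset)}, L_{(\mu,\emptyset)}) \cong \mathrm{ext}^\bullet_{A_{\mathfrak{S}_n}}(M_\lambda, L_\mu)$. Under the assumed Corollary~\ref{so} for type $\mathsf{A}$, the projective resolution of $M_\lambda$ constructed in its proof has terms that are direct sums of $P_\nu\langle d\rangle$ with $\nu$ indexing orbits contained in $\overline{\mathcal{O}_\lambda}$; in type $\mathsf{A}$ this becomes the dominance condition $\nu \le \lambda$. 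Hence $\mathrm{ext}^\bullet_{A_{\mathfrak{S}_n}}(M_\lambda, L_\mu) = 0$ for $\mu \not\le \lambda$, completing the second assertion. The main obstacle I anticipate is the careful identification of the $e_0$-component of the dualized induced module with $M_\mu^*$ as an $A^{\flat}$-module, and the verification that the graded Euler-Poincar\'e pairings on the two sides correspond under the reduction to $A_{\mathfrak{S}_n}$ (modulo the $t \mapsto t^2$ rescaling).
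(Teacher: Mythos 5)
Your proof is correct and arrives at the same conclusion as the paper's, but the packaging is genuinely different. The paper induces the minimal projective resolution of $M_\lambda$ from $A^{\flat}$ (all terms $P_{(\gamma,\emptyset)}\langle 2d\rangle$), reads off $\left< K^{ex}_{(\lambda,\emptyset)}, L_{(\mu,\emptyset)}\right>_{\mathsf{gEP}}=Q_{\lambda,\mu}(t^2)$ from the multiplicities, records the second assertion only as a dimension inequality $\dim\mathrm{ext}^i_{A_W}\le\dim\mathrm{ext}^i_{A^{\flat}}$, and then concludes at the level of graded characters (the step passing from $\langle K^{ex},L_{(\mu,\emptyset)}\rangle$ to $\langle K^{ex},(K^{ex})^*\rangle$ is left rather implicit). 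You instead exhibit $e_0=\tfrac1{|\Gamma|}\sum_{g\in\Gamma}g$ as a central idempotent of $A^{\flat}$ with $A^{\flat}e_0\cong A_{\mathfrak S_n}$ (degree-doubled) and pair this with Frobenius reciprocity $\mathrm{ext}^{\bullet}_{A_W}(A_W\otimes_{A^{\flat}}M,N)\cong\mathrm{ext}^{\bullet}_{A^{\flat}}(M,N|_{A^{\flat}})$; that produces genuine isomorphisms $\mathrm{ext}^{\bullet}_{A_W}(K^{ex}_{(\lambda,\emptyset)},(K^{ex}_{(\mu,\emptyset)})^*)\cong\mathrm{ext}^{\bullet}_{A_{\mathfrak S_n}}(M_\lambda,M_\mu^*)$ and $\mathrm{ext}^{\bullet}_{A_W}(K^{ex}_{(\lambda,\emptyset)},L_{(\mu,\emptyset)})\cong\mathrm{ext}^{\bullet}_{A_{\mathfrak S_n}}(M_\lambda,L_\mu)$, up to degree doubling, which is a stronger (module-level) statement than the paper's character-level one and makes the $\Gamma$-isotypic reduction transparent. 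Both routes rest on the same inputs (freeness of $A_W$ over $A^{\flat}$, Lemma~\ref{gind}, Fact~\ref{typeCref}(5), Theorem~\ref{kosA}, Corollary~\ref{so}); what your $e_0$-splitting buys is the clean identification of the relevant Ext groups, at the cost of having to check that $e_0$ is central (you do this) and that the $e_0$-component of the dual of the induced module is indeed $M_\mu^*$ (you do this too, and it is correct since $\Gamma$ has exponent $2$). One small imprecision: in the second assertion you refer to the projective resolution ``of $M_\lambda$'' constructed in the proof of Corollary~\ref{so}; that proof actually resolves $\widetilde K_\lambda$, and the needed vanishing $\mathrm{ext}^{\bullet}_{A_{\mathfrak S_n}}(M_\lambda,L_\mu)=0$ for $\mu\not\le\lambda$ follows from Corollary~\ref{so}(2) via the usual induction on the triangular multiplicity matrix, exactly as the paper tacitly invokes it. This does not affect the validity of your argument.
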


\begin{proof}
By the arguments in the proof of Corollary \ref{asymp}, if
$$P_i := \bigoplus _{\gamma,d \ge 2i} P_{\gamma} \left< d \right> ^{\oplus m _{\gamma,d}^i}$$
is the $i$-th term of the minimal projective resolution of $M_{\lambda}$, then
$$P_i^{\uparrow} := \bigoplus _{\gamma,d \ge 2i} P_{(\gamma, \emptyset)} \left< 2 d \right> ^{\oplus m _{\gamma,d}^i} = A_W \otimes _{A^{\flat}} P _i \otimes \mathsf{Lsgn}$$
is the $i$-th term of a projective resolution of $K^{ex} _{(\lambda,\emptyset)}$. It follows that if we write $\left< M_{\lambda}, L_{\mu} \right>_{\mathsf{gEP}} = Q_{\lambda,\mu} (t)$, then we have
$$\left< K^{ex}_{( \lambda, \emptyset )}, L_{( \mu, \emptyset )} \right>_{\mathsf{gEP}} = Q_{\lambda,\mu} ( t^2 ).$$
Thus, we conclude the desired vanishing of the graded Euler-Poincar\'e pairing by Theorem \ref{kosA} (or Theorem \ref{gSp}). For the second assertion, we have
$$\dim \mathrm{ext} ^i _{A_W} ( K ^{ex} _{( \lambda, \emptyset )}, L _{( \mu, \emptyset )} ) \le \dim \mathrm{ext} ^i _{A^{\flat}} ( M _{\lambda}, L _{\mu} ) \hskip 5mm \text{ for each } \hskip 2mm i \in \mathbb Z$$
by the above description of a projective resolution. Therefore, the assertion follows by Corollary \ref{so} (for type $\mathsf{A}$).
\end{proof}

We return to the proof of Proposition \ref{ak}. Let $n_i := | \lambda^{(i)}|$ for $i = 0,1$. Let $\mathfrak h_i \subset \mathfrak h$ be the reflection representation of $W_{n_i}$. We have $A ^{\bm \lambda} \cong (\mathbb C W_{n_0} \ltimes \mathbb C [ \mathfrak h ^* _0 ] ) \boxtimes ( \mathbb C W_{n_1} \ltimes \mathbb C [ \mathfrak h ^* _1 ] )$. We have
$$\mathrm{ext} ^{i} _{A_W} ( K _{\bm \lambda}, L _{\bm \mu} ) = \mathrm{ext} ^{i} _{A ^{\bm \lambda}} ( K ^{ex} _{( \lambda ^{(0)}, \emptyset )} \boxtimes L _{( \emptyset, \lambda ^{(1)} )}, L _{\bm \mu} ) \hskip 3mm \text{ for each } \hskip 2mm i \in \mathbb Z$$
by the Frobenius-Nakayama reciprocity. Applying Corollary \ref{asymp}, the first terms of the minimal projective resolution of $K ^{ex} _{( \lambda ^{(0)}, \emptyset )} \boxtimes L _{( \emptyset, \lambda ^{(1)} )}$ (obtained from the double complex arising from the minimal projective resolutions of $K ^{ex} _{( \lambda ^{(0)}, \emptyset )}$ and $L _{( \emptyset, \lambda ^{(1)} )}$) goes as:
\begin{align*}
\cdots \to & \bigoplus _{\gamma, d' > 0} P _{( \gamma, \emptyset )} \left< d' \right> \boxtimes \left( \mathfrak h _1 \otimes P _{( \emptyset, \lambda ^{(1)} )} \left< 2 \right> \right) \oplus \\
& \left( P _{( \lambda ^{(0)}, \emptyset)} \boxtimes \wedge_+ ^2 \mathfrak h _1 \otimes P _{( \emptyset, \lambda ^{(1)} )} \left< 4 \right> \right) \oplus \bigoplus _{\mu, d > 0} \left( P _{( \mu, \emptyset )} \left< d \right> \boxtimes P _{( \emptyset, \lambda ^{(1)} )} \right) \to \\
& \left( P _{( \lambda ^{(0)}, \emptyset )} \boxtimes \mathfrak h _1 \otimes P _{( \emptyset, \lambda ^{(1)} )} \left< 2 \right> \right) \oplus \bigoplus _{\nu, d > 0} \left( P _{( \nu, \emptyset )} \left< d \right> \boxtimes P _{( \emptyset, \lambda^{(1)} )}  \right) \to \\
& P _{( \lambda ^{(0)}, \emptyset )} \boxtimes P _{( \emptyset, \lambda ^{(1)})} \to K ^{ex} _{( \lambda ^{(0)}, \emptyset )} \boxtimes L _{( \emptyset, \lambda ^{(1)} )} \to 0,
\end{align*}
where $\gamma,\mu,\nu$ run over some sets of partitions of $|\lambda^{(0)}|$. We have
$$L _{\bm \mu} = \bigoplus _{w \in \mathfrak S _n / \mathfrak S _{|\mu^{(0)}|} \times \mathfrak S _{|\mu^{(1)}|}} w \cdot L _{( \mu ^{(0)}, \emptyset )} \boxtimes L _{( \emptyset, \mu ^{(1)})}$$
by Fact \ref{typeCref} 1). By examining the $S_{\Gamma}$-action, we conclude that
\begin{align*}
& \mathrm{hom} _{A_W} ( K _{\bm \lambda}, L _{\bm \mu} ) \neq \{0\} \hskip 2mm \text{ only if } \hskip 2mm |\lambda ^{(1)}| = | \mu ^{(1)}|, \text{ and }\\
& \mathrm{ext} ^{i} _{A_W} ( K _{\bm \lambda}, L _{\bm \mu} ) \neq \{0\} \hskip 2mm \text{ only if } \hskip 2mm | \lambda ^{(1)}| - i \le | \mu ^{(1)}| \le | \lambda ^{(1)}|.
\end{align*}
In addition, if  $|\lambda ^{(1)}| = | \mu ^{(1)}|$, then we have
$$\mathrm{ext} ^{\bullet} _{A_W} ( K _{\bm \lambda}, L _{\bm \mu} ) \neq \{0\} \hskip 2mm \text{ only if } \hskip 2mm \lambda ^{(0)} \ge \mu ^{(0)} \text{ and } \lambda ^{(1)} = \mu ^{(1)}$$
by the second part of Lemma \ref{vcc}. Therefore, we conclude that
\begin{equation}
\mathrm{ext} ^{\bullet} _{A_W} ( K _{\bm \lambda}, L _{\bm \mu} ) = \{0\} \hskip 3mm \text{ if } \hskip 3mm a ( {\bm \lambda} ) \ge a ( {\bm \mu} ) \hskip 2mm \text{ and } \hskip 2mm {\bm \lambda} \neq {\bm \mu}.\label{ae}
\end{equation}
By construction, we know that each $K_{\bm \lambda}$ is an indecomposable module with simple head $L_{\bm \lambda}$. Again by counting $S_{\Gamma}$-eigenvalues and using Fact \ref{typeCref} 1), we deduce
$$[ K_{( \lambda^{(0)}, \lambda ^{(1)})} : L _{(\mu^{(0)},\mu^{(1)})}] \neq 0 \hskip 1mm \text{ only if } \hskip 1mm  | \lambda ^{(0)} | > | \mu ^{(0)} |, \hskip 1mm \text{ or } \hskip 1mm \lambda ^{(0)} \le \mu ^{(0)} \text{ and }\lambda^{(1)} = \mu^{(1)}.$$
Hence, Lemma \ref{aasymp} and (\ref{ae}) imply that $K_{\bm \lambda}$ is a $\mathcal P$-trace with respect to $Z^{r,s}_n$. Applying Proposition \ref{orthex}, we conclude that $\{ K _{\bm \lambda} \} _{\bm \lambda}$ forms a Kostka system adapted to $Z ^{r,s} _n$ as required.

\begin{remark}
The $\mathrm{ext}^1$ and gEP-version of the second part of Lemma \ref{vcc} follows by Theorem \ref{kosA}. This yields $\mathrm{ext}^{1} ( K _{\bm \lambda}, L _{\bm \mu} ) = \{ 0 \}$ and $\left< K _{\bm \lambda}, L _{\bm \mu}\right> _{\mathsf{gEP}} = 0$ in place of (\ref{ae}), and hence one can make the proof into a purely algebraic one.
\end{remark}

{\footnotesize
}
\end{document}